\documentclass[11pt]{article}

\usepackage{graphicx,amsmath,amssymb,color}
\usepackage{latexsym,subfigure,stmaryrd}

\usepackage{tikz}

\usetikzlibrary{matrix}

\setlength{\topmargin}{-10mm} 
\setlength{\textheight}{225mm} 
\setlength{\oddsidemargin}{-5mm} 
\setlength{\textwidth}{170mm}

\newtheorem{theorem}{Theorem}[section]
\newtheorem{lemma}[theorem]{Lemma}
\newtheorem{proposition}[theorem]{Proposition}
\newtheorem{corollary}[theorem]{Corollary}

\newcommand{\R}{\mathbb{R}}
\newcommand{\D}{{\Rd \mathrm{D}}}
\newcommand{\N}{\mathbb{N}}

\newcommand{\V}{\boldsymbol{V}}
\newcommand{\n}{\boldsymbol{n}}
\newcommand{\sH}{{\rm H}}

\newcommand{\Om}{{\Omega}}
\newcommand{\sL}{{\rm L}}

\def\nablat{\nabla_{\tau}}

\def\Deltat{\Delta_{\tau}}

\def\restriction#1#2{\mathchoice
              {\setbox1\hbox{${\displaystyle #1}_{\scriptstyle #2}$}
              \restrictionaux{#1}{#2}}
              {\setbox1\hbox{${\textstyle #1}_{\scriptstyle #2}$}
              \restrictionaux{#1}{#2}}
              {\setbox1\hbox{${\scriptstyle #1}_{\scriptscriptstyle #2}$}
              \restrictionaux{#1}{#2}}
              {\setbox1\hbox{${\scriptscriptstyle #1}_{\scriptscriptstyle #2}$}
              \restrictionaux{#1}{#2}}}
\def\restrictionaux#1#2{{#1\,\smash{\vrule height .8\ht1 depth .85\dp1}}_{\,#2}} 
\newcommand{\dive}{\mathrm{div}\,}
\newcommand{\divet}{\mathrm{div_{\tau}}\,}

\def\nablat{\nabla_{\tau}}

\def\Deltat{\Delta_{\tau}}

\newcommand{\dd}{\textup{d}}
\newcommand{\Rd}{\color{black}}
\newcommand{\Bk}{\color{black}}
\newcommand{\Be}{\color{black}}

\newcommand{\Tr}{\mathrm{Tr}}

\def\blacksquare{
\thinspace\nobreak \vrule width 5pt height 5pt depth 0pt}
\newtheorem{remark}[theorem]{Remark}
\newenvironment{proof}{\begin{trivlist}
                       \item[]\hspace{0cm}{\bf Proof:}
                       \hspace{0mm} }{\hfill $\blacksquare$
                     \end{trivlist}}
\newenvironment{proofof}[1]{\begin{trivlist}
                       \item[]\hspace{0cm}{\bf Proof of #1: }
                       \\ }{\hfill $\blacksquare$
                     \end{trivlist}}
                     
\begin{document}

\title {An extremal eigenvalue problem for the {\Be Wentzell}-Laplace operator }
\author{M. Dambrine, D. Kateb, J. Lamboley}
\maketitle

\noindent\textbf{Abstract:} We consider the question of giving an upper bound for the first nontrivial eigenvalue of the {\Be Wentzell}-Laplace operator of a domain $\Om$, involving only geometrical informations. We provide such an upper bound, by generalizing Brock's inequality concerning Steklov eigenvalues, and we conjecture {\Rd that balls maximize the Wentzell eigenvalue, in a suitable class of domains, which would improve our bound}. To support this conjecture, we prove that balls are critical domains for the {\Be Wentzell} eigenvalue, in any dimension, and that they are local maximizers in dimension 2 and 3, using an order two sensitivity analysis. We also provide some numerical evidence.\\
 ~\\ 
\textbf{Keywords:} {\Be Wentzell} eigenvalues, eigenvalue estimates, Shape optimization, Shape derivatives, {\Rd Stability, Quantitative isoperimetric inequality}.  \\
~\\
\textbf{AMS Subject Classification:} \emph{Primary} 35P15; \emph{Secondary} 49K20, 49K40.

\tableofcontents

\section{Introduction}

\paragraph{Background.}
Let $d\ge 2$ and $\Omega$ be a bounded domain in $\mathbb{R}^d$ (i.e a bounded connected open set) supposed to be sufficiently smooth  (of class $C^3$),
and we denote $\Delta_\tau$ the Laplace-Beltrami operator on $\partial \Omega$.
Motivated by generalized impedance boundary conditions, we consider the eigenvalue problem for {\Be Wentzell} boundary  conditions 
\begin{equation}\label{Steklov_Ventcel}
\left\{
\begin{array}{rlll}
-\Delta u &=&0&~\textrm{~~in~~}  \Omega\\
-\beta \Delta_\tau u+\partial_n{}u&=&\lambda u&~\textrm{~~on~}~\partial \Omega
\end{array}
\right.
\end{equation}
where $\beta$ is a given real number and  $\partial_n$ denotes the outward unit normal derivative.

The coefficient $\beta$ appears as a surface diffusion coefficient arising in a passage to the limit in the thickness of the boundary layer for coated object (see \cite{LemrabetTeniou,BendaliLemrabet,HadJol05}).  A general derivation of {\Be Wentzell} boundary conditions can be found in \cite{Goldstein}. \Bk The coef\-ficient can be either positive or negative. We first consider the case $\beta\geq 0$ where the obtained  boundary value problem is coercive.

This problem couples surface and volume effects through the Steklov eigenvalue problem in $\Omega$ with the Laplace-Beltrami eigenvalue problem on $\partial\Omega$. 
Let us recall some known facts about these two problems. The {\bf Steklov eigenvalue problem} consists in solving 
\begin{equation}\label{Steklov}
\left\{
\begin{array}{rlll}
\Delta u &=&0&~\textrm{~~in~~}  \Omega\\
\partial_n{}u&=&\lambda^S  u&~\textrm{~~on~}~\partial \Omega
\end{array}
\right.
\end{equation}
It has a discrete spectrum consisting in a sequence
$$
\lambda_0^S(\Omega)=0< \lambda_1^S(\Omega) \le \lambda_2^S(\Omega) \ldots \rightarrow +\infty 
$$
where the $\lambda^S$  {are} called Steklov eigenvalues. Brock-Weinstock inequality states that ${\lambda^S_{1}}$ is maximized by the ball among all open sets of fixed volume $|\Omega|$. It was first proved in the case $d=2$ by Weinstock and extended by Brock to any dimension in \cite{Brock} (Weinstock inequality is slightly stronger but restricted to simply-connected domains: he proved indeed that the disk maximizes $\lambda_{1}^S$ among simply-connected sets of given perimeter). A quantitative form of this inequality was recently obtained by Brasco, De Philippis and Ruffini who proved in \cite{BrascoDePhilippisRuffini} that
$$
\lambda_{1}^S(\Omega) \leq \lambda_{1}^S(B)  \left[ 1 - \delta_{d} \left(\cfrac{|\Omega\Delta B(x_{\partial\Omega})|}{|\Omega|} \right)^2\right],
$$
where $\delta_{d}$ is an explicit nonnegative constant depending only on $d$, $x_{\partial\Omega}$ is the center of mass of $\partial\Omega$ and $B(x_{\partial\Omega})$ is the ball centered in $x_{\partial\Omega}$ with volume $|\Omega|$\footnote{\Rd The results in \cite{BrascoDePhilippisRuffini} are stated with the Fraenkel asymmetry, meaning that the previous inequality is stated for the ball $B$ of volume $|\Om|$ that minimizes $|\Om\Delta B|$, but from the proof (see \cite[Section 5]{BrascoDePhilippisRuffini}) we can conclude that the ball $B(x_{\partial\Om})$ of volume $|\Om|$ and such that $\int_{\partial\Om}(x-x_{\partial\Om})d\sigma=0$ is in fact valid as well.}.
Let us emphasize that no additional topological assumption is needed.

It is well-known that  the {\bf spectrum of the Laplace-Beltrami operator} on $\partial\Omega$, {\Rd that is numbers $\lambda$ such that the equation $-\Delta_{\tau}u=\lambda u$ on $\partial\Om$ has nontrivial solutions}, is also discrete and satisfies:
$$
\lambda_0^{LB}(\partial\Omega)=0< \lambda_1^{LB}(\partial\Omega) \le \lambda_2^{LB}(\partial\Omega) \ldots \rightarrow +\infty 
$$
Again, one can ask if $\lambda_{1}^{LB}$ takes its maximal value on the euclidean {sphere}, among  hypersurfaces of fixed {$(d-1)$-dimensional} volume. Here, the answer is more complicated than for the Steklov problem. It depends on both the topology of the surface and the dimension. In \cite{Hersch2}, Hersch gave a positive answer  if $d= 3$  for surfaces homomorphic to the euclidean {sphere}. In the cases $d>3$ or without topological restriction, the answer is negative (see \cite{Bleecker,ColboisDodziuk,ColboisDrydenElSsoufi}, and Section \ref{remarques:preliminaires} for the 2-dimensional case)).

When $\beta\geq 0$, the {\bf spectrum of the Laplacian with  {\Be Wentzell} conditions} consists in an increasing countable sequence of  eigenvalues 
\begin{equation}\label{eq:spectrum}
\lambda_{0,\beta}(\Omega)=0< \lambda_{1,\beta}(\Omega) \le \lambda_{2,\beta}(\Omega) \ldots \rightarrow +\infty 
\end{equation}
with corresponding real orthonormal {\Rd (in $L^2(\partial\Om)$)} eigenfunctions $u_{0},u_1,u_2,\ldots$  As in the previous cases, the first eigenvalue is zero with constants as corresponding  eigenfunctions. As usual, we adopt the convention that each eigenvalue is repeated according to its multiplicity.  Hence, the first eigenvalue of interest is $\lambda_{1,\beta}$. {\Rd A variational characterization of the eigenvalues is available:} we introduce the Hilbert space 
$$ \sH(\Omega)= \{ u \in \sH^1(\Omega),\  \Tr_{\partial\Omega}(u) \in \sH^1(\partial\Omega) \},
$$
where {\Rd $\Tr_{\partial\Om}$ is the trace operator}, and we define on $\sH(\Omega)$ the two bilinear forms
\begin{equation}
\label{definition:Abeta}
A_{\beta}(u,v)= \int_{\Omega} \nabla u . \nabla v~dx+\beta \int_{\partial \Omega} \nablat u . \nablat v ~d\sigma ,\;\;\;\;\;\;\;\;{B(u,v)= \int_{\partial \Omega}  u  v ,}
\end{equation}
where $\nablat$ is the tangential gradient.
Since we assume $\beta$  is nonnegative, the two bilinear forms are positive and  the variational characterization for the $k$-th eigenvalue is 
\begin{equation}\label{eq:var}
\lambda_{k,\beta}(\Omega)=\min~\left\{\displaystyle\frac{A_{\beta}(v,v) }{B(v,v)},~v \in \sH(\Omega),~{\int_{\partial\Omega}} v u_i =0,~i=0,\ldots,k-1\right\}
\end{equation}
In particular, when $k=1$, the minimum is taken over the functions orthogonal to the eigenfunctions associated to $\lambda_{0,\beta}=0$, i.e constant functions. To describe this spectrum, one can notice that the eigenvalue problem can be rewritten purely on $\partial\Omega$ as:
$$-\beta \Delta_\tau u+\D u=\lambda u$$
where $\D$ denotes the Dirichlet-to-Neumann map, that is a selfadjoint, positive pseudodifferential operator of order one. Therefore, this problem can be seen as a compact perturbation of the usual Laplace-Beltrami operator. This point of view was used in \cite{BDHV10} and justify that high order eigenvalues of the Laplace-{\Be Wentzell} problem look like those of the Laplace-Beltrami operator. 

However, we are interested in this work, in studying low order eigenvalues and more precisely in giving an upper bound for the second eigenvalue $\lambda_{1,\beta}$ involving only geometrical informations. 
{\Rd Please remark that we are not seeking for lower bound, because even with very strong geometrical assumption, there is none. Indeed, a consequence of our results is that
\begin{equation}\label{eq:inf}
\inf\left\{\lambda_{1,\beta}(\Om),\;\;\Om\textrm{ convex }, \;|\Om|=m\right\}=0
\end{equation}
for any value of $\beta\geq 0$ and $m\geq 0$, see Remark \ref{rk:cigare}.}
An important remark at this point is that the bilinear form $A_{\beta}$ is not homogeneous with respect to dilatation of the domain. Therefore, the volume of $\Omega$ plays a crucial role in $\lambda_{1,\beta}$. As a surface term appears also in $A_{\beta}$ (corresponding to the Laplace-Beltrami operator), the perimeter of $\Omega$ (i.e. the volume of $\partial\Omega$) should also play a crucial role. 

Notice that when $\beta=0$ we retrieve the Steklov eigenvalues, and we recover the Laplace-Beltrami eigenvalues by considering $\frac{1}{\beta}\lambda_{1,\beta}$ and letting $\beta$ go to $+\infty${\Rd , see Section \ref{remarques:preliminaires}}.

Note also that the close but distinct eigenvalue problem 
\begin{equation}\label{Steklov_Ventcel:kennedy}
\left\{
\begin{array}{rlll}
-\Delta u &=& \lambda u&~\textrm{~~in~~}  \Omega\\
\Delta u+\alpha \partial_n u +\gamma u&=&0&~\textrm{~~on~}~\partial \Omega
\end{array}
\right.
\end{equation}
was considered by J.B. Kennedy in \cite{Kennedy}. He transforms this problem into a Robin type problem to prove a Faber-Krahn type inequality when the constants $\alpha,\gamma$ are non negative: the ball is the best possible domain among those of given volume. 

\paragraph{The results of the paper.} We first apply the strategy of F. Brock for the Steklov eigenvalue problem to the {\Be Wentzell} eigenvalue problem and 
obtain a first upper bound of $\lambda_{1,\beta}(\Omega)$ in terms of purely geometric quantities (we actually provide a refined version, using \cite{BrascoDePhilippisRuffini}):

\begin{theorem}\label{main:theorem:section:brock}
{\Be Let $\Om$ a smooth set such that $\int_{\partial\Om}x=0$.} Let $\Lambda[\Omega]$ be the spectral radius of the symmetric and positive semi\-definite matrix $P(\Omega)=(p_{ij})_{i,j=1,\ldots,d}$ defined as 
\begin{equation}
\label{definition:matrice:A}
p_{ij}=\displaystyle \int_{\partial\Omega} (\delta_{ij}-n_in_j),
\end{equation}
where $\n$ is the outward normal vector to $\partial\Omega$. Then if $\beta\geq 0$, one has:
\begin{equation}\label{egalite}
S(\Omega):= {\sum_{i=1}^{d}} \cfrac{1}{\lambda_{i,\beta}(\Omega)} {\Be\;\; \geq \cfrac{\int_{\partial\Om}|x|^2}{|\Om|+\beta\Lambda[\Om]}}\;\;\geq  \cfrac{d\omega_{d}^{-1/d}|\Omega|^{\frac{d+1}{d}}}{|\Omega|+\beta\Lambda[\Omega]} {\Rd \left[ 1+\gamma_{d} \left(\cfrac{|\Omega\Delta B|}{|B|}\right)^2\right]}.
\end{equation}
where 
\begin{equation}
\label{constante:Brasco}
\gamma_{d}=\cfrac{d+1}{d} \  \cfrac{2^{1/d}-1}{4},
\end{equation}
$\omega_{d}=|B_{1}|$ and $B$ is the ball of volume $|\Omega|$ and with the same center of mass than $\partial\Omega$.
Equality holds in \eqref{egalite} {if} $\Omega$ is a ball.
\end{theorem}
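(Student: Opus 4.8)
The plan is to use the $d$ coordinate functions as test functions in the variational characterization \eqref{eq:var}, and then to convert the resulting information into a lower bound for the \emph{sum} of reciprocals via a trace argument. First I would record the crucial algebraic identity. Since $\nabla x_i = e_i$ and the tangential gradient is $\nablat x_i = e_i - n_i\n$, a direct computation gives $\int_\Om \nabla x_i\cdot\nabla x_j = |\Om|\delta_{ij}$ and $\int_{\partial\Om}\nablat x_i\cdot\nablat x_j = \int_{\partial\Om}(\delta_{ij}-n_in_j) = p_{ij}$, whence
$$A_{\beta}(x_i,x_j) = |\Om|\,\delta_{ij} + \beta\, p_{ij}.$$
Thus the Gram matrix $\mathcal A := (A_{\beta}(x_i,x_j))_{ij}$ equals $|\Om| I + \beta P(\Om)$, and its largest eigenvalue is exactly $|\Om|+\beta\Lambda[\Om]$. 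The centering hypothesis $\int_{\partial\Om}x=0$ guarantees that each $x_i$ is $\sL^2(\partial\Om)$-orthogonal to the constants, so each $x_i$ is admissible relative to $u_0$.

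Next comes the heart of the argument, the passage from ``one test function per eigenvalue'' to the full sum $S(\Om)=\sum_{i=1}^d 1/\lambda_{i,\beta}$. Since $\mathcal A$ is symmetric, I would first rotate the frame by an orthogonal $O$: this replaces $\mathcal A$ by $O\mathcal A O^{T}$ and leaves $|\Om|$, $\Lambda[\Om]$, $\int_{\partial\Om}|x|^2$, the spectrum $\{\lambda_{i,\beta}\}$ and the centering condition all invariant, so I may assume $\mathcal A$ is diagonal, i.e. $x_1,\dots,x_d$ are mutually $A_{\beta}$-orthogonal with $A_{\beta}(x_i,x_i)=\alpha_i\le |\Om|+\beta\Lambda[\Om]$. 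Setting $w_i = x_i/\sqrt{\alpha_i}$ produces an $A_{\beta}$-orthonormal family in $H_0 = \{v\in\sH(\Om):\int_{\partial\Om}v=0\}$, the space on which $A_{\beta}$ is coercive. On $H_0$ the compact self-adjoint operator $K$ defined by $A_{\beta}(Kv,w)=B(v,w)$ for all $w\in H_0$ has eigenvalues $1/\lambda_{1,\beta}\ge 1/\lambda_{2,\beta}\ge\cdots$ and satisfies $A_{\beta}(Kw_i,w_i)=B(w_i,w_i)$. Ky Fan's maximum principle for the sum of the $d$ largest eigenvalues then yields
$$\sum_{i=1}^d \frac{1}{\lambda_{i,\beta}} \ge \sum_{i=1}^d A_{\beta}(Kw_i,w_i) = \sum_{i=1}^d \frac{B(x_i,x_i)}{\alpha_i} \ge \frac{1}{|\Om|+\beta\Lambda[\Om]}\sum_{i=1}^d B(x_i,x_i) = \frac{\int_{\partial\Om}|x|^2}{|\Om|+\beta\Lambda[\Om]},$$
using $\alpha_i\le |\Om|+\beta\Lambda[\Om]$ and $\sum_i B(x_i,x_i)=\int_{\partial\Om}\sum_i x_i^2 = \int_{\partial\Om}|x|^2$. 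This is precisely the first inequality in \eqref{egalite}.

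For the second inequality only geometry remains. The denominators coincide, so it suffices to prove the sharp weighted bound $\int_{\partial\Om}|x|^2 \ge d\,\omega_d^{-1/d}|\Om|^{(d+1)/d}\,[\,1+\gamma_d(|\Om\Delta B|/|B|)^2\,]$, with equality for the centered ball (one checks $\int_{\partial B_r}|x|^2 = d\omega_d r^{d+1}=d\,\omega_d^{-1/d}|B_r|^{(d+1)/d}$). This is exactly the quantitative stability estimate for the boundary moment of inertia established in \cite{BrascoDePhilippisRuffini}, which under the normalization $\int_{\partial\Om}x=0$ holds with the ball $B$ of volume $|\Om|$ centered at the origin, as explained in the footnote; I would simply invoke it. Equality then holds for a ball centered at the origin, since there the coordinate functions are, up to normalization, genuine eigenfunctions for a single eigenvalue, forcing $\mathcal A=(|\Om|+\beta\Lambda[\Om])I$ and making all inequalities equalities at once. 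The step I expect to be the main obstacle is the sum-of-reciprocals conversion: a single test function controls only $\lambda_{1,\beta}$, and the nontrivial observation is that diagonalizing $\mathcal A$ by a rotation renders the $x_i$ mutually $A_{\beta}$-orthogonal, which is exactly the structure Ky Fan's principle needs to spread the bound across all $d$ eigenvalues; everything else is the routine identity for $A_{\beta}(x_i,x_j)$ or a direct appeal to \cite{BrascoDePhilippisRuffini}.
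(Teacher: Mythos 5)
Your proof is correct, and its skeleton coincides with the paper's: test functions taken in the span of the coordinate functions, pairwise $A_{\beta}$-orthogonality obtained by an orthogonal diagonalization of $P(\Om)$, the bound $A_{\beta}(w_i,w_i)\le |\Om|+\beta\Lambda[\Om]$ (valid since $\beta\ge 0$), conversion into a lower bound for $S(\Om)$, and finally the Brasco--De Philippis--Ruffini quantitative moment inequality with the same equality check for the ball. Where you genuinely depart from the paper is the conversion step: the paper invokes, as a black box, the Hersch--Hile--Xu inequality $\sum_{i=1}^{d}1/\lambda_{i,\beta}\ge \sum_{i=1}^{d}B(v_i,v_i)/A_{\beta}(v_i,v_i)$ for families that are pairwise $A_{\beta}$-orthogonal and $B$-orthogonal to constants, whereas you re-derive exactly this statement by introducing the compact self-adjoint operator $K$ on $H_0=\{v\in\sH(\Om):\int_{\partial\Om}v=0\}$ defined by $A_{\beta}(Kv,w)=B(v,w)$, identifying its decreasing eigenvalues with the $1/\lambda_{k,\beta}$, and applying Ky Fan's maximum principle; after normalizing the $v_i$ the two statements coincide, so your route is a self-contained proof of the paper's cited lemma rather than a different mechanism. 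A further streamlining on your side: the paper first rotates coordinates so that $\int_{\partial\Om}x_ix_j=0$ for $i\ne j$ in order to expand each $B(w_i,w_i)$ explicitly, but, as your computation shows, only the rotation-invariant sum $\sum_i B(x_i,x_i)=\int_{\partial\Om}|x|^2$ ever enters, so a single diagonalization (of $P(\Om)$, equivalently of the Gram matrix $\mathcal A$) suffices. What your approach buys is independence from the Hile--Xu reference; the price is that you must justify that $A_{\beta}$ is a scalar product making $H_0$ complete and that $K$ is compact --- you assert these without proof, but they follow from a Poincar\'e-type inequality on $H_0$ and the compactness of the trace embedding into $\sL^2(\partial\Om)$, and they are implicit in the discreteness of the spectrum \eqref{eq:spectrum} that the paper takes for granted.
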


A trivial consequence of Theorem \ref{main:theorem:section:brock} is the following upper bound for $\lambda_{1,\beta}(\Omega)$. 

\begin{corollary}
\label{cor:bornesup}
{\Be With the same notations as in Theorem \ref{main:theorem:section:brock},} if $\beta\geq 0$, it holds:
\begin{equation}\label{majoration:lambda2}
\lambda_{1,\beta}(\Omega) {\Be \leq d\frac{|\Om|+\beta\Lambda[\Om]}{\int_{\partial\Om}|x|^2}}\leq \cfrac{|\Omega|+\beta\Lambda[\Omega]}{\omega_{d}^{-1/d}|\Omega|^{\frac{d+1}{d}}{\Rd \left[ 1+\gamma_{d} \left(\cfrac{|\Omega\Delta B|}{|B|}\right)^2\right]}}. 
\end{equation}
where $B$ and $\gamma_{d}$ are as in Theorem \ref{main:theorem:section:brock}.
 Equality holds in \eqref{majoration:lambda2} {if} $\Omega$ is a ball.
\end{corollary}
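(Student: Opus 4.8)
The plan is to read the corollary off Theorem \ref{main:theorem:section:brock} by combining it with the monotonicity of the eigenvalue sequence. Since the eigenvalues are ordered as in \eqref{eq:spectrum}, we have $0<\lambda_{1,\beta}(\Om)\le\lambda_{i,\beta}(\Om)$ for every $i=1,\ldots,d$, hence $1/\lambda_{i,\beta}(\Om)\le 1/\lambda_{1,\beta}(\Om)$, and summing over $i$ gives
$$S(\Om)=\sum_{i=1}^d\frac{1}{\lambda_{i,\beta}(\Om)}\le\frac{d}{\lambda_{1,\beta}(\Om)}.$$
Chaining this with the first inequality of \eqref{egalite} produces $d/\lambda_{1,\beta}(\Om)\ge S(\Om)\ge \int_{\partial\Om}|x|^2/(|\Om|+\beta\Lambda[\Om])$, and a direct rearrangement of the two outer terms is exactly the first bound $\lambda_{1,\beta}(\Om)\le d(|\Om|+\beta\Lambda[\Om])/\int_{\partial\Om}|x|^2$ in \eqref{majoration:lambda2}.

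The second bound in \eqref{majoration:lambda2} then follows purely algebraically from the second inequality of \eqref{egalite}. Indeed, dividing that inequality by $d$ and canceling the common factor $|\Om|+\beta\Lambda[\Om]$ yields the geometric estimate $\int_{\partial\Om}|x|^2\ge d\,\omega_d^{-1/d}|\Om|^{(d+1)/d}\big[1+\gamma_d(|\Om\Delta B|/|B|)^2\big]$; inverting this positive quantity and multiplying by $d(|\Om|+\beta\Lambda[\Om])$ transforms the first bound into the second one. So the whole corollary is obtained by transporting the two inequalities of the theorem through the single elementary estimate $S(\Om)\le d/\lambda_{1,\beta}(\Om)$, with no new analysis required.

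For the equality case one checks that both of the inequalities used become equalities when $\Om$ is a ball $B$. Equality in \eqref{egalite} is part of the statement of Theorem \ref{main:theorem:section:brock}, and $|\Om\Delta B|=0$ makes the bracketed asymmetry term equal to $1$. The only point deserving a word is the step $S\le d/\lambda_{1,\beta}$: its equality requires $\lambda_{1,\beta}(B)=\cdots=\lambda_{d,\beta}(B)$, which holds because the rotational symmetry of the ball forces the first nontrivial {\Be Wentzell} eigenvalue to have multiplicity $d$, the coordinate functions $x_1,\ldots,x_d$ restricted to $\partial B$ providing the associated eigenfunctions of \eqref{Steklov_Ventcel}. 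I do not expect any genuine obstacle here; this multiplicity-$d$ fact for the ball, read off from the explicit spectral decomposition on the sphere, is the sole ingredient beyond the monotonicity of the spectrum.
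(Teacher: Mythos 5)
Your proof is correct and follows essentially the same route as the paper's: both derive $\lambda_{1,\beta}(\Om)\le d/S(\Om)$ from the ordering of the eigenvalues and then chain this with the two inequalities of \eqref{egalite}. Your treatment of the equality case is in fact slightly more explicit than the paper's, since you justify $S(B)=d/\lambda_{1,\beta}(B)$ via the multiplicity-$d$ property \eqref{valeurs:propres:sphere}, which the paper leaves implicit.
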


Note that the method used for the {\Be Wentzell} eigenvalue problem also applies for the Laplace-Beltrami case and provides an upper bound for $\lambda_{1}^{LB}$ without any topological assumption on $\Omega$. 
\begin{theorem}\label{main:theorem:section:brockLB}
{\Be With the same notations as in Theorem \ref{main:theorem:section:brock}}, it holds
\begin{equation}\label{egaliteLB}
S^{LB}(\partial\Omega):=\sum_{i=1}^{d} \cfrac{1}{\lambda^{LB}_i(\partial\Omega)} {\Be \geq \cfrac{\displaystyle\int_{\partial\Om}|x|^2}{\Lambda[\Om]}}\geq \cfrac{d\omega_{d}^{-1/d}|\Omega|^{\frac{d+1}{d}}}{\Lambda[\Omega] }{\Rd \left[ 1+\gamma_{d} \left(\cfrac{|\Omega\Delta B|}{|B|}\right)^2\right]}.
\end{equation}
and 
\begin{equation}\label{majoration:lambda2LB}
\lambda_{1}^{LB}(\partial\Omega){\Be \leq d\frac{\Lambda[\Om]}{\displaystyle\int_{\partial\Om}|x|^2}}\leq   \cfrac{\Lambda[\Omega]}{\omega_{d}^{-1/d}|\Omega|^{\frac{d+1}{d}} {\Rd \left[ 1+\gamma_{d} \left(\cfrac{|\Omega\Delta B|}{|B|}\right)^2\right]}}.
\end{equation}
 Equality holds in \eqref{egaliteLB} and \eqref{majoration:lambda2LB} {if} $\Omega$ is a ball.
\end{theorem}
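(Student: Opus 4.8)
The plan is to run Brock's test-function argument directly on $\partial\Om$, exactly as for the Wentzell problem in Theorem \ref{main:theorem:section:brock} but keeping only the surface term; indeed \eqref{egaliteLB}--\eqref{majoration:lambda2LB} are the formal counterparts of \eqref{egalite}--\eqref{majoration:lambda2} obtained by dividing by $\beta$ and letting $\beta\to+\infty$, which is consistent with the remark that $\tfrac1\beta\lambda_{1,\beta}\to\lambda_1^{LB}$. The starting observation is the computation of the tangential gradient of the coordinate functions: since $\nablat x_i = e_i - n_i\n$, one has pointwise on $\partial\Om$ that $\nablat x_i\cdot\nablat x_j = \delta_{ij}-n_in_j$, whence
\[
\int_{\partial\Om}\nablat x_i\cdot\nablat x_j\,d\sigma = p_{ij}.
\]
Thus $P(\Om)$ is precisely the stiffness matrix of the family $(x_1,\dots,x_d)$ for the Laplace-Beltrami quadratic form. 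I also record the mass matrix $Q=(q_{ij})$ with $q_{ij}=\int_{\partial\Om}x_ix_j\,d\sigma$, whose trace is $\Tr Q=\int_{\partial\Om}|x|^2\,d\sigma$.

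For the first inequality in \eqref{egaliteLB} I exploit the hypothesis $\int_{\partial\Om}x\,d\sigma=0$, which makes every linear function $a\cdot x$ orthogonal to the constants. To treat the full reciprocal sum I would choose the coordinate frame adaptively: writing $u_1,\dots,u_d$ for the first nontrivial eigenfunctions and setting $v_k=\big(\int_{\partial\Om}x_iu_k\,d\sigma\big)_{i=1,\dots,d}\in\R^d$, I build an orthonormal basis $a^{(1)},\dots,a^{(d)}$ of $\R^d$ such that $a^{(i)}$ is orthogonal both to $a^{(i+1)},\dots,a^{(d)}$ and to $v_1,\dots,v_{i-1}$. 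This is always possible because at each step it amounts to $(d-i)+(i-1)=d-1$ linear conditions in $\R^d$, leaving a nonzero admissible direction; and it forces $y_i:=a^{(i)}\cdot x$ to be orthogonal to $u_0,\dots,u_{i-1}$, hence admissible in the Rayleigh quotient for $\lambda_i^{LB}$. The variational characterization then gives $\lambda_i^{LB}\le (a^{(i)})^TPa^{(i)}/(a^{(i)})^TQa^{(i)}$, so that
\[
\frac{1}{\lambda_i^{LB}}\ \ge\ \frac{(a^{(i)})^TQa^{(i)}}{(a^{(i)})^TPa^{(i)}}\ \ge\ \frac{(a^{(i)})^TQa^{(i)}}{\Lambda[\Om]},
\]
using that $P$ is positive semidefinite with spectral radius $\Lambda[\Om]$ and that $a^{(i)}$ is a unit vector. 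Summing over $i$ and invoking the invariance of the trace under the orthonormal change of basis, $\sum_i (a^{(i)})^TQa^{(i)}=\Tr Q$, yields $\sum_{i=1}^d 1/\lambda_i^{LB}\ge \Tr Q/\Lambda[\Om]=\int_{\partial\Om}|x|^2\,d\sigma/\Lambda[\Om]$.

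The remaining assertions are then immediate. The second inequality in \eqref{egaliteLB} is exactly the geometric lower bound $\int_{\partial\Om}|x|^2\,d\sigma\ge d\,\omega_d^{-1/d}|\Om|^{(d+1)/d}\big[1+\gamma_d(|\Om\Delta B|/|B|)^2\big]$ already established in the proof of Theorem \ref{main:theorem:section:brock} following \cite{BrascoDePhilippisRuffini}, so it transfers verbatim. For \eqref{majoration:lambda2LB}, since $\lambda_1^{LB}\le\lambda_i^{LB}$ for every $i$ one has $S^{LB}(\partial\Om)\le d/\lambda_1^{LB}(\partial\Om)$, which combined with \eqref{egaliteLB} gives $\lambda_1^{LB}\le d\,\Lambda[\Om]/\int_{\partial\Om}|x|^2$, and then the last bound upon reusing the same geometric inequality. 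The equality case is read off on a ball: by symmetry $P$ and $Q$ are scalar matrices, the first $d$ nontrivial Laplace-Beltrami eigenfunctions are exactly the linear coordinates (so $\lambda_1^{LB}=\dots=\lambda_d^{LB}$ and all Rayleigh quotients are attained), and the Brasco--De Philippis--Ruffini inequality is an equality since $|\Om\Delta B|=0$.

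I expect the only genuinely delicate point to be the adaptive construction of the orthonormal frame $a^{(1)},\dots,a^{(d)}$: one must verify the dimension count at each step and, above all, notice that the individual estimates recombine only through the basis-independence of $\Tr Q$, not through any per-index control of $(a^{(i)})^TQa^{(i)}$. Every other step is a direct transcription of the Wentzell argument with the volume contribution $|\Om|$ deleted from the denominator.
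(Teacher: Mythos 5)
Your proof is correct, but it reaches the key inequality by a genuinely different route than the paper. The paper proves \eqref{egaliteLB} exactly as it proved \eqref{egalite}: it invokes the inverse-trace inequality of Hersch and Hile--Xu, $\sum_{i=1}^d 1/\lambda_i \ge \sum_{i=1}^d B(v_i,v_i)/A(v_i,v_i)$ for any family $(v_i)$ that is $B$-orthogonal to the constants and pairwise $A$-orthogonal, and it manufactures such a family $w_i=\sum_j c_{ij}x_j$ by choosing the orthogonal matrix $C$ diagonalizing $P[\Om]$ — the only change from the Wentzell case being that $A_\beta$ is replaced by the tangential form $\int_{\partial\Om}\nablat u\cdot\nablat v$, so that $A(w_i,w_i)=(CP[\Om]C^T)_{ii}\le\Lambda[\Om]$. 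You avoid the Hile--Xu theorem altogether: you bound each $1/\lambda_i^{LB}$ separately by the Rayleigh quotient of a linear test function $a^{(i)}\cdot x$ made admissible in the min-characterization of $\lambda_i^{LB}$ through your reverse-order Gram--Schmidt construction (orthogonality to $v_1,\dots,v_{i-1}$ plus the hypothesis $\int_{\partial\Om}x=0$), and you recombine the separate estimates through the invariance of $\Tr Q=\int_{\partial\Om}|x|^2$ under orthonormal change of frame. Your dimension count $(d-i)+(i-1)=d-1$ is right, and the recombination is legitimate precisely because you first replace each denominator $(a^{(i)})^TPa^{(i)}$ by the uniform bound $\Lambda[\Om]$ — the same relaxation the paper performs after applying Hile--Xu, which is why both routes land on the same constant. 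What your route buys is self-containedness: only the standard variational characterization and elementary linear algebra are needed, no external inverse-trace theorem, and you do not even need the paper's preliminary rotation making $\int_{\partial\Om}x_ix_j=0$ for $i\ne j$. What the paper's route buys is brevity (essentially one line, given the machinery already set up for Theorem \ref{main:theorem:section:brock}) and trial functions that do not depend on the unknown eigenfunctions $u_1,\dots,u_d$, whereas your frame $a^{(1)},\dots,a^{(d)}$ does. The rest of your argument — the Brasco--De Philippis--Ruffini lower bound for $\int_{\partial\Om}|x|^2$, the passage from \eqref{egaliteLB} to \eqref{majoration:lambda2LB} via $\lambda_1^{LB}\le\lambda_i^{LB}$, and the equality check on the ball — coincides with the paper's.
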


It is expected in this type of extremal eigenvalue problem that ball {\Rd are maximizers}. We are not able to fully justify the natural following conjecture:
\\
 
\noindent{\bf Conjecture:}  The ball maximizes the first non-trivial {\Be Wentzell}-Laplace eigenvalue among smooth open sets of given volume and which are homeomorphic to the ball. \\
  
The topological restriction is motivated by the limit case $\beta 	\rightarrow + \infty$ as we noticed before (see also Section \ref{remarques:preliminaires}).
{\Be In Section \ref{preuve:brock}, we observe that the intermediate bound in \eqref{majoration:lambda2} has both its numerator and denominator that are minimized by the ball, under volume constraint, so there is a competition. In Section \ref{section:tests:numeriques} we observe that in fact, the ball does not minimize this bound in general (see Figure \ref{sharpness:upperbound}). Therefore, we can not deduce from this bound the maximality of balls (though it might work for certain values of $\beta$ and the volume constraint). 
About the upper bound \eqref{majoration:lambda2}, we show that it is larger than $\lambda_{1,\beta}(B)$ for every $\beta>0$ (with equality  for the ball) and hence again does not implies {\Rd that balls are maximizing $\lambda_{1,\beta}$}}. 
To check if balls are {\Rd relevant candidates for maximizers} in our case, we then turn our attention to a shape sensitivity analysis of $\lambda_{1,\beta}$. 

{Therefore, we first wonder if} the ball {is} a critical shape in any dimension. With respect to shape sensitivity, the main difficulty is to handle multiple eigenvalues which leads to a nonsmooth dependency of $\lambda_{1,\beta}$ with respect to $\Omega$. However, for a fixed deformation field $\V\in W^{3,\infty}(\Om,\R^d)$, along the transport of $\Omega$ by $T_t=I+t\V$, we prove the existence of smooth branches of eigenvalues and eigenfunctions associated to the subspace generated by the group of eigenvalues and provide a characterization of the derivative along the branches: $\lambda_{1,\beta}$ is then the minimum value among these $d$ smooth branches. 

\begin{theorem}\label{Theoreme:gradient:cas}
We distinguish the case of simple and multiple eigenvalue. 
\begin{itemize}
\item If $\lambda{=\lambda_{k,\beta}(\Omega)}$ is a simple eigenvalue of the {\Be Wentzell} problem, then the  application $t \mapsto \lambda(t) {=\lambda_{k,\beta}(\Om_{t})}$ {(where $\Om_{t}=(I+t\V)(\Omega)$)} is differentiable and the derivative at $t=0$ is 
$$
\lambda'(0)=\int_{\partial\Omega}V_n\Big( \vert  \nablat u\vert ^2-\vert \partial_{n}u \vert ^2 -{ \lambda} H \vert  u_0\vert ^2+\beta  (H~I_{d}-2D^2b)\nablat u.\nablat u
\Big)~d\sigma.
$$
where $u$ is the normalized eigenfunction associated to $\lambda$, {$D^2b$ is the Hessian of the signed distance function (see \eqref{eq:signeddistance}), $H=\Tr(D^2b)$ is the mean curvature of $\partial\Om$, $I_{d}$ is the identity matrix of size $d$, and $V_{n}=\V\cdot {\bf n}_{\partial\Om}$ is the normal component of the deformation}. Moreover, the shape derivative $u'$ at $t=0$ of the eigenfunction satisfies
\begin{equation}
\left\{
\begin{array}{rcl}
\Delta u' &\!\!=& 0 \;\;\;\;\;\;\;\;\textrm{ in }\Omega, \\[3mm]
-\beta \Deltat u'+\partial_n{u'} - \lambda u' &=& \beta \Deltat (V_n \partial_n u) +\beta \divet\big(V_n(H I_{d}-2D^2b)\nablat u\big)\\
&&+ \divet(V_n\nablat u)  -\lambda' u+\lambda  V_n( \partial_nu+ H u) \hspace{1cm} \textrm{ on }\partial\Omega.
\end{array}
\right.
\end{equation}
\item { Let $\lambda$ be a multiple eigenvalue of order $m\ge 2$.  Let  $(u_k)_{k=1,\ldots,m}$ denote the eigenfunctions associated to $\lambda$. Then there exist $m$ functions $t\mapsto \lambda_{k,\beta}(t), k=1,\ldots,m$ defined in a neighborhood of 0 such that
\begin{itemize}
\item $\lambda_{k,\beta}(0)=\lambda$,
\item for every $t$ in a neighborhood of 0, $\lambda_{k,\beta}(t)$ is an eigenvalue of $\Om_{t}=(I+t\V)(\Om)$,
\item the functions $t\mapsto\lambda_{k,\beta}(t), k=1,\ldots,m$ admit derivatives and their values at 0 are the eigenvalues of the $m\times m$ matrix $M{=M_{\Om}(V_{n})}$ of entries $(M_{ij})$ defined by 
$$
{ M_{ij}=\displaystyle \int_{\partial\Omega} V_n\Big(\nablat u_i.\nablat u_j-\partial_n{}u_i \partial_n{}u_j -\lambda H u_iu_j+ \beta\left(HI_{d}-2D^2 b\right)\nablat u_i.\nablat u_j \Big)~d\sigma.
}$$
\end{itemize}}
\end{itemize}
\end{theorem}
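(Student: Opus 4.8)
The plan is to transport the moving problem back onto the fixed domain $\Om$ and then invoke the analytic perturbation theory of self-adjoint operators. Writing $T_t=I+t\V$ and pulling back a function $u$ on $\Om_t$ to $\tilde u=u\circ T_t$ on $\Om$, the weak formulation of \eqref{Steklov_Ventcel} on $\Om_t$, namely $A_\beta^{\Om_t}(u,v)=\lambda\,B^{\Om_t}(u,v)$, becomes a generalized eigenvalue problem $a(t;\tilde u,\tilde v)=\lambda\, b(t;\tilde u,\tilde v)$ posed on the \emph{fixed} space $\sH(\Om)$. A change of variables gives
$$a(t;\varphi,\psi)=\int_\Om (\mathcal A(t)\nabla\varphi)\cdot\nabla\psi\,dx+\beta\int_{\partial\Om}\big(\mathcal B(t)\nablat\varphi\big)\cdot\nablat\psi\,d\sigma,\qquad b(t;\varphi,\psi)=\int_{\partial\Om}\varphi\,\psi\,\omega(t)\,d\sigma,$$
where $\mathcal A(t)=\det(DT_t)\,(DT_t)^{-1}(DT_t)^{-T}$, $\omega(t)$ is the tangential Jacobian of $T_t$ on $\partial\Om$, and $\mathcal B(t)$ is the corresponding pulled-back matrix for the tangential gradient. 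Since $\V\in W^{3,\infty}$, all these coefficients depend analytically on $t$ near $0$ with $\mathcal A(0)=I_d$, $\mathcal B(0)=I_d$, $\omega(0)=1$, so $a$ and $b$ are analytic families of bounded forms, with $a(0;\cdot,\cdot)=A_\beta$ and $b(0;\cdot,\cdot)=B$.

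Next I would encode this as a fixed-space spectral problem. The form $a(t;\cdot,\cdot)$ is coercive and $b(t;\cdot,\cdot)$ is the moving $\sL^2(\partial\Om_t)$ inner product (via the trace), so the resolvent-type operator $R(t)$ defined by $a(t;R(t)f,\cdot)=b(t;f,\cdot)$ is compact and self-adjoint for the $b(t)$ inner product, its nonzero eigenvalues being exactly $1/\lambda_{k,\beta}(\Om_t)$. Because $R(t)$ is an analytic family of compact self-adjoint operators, Rellich's theorem furnishes eigenvalues and eigenprojections that are analytic in $t$, organized into branches that persist across crossings. This yields at once the simple case --- where the branch is locally the unique eigenvalue $\lambda_{k,\beta}(t)$ and is therefore smooth --- and, for an eigenvalue of multiplicity $m$, the existence of the $m$ analytic branches $\lambda_{k,\beta}(t)$ claimed in the statement.

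To identify the derivatives, I would differentiate $a(t;u(t),\psi)=\lambda(t)\,b(t;u(t),\psi)$ at $t=0$ and test against the eigenfunctions. In the simple case, choosing $\psi=u$ (normalized by $B(u,u)=1$) makes the $u'(0)$ contributions cancel by self-adjointness, leaving $\lambda'(0)=\dot a(u,u)-\lambda\,\dot b(u,u)$, where a dot denotes $\partial_t|_{t=0}$. In the multiple case the same computation restricted to the eigenspace $\mathrm{span}(u_1,\dots,u_m)$ shows the branch derivatives are the eigenvalues of the symmetric matrix $M_{ij}=\dot a(u_i,u_j)-\lambda\,\dot b(u_i,u_j)$, which is exactly the matrix $M$ of the theorem. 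It then remains to evaluate $\dot a$ and $\dot b$: using $\partial_t|_{t=0}\det(DT_t)=\dive\V$, the surface transport identity $\partial_t|_{t=0}\int_{\partial\Om_t}f=\int_{\partial\Om}(\partial_n f+Hf)V_n\,d\sigma$, and the shape derivative of the tangential Dirichlet energy $\int_{\partial\Om}|\nablat u|^2$ --- which is the step that produces the curvature tensor $HI_d-2D^2b$ --- followed by tangential integration by parts and the boundary relation $-\beta\Deltat u+\partial_n u=\lambda u$, the integrand collapses to the stated geometric form.

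Finally, the equation for the shape derivative $u'$ is obtained by differentiating the state equation: harmonicity is preserved, giving $\Delta u'=0$ in $\Om$, while differentiating the Wentzell condition and collecting the commutators of the boundary operators with the normal displacement $V_n$ produces the nonhomogeneous boundary condition displayed. The main obstacle is the multiple-eigenvalue case: the labelled map $t\mapsto\lambda_{k,\beta}(t)$ is in general only Lipschitz, not differentiable, at a crossing, so one cannot differentiate it directly, and the content of the theorem is precisely that the analytic Rellich branches --- whose derivatives are the spectrum of $M$ --- are the right objects. A secondary technical difficulty is that this is a generalized problem in which the mass form $b(t)$ also moves; the $u'(0)$ terms cancel only because one differentiates the correctly normalized identity in the moving $\sL^2(\partial\Om_t)$ inner product (this is why both $\dot a$ and $\dot b$ enter $M$). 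The remaining surface manipulations are routine but require the tangential Gauss--Green formula and the shape-calculus rules for boundary differential operators.
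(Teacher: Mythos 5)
Your proposal is correct in substance and reaches both conclusions, but at the two decisive steps it uses different tools than the paper. For the existence of smooth branches, the paper (Theorem \ref{analyticity}) transports the problem to the fixed boundary via the operators $\mathcal{D}_{t}$ and $\mathcal{L}_{t}$ of Lemmas \ref{transport:Dirichlet:2:Neumann} and \ref{transport:Laplace:Beltrami:cas:trace}, and then performs a Lyapunov--Schmidt reduction followed by the Weierstrass preparation theorem and a deflation argument, in the spirit of Ortega--Zuazua; you instead build the compact resolvent of the pulled-back variational problem and invoke Rellich's analytic perturbation theorem for self-adjoint compact operators. Your route is shorter and yields analytic eigenprojections for free, but it silently uses two things: since $b(t)$ moves, $R(t)$ is self-adjoint only for the $t$-dependent inner product, so one must conjugate by the analytic family of multiplication operators $\omega_t^{1/2}$ to land on a fixed Hilbert space before citing Rellich; and the coercivity you invoke degenerates on constants (fixed by replacing $a$ by $a+cb$) and fails outright when $\beta<0$, whereas the paper's Fredholm/Lyapunov--Schmidt argument is insensitive to the sign of $\beta$ --- a point the paper exploits, since Section \ref{section:analyze:ordre1} explicitly drops the hypothesis $\beta\ge 0$. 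For the derivative formulas, the paper first derives the boundary value problem solved by the shape derivative $u'$ (Theorem \ref{th:u'}) and then obtains $\lambda'(0)$ and the matrix $M$ by testing that equation against the eigenfunctions (Theorems \ref{derivee:vp:simple} and \ref{theorem:gradient:vp:multiple}); you differentiate the transported variational identity and cancel the material-derivative terms by symmetry of $a$ and $b$, obtaining $M_{ij}=\dot{a}(u_i,u_j)-\lambda\,\dot{b}(u_i,u_j)$ without ever needing $u'$. That is a genuine streamlining for the eigenvalue derivatives, although identifying this expression with the stated integrand still requires the Hadamard-type tangential calculus producing $H I_{d}-2D^2b$ (the same computations the paper imports from the literature), and the equation for $u'$ asserted in the theorem must then be proved separately, exactly as the paper does, so that part of the work is not avoided.
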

Notice that in the notations above and contrary to \eqref{eq:spectrum}, the functions $\lambda_{k}(t)$ are no longer ordered.
As a byproduct of this result, notice that we can write the corresponding shape derivatives for the Steklov and Laplace-Beltrami eigenvalue problem (see Appendix \ref{app:SteLB}).
Another consequence of this result, regarding our conjecture, is that we  are  able to check that balls are critical shapes for $\lambda_{1,\beta}$ by computing the trace of the previously defined matrix $M=M_{B}$ (recall that $\lambda_{1,\beta}(B)$ is an eigenvalue of multiplicity $d$ the dimension). {\Be But first, we make a short remark about the notion of volume preserving deformation:}

\begin{remark}\label{rk:volume}
{\Rd In the next results and in many places in the paper, we will consider  volume preserving smooth deformations of domains, that is to say $\Om_{t}=T_{t}(\Om)$ where
$t\mapsto T_{t}$ satisfies:
\begin{itemize}
\item
$T_0=Id$,
\item
for every $t$ near 0, $T_t$ is a $W^{3,\infty}$-diffeomorphism from $\Omega$ onto its image $\Omega_t=T_t(\Omega)$,
\item
the application $t\mapsto T_t$ is real-analytic near $t=0$.
\item for every $t$ near 0, $|\Om_{t}|=|\Om|$.
\end{itemize}
More generally, it can be sufficient to assume that the volume is preserved at the first or the second order, depending on whether we are interested in first or second order conditions.
For example, if one considers $T_{t}=I+t\V$ the vector field $\V$ is said to be volume preserving at first order if it satisfies $\int_{\partial\Om}V_{n}d\sigma=0$ ; indeed for $\Om_{t}=(I+t\V)(\Om)$, we have $\frac{d}{dt}_{|t=0}|\Om_{t}|=\int_{\partial\Om}V_{n}d\sigma$.

When dealing with second order considerations as in {\Rd Theorem} \ref{conclusion:ordre:deux}, we need that the volume is preserved at the second order, so $T_{t}$ is volume preserving at second order if
$$
\cfrac{d^2}{dt^2}| \Omega_t|_{|t=0}=\int_{\partial \Om}\left(W+V_n\partial_{n} V_n +H V_n^2 \right)~d\sigma=0,
$$
where $\V=\frac{1}{t}(T_{t}-I)$, $V_{n}$ is the value at $t=0$ of $\V\cdot n_{\partial\Om_{t}}$, and $W$ denotes the derivative of $\V\cdot n_{\partial\Om_{t}}$ with respect to $t$ at $t=0$.
}\end{remark}

\begin{proposition}
\label{trace:matrice:derivees:premieres}
Any ball $B$ is a critical shape for $\lambda_{1,\beta}$ { with volume constraint}, in the sense that for every volume preserving deformations $\V$,
$$\Tr(M_{B}(V_{n}))={\Be \sum_{k=1}^d{\lambda'_{k,\beta}}(0)=}0,$$
{\Be where $(t\mapsto\lambda_{k,\beta}(t))_{k=1\ldots d}$ are defined in Theorem \ref{Theoreme:gradient:cas}.}\\
In particular, $0 \in \partial\lambda_{1,\beta}(B;V_{n}){\Rd:=[\inf_{i=1\cdots d} \lambda_{i,\beta}'(0), \sup_{i=1\cdots d} \lambda_{i,\beta}'(0)]}$ the directional subdifferential associated to the first non trivial eigenvalue.

Moreover, this subdifferential reduces to $\{0\}$ if $V_{n}$ is orthogonal to spherical harmonics of order two: in other words, in that case, the directional derivative exists in the usual sense and vanishes. 
\end{proposition}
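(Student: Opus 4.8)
The plan is to reduce everything to an explicit computation of the matrix $M_B(V_n)$ from Theorem \ref{Theoreme:gradient:cas}, exploiting that on a ball the first nontrivial {\Be Wentzell} eigenvalue is completely explicit. Fix the ball $B$ of radius $R$ centered at the origin. The harmonic functions in $B$ restricting to first-order spherical harmonics are the linear coordinate functions, so I would take as eigenfunctions $u_i=c\,x_i$, $i=1,\dots,d$, with $c=(\omega_d R^{d+1})^{-1/2}$ chosen so that $\int_{\partial B}u_i^2\,d\sigma=1$. A direct check (using $-\Deltat x_i=\frac{d-1}{R^2}x_i$ and $\partial_n x_i=x_i/R$ on $\partial B$) shows that these are eigenfunctions for the eigenvalue $\lambda=\lambda_{1,\beta}(B)=\frac{\beta(d-1)}{R^2}+\frac1R$, which therefore has multiplicity $d$. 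This puts us in the multiple-eigenvalue case of Theorem \ref{Theoreme:gradient:cas}, so the relevant derivatives $\lambda'_{k,\beta}(0)$ are exactly the eigenvalues of $M_B(V_n)$.

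Next I would record the geometry of the sphere. With $b$ the signed distance function, on $\partial B$ one has $\nabla b=\n$, $D^2 b=\frac1R(I_{d}-\n\otimes\n)$ and hence $H=\Tr(D^2 b)=\frac{d-1}{R}$. Since $H I_{d}-2D^2b=\frac1R[(d-3)I_{d}+2\,\n\otimes\n]$, this matrix acts on any tangential vector simply as multiplication by $\frac{d-3}{R}$. Combined with the pointwise identities on $\partial B$, namely $u_iu_j=c^2x_ix_j$, $\partial_n u_i\,\partial_n u_j=c^2 x_ix_j/R^2$ and $\nablat u_i\cdot\nablat u_j=c^2(\delta_{ij}-x_ix_j/R^2)$, the integrand defining $M_{ij}$ collapses, after substituting the value of $\lambda$, into the form $A\,\delta_{ij}+C\,x_ix_j$ with explicit constants $A,C$ depending only on $d,\beta,R$. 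Thus
$$M_{ij}=A\,\delta_{ij}\int_{\partial B}V_n\,d\sigma+C\int_{\partial B}V_n\,x_ix_j\,d\sigma.$$
The bookkeeping leading to this reduction (in particular simplifying the coefficient of $x_ix_j$, where the identity $(d-1)^2+(d-3)=(d-2)(d+1)$ is convenient) is the only computational step, and is routine.

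From this formula the first assertion is immediate: taking the trace gives $\Tr(M_B(V_n))=(Ad+CR^2)\int_{\partial B}V_n\,d\sigma$, and for a volume preserving deformation $\int_{\partial B}V_n\,d\sigma=0$, so $\Tr(M_B(V_n))=\sum_k\lambda'_{k,\beta}(0)=0$. Because $M_B$ is symmetric, its eigenvalues $\lambda'_{k,\beta}(0)$ are real with vanishing sum, whence $\inf_i\lambda'_{i,\beta}(0)\le 0\le\sup_i\lambda'_{i,\beta}(0)$; that is, $0\in\partial\lambda_{1,\beta}(B;V_n)$.

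For the final refinement, I would use that under the volume constraint both the diagonal and off-diagonal entries reduce to $M_{ij}=C\int_{\partial B}V_n\,x_ix_j\,d\sigma$. Decomposing the quadratic monomials restricted to the sphere into spherical harmonics, $x_ix_j=Y_{ij}+\frac{R^2}{d}\delta_{ij}$ with $Y_{ij}$ a spherical harmonic of order two and $\frac{R^2}{d}\delta_{ij}$ a multiple of the constant (order-zero) harmonic, the constant part again integrates against $V_n$ to zero, leaving $M_{ij}=C\int_{\partial B}V_n\,Y_{ij}\,d\sigma$. If $V_n$ is orthogonal to all spherical harmonics of order two, every entry vanishes, so $M_B=0$, all $\lambda'_{k,\beta}(0)$ coincide with $0$, and the directional derivative exists in the usual sense. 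The only point requiring care here is the harmonic decomposition of the quadratic monomials, i.e.\ that the trace-free parts $x_ix_j-\frac{|x|^2}{d}\delta_{ij}$ are exactly the order-two harmonics; the rest is the same elementary algebra as above. I expect the main obstacle to be purely organizational, namely carrying the explicit constants $A$ and $C$ accurately through the substitution of $\lambda$ and the geometric factor $\frac{d-3}{R}$.
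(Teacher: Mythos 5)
Your proposal is correct and follows essentially the same route as the paper: an explicit computation of $M_{B}(V_{n})$ using the geometry of the sphere ($H=\frac{d-1}{R}$, $D^2b=\frac{1}{R}(I_{d}-\n\otimes\n)$, so that $HI_{d}-2D^2b$ acts tangentially as $\frac{d-3}{R}$), yielding the form $M_{ij}=A\,\delta_{ij}\int_{\partial B}V_{n}+C\int_{\partial B}V_{n}x_{i}x_{j}$, after which the trace vanishes by volume preservation and the vanishing of $M$ for $V_{n}$ orthogonal to $\mathcal{H}_{2}$ follows from the harmonic content of the monomials $x_{i}x_{j}$. The only cosmetic difference is in that last step, where you use the trace-free decomposition $x_{i}x_{j}=\bigl(x_{i}x_{j}-\frac{|x|^2}{d}\delta_{ij}\bigr)+\frac{R^2}{d}\delta_{ij}$ while the paper argues via the spanning set $\{x_{j}x_{k},\,j\neq k\}\cup\{x_{1}^2-x_{j}^2\}$; both are the same elementary observation.
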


{ Two situations can now occur: either the subdifferential in direction $V_{n}$ is not reduced to $\{0\}$ and then one can deduce from the previous statement that $B$ locally maximizes $\lambda_{1,\beta}$ along $t\mapsto B_{t}$ (see for example (c) and (d) in Figure \ref{Fig5}), or the subdifferential in direction $V_{n}$ is $\{0\}$ and then this first order shape calculus does not allow us to conclude that the ball is a local maximizer of $\lambda_{1,\beta}$. Hence, for the directions $V_{n}$ in $\mathcal{H}$ {\bf defined as the Hilbert space generated by spherical harmonics of order greater or equal to three}, we now consider the second order analysis to wonder if the ball satisfies the second order necessary condition of optimality, and obtain the following result in dimension two and three.}

\begin{theorem}
\label{conclusion:ordre:deux}
{ Let $B$ be a ball of radius $R$ in $\R^2$ or $\R^3$ and {\Rd$ t\mapsto B_{t}=T_{t}(B)$ a second order volume preserving deformation}. $\lambda_{1,\beta}(B)$ is an eigenvalue of multiplicity $d$ the dimension, and we denote $t\mapsto\lambda_{k,\beta}(t),~k=1,\ldots,d$ the branches obtained in Theorem \ref{Theoreme:gradient:cas}. 

Then the functions $t\mapsto \lambda_{k,\beta}(t), ~k=1,\ldots,d$  admit a second derivative and their values at 0 are the eigenvalues of the $d\times d$ matrix $E=E_{B}(V_{n})$ defined in Section \ref{section:analyze:ordre2}. Moreover, there exists \Be a nonnegative number $\mu(=\mu(\beta))$ independent of the radius $R$ such that \Bk
  $$
  \Tr(E_{B}(V_{n})){\Be = \sum_{k=1}^d{\lambda''_{k,\beta}}(0)}\leq - \Be \mu K(R) \Bk \int_{\partial B} \left(|\nablat V_{n}|^2+|V_{n}|^2\right) ~d\sigma =-\Bk \mu K(R)\Bk \|V_{n}\|_{\sH^1(\partial B)}^2.
  $$
  holds for or all $V_n \in \mathcal{H}$, \Be with  $K(R)=\frac{d}{R^{2+d}\omega_{d\Be -\Bk 1}}$\Bk.
  }
\end{theorem}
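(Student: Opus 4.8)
The plan is to reduce the computation of $\Tr(E_B(V_n))=\sum_{k=1}^d\lambda''_{k,\beta}(0)$ to an explicit mode-by-mode calculation in the spherical-harmonic basis, exploiting the rotational symmetry of the ball and the fact that on $\partial B$ every relevant quantity is explicit. First I would record the data on $B=B_R$: the eigenfunctions associated with $\lambda:=\lambda_{1,\beta}(B)=\beta\frac{d-1}{R^2}+\frac1R$ are the $L^2(\partial B)$-normalized coordinate functions $u_i=x_i/\sqrt{\omega_d R^{d+1}}$ (harmonic, homogeneous of degree one), for which $\partial_n u_i=\frac1R u_i$ and $\sum_{i=1}^d u_i^2\equiv\mathrm{const}$ on $\partial B$. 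The curvature data are constant: $H=\frac{d-1}{R}$ and $D^2 b=\frac1R(I_d-n\otimes n)$ on $\partial B$, so the tensor of Theorem \ref{Theoreme:gradient:cas} acts on tangent vectors as $(H I_d-2D^2 b)\xi=\frac{d-3}{R}\xi$; in particular this $\beta$-curvature term vanishes identically when $d=3$, one of the features making the three-dimensional case tractable. Since $V_n\in\mathcal H$ is orthogonal to the spherical harmonics of order $\le 2$, Proposition \ref{trace:matrice:derivees:premieres} gives $\lambda'_{k,\beta}(0)=0$ for every branch, so the $\lambda''_{k,\beta}(0)$ are well defined and independent of the second-order part $W$ of the deformation beyond the value of its mean, which is fixed by the second-order volume constraint of Remark \ref{rk:volume}.

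Next I would obtain $E=E_B(V_n)$ by differentiating the weak Wentzell formulation $A_\beta^t(u_k(t),v)=\lambda_k(t)B^t(u_k(t),v)$ twice in $t$ (after transport to the fixed domain), projecting onto the $d$-dimensional eigenspace and taking the trace, i.e.\ summing over $u_1,\dots,u_d$. Because all first derivatives vanish, the trace splits into a purely second-order part, namely a boundary quadratic form in $V_n$, $W$ and the constant curvatures evaluated on the $u_i$, plus a coupling part bilinear in $V_n$ and the eigenfunction shape derivatives $u'_k$, each solving the boundary value problem of Theorem \ref{Theoreme:gradient:cas} with $\lambda'=0$. The crucial point is that the right-hand side of that problem is built from the products $V_n u_k$, $V_n\nablat u_k$ and $V_n\partial_n u_k$: since $u_k$ and $\nablat u_k$ carry only degree-one harmonics while $V_n$ carries degrees $\ell\ge 3$, these products decompose onto the modes $\ell-1$ and $\ell+1$, both $\ge 2$, hence strictly above the eigenspace mode $\ell=1$. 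There is therefore no resonance, and for each incoming mode $\ell'\in\{\ell-1,\ell+1\}$ the solution $u'_k$ is given explicitly by the Wentzell resolvent on the ball, a boundary datum in mode $Y_{\ell'}$ producing the interior solution $r^{\ell'}Y_{\ell'}$ with coefficient proportional to $1/(\lambda_{\ell'}-\lambda)$, where $\lambda_{\ell'}=\beta\frac{\ell'(\ell'+d-2)}{R^2}+\frac{\ell'}{R}>\lambda$ for $\ell'\ge 2$. This is exactly where $\ell\ge 3$ is needed: for $\ell=2$ the product would reach the resonant mode $\ell'=1$, reflecting the degeneracy already detected at first order in Proposition \ref{trace:matrice:derivees:premieres}.

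I would then assemble $\Tr(E_B(V_n))$ as a quadratic form in $V_n$. By rotational invariance of both $B$ and the eigenspace, this form is diagonal in the decomposition $V_n=\sum_{\ell\ge 3,m}a_{\ell m}Y_{\ell m}$, so $\Tr(E_B(V_n))=\sum_{\ell\ge 3}c_\ell(\beta,R)\sum_m|a_{\ell m}|^2$, each $c_\ell$ an explicit rational expression in $\ell,\beta,R$ obtained from the second-order boundary terms minus the coupling sums over $\ell'=\ell\pm1$ (with positive denominators $\lambda_{\ell'}-\lambda$). A scaling argument (the Wentzell problem on $B_R$ with coefficient $\beta$ is conjugate to the one on $B_1$ with coefficient $\beta/R$) lets me factor the homogeneity in $R$ and isolate the prefactor $K(R)=\frac{d}{R^{2+d}\omega_{d-1}}$. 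It then remains to prove the coercive inequality $c_\ell\le-\mu(\beta)K(R)\big(1+\frac{\ell(\ell+d-2)}{R^2}\big)$ for all $\ell\ge 3$, which, summed against $\sum_m|a_{\ell m}|^2$, is exactly the claimed bound, since $\|V_n\|_{\sH^1(\partial B)}^2=\sum_{\ell,m}\big(1+\frac{\ell(\ell+d-2)}{R^2}\big)|a_{\ell m}|^2$ by $-\Deltat Y_{\ell m}=\frac{\ell(\ell+d-2)}{R^2}Y_{\ell m}$.

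The main obstacle is this last step: producing the explicit $c_\ell$ and proving it negative with the stated coercivity uniformly in $\ell\ge 3$ and $R>0$, with a constant $\mu$ depending only on $\beta$. This requires carefully combining the second-order shape derivatives of both bilinear forms $A_\beta$ and $B$ with the coupling contributions, writing all products of harmonics in closed form — trivial in $d=2$, where the $Y_{\ell m}$ are $\cos\ell\theta,\sin\ell\theta$ and products are elementary, and manageable in $d=3$ via the explicit Gaunt/Clebsch–Gordan coefficients — and then analyzing the sign of the resulting rational function of $\ell$. It is this sign analysis, tractable only because $d\in\{2,3\}$ makes the harmonic algebra explicit and keeps the denominators $\lambda_{\ell\pm1}-\lambda$ bounded away from zero for $\ell\ge 3$, that confines the theorem to dimensions two and three; the exclusion of the modes $\ell\le 2$ is essential, those directions being handled favorably already by the first-order subdifferential of Proposition \ref{trace:matrice:derivees:premieres}.
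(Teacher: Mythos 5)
Your proposal reproduces the paper's own strategy: the same splitting of $\Tr(E_B(V_n))$ into a purely second-order part and a coupling part involving the eigenfunction derivatives, the same explicit resolution of the $u_k'$ on the ball via the non-resonant Wentzell resolvent acting on the modes $\ell\pm1$ (this is exactly the paper's Fredholm computation of the $\tilde u_j$, with your denominators $\lambda_{\ell'}-\lambda$ appearing there as the factors $1+\alpha(\ell+1)$, $1+\alpha(\ell+3)$ and the pole $\ell-2$ at the excluded resonance), and the same mode-by-mode diagonalization reducing everything to the sign of an explicit rational coefficient $c_\ell(\beta,R)$. The ``main obstacle'' you defer is precisely what the paper finishes: it computes $c_\ell$ as the functions $G(\alpha,l)$ ($d=2$) and $F(\alpha,l)$ ($d=3$) and establishes the uniform lower bound $\mu>0$ for $\ell\geq 3$ by elementary analysis of $G$ and by Descartes's rule of signs applied to the numerator polynomials of $F$.
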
 
As a consequence of Proposition \ref{trace:matrice:derivees:premieres} and {\Be Theorem }\ref{conclusion:ordre:deux}, we have the result:

\begin{corollary}\label{cor:final}
If $B$ is a ball in $\R^2$ or $\R^3$, and $t\mapsto T_{t}\in W^{3,\infty}(B,\R^d)$ a smooth \Rd(second order)\Bk volume preserving deformation, then
$$\lambda_{1,\beta}(B)\geq \lambda_{1,\beta}(T_{t}(B)), \;\;\;\;\textrm{ for }t\textrm{ small enough}.$$
\end{corollary}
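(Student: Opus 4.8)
The plan is to combine the first-order information of Proposition~\ref{trace:matrice:derivees:premieres} with the second-order information of Theorem~\ref{conclusion:ordre:deux}, exploiting the fact that the first nontrivial eigenvalue is the lower envelope of the $d$ analytic branches emanating from $\lambda_{1,\beta}(B)$. Write $B_t=T_t(B)$, set $\V=\frac{d}{dt}_{|t=0}T_t$ and $V_n=\V\cdot n$, and note that since shape sensitivities only see the normal component we may take $\V=V_n\,n$, so that if $V_n\equiv 0$ the deformation is trivial and there is nothing to prove; assume henceforth $V_n\not\equiv 0$. Because $\lambda_{1,\beta}(B)$ has multiplicity $d$ while the neighbouring eigenvalues $\lambda_{0,\beta}=0$ and $\lambda_{d+1,\beta}(B)>\lambda_{1,\beta}(B)$ stay bounded away from it, for small $t$ the first nontrivial eigenvalue of $B_t$ is exactly $\lambda_{1,\beta}(B_t)=\min_{k=1,\ldots,d}\lambda_{k,\beta}(t)$, where the $t\mapsto\lambda_{k,\beta}(t)$ are the regular branches of Theorem~\ref{Theoreme:gradient:cas}, all satisfying $\lambda_{k,\beta}(0)=\lambda_{1,\beta}(B)$. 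Using the Taylor expansions $\lambda_{k,\beta}(t)=\lambda_{1,\beta}(B)+t\,\lambda'_{k,\beta}(0)+\tfrac{t^2}{2}\lambda''_{k,\beta}(0)+o(t^2)$, the goal reduces to showing $\min_k\lambda_{k,\beta}(t)\leq\lambda_{1,\beta}(B)$ for small $t$, and I would split into two cases according to whether $M_B(V_n)=0$.

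First, suppose $M_B(V_n)\neq 0$. By Proposition~\ref{trace:matrice:derivees:premieres} we have $\sum_k\lambda'_{k,\beta}(0)=\Tr(M_B(V_n))=0$ while the $\lambda'_{k,\beta}(0)$ are not all zero, hence $\inf_k\lambda'_{k,\beta}(0)<0<\sup_k\lambda'_{k,\beta}(0)$. Choosing indices $k_-,k_+$ that realize the infimum and the supremum, for $t>0$ small the branch $k_-$ gives $\lambda_{k_-,\beta}(t)=\lambda_{1,\beta}(B)+t\,\lambda'_{k_-,\beta}(0)+o(t)<\lambda_{1,\beta}(B)$, while for $t<0$ small the branch $k_+$ gives $\lambda_{k_+,\beta}(t)<\lambda_{1,\beta}(B)$. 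In both cases $\lambda_{1,\beta}(B_t)=\min_k\lambda_{k,\beta}(t)<\lambda_{1,\beta}(B)$, which is the situation where the directional subdifferential is a nondegenerate interval.

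Second, suppose $M_B(V_n)=0$, i.e. $\lambda'_{k,\beta}(0)=0$ for all $k$; by the last part of Proposition~\ref{trace:matrice:derivees:premieres} this is precisely the case where $V_n$ is orthogonal to spherical harmonics of order two. To apply Theorem~\ref{conclusion:ordre:deux} I must first reduce to $V_n\in\mathcal H$. The first-order volume constraint $\int_{\partial B}V_n\,d\sigma=0$ kills the order-zero (constant) component of $V_n$, and the order-one component of $V_n$ is exactly the normal part $a\cdot n$ of an infinitesimal translation; composing $T_t$ with the translation $x\mapsto x-ta$ removes this component without affecting the second-order volume-preservation hypotheses, and leaves $\lambda_{1,\beta}(B_t)$ unchanged because the Wentzell spectrum is invariant under rigid motions. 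After this reduction $V_n\in\mathcal H$, and Theorem~\ref{conclusion:ordre:deux} yields $\sum_k\lambda''_{k,\beta}(0)=\Tr(E_B(V_n))\leq-\mu K(R)\,\|V_n\|_{\sH^1(\partial B)}^2<0$ (using $\mu>0$ and $V_n\not\equiv 0$). Therefore $\min_k\lambda''_{k,\beta}(0)\leq\tfrac1d\sum_k\lambda''_{k,\beta}(0)<0$, and selecting $k_0$ achieving this minimum, since $\lambda'_{k_0,\beta}(0)=0$ we obtain $\lambda_{k_0,\beta}(t)=\lambda_{1,\beta}(B)+\tfrac{t^2}{2}\lambda''_{k_0,\beta}(0)+o(t^2)<\lambda_{1,\beta}(B)$ for all small $t\neq 0$, whence again $\lambda_{1,\beta}(B_t)=\min_k\lambda_{k,\beta}(t)<\lambda_{1,\beta}(B)$.

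The hard part is the second case. The crux is the reduction to $V_n\in\mathcal H$: one must cleanly strip the order-zero component (via the volume constraint) and the order-one component (via translation invariance) so that the hypothesis of Theorem~\ref{conclusion:ordre:deux} is met, and the entire conclusion then rests on the strict negative-definiteness of the order-two quadratic form on $\mathcal H$, that is on $\mu>0$, which is the delicate output of the computation of $E_B(V_n)$ carried out in dimensions $2$ and $3$. A secondary subtlety is the non-smooth envelope structure: since $\lambda_{1,\beta}(B_t)$ is a minimum of branches that are only critical (not monotone) at $t=0$, one must test both signs of $t$ and, in the first case, use a branch of the appropriate sign for $t<0$.
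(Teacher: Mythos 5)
Your proposal is correct in substance and follows exactly the route the paper intends: the paper's own ``proof'' of Corollary \ref{cor:final} is nothing more than the assembly of Proposition \ref{trace:matrice:derivees:premieres} (the trace of $M_{B}(V_{n})$ vanishes, so either some branch has strictly negative slope for each sign of $t$, or $M_{B}(V_{n})=0$ and $V_{n}\perp\mathcal{H}_{2}$) with Theorem \ref{conclusion:ordre:deux} (strictly negative trace of $E_{B}(V_{n})$ forces at least one branch to decrease at second order), using that $\lambda_{1,\beta}(B_{t})$ is the minimum of the $d$ analytic branches; this is precisely your two-case argument.

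Two remarks are worth recording. First, your reduction of the order-one component of $V_{n}$ by composing $T_{t}$ with the translation $x\mapsto x-ta$ is a genuine detail that the paper leaves implicit: Theorem \ref{conclusion:ordre:deux} is stated for $V_{n}\in\mathcal{H}$ (harmonics of order $\geq 3$), while Proposition \ref{trace:matrice:derivees:premieres} only forces $V_{n}$ to be orthogonal to order-two harmonics, so the two statements do not quite glue without either your translation argument or the observation, buried in Section \ref{section:analyze:ordre2}, that the coefficients satisfy $G(\alpha,1)=F(\alpha,1)=0$; your fix is valid because translations preserve both the volume constraints and the Wentzell spectrum. Second, the one step that is false as written is the opening claim that $V_{n}\equiv 0$ makes the deformation trivial: a deformation whose initial velocity is tangential (or, after your translation reduction, a path that is a pure translation at first order) can still move $B$ at order $t^{2}$, and in that degenerate situation neither your argument nor the paper's gives any information, since one then only gets $\Tr(E_{B}(V_{n}))\leq 0$ with no strictness. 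This blind spot is shared by the paper's own two-line proof, so it does not place your proposal below the paper's standard of rigor, but dismissing that case as ``trivial'' is not a correct justification and would need a separate (higher-order) argument to be closed.
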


\paragraph{Plan of the paper.}
The paper is organized as follows: in section \ref{cas:beta:positif}, we prove Theorem \ref{main:theorem:section:brock} by adapting the strategy of Brock and present some numerical tests to illustrate the sharpness of the upper bound. The first order shape analysis is presented in section \ref{section:analyze:ordre1}, while the second order shape analysis is presented in section \ref{section:analyze:ordre2}. The background material for shape calculus and the proofs of technical intermediary results are postponed to the annexes. 

\section{Upper bound for $\lambda_{1,\beta}$}
\label{cas:beta:positif}

\subsection{Preliminary remarks and results.}
\label{remarques:preliminaires}
 
Let us start by a few remarks on the proofs in the two limit cases $\beta \rightarrow + \infty$ (that is the Laplace-Beltrami eigenvalue problem), and $\beta=0$ (that is the Steklov eigenvalue problem).\\

\noindent{\bf On the Laplace-Beltrami case}:

\medskip

\noindent The case $d=2$ is trivial: it suffices to argue on each connected component of $\partial\Omega$. We introduce $\gamma:[0,L]$ a parametrization by the arclength of a connected component $\Gamma$ of $\partial\Omega$, then for any $u\in \sH^1(\partial\Omega)$, the Rayleigh quotient can be written as  
$$ \cfrac{\displaystyle\int_{\Gamma} |\nablat u|^2}{\displaystyle\int_{\Gamma} u^2}=\cfrac{\displaystyle\int_{0}^L [(u\circ \gamma)']^2}{\displaystyle\int_{0}^L(u\circ\gamma)^2}.$$
Hence, the $\lambda_{1}^{LB}(\Gamma)$ is nothing but the infimum of $\|u'\|_{\sL^2(0,L)}^2$ among {\Rd periodic} functions $u$ with $0$ mean value and $\|u\|_{\sL^2(0,L)}=1$, that is to say $4\pi^2/L^2$. It is a decreasing function of the length of the connected component of the boundary. Then, if $\Omega$ is simply connected, combined with the isoperimetric inequality, the previous computations leads to $\lambda_{1}^{LB}(\partial\Omega)\leq \lambda_{1}^{LB}(\partial B)$ where $B$ is a disk of same area than $\Omega$.

Moreover, if $\partial\Omega$ has more than one connected component, then $\lambda_{1}^{LB}=0$ since the multiplicity of $0$ as eigenvalue is at least the number of connected component. To check that claim, it suffices to check that the functions taking the value $1$ on one of the connected component and $0$ elsewhere are independent eigenfunctions associated to the eigenvalue $0$. We conclude that in dimension $2$, $\lambda_{1}^{LB}(\partial\Omega)\leq \lambda_{1}^{LB}(\partial B)$, where $B$ is a disk of same area than $\Omega$.

The case $d=3$ is more complex. There is a classical result of J. Hersch \cite{Hersch2}: if $\Omega\subset\R^3$ is homeomorphic to the ball, then 
\begin{equation}
\label{enonce:hersch}
\lambda_{1}^{LB}(\partial\Omega)\leq \lambda_{1}^{LB}(\partial B), \textrm{ for all }\Omega \textrm{ such that } |\partial\Omega|=|\partial B|.
\end{equation}
We first extend Hersch statement to domains of same volume by a classic homogenity argument.

\begin{lemma} \label{hersch:revisite}
If $\Omega\subset\R^3$ is homeomorphic to the ball, then  
$$\lambda_{1}^{LB}(\partial\Omega)\leq \lambda_{1}^{LB}(\partial B) \textrm{ if }|\Omega|=|B|.$$
\end{lemma}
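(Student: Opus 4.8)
The plan is to deduce the result from Hersch's inequality \eqref{enonce:hersch}, which is stated under the constraint $|\partial\Om|=|\partial B|$, by combining a scaling (homogeneity) argument with the isoperimetric inequality. The only two facts I need are the behaviour of $\lambda_{1}^{LB}$ and of the surface area under a dilation, together with the fact that, at fixed volume, the ball minimizes the perimeter.

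First I would record the scaling laws. If $S\subset\R^3$ is a surface and $tS$ its image under the dilation $x\mapsto tx$ ($t>0$), then the induced metric is multiplied by $t^2$, so the Laplace-Beltrami operator is multiplied by $t^{-2}$ and hence
$$\lambda_{1}^{LB}(tS)=t^{-2}\lambda_{1}^{LB}(S),\qquad |tS|=t^2|S|,$$
the latter because $S$ is two-dimensional. In particular $\lambda_{1}^{LB}(\partial\Om)\,|\partial\Om|$ is invariant under dilations, which is precisely the scale-invariant content of \eqref{enonce:hersch}.

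Next, let $\Om\subset\R^3$ be homeomorphic to the ball with $|\Om|=|B|$, where $B$ is a ball. By the isoperimetric inequality, the ball minimizes the perimeter at fixed volume, so $|\partial B|\le|\partial\Om|$. I would then choose the dilation factor
$$t=\left(\frac{|\partial B|}{|\partial\Om|}\right)^{1/2}\le 1,$$
so that the rescaled domain $t\Om$ satisfies $|t\partial\Om|=t^2|\partial\Om|=|\partial B|$. Since $t\Om$ is again homeomorphic to the ball and has the same perimeter as $B$, Hersch's inequality \eqref{enonce:hersch} applies and gives $\lambda_{1}^{LB}(t\partial\Om)\le\lambda_{1}^{LB}(\partial B)$.

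Finally I would undo the scaling: by the first scaling law $\lambda_{1}^{LB}(\partial\Om)=t^2\,\lambda_{1}^{LB}(t\partial\Om)$, and since $t^2\le 1$ this yields
$$\lambda_{1}^{LB}(\partial\Om)=t^2\,\lambda_{1}^{LB}(t\partial\Om)\le\lambda_{1}^{LB}(t\partial\Om)\le\lambda_{1}^{LB}(\partial B),$$
which is the claim. The argument contains no genuine obstacle; the only point requiring care is to check that the direction of the isoperimetric inequality ($|\partial B|\le|\partial\Om|$, hence $t\le 1$) is compatible with the monotonicity $\lambda_{1}^{LB}(tS)=t^{-2}\lambda_{1}^{LB}(S)$, so that shrinking $\Om$ to match the perimeter of $B$ can only increase its Laplace-Beltrami eigenvalue. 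Preservation of the homeomorphism type under dilation is immediate.
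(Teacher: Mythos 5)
Your proof is correct and follows essentially the same route as the paper: both arguments combine the $-2$-homogeneity of $\lambda_{1}^{LB}$ under dilations with Hersch's inequality \eqref{enonce:hersch} and the isoperimetric inequality $|\partial B|\leq |\partial\Omega|$ at fixed volume. The only difference is presentational — you rescale the domain explicitly to match perimeters, while the paper works with the scale-invariant product $\lambda_{1}^{LB}(\partial\Omega)\,|\partial\Omega|^{2/(d-1)}$ — but the ingredients and the logic are identical.
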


\begin{proofof}{Lemma \ref{hersch:revisite}.} One easily checks that $\Omega \mapsto \lambda_{1}^{LB}(\partial\Omega)$ is homogeneous of degree $-2$, so $\Omega\mapsto \lambda_{1}^{LB}(\Omega) |\partial\Omega|^{2/(d-1)}$ is homogeneous of degree $0$. 
Then we get from Hersch's inequality \eqref{enonce:hersch}, that 
\begin{equation}
\label{hersch:pondere}
\lambda_{1}^{LB}(\partial\Omega) |\partial\Omega|^{\frac 2 {d-1}}\leq \lambda_{1}^{LB}(\partial B) |\partial B|^{\frac 2 {d-1}}, \textrm{ for all }\Omega \textrm{ such that } |\partial\Omega|=|\partial B|.
\end{equation}
Thanks to the invariance by translation of $\lambda_{1}^{LB}$ and the perimeter, and using the $0$-homogeneity of the previous product, we get that the previous inequality is in fact valid for any ball $B$ and any domain $\Om$.
We combine with  the isoperimetric inequality
$$\cfrac{|\partial B|^{\frac d {d-1}}}{|B|} \leq \cfrac{|\partial \Omega|^{\frac d {d-1}}}{|\Omega|}$$
to conclude.
\end{proofof}

\noindent{\bf \Be On the Steklov case}:

\medskip

\noindent {\Be In the general case $\beta\geq 0$, we will adapt the original Brock's proof; the main tool is} an isoperimetric inequality for the moment of inertia of the boundary $\partial\Omega$ with respect to the origin. The general form of  the weighted isoperimetric  inequality due to F. Betta, F. Brock, A. Mercaldo and M.R. Posteraro \cite{BBMP} is:
\begin{lemma}\label{Brock}
Let $\Omega \subset \mathbb{R}^d$ be an open set  and let $f$  be a continuous, nonnegative and nondecreasing  function defined on $[0,\infty]$.  Moreover, we suppose that 
$$
t\mapsto \displaystyle \left( f(t^{\frac{1}{d}})-f(0)\right)t^{1-\frac{1}{d}} \textit{~is convex for ~} t\ge 0
$$
Then 
\begin{equation}
\label{eq:Brock}
\int_{\partial \Omega} f(\vert  x \vert  ) d\sigma  \ge   f(R) \  |\partial B_{R}|,
\end{equation}
{\Rd where $B_{R}$ is the ball centered at the origin such that $|B_{R}|=|\Omega|$}.
\end{lemma}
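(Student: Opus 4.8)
The plan is to prove the inequality by combining the classical isoperimetric inequality with a well-chosen radial vector field, whose divergence is rendered monotone precisely by the convexity hypothesis.

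First I would reduce to the case $f(0)=0$. Writing $f=f(0)+\tilde f$ with $\tilde f:=f-f(0)\ge 0$ nondecreasing and $\tilde f(0)=0$, the constant part contributes $f(0)\,|\partial\Om|$, which is at least $f(0)\,|\partial B_R|$ by the standard isoperimetric inequality (since $|\Om|=|B_R|$ forces $|\partial\Om|\ge|\partial B_R|$) together with $f(0)\ge 0$. As $\tilde f$ inherits the structural convexity assumption (the quantity $(f(t^{1/d})-f(0))t^{1-1/d}$ is unchanged), it suffices to treat $\tilde f$, i.e. I may assume $f(0)=0$ from now on.

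Next I would introduce the radial field $\Phi(x)=f(|x|)\,x/|x|$ (with $\Phi(0)=0$, continuous because $f(0)=0$), whose divergence is $\dive\Phi(x)=h(|x|)$ with $h(r):=f'(r)+\frac{d-1}{r}f(r)$. A direct computation shows that $\psi(t):=f(t^{1/d})\,t^{1-1/d}$ satisfies $\psi'(r^d)=\tfrac1d\,h(r)$, so the convexity of $\psi$ (equivalently, $\psi'$ nondecreasing) is exactly equivalent to $h$ being nondecreasing. With this in hand, the divergence theorem together with the pointwise bound $\Phi\cdot\n\le|\Phi|=f(|x|)$ gives
$$\int_{\partial\Om}f(|x|)\,d\sigma\ \ge\ \int_{\partial\Om}\Phi\cdot\n\,d\sigma\ =\ \int_{\Om}h(|x|)\,dx.$$
Finally, since $h$ is radial and nondecreasing and $|\Om|=|B_R|$, a one-line rearrangement argument (split $\int_\Om h-\int_{B_R}h=\int_{\Om\setminus B_R}h-\int_{B_R\setminus\Om}h$, bound $h\ge h(R)$ outside $B_R$ and $h\le h(R)$ inside, and use $|\Om\setminus B_R|=|B_R\setminus\Om|$) yields $\int_\Om h\ge\int_{B_R}h=\int_{\partial B_R}\Phi\cdot\n\,d\sigma=f(R)\,|\partial B_R|$, which is the claim; equality manifestly holds for $\Om=B_R$.

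The main technical obstacle is the regularity needed to run the divergence theorem, since $f$ is only assumed continuous and nondecreasing (plus the convexity condition), so $\Phi$ need not be $C^1$. I would handle this by approximating $f$ from below by smooth nondecreasing functions $f_\varepsilon\nearrow f$ still satisfying the convexity condition, proving the inequality for each $f_\varepsilon$ and passing to the limit by monotone convergence. The singularity of $\Phi$ at the origin is harmless: applying the identity on $\Om\setminus B_\varepsilon$, the inner boundary term equals $f(\varepsilon)\,|\partial B_\varepsilon|=f(\varepsilon)\,\varepsilon^{d-1}|\partial B_1|\to 0$ as $\varepsilon\to0$, and $h(|x|)$ is integrable near $0$ because $\int_0^\varepsilon h(r)\,r^{d-1}\,dr=f(\varepsilon)\,\varepsilon^{d-1}\to0$.
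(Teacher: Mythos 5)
The first thing to say is that the paper contains no proof of this lemma to compare against: it is quoted as a known result of Betta--Brock--Mercaldo--Posteraro \cite{BBMP} and used as a black box in the proof of Theorem \ref{main:theorem:section:brock}. So your argument is necessarily a different route from the paper's, and it should be judged on its own. On that count it is correct in outline, and it is in substance the classical calibration (divergence-theorem) proof of this weighted isoperimetric inequality, the same mechanism used in the cited literature for the quantitative refinement of the case $f(t)=t^2$. The key steps all check out: the reduction to $f(0)=0$ via the standard isoperimetric inequality; the identity $\psi'(r^d)=\frac{1}{d}\bigl(f'(r)+\frac{d-1}{r}f(r)\bigr)$ for $\psi(t)=f(t^{1/d})t^{1-1/d}$, which correctly converts the structural convexity hypothesis into monotonicity of $h=\dive\Phi$ with $\Phi(x)=f(|x|)x/|x|$; the estimate $\Phi\cdot\n\le f(|x|)$ combined with the divergence theorem; and the bathtub comparison of $\int_\Omega h$ with $\int_{B_R}h$ under $|\Omega|=|B_R|$, which is where the constraint ``same volume'' (rather than same perimeter) enters. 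Two technical points deserve sharper treatment. First, for an arbitrary open set the boundary integral need not even make sense, and both the divergence theorem and your reduction step require some regularity: you should state that you work with Lipschitz (or finite-perimeter) sets, using the reduced boundary and measure-theoretic normal in the latter case; this matches how the lemma is stated and used in the literature. Second, your approximation step --- ``smooth nondecreasing $f_\varepsilon\nearrow f$ still satisfying the convexity condition'' --- is asserted rather than constructed, and preserving all three constraints simultaneously is not completely obvious. It is cleaner to avoid it altogether: convexity of $\psi$ makes $\psi$ locally Lipschitz on $(0,\infty)$, hence $f(r)=f(0)+\psi(r^d)\,r^{1-d}$ is locally Lipschitz there, so $\Phi$ is locally Lipschitz away from the origin and the divergence theorem (with $f'$ understood almost everywhere, and $h$ taken as the nondecreasing a.e.\ representative $d\,\psi'(r^d)$) applies directly on $\Omega\setminus B_\varepsilon$, with your limiting argument at the origin unchanged. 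With these repairs the proof is complete.
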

Let us remark that the function $t\mapsto t^p$ satisfies the assumptions of the lemma as soon as $p\geq 1$ and in particular for $p=2$. In that case and in order to prove a refinement of Brock's inequality, L. Brasco, G. De Philippis and B. Ruffini established a qualitative refinement of this inequality (Theorem B of \cite{BrascoDePhilippisRuffini}):

\begin{lemma} \label{Brock:raffine}
There exists an explicit dimensional constant $\gamma_{d}$ such that for every bounded, open Lipschitz set $\Omega$ in $\R^d$,
\begin{equation}
\label{eq:Brock:raffinee}
\int_{\partial \Omega} |x|^2 d\sigma  \geq  R^2 \  |\partial B_{R}| \left[ 1+\gamma_{d} \left(\cfrac{|\Omega\Delta B_{R}|}{|B_{R}|}\right)^2\right],
\end{equation}
where $B_{R}$ is the ball centered at the origin such that $|B_{R}|=|\Omega|$ and $\gamma_{d}$ is the constant defined in \eqref{constante:Brasco}.
\end{lemma}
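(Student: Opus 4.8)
The plan is to reduce the weighted boundary integral to a \emph{volume} integral via the divergence theorem, and then to establish a \emph{quantitative} version of the underlying bathtub principle by comparison with concentric spherical shells; the second-order deficit thus produced is exactly what yields the correction term $\gamma_{d}(|\Om\Delta B_{R}|/|B_{R}|)^{2}$. (The statement is Theorem~B of \cite{BrascoDePhilippisRuffini}, so here I only indicate the line of argument.)

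First I would observe that $\dive(|x|\,x)=(d+1)|x|$ and that $x\cdot\n\le|x|$ on $\partial\Om$ by Cauchy--Schwarz. The divergence theorem then gives
\begin{equation*}
\int_{\partial\Om}|x|^{2}\,d\sigma\ \ge\ \int_{\partial\Om}|x|\,(x\cdot\n)\,d\sigma\ =\ (d+1)\int_{\Om}|x|\,dx,
\end{equation*}
with equality in the first step precisely when $x$ is parallel to $\n$ on $\partial\Om$, i.e.\ when $\partial\Om$ is a sphere centred at the origin. It remains to bound $\int_{\Om}|x|\,dx$ from below, the relevant normalisation being the identity $R^{2}|\partial B_{R}|=(d+1)\int_{B_{R}}|x|\,dx=d\omega_{d}R^{d+1}$.

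Since $x\mapsto|x|$ is radial and increasing, the bathtub principle shows that, at fixed volume, $\int_{\Om}|x|\,dx$ is minimised by the centred ball $B_{R}$ with $|B_{R}|=|\Om|$. Writing $m=|\Om\setminus B_{R}|=|B_{R}\setminus\Om|=\tfrac12|\Om\Delta B_{R}|$ and $s=m/|B_{R}|\in[0,1]$, one has the exact identity
\begin{equation*}
\int_{\Om}|x|\,dx-\int_{B_{R}}|x|\,dx=\int_{\Om\setminus B_{R}}(|x|-R)\,dx+\int_{B_{R}\setminus\Om}(R-|x|)\,dx\ \ge\ 0,
\end{equation*}
both integrands being nonnegative. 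The crucial point — and the main obstacle — is that the naive bounds $|x|\ge R$ outside and $|x|\le R$ inside only produce a \emph{vanishing} (zeroth order) estimate, so one must extract a genuinely quadratic deficit. To do so I would minimise each of the two integrals over all competitors of measure $m$: each is minimised by the concentric shell ($B_{R_{+}}\setminus B_{R}$ with $\omega_{d}(R_{+}^{d}-R^{d})=m$, respectively $B_{R}\setminus B_{R_{-}}$), which reduces the whole deficit to the explicit one-variable function $\psi(s)=(1+s)^{1+1/d}+(1-s)^{1+1/d}-2$, convex, even, and vanishing to second order at $0$.

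The argument then closes with a convexity/Taylor estimate of the form $\psi(s)\ge c\,s^{2}$ on $[0,1]$, the factor $2^{1/d}$ in $\gamma_{d}$ arising from the largest admissible shell $s=1$, for which $R_{+}=2^{1/d}R$; combining the resulting bound with the factor $d+1$ from the divergence step and the normalisation above gives the stated constant $\gamma_{d}=\tfrac{d+1}{d}\cdot\tfrac{2^{1/d}-1}{4}$ (which is a clean, not necessarily optimal, admissible choice). Finally, equality throughout forces simultaneously a sphere centred at the origin and $\Om=B_{R}$, so equality holds exactly when $\Om$ is the centred ball, consistent with the statement.
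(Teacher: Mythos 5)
This lemma is not proved in the paper at all: it is imported as Theorem B of \cite{BrascoDePhilippisRuffini}, so there is no internal argument to compare yours against; what you have done is reconstruct, correctly in outline, the Brasco--De Philippis--Ruffini proof itself. Your reduction steps are all sound: $|x|^2\ge |x|(x\cdot\n)$ pointwise, the divergence theorem giving $\int_{\partial\Om}|x|^2\,d\sigma\ge (d+1)\int_{\Om}|x|\,dx$, the normalisation $R^2|\partial B_R|=(d+1)\int_{B_R}|x|\,dx=d\omega_d R^{d+1}$, the deficit identity, and the bathtub reduction to concentric shells, which (after dividing by $\frac{d}{d+1}\omega_d R^{d+1}$) produces exactly $\psi(s)=(1+s)^{1+1/d}+(1-s)^{1+1/d}-2$ with $s=\tfrac12|\Om\Delta B_R|/|B_R|\in[0,1]$.

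The only step you leave genuinely unfinished is the quadratic bound $\psi(s)\ge 4\gamma_d s^2$ (the factor $4$ being forced by $|\Om\Delta B_R|/|B_R|=2s$), and it is worth recording that it closes with exactly the stated constant. Split $\psi=\phi+\chi$ with $\phi(s)=(1+s)^{1+1/d}-1-\frac{d+1}{d}s$ and $\chi(s)=(1-s)^{1+1/d}-1+\frac{d+1}{d}s$. Then
\begin{equation*}
\phi(s)=\frac{d+1}{d}\int_0^s\left[(1+t)^{1/d}-1\right]dt\ \ge\ \frac{d+1}{2d}\,(2^{1/d}-1)\,s^2,
\end{equation*}
by the chord bound $(1+t)^{1/d}\ge 1+(2^{1/d}-1)t$ on $[0,1]$ (concavity of $u\mapsto u^{1/d}$ on $[1,2]$; this is the precise form of your ``largest shell $s=1$'' heuristic), while
\begin{equation*}
\chi(s)=\frac{d+1}{d}\int_0^s\left[1-(1-t)^{1/d}\right]dt\ \ge\ \frac{d+1}{2d^2}\,s^2\ \ge\ \frac{d+1}{2d}\,(2^{1/d}-1)\,s^2,
\end{equation*}
using $(1-t)^{1/d}\le 1-t/d$ and then $2^{1/d}-1\le 1/d$ (both Bernoulli). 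Summing gives $\psi(s)\ge\frac{d+1}{d}(2^{1/d}-1)s^2=4\gamma_d s^2$, hence $\int_{\partial\Om}|x|^2\,d\sigma\ge R^2|\partial B_R|\left[1+\gamma_d\left(|\Om\Delta B_R|/|B_R|\right)^2\right]$ as claimed. With that supplement your sketch is a complete and correct proof, essentially identical to the cited one.
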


\noindent{\bf \Be On the Wentzell case}:

\medskip

\noindent An important remark for the sequel is the particular case when $\Omega$ is a ball $B_R$ of radius $R$. The eigenspace corresponding to $\lambda_{1,\beta}$ is $d$-dimensional: it consists to the restrictions on the sphere $S^{d-1}_R$ of the  linear functions in $\mathbb{R}^d$ spanned by the coordinates functions. It follows, from the theory of spherical harmonic functions that 
\begin{equation}
\label{valeurs:propres:sphere}
 \lambda_{1,\beta}(B_{R}) = \lambda_{2,\beta}(B_{R})=\ldots =\lambda_{d,\beta}B_{R})=\frac{(d-1)\beta +R}{ R^2}.
\end{equation}
The Laplace-Beltrami operator on $\partial B_{R}$ and the Steklov operator also are diagonal on the basis of spherical harmonics, hence
$$ \lambda_{1,\beta}(B_{R}) = \lambda_{1}^{S}(B_{R}) +  \beta \lambda_{1}^{LB}(\partial B_{R}),$$
and more generally the eigenvalue associated to spherical harmonics of order $l$ is
\begin{equation}
\label{valeurs:propres:sphere:ordre:l}
 \lambda_{(l)}(B_{R}) =\frac{l(l+d-2)\beta +R}{ R^2}.
\end{equation}
{ But, this situation is specific to the ball: indeed, in general we only have the inequality
$$\lambda_{1,\beta}(\Om)\geq \lambda_{1}^{S}(\Om)+\beta\lambda_{1}^{LB}(\Om).$$
{\Rd Moreover, we can easily prove that for any smooth $\Om$, $\displaystyle{\lim_{\beta\to\infty}\frac{1}{\beta}\lambda_{1,\beta}(\Om)=\lambda_{1}^{LB}(\Om)}$: indeed, we have a first trivial inequality $\frac{1}{\beta}\lambda_{1,\beta}(\Om)\geq\lambda_{1}^{LB}(\Om)$ for any $\beta\geq 0$, and using the variational formulation \eqref{eq:var}, we obtain
$\forall v\in \sH(\Om)$ with the addition condition $\int_{\partial\Om}v=0$,
$$\overline{\lim_{\beta\to\infty}}\frac{1}{\beta}\lambda_{1,\beta}(\Om)\leq \overline{\lim_{\beta\to\infty}}\frac{\frac{1}{\beta}\displaystyle\int_{\Om}|\nabla v|^2+\displaystyle\int_{\partial\Om}|\nabla_{\tau}v|^2}{\displaystyle\int_{\partial\Om}v^2}=\frac{\displaystyle\int_{\partial\Om}|\nabla_{\tau}v|^2}{\displaystyle\int_{\partial\Om}v^2}$$
which leads to the result.}

For example if $d=3$, combining Brock's inequality and Lemma \ref{hersch:revisite}, we obtain that the right-hand side in the previous inequality is maximized by the ball, among domains of given volume and homeomorphic to the ball. Unfortunately, this is not enough to obtain {\Rd that balls are maximizing} the {\Be Wentzell} eigenvalue.
}

{ So in order to obtain an estimate of $\lambda_{1,\beta}$, we look into the strategies used for the extremal problems, which are the Steklov ($\beta=0$) and the Laplace-Beltrami ($\beta\rightarrow +\infty$) cases. The strategies of Brock and Hersch for those cases are actually close but distinct: they use the coordinate functions as test functions in the Rayleigh quotient characterization of eigenvalues. In the case of the Laplace-Beltrami operator though, J. Hersch had an additional step: he first transports  the surface $\partial\Omega$ on the sphere by a conformal mapping, and use the conformal invariance of the Dirichlet energy for 2-dimensional surfaces. 
In the following, we choose to follow the ideas of Brock. This allows to obtain an estimate with no assumption on the topology or the dimension of the domain. 
Indeed, the above mentioned phenomenon of decoupling between the different connected components does not appear in the Steklov case, due to the volume term, and in fact Brock's result is valid for every (smooth enough) domain.}
The same volume term appears in the {\Be Wentzell} case and the approach of Brock is then the natural one. However, one expects from these topological considerations that it will not provide an optimal result.

\subsection{Proof of Theorem \ref{main:theorem:section:brock}}
\label{preuve:brock}

Our strategy to prove Theorem \ref{main:theorem:section:brock} is to use the following characterization for the inverse trace of eigenvalues (stated by J. Hersch in \cite{Hersch1} and proved by G. Hile and Z. Xu in \cite{HileXu})
\begin{equation}
\sum_{i=1}^{d}\displaystyle\frac{1}{\lambda_{i,\beta}}=\max_{v_{1},\cdots,v_{d}} \sum_{i=1}^{d}\frac{B(v_i,v_i)}{A_\beta(v_i,v_i)},
\end{equation}
where the functions $(v_i)_{i=1,\ldots,d}$ are non zero functions that are $B$-orthogonal to the constants and pairwise $A_{\beta}$-orthogonal. 

Before proving Theorem \ref{main:theorem:section:brock}, we now present some preliminary results.

\begin{lemma}
\label{lemme:matrice:normales}
{ The matrix $P[\Omega]$defined by  \eqref{definition:matrice:A}
is symmetric,  positive definite.  Its spectral radius $\Lambda[\Omega]$  satisfies
\begin{equation}
 \label{encadrement:Lambda}
(d-1)|\partial\Om|\geq \Lambda[\Omega]\geq  \cfrac{d-1}{d} |\partial\Om|.
\end{equation}
In particular, among sets of given volume, the spectral radius is minimal for the ball.}
\end{lemma}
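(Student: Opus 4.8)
The plan is to read everything off the quadratic form attached to $P[\Omega]$ together with one trace computation. First I would fix $\xi\in\R^d$ and write
$$
\xi^{T}P[\Omega]\,\xi=\sum_{i,j=1}^{d}\xi_i\xi_j\int_{\partial\Omega}(\delta_{ij}-n_in_j)\,d\sigma=\int_{\partial\Omega}\big(|\xi|^2-(\xi\cdot\n)^2\big)\,d\sigma.
$$
Since $|\n|=1$ gives $(\xi\cdot\n)^2\le|\xi|^2$ pointwise by Cauchy--Schwarz, the integrand is nonnegative, so $P[\Omega]$ is positive semidefinite; symmetry is immediate from $\delta_{ij}=\delta_{ji}$ and $n_in_j=n_jn_i$. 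For the strict definiteness I would argue that $\xi^{T}P[\Omega]\,\xi=0$ forces $(\xi\cdot\n)^2=|\xi|^2$ for $\sigma$-almost every point, i.e. $\n=\pm\xi/|\xi|$ everywhere on $\partial\Omega$; for a bounded domain this cannot happen unless $\xi=0$, since the outward normal of a bounded set cannot be everywhere collinear with a fixed direction (the boundary would then be squeezed between two hyperplanes orthogonal to $\xi$ and could not enclose positive volume; alternatively one may invoke $\int_{\partial\Omega}\n\,d\sigma=0$). Hence $P[\Omega]$ is positive definite, and this geometric nondegeneracy is the only point that is not purely formal.

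Then I would compute the trace, which encodes both bounds at once:
$$
\Tr P[\Omega]=\sum_{i=1}^{d}\int_{\partial\Omega}(1-n_i^2)\,d\sigma=\int_{\partial\Omega}\Big(d-\sum_{i=1}^{d}n_i^2\Big)\,d\sigma=(d-1)\,|\partial\Omega|,
$$
using $\sum_i n_i^2=|\n|^2=1$. Writing $0<\mu_1\le\cdots\le\mu_d=\Lambda[\Omega]$ for the (positive) eigenvalues of $P[\Omega]$, the estimate \eqref{encadrement:Lambda} follows immediately: the largest eigenvalue is bounded above by the full sum, $\Lambda[\Omega]\le\sum_i\mu_i=(d-1)|\partial\Omega|$, and it is at least the average, $\Lambda[\Omega]\ge\frac1d\sum_i\mu_i=\frac{d-1}{d}|\partial\Omega|$.

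Finally, for the ball $B$ of volume $|B|=|\Omega|$, rotational invariance makes $P[B]$ a scalar multiple of the identity, so all its eigenvalues coincide and equal the common average $\Lambda[B]=\frac{d-1}{d}|\partial B|$. Combining the lower bound just obtained with the isoperimetric inequality $|\partial\Omega|\ge|\partial B|$ yields
$$
\Lambda[\Omega]\ge\frac{d-1}{d}|\partial\Omega|\ge\frac{d-1}{d}|\partial B|=\Lambda[B],
$$
which is the claimed minimality of the spectral radius at the ball. In summary, the main obstacle is the strict positive definiteness, which needs a short geometric observation about the Gauss map of a bounded domain; once the trace identity is in hand, the two-sided bound and the ball comparison are direct consequences of elementary linear algebra and the isoperimetric inequality.
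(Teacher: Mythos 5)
Your proof is correct and follows essentially the same route as the paper: the same Cauchy--Schwarz argument on the quadratic form for semidefiniteness and definiteness (ruling out a normal collinear with a fixed direction on a bounded domain), the same trace identity $\Tr P[\Omega]=(d-1)|\partial\Omega|$ giving the two-sided eigenvalue bound, and the same combination of rotational invariance of the ball with the isoperimetric inequality. The only caveat is that your parenthetical alternative via $\int_{\partial\Omega}\n\,d\sigma=0$ does not by itself yield a contradiction (it only shows the two sign regions have equal area), but your main geometric argument is sound, so nothing essential is missing.
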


\begin{proof} The matrix $P(\Omega)$ is symmetric by definition. For  $\mathbf{y}=(y_1,\cdots,y_{d}) \in \mathbb{R}^d$ with $\mathbf{y} \ne \mathbf{0}$, we check that 
$$ \sum_{i,j=1}^d y_{i}(\delta_{ij}-\n_i \n_j)y_j =  \mathbf{y}^T\mathbf{y}- (\mathbf{y}^T\n)^2 \ge 0
$$
by Cauchy-Schwarz inequality.  By integration over $\partial\Omega$, $P[\Omega]$ is positive semi\-definite.
Assume, by contradiction, that $P$ is not definite: then there is a vector $\bf{y}\ne \bf{0}$ such that 
$$0= \sum_{i,j=1}^d y_{i} \left(\int_{\partial\Omega} \left(\delta_{ij}-\n_i \n_j \right)\right)y_j =  \int_{\partial\Omega} \left(\mathbf{y}^T\mathbf{y}- (\mathbf{y}^T\n)^2 \right) .
$$
The equality case of Cauchy-Schwarz inequality $\mathbf{y}^T\mathbf{y}- (\mathbf{y}^T\n)^2=0$ is therefore satisfied everywhere on $\partial\Omega$, this holds if and only if $\mathbf{y}$ and $\n$ are colinear. Hence, $\n$ is constant on $\partial\Omega$ which contradicts the boundedness of $\Omega$.

The matrix $P[\Omega]$ has positive eigenvalues. Their sum is the trace $\Tr(P[\Omega])$, hence
{ $$
\Tr(P[\Om]) \geq\Lambda[\Omega]\geq \cfrac{\Tr(P[\Omega])}{d}\;\;\;\textrm{with}\;\;\Tr(P[\Om])= \sum_{i=1}^d \int_{\partial \Omega} (1-\n_{i}^2)=(d-1)|\partial\Omega|.$$}
Therefore 
$$
(d-1) |\partial\Omega|\geq\Lambda[\Omega]\geq  \cfrac{(d-1)}{d}\ |\partial \Omega| \geq \cfrac{(d-1)}{d}\ |\partial B|.$$
The last inequality is obtained by the usual isoperimetric inequality and assuming $B$ is a ball such that $|\Om|=|B|$. 
Let us compute $\Lambda[B]$. From the invariance by rotation of the ball, there exists a real number $a$ such that $P[B]=a I_{d}$. In others words, we have 
$$
\int_{\partial B} \n_i \n_j =0,~i\neq j
\textrm{ and } 
\int_{\partial B}(1-\n_i^2) =\int_{\partial B}(1-\n_1^2),~~i=1,\ldots,d.
$$
The real number $a$ is determined using the trace of the matrix: we obtain that $d \ \Lambda[B]=(d-1) |\partial B|,$ and so $\Lambda(\Om)\geq \Lambda(B)$.
\end{proof}

\begin{remark}\label{rk:cigare}
The inequalities in \eqref{encadrement:Lambda} are sharp. The lower bound is reached when $\Omega$ is a ball and the upper bound is the limit of the collapsing stadium $S_{\varepsilon}$ (union of a rectangle and two half-disks) of unit area and width $\varepsilon$   when $\varepsilon$ tends to $0$: one checks by an explicit elementary calculus that:
$$|\partial S_{\varepsilon}|= \cfrac{2}{\varepsilon}+\cfrac{\pi\varepsilon}{2} \textrm{ while }\Lambda[S_{\varepsilon}]=\cfrac{2}{\varepsilon}.$$ 
{\Rd This example is also useful to prove \eqref{eq:inf}: indeed, we easily prove
$$\int_{\partial S_{\varepsilon}}|x|^2\geq \frac{\alpha}{\varepsilon^3},$$
where $\alpha$ is a universal constant, so using \eqref{majoration:lambda2}, we obtain \eqref{eq:inf} for $d=2$ and $m=1$. The other cases can be handled similarly.}
\end{remark}

\begin{proofof}{Theorem \ref{main:theorem:section:brock}}
We first translate and rotate coordinates $x_i,~ i=1,2,\ldots d$ such that  
$$
{\Be \forall i\neq j\in\llbracket 1,d\rrbracket^2, \;\;\;\int_{\partial \Omega} x_i  =0
\textrm{ and }
\int_{\partial \Omega} x_i x_j  = 0.}
$$

We now construct { a family which is pairwise $A_{\beta}$-orthogonal, and $B$-orthogonal to $\R$}.
We  consider a collection of  a family of functions  $w_1,w_2,\ldots,w_{d}$  in the vector space spanned by the coordinates functions: there is a matrix $C$  such that  
$$
w_{i}=\sum_{j=1}^d c_{ij}x_j,~i\in\llbracket 1,d\rrbracket.
$$
Brock used directly the coordinate functions to deal with $A_{0}$. Here, we need an $A_{\beta}$-orthogonal family, hence the matrix $C$ will be chosen to that end.  Since the coordinates functions are $\sL^2$ orthogonal to the constants, each $w_i$ is $\sL^2$-orthogonal to the constants (that is to say  the eigenfunctions associated to the smallest eigenvalue $\lambda_0=0$). 

Let us compute $A_\beta(w_i,w_j)$. First, we  get
$\nabla w_{i} =  (c_{i1}, c_{i2}, \dots , c_{id})^T$ then  
\begin{equation*}
\int_{\Omega}\nabla w_i \cdot \nabla w_j  =  \int_{\Omega} \sum_{k,m=1}^d c_{ik}c_{jm} \ = \ |\Omega| \ (CC^T)_{ij}.
\end{equation*}
To compute the second term of the sum occurring in $A_\beta$,   we recall that 
$$\nablat w_{i} \cdot \nablat w_{j}= \nabla w_{i} \cdot \nabla w_{j}- (\nabla w_{i}\cdot \n)(\nabla w_{j}\cdot \n_{j}).$$
We therefore get 
\begin{eqnarray*}
\int_{\partial\Omega} \ \nablat w_{i}\cdot \nablat w_{i} &=& \int_{\partial\Omega} \left[ \sum_{k=1}^d c_{ik}\ c_{jk}- \left( \sum_{k=1}^d c_{ik} \n_{k}\right)\left( \sum_{k=1}^d c_{jk} \n_{k}\right)\right]\\
&=& \int_{\partial\Omega} \left[ \sum_{k=1}^d c_{ik}\ c_{jk}- \sum_{k,l=1}^d c_{ik}c_{jl} \n_{k} \n_{l} \right].
\end{eqnarray*}
We introduce  $P[\Omega]$ the matrix defined in \eqref{definition:matrice:A} to get 
$$
\int_{\partial \Omega} \nablat w_i \cdot \nablat w_j \  = \sum_{k,m} c_{ik} \ p_{km} \ c_{jm}
=(CP[\Omega]C^T)_{ij}.
$$
Gathering all the terms, it comes that 
\begin{equation}
A_\beta(w_i,w_j)=|\Omega|\ (CC^T)_{ij}\ +\ \beta(CP[\Omega]C^T)_{ij}
\end{equation}
Since $P[\Omega]$ is a real symmetric matrix, we can choose an orthogonal matrix $C$  such that $CP[\Omega]C^T$ is diagonal. Hence, $CC^T=I$ and finally $w_{i}$ and $w_{j}$ are $A_{\beta}$-orthogonal if $i\ne j$ while
\begin{equation}\label{majoration:Abeta}
A_\beta(w_i,w_i)=\ | \Omega | \  + \ \beta(CP[\Omega]C^T)_{ii} \leq  \ | \Omega| \  +\beta\Lambda[\Omega].
\end{equation}
and we can apply Hile and Xu's inequality (see \cite{HileXu}).

Since by assumption
$$\int_{\partial\Omega} x_i x_j   = 0$$ 
when $i\neq j$, it comes that 
$$
B(w_i,w_i)=\sum_{k=1}^d c_{ik}^2 \int_{\partial \Omega} x_k^2~
$$
and then 
$$
S(\Omega)=\displaystyle\sum_{i=1}^d \cfrac{1}{\lambda_{i,\beta}(\Omega)} \geq\cfrac{ \displaystyle\sum_{i=1}^d\sum_{k=1}^d c_{ik}^2 \displaystyle\int_{\partial \Omega} x_k^2  }{ |\Omega| +\beta\Lambda[\Omega]}
 = \cfrac{  \displaystyle\sum_{k=1}^d \left(\int_{\partial \Omega} x_k^2 \right) \sum_{i=1}^d c_{ik}^2}{| \Omega|+\beta\Lambda[\Omega]}
 = \cfrac{ \displaystyle \int_{\partial \Omega} |x |^2  }{|\Omega|+\beta\Lambda[\Omega]},$$
{\Be  which is the first part of the result.}
Then using first  the isoperimetric weighted inequality \eqref{eq:Brock} for $p=2$, we get 
$$\int_{\partial\Omega} |x|^2  \geq R^2 |\partial B_{R}|,$$ and so
$$
{\Be \cfrac{ \displaystyle \int_{\partial \Omega} |x |^2  }{|\Omega|+\beta\Lambda[\Omega]} \geq  \cfrac{R^2  |\partial B_{R}|}{|\Omega|+\beta\Lambda[\Omega]}= \cfrac{R^2 }{\cfrac{|B_{R}|}{ |\partial B_{R}|}+\cfrac{\beta\Lambda[\Omega]}{ |\partial B_{R}|}} .}
$$
If $\Omega=B_{R}$, we know that $d |B_{R}| = R |\partial B_{R}|$ and then 
$$\cfrac{R^2 }{\cfrac{|B_{R}|}{ |\partial B_{R}|}+\cfrac{\beta\Lambda[B_{R}]}{ |\partial B_{R}|}} = \cfrac{R^2}{\cfrac{R}{d}+\beta \cfrac{d-1}{d}}=\cfrac{d}{\lambda_{1,\beta}(B_{R})},$$
and prove the equality case. By the quantitative version of the isoperimetric inequality for the moment of inertia of $\partial\Omega$ with respect to the origin \eqref{eq:Brock:raffinee}, we {\Be also} get the precise version: 
$$
{\Be \cfrac{ \displaystyle \int_{\partial \Omega} |x |^2  }{|\Omega|+\beta\Lambda[\Omega]} \geq \cfrac{R^2 |\partial B_{R}|}{|\Om|+\beta\Lambda[\Omega]}{\Rd \left[ 1+\gamma_{d} \left(\cfrac{|\Omega\Delta B_{R}|}{|B_{R}|}\right)^2\right]}.}
$$
Using the definition of $R$ and $|\Omega|=|B_{R}|$, we obtain $R^2|\partial B_{R}|=d\omega_{d}^{-1/d}|\Om|^{\frac{d+1}{d}}$
and the desired inequality. \end{proofof}

\begin{proofof}{Corollary \ref{cor:bornesup}}
Since $\lambda_{1,\beta}(\Omega)\leq\lambda_{i,\beta}(\Omega)$ for $i=1,\dots, d$, we get
$$\lambda_{1,\beta}(\Om)\leq\cfrac{d}{S(\Omega)}{\Be \leq d\cfrac{|\Omega|+\beta\Lambda[\Omega]}{ \displaystyle \int_{\partial \Omega} |x |^2  }}\leq \cfrac{d}{{\Rd  1+\gamma_{d} \left(\cfrac{|\Omega\Delta B_{R}|}{|B_{R}|}\right)^2}} \ \cfrac{|\Omega|+\beta\Lambda[\Omega]}{d\omega_{d}^{-1/d}|\Om|^{\frac{d+1}{d}}}.$$ 
\end{proofof}

\begin{proofof}{Theorem \ref{main:theorem:section:brockLB}}
It is a direct adaptation of the previous proof to the Laplace-Beltrami case: it suffices to replace the bilinear form $A_{\beta}(u,v)$ by $A(u,v)= \int_{\Omega}\nabla u. \nabla v$.  Then Equation \eqref{majoration:Abeta} becomes
$A(w_i,w_i)=(CP[\Omega]C^T)_{ii} \leq \Lambda[\Omega]$ and the conclusion follows.
\end{proofof}

\subsection{On the sharpness of the upper bounds.}
\label{section:tests:numeriques}

\paragraph{Testing the sharpness.} 

Let us denote $M_{1}(\Omega)$ the upper bound \eqref{majoration:lambda2}. In order to emphasize the improvement to the inequality of Brasco, De Philippis and Ruffini, we also plot the rougher upper bound 
$$M_{3}(\Omega)=\cfrac{|\Omega|+\beta\Lambda[\Omega]}{{\omega_{d}^{-1/d}|\Om|^{\frac{d+1}{d}}}}=d\cfrac{|\Omega|+\beta\Lambda[\Omega]}{R^2|\partial B_{R}|}.$$
It is clear from the bound of $\Lambda[\Omega]$ stated in \eqref{encadrement:Lambda} that
$$\lambda_{1,\beta}(B_{R})=M_{1}(B_{R}) \leq M_{2}(\Omega)= \cfrac{d}{\left( 1+\gamma_{d} \cfrac{|\Omega\Delta B_{R}|}{|B_{R}|}\right)^2} \ \cfrac{|\Omega|+\beta\Lambda[\Omega]}{R^2|\partial B_{R}|}. $$
\Be We also plot the shaper bound 
$$M_{1}(\Omega)=d\cfrac{|\Omega|+\beta\Lambda[\Omega]}{ \displaystyle \int_{\partial \Omega} |x |^2  }.$$
\Bk
This inequality means {\Rd that proving that balls are maximizers} would be strictly better tha\Be n\Bk \ \eqref{majoration:lambda2}. Let us illustrate this fact with some numerical illustrations. We compute $\lambda_{1,\beta}(\Omega)$ and $M_{i}(\Omega)\;(i=1,2)$ for several parametrized families of plane domains  when $\beta=1$. In Figure \ref{Fig1_1}, we present the case of ellipses of area $\pi$ (their semiaxis are $e^t$ and $e^{-t}$, $t$ is in abscissa)  while in Figure \ref{Fig1_2} and \ref{Fig1_3} we present the 
case of the star-shaped domains $\Omega_{t}$ defined in polar coordinate by $r(\theta)=a(t)(2+cos(k\theta))$ where $a(t)$ is a constant chosen such that $|\Omega_{t}|=\pi$.
\begin{figure}[!htH]\label{sharpness:upperbound}
\begin{center}
\subfigure[Ellipses of area $\pi$, ]{\label{Fig1_1} 
\includegraphics[width=0.35\textwidth]{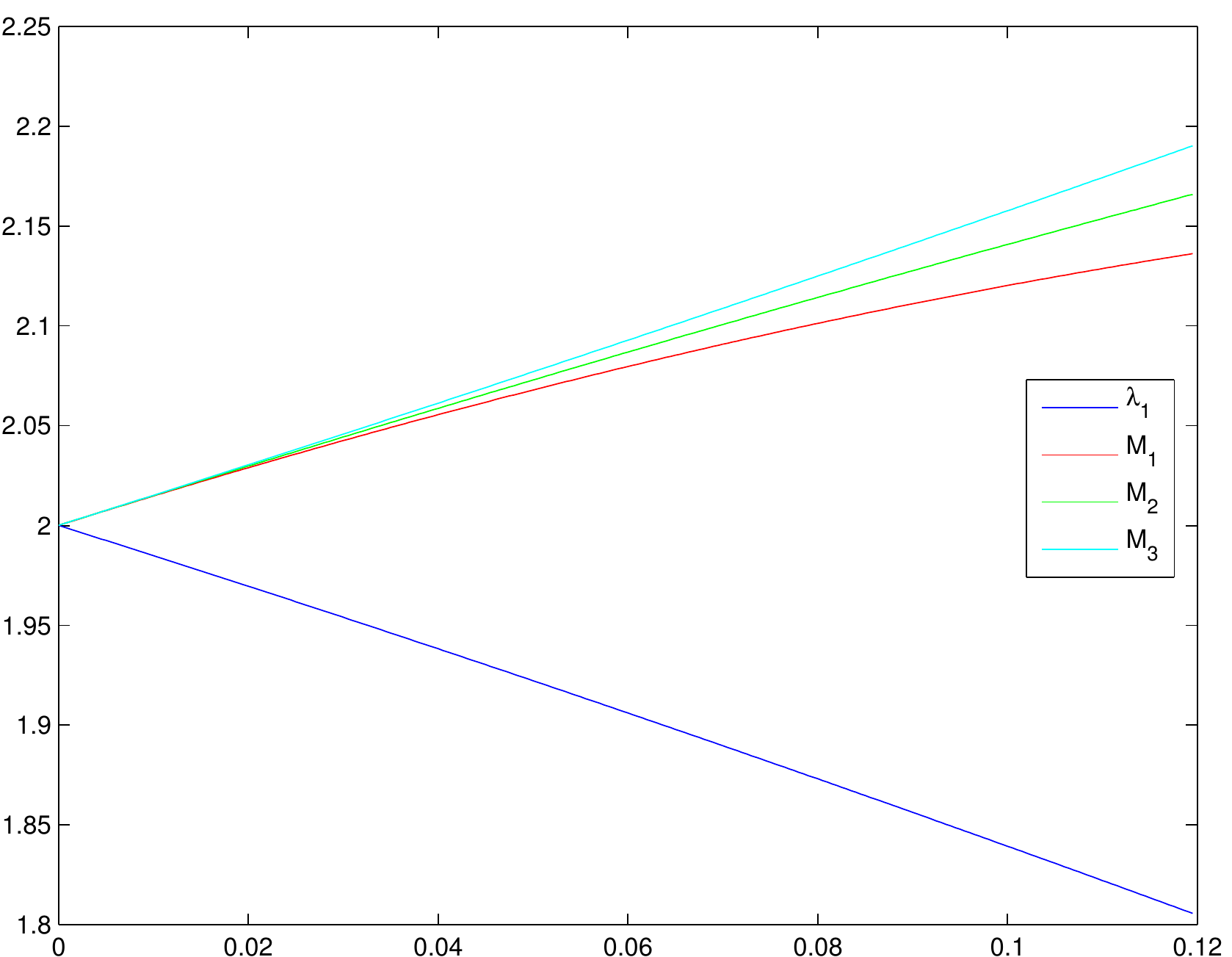}
}
\subfigure[5 branches star-shaped domains]{\label{Fig1_2}
 \includegraphics[width=0.35\textwidth]{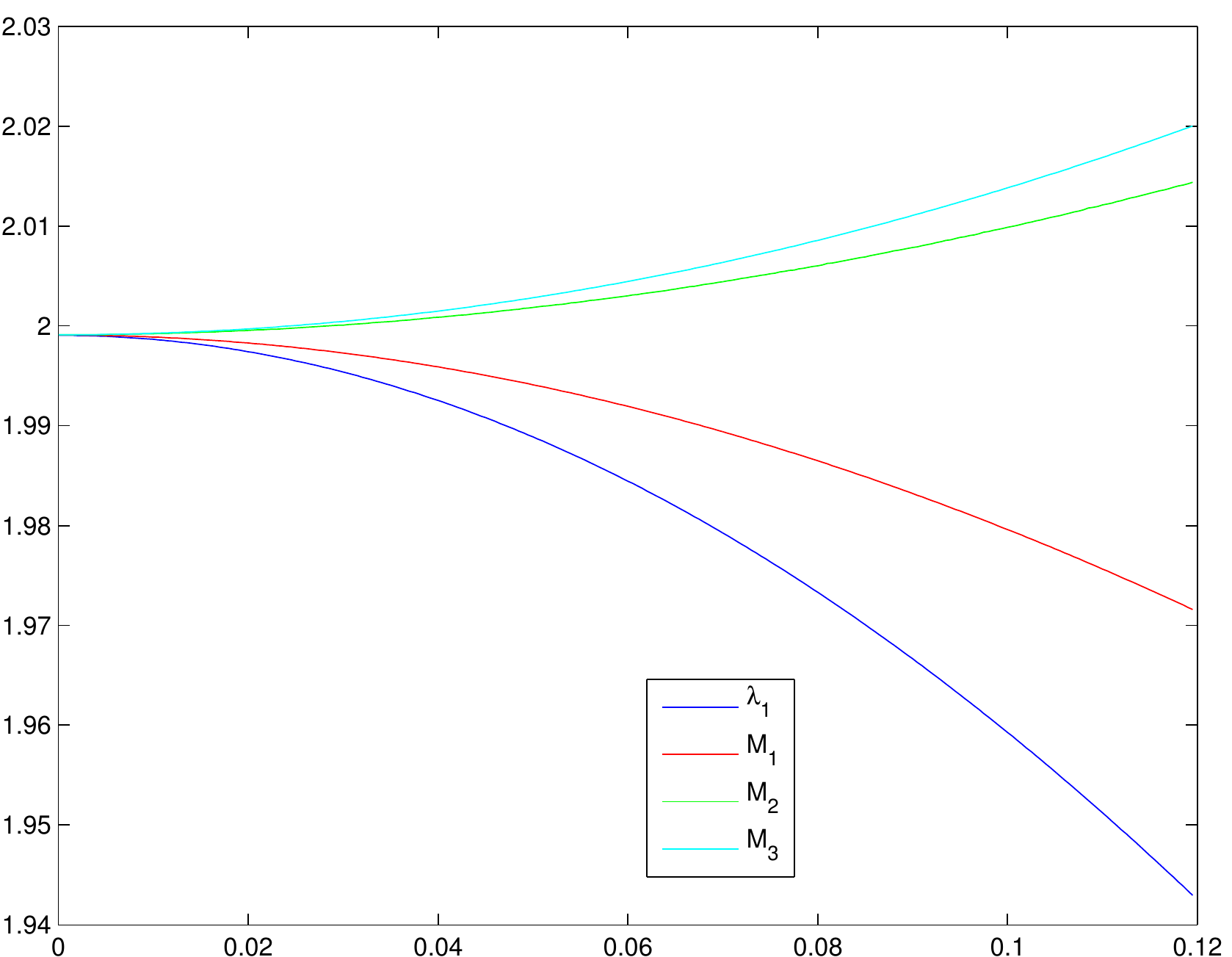}
}
\subfigure[11 branches star-shaped domains]{\label{Fig1_3}
 \includegraphics[width=0.35\textwidth]{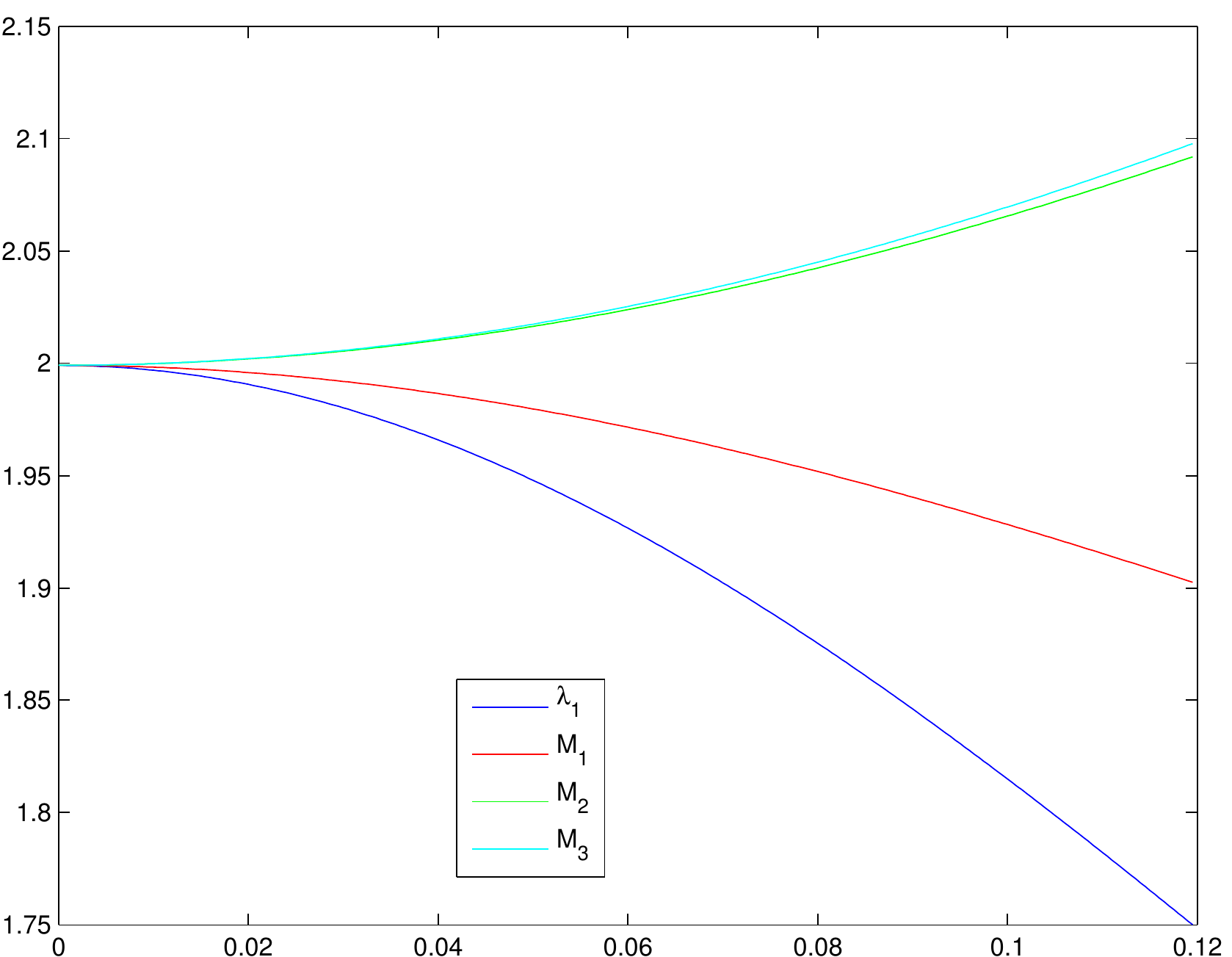}
}
\subfigure[17 branches star-shaped domains]{\label{Fig1_4}
 \includegraphics[width=0.35\textwidth]{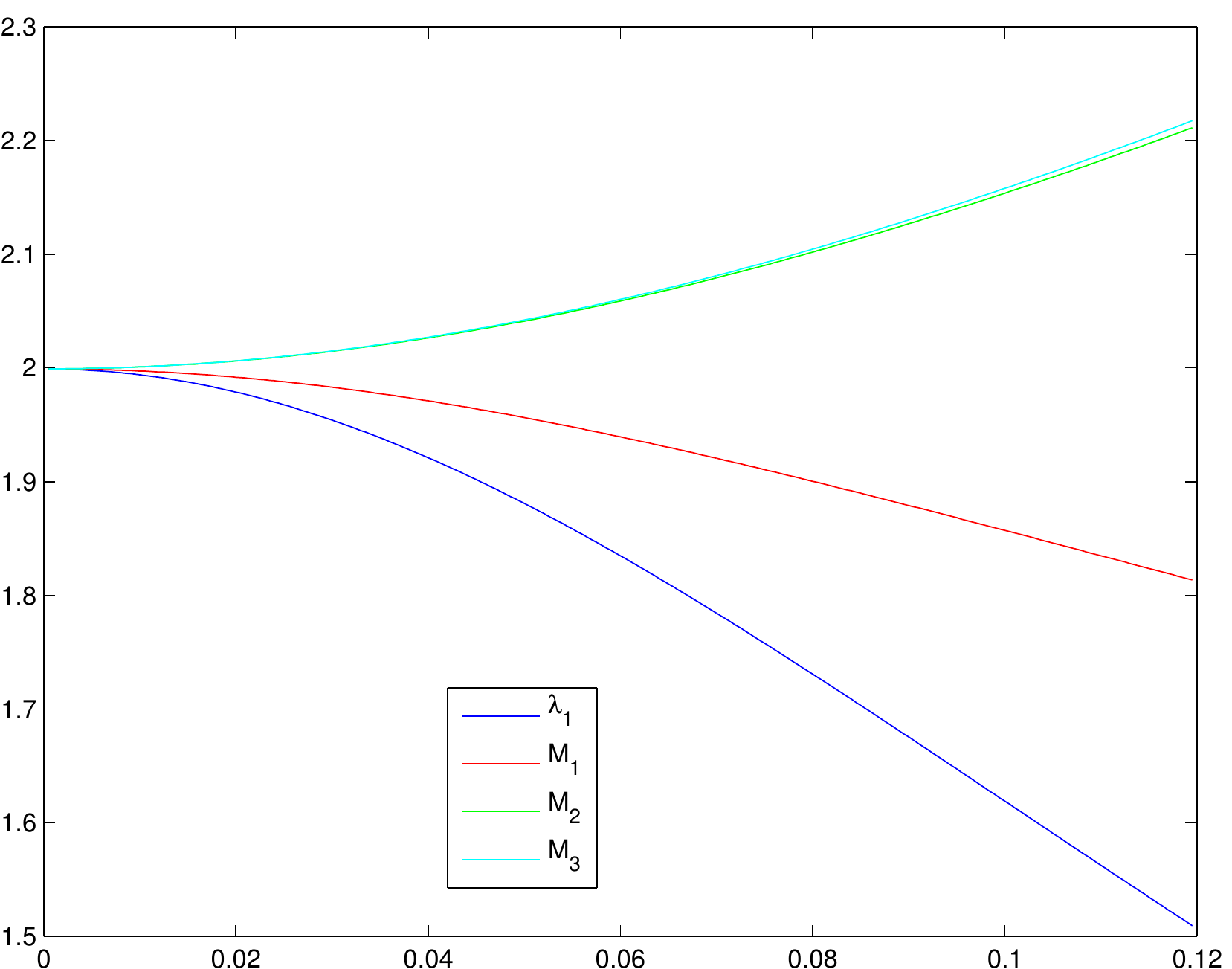}
}
\caption{Comparison of $\lambda_{1,\beta}(\Omega)$ and $M_{i}(\Omega) $. Here $\lambda_{1,\beta}(B_{1})=2$.}
\end{center}
\end{figure}

\noindent From these graphs, it seems that the upper bounds $M_{i}(\Omega)$ lack of precision when $\Omega$ is far from a ball and that {\Rd the maximality of balls is possible} and would improve the upper bound { given in Corollary \ref{cor:bornesup}}.  

\paragraph{Some numerical tests.} { It is natural to wonder if {\Rd the ball have the largest $\lambda_{1,\beta}$ among all the domains of same volume that are homeomorphic to the ball. This question cannot be solved with estimate \eqref{majoration:lambda2}, as shows Figure \ref{Fig1_1}}. { Therefore, to conclude} this section, we would like to present some numerical experiments in favor of such property.

Let us start by computing the value of $\lambda_{1,\beta}(\Omega)$ when $\Omega$ is an ellipse of fixed volume. We present here the results of our numerical computations for $\beta \in \{ 0.1,1,5,10\}$ when $|\Omega|=\pi$. Then when the volume of $\Omega$ is $4\pi$. In both figures, the abscissa stands for the eccentricity of the ellipse.  It seems that the ball maximizes $\lambda_{1,\beta}$ among ellipses of fixed area. 

\begin{figure}[!htH]
\begin{center}
\subfigure[$|\Omega|=\pi$]{\label{Fig2_1} 
\includegraphics[width=0.4\textwidth]{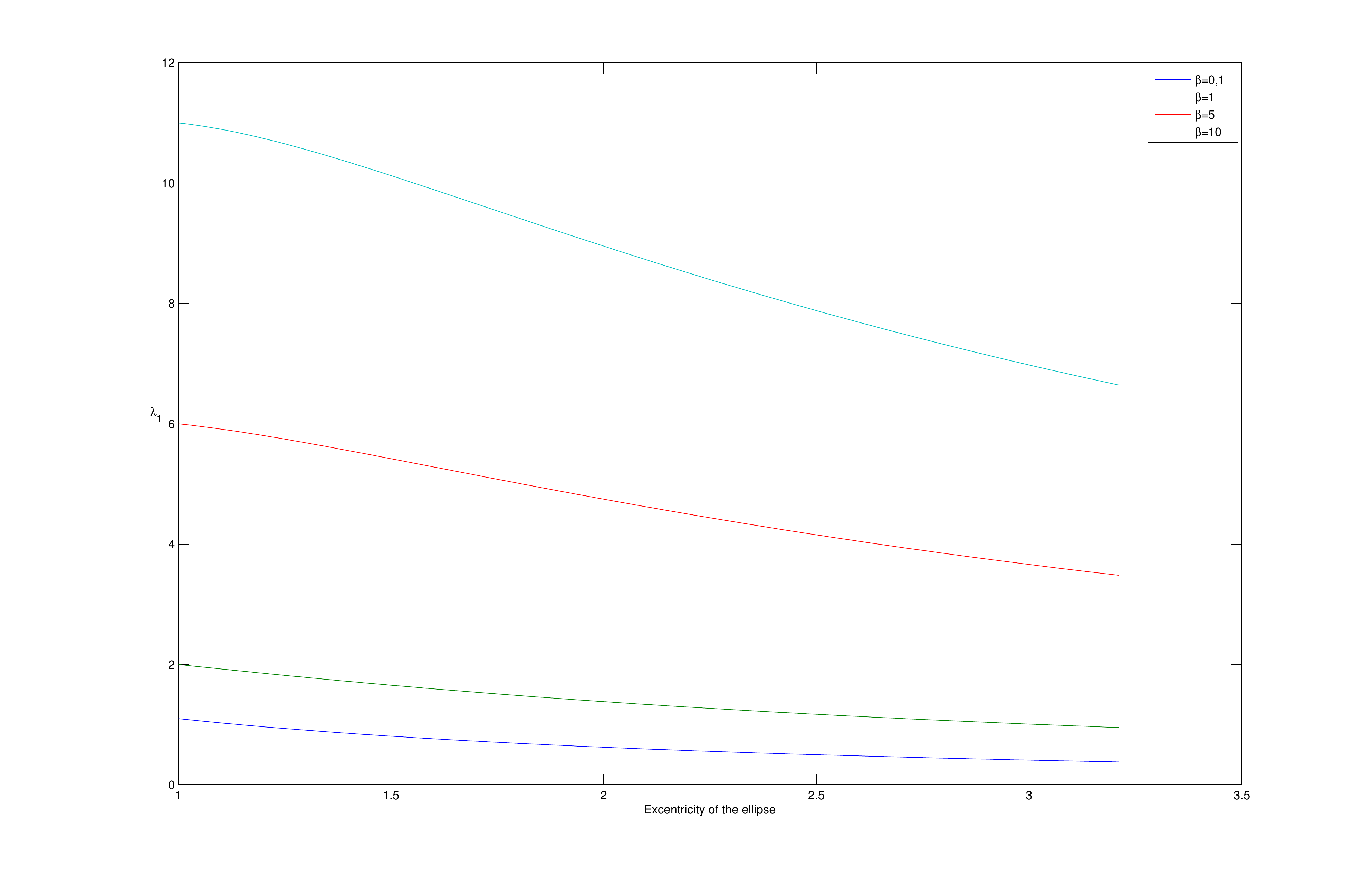}
}
\subfigure[$|\Omega|=4\pi$]{\label{Fig2_2}
 \includegraphics[width=0.4\textwidth]{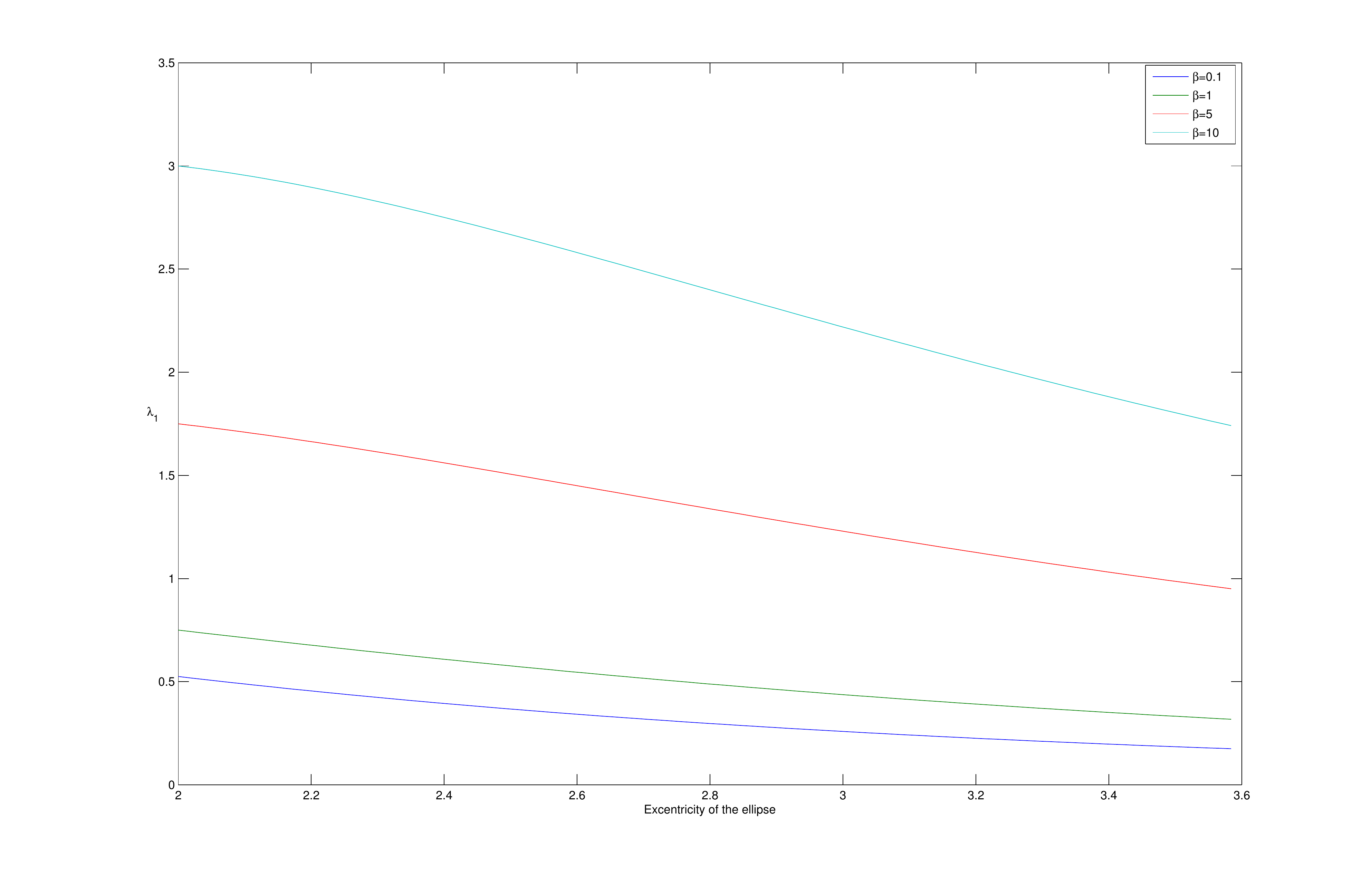}
}
\caption{$\lambda_{1,\beta}(\Omega)$ when $\Omega$ is an ellipse of volume $|\Omega|$}
\end{center}\end{figure}

Let us show some computations in dimension three. We consider families of ellipsoids with semi-axes defined by $\left(\exp(\alpha_{i}t)\right)_{i=1,2,3}$ where $\alpha_{1}+\alpha_{2}+\alpha_{3}=0$ to insure the volume constraint. The ball $B$ corresponds to $t=0$. We remind that in this case, $\lambda_{1,\beta}(B)$ has multiplicity $3$ at the sphere, we then have plotted the three corresponding eigenvalues in two cases: first for the family such that $ \alpha=(2,-0.8,-1.2)$ in Figure \ref{Fig3_1}, then for $\alpha=(2,-1,-1)$ in Figure \ref{Fig3_2}. In the last case, the defined ellipsoids are of revolution and we observe that in this particular case $\lambda_{3,\beta} \approx \lambda_{4,\beta}$. One can wonder if it is really the case.  
\begin{figure}[!htH]
\begin{center}
\subfigure[$\alpha=(2,-0.8,-1.2)$]{\label{Fig3_1} 
\includegraphics[width=0.35\textwidth]{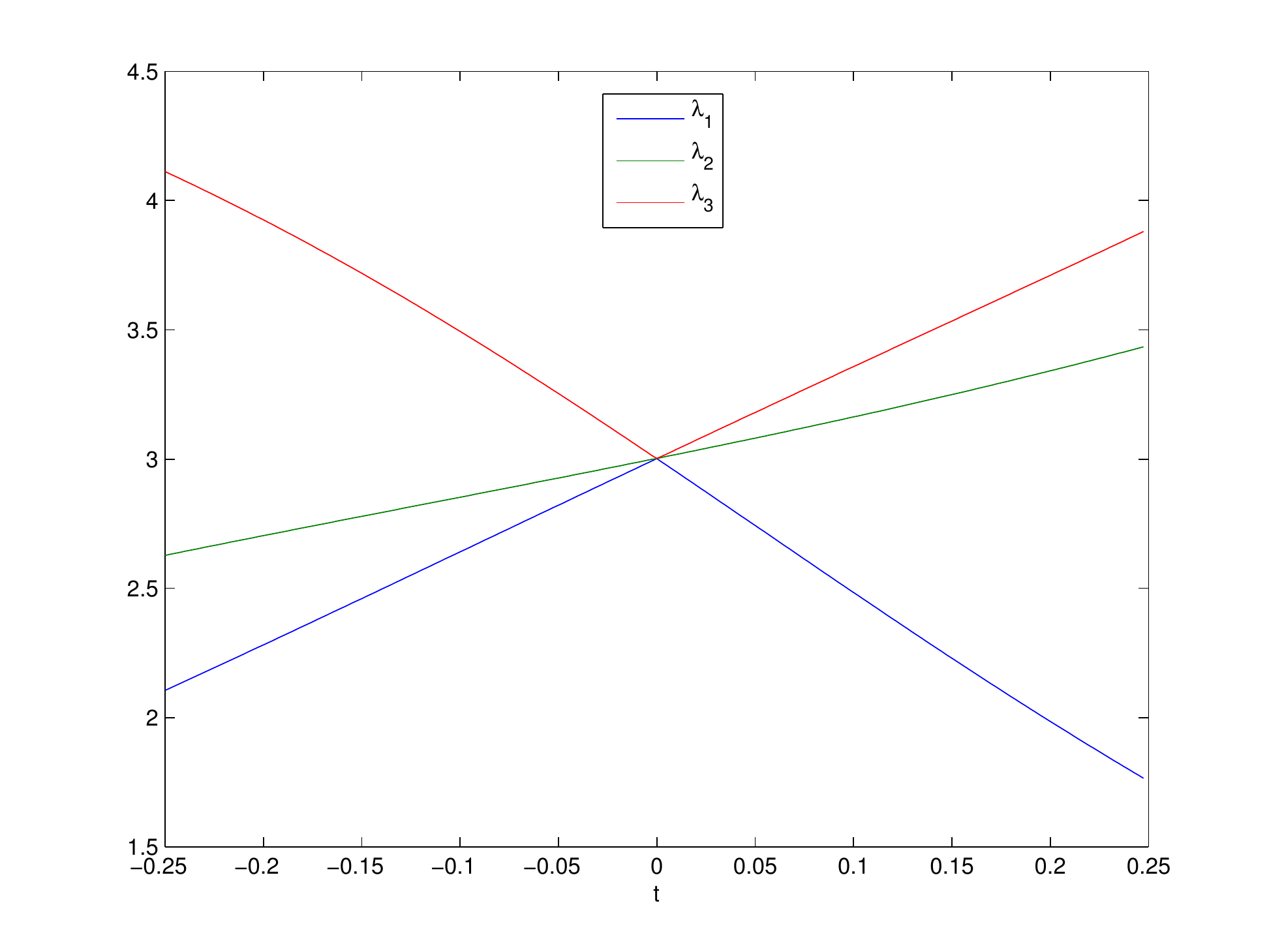}
}
\subfigure[$\alpha=(2,-1,-1)$]{\label{Fig3_2}
\includegraphics[width=0.35\textwidth]{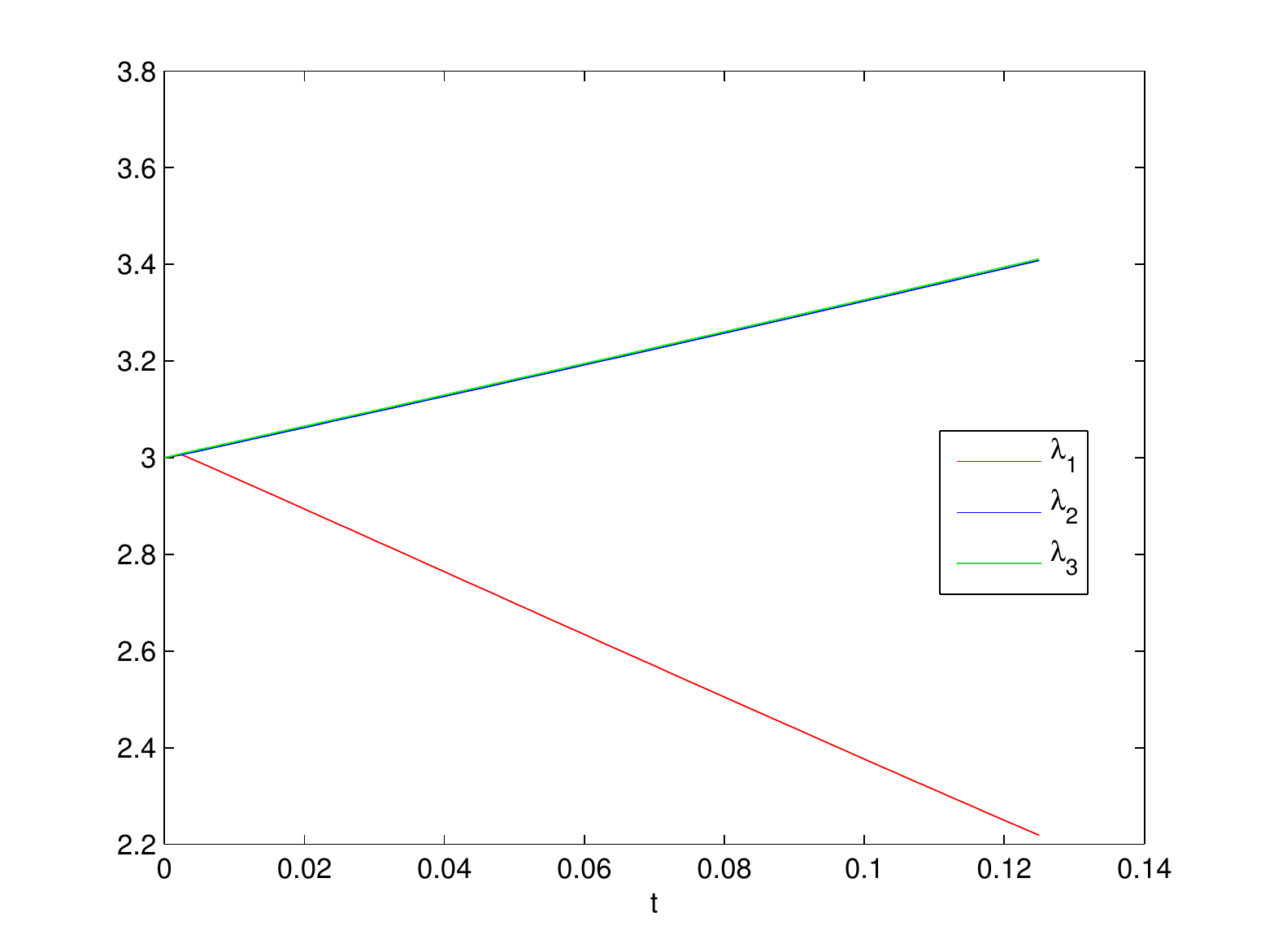}
}
\caption{$(\lambda_{1,\beta}(\Omega_{t}),\lambda_{2,\beta}(\Omega_{t}),\lambda_{3,\beta}(\Omega_{t}))$ when $\Omega_{t}$ is a parametrized  ellipsoid of volume $4\pi/3$}
\end{center}\end{figure}

Let $E(a,b)$ be an ellipsoid of volume $4\pi/3$ where $a$ is the larger semiaxis and b the middle one. We now show in Figure \ref{fig:ellipsoid} the surfaces $z=\lambda_{i,\beta}(E(a,b))$ where $i=1,2,3$. The pictures have been obtained by interpolation after the computations of the eigenvalues on $2700$ ellipsoids. Again one can attest that the ball seems to maximize $\lambda_{1,\beta}$ among ellipsoids.

\begin{figure}[!htH]
\begin{center}
\subfigure[$\lambda_{1,\beta}(E(a,b))$]{\label{Fig4_1} 
\includegraphics[width=0.3\textwidth]{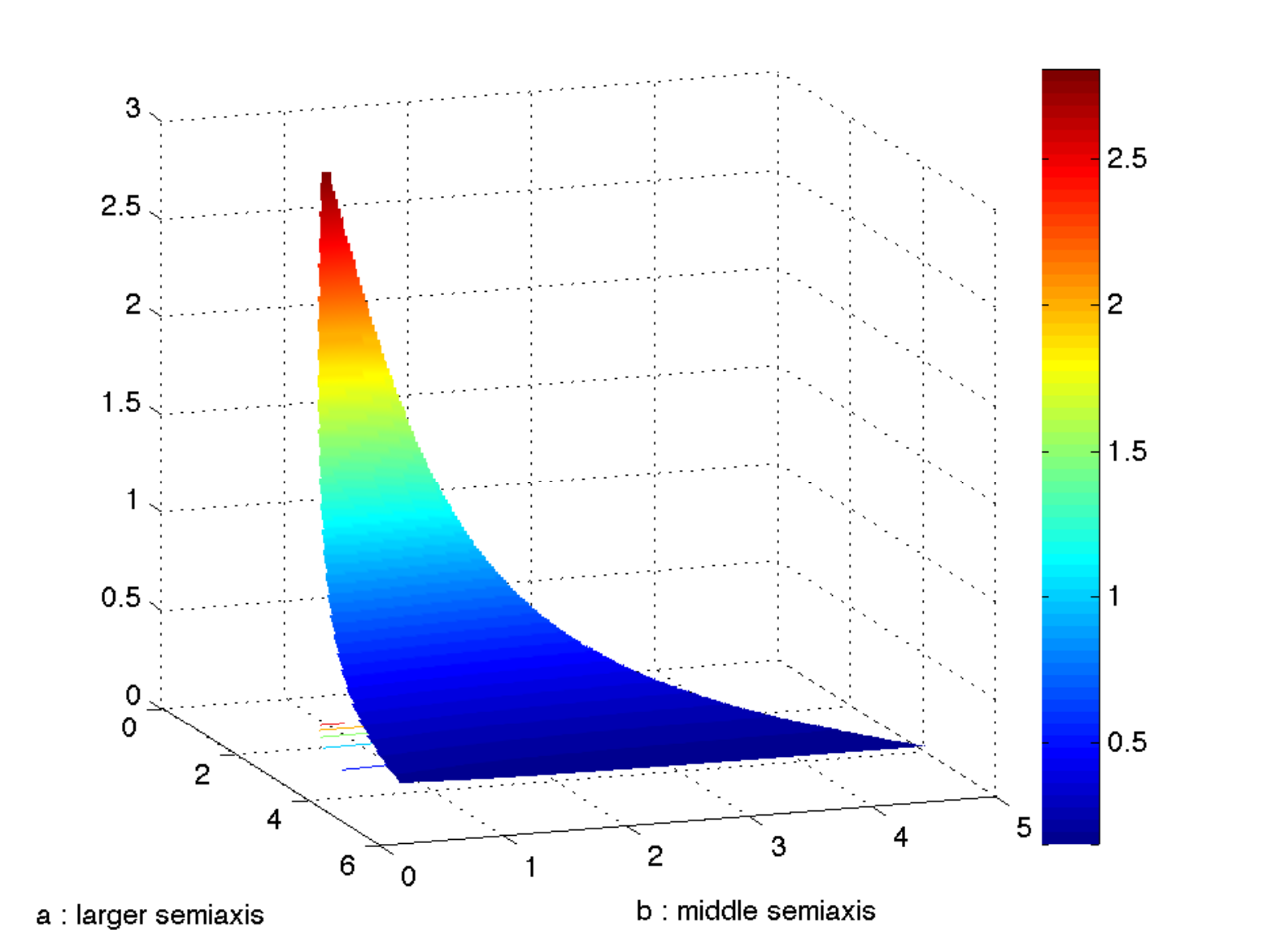}
}
\subfigure[$\lambda_{2,\beta}(E(a,b))$]{\label{Fig4_2}
\includegraphics[width=0.3\textwidth]{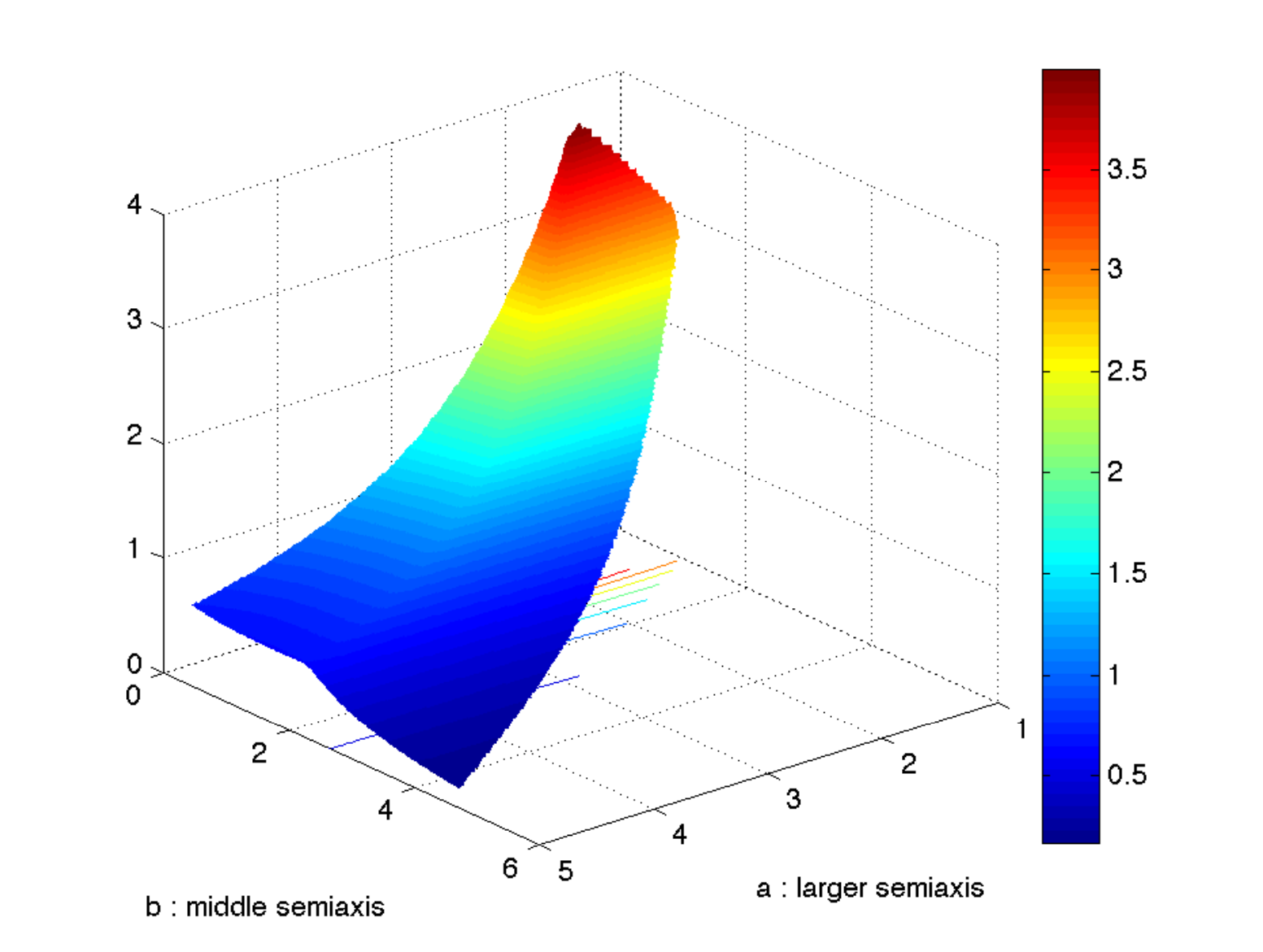}
}
\subfigure[$\lambda_{3,\beta}(E(a,b))$]{\label{Fig4_3}
\includegraphics[width=0.3\textwidth]{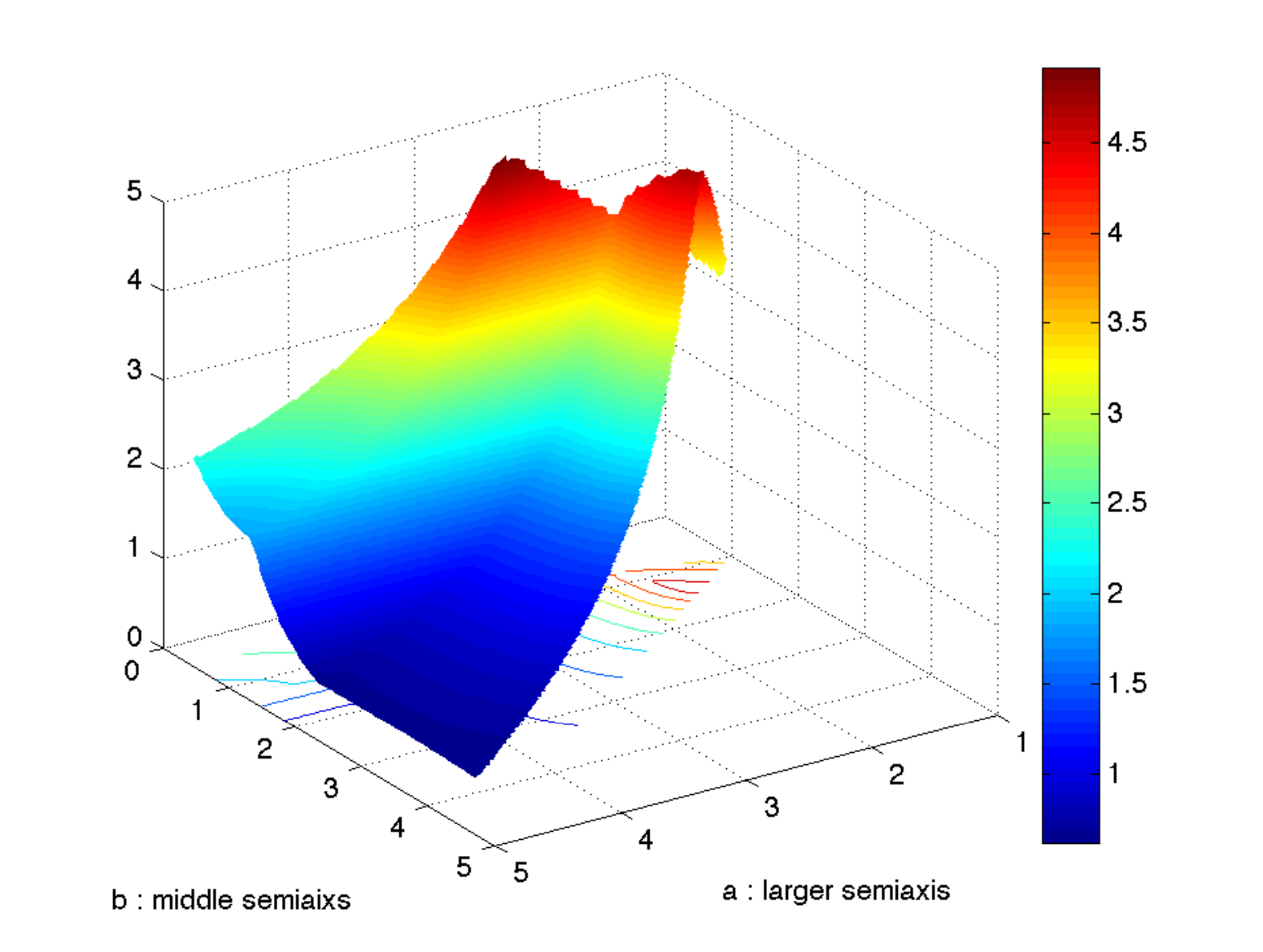}
}

\caption{$(\lambda_{1,\beta}(\Omega),\lambda_{2,\beta}(\Omega),\lambda_{3,\beta}(\Omega))$ when $\Omega=E(a,b)$ is an ellipsoid of volume $4\pi/3$}
\label{fig:ellipsoid}\end{center}\end{figure}

\section{First order shape calculus }
\label{section:analyze:ordre1}

In order to go one step further, we adopt a shape optimization point of view and prove in this section that the ball is a critical point. The main difficulty here is that the eigenvalue $\lambda_{1,\beta}(B)$ has multiplicity the dimension of the ambient space. We need some technical material on shape derivative and tangential calculus on manifold to justify the results stated in this section;  to simplify the reading of this work, we postpone these reminders in Appendix \ref{app:diffcal}.

{ Let us emphasize that from this point we do not make the assumption $\beta\geq0$, and therefore all the results of this section and the following are valid for any $\beta\in\R$.} {\Be Though from now on we drop the notation $\beta$ in $\lambda_{1,\beta}$ since there is no possible confusion anymore.}

\subsection{Notations and  preliminary result for shape deformation}

{ We adopt the formalism of Hadamard's shape calculus and consider the map $t\mapsto T_t=I+t\V$ where $\V\in W^{3,\infty}(\Om,\R^d)$ and  $t$ is small enough.
We denote by $$\Omega_t=T_t(\Omega)=\{ x+t\V(x), x\in \Om\}.
$$
\begin{remark}\label{rk:deformation}
More generally the results and computations from this section are valid if $t\mapsto T_{t}$ satisfies:
\begin{itemize}
\item
$T_0=Id$,
\item
for every $t$ near 0, $T_t$ is a $W^{3,\infty}$-diffeomorphism from $\Omega$ onto its image $\Omega_t=T_t(\Omega)$.
\item
The application $t\mapsto T_t$ is real-analytic near $t=0$.
\end{itemize}
\end{remark}}
We need to introduce the surface jacobian $\omega_t$  defined as 
$$
\omega_t(x)=\textrm{det}(DT_t(x))\parallel (DT_t(x)^{T})^{-1}\mathbf{n}(x)\parallel,
$$
and the functions
$$
A_t(x)=(DT_t(x))^{-1} (DT_t(x)^T)^{-1}
, \;\;\;\tilde{A}_{t}(x)=\textrm{det}(DT_{t}(x)) A_{t}(x), \;\;\;\;C_t(x)=\omega_t(x)A_t(x).
$$
We have to study the transport of the considered eigenvalue problem on the deformed domain $\Omega_t$. \Rd To that end, we first rewrite the deformed equation on the fixed domain $\Omega$ and its boundary $\partial\Omega$: we have to describe how are transported the Laplace-Beltrami and the Dirichlet-to-Neumann operators. \Bk

\paragraph{Transport of the Dirichlet-to-Neumann map.}

Let us consider  the Dirichlet-to-Neu\-mann operator defined on its natural space $\D_{t}:  H^{1/2}(\partial\Omega_t)\rightarrow H^{-1/2}(\partial\Omega_t)$. It maps a function $\phi_t$ in $H^{1/2}(\partial\Omega_{t})$ onto the normal derivative of its harmonic expansion in $\Omega_{t}$, that is to say 
$\D_{t}(\phi_t)=\partial_{\n_{t}} u_t$, where $u^t$ solves the boundary values problem:
\begin{equation}
\label{definition:Dirichlet:to:Neumann}
\left\{
\begin{array}{rcll}
-\Delta u_t&=&0 &\textrm{ in } \Omega_{t},\\
 u_t&=& \phi_t &\textrm{ on }\partial \Omega_{t}.
\end{array}
\right.
\end{equation}

To compute the quantity $\mathcal{D}_{t}$ such that $\mathcal{D}_{t}(\phi_t\circ T_{t})= [\D_{t}(\phi_t)] \circ T_{t}$ , we transport the boundary value problem \eqref{definition:Dirichlet:to:Neumann}  back on the domain $\Omega$. \Rd In others words, $\mathcal{D}_{t}$ makes the following diagram commutative:
\begin{center}
\begin{tikzpicture}
  \matrix (m) [matrix of math nodes,row sep=3em,column sep=4em,minimum width=2em]
  {
     \sH^{1/2}(\partial\Omega_{t}) &  \sH^{1/2}(\partial\Omega) \\
      \sH^{-1/2}(\partial\Omega_{t}) &  \sH^{-1/2}(\partial\Omega)  \\};
  \path[-stealth]
    (m-1-1) edge node [left] {$\D_{t}$} (m-2-1)
            edge node [above] {$T_{t}$} (m-1-2)
    (m-2-1.east|-m-2-2) edge node [above] {$T_{t}$} (m-2-2)
    (m-1-2) edge node [right] {$\mathcal{D}_{t}$} (m-2-2);
\end{tikzpicture}
\end{center}

\noindent To be more precise, we have  the following result proved in \cite{DK2012}.
\begin{lemma}\label{transport:Dirichlet:2:Neumann} 
Given $\psi\in H^{1/2}(\partial\Omega)$, we denote $v^t$ the solution of the boundary value problem 
\begin{equation}
\left\{
\begin{array}{rcll}
-\dive(\tilde{A}_{t}\nabla  v^{t})&=&0 &\textrm{ in } \Omega,\\
 v^{t}&=& \psi &\textrm{ on }\partial \Omega.
\end{array}
\right.
\end{equation}
and then define  $\mathcal{D}_{t}\psi\in H^{-1/2}(\partial\Omega)$ as:
$$
\mathcal{D}_{t}\psi: f \in H^{1/2}(\partial\Omega) \mapsto \int_{\Omega} \tilde{A}_{t}(x)\nabla v^{t}(x)\cdot\nabla E(f)(x)\dd x,$$
where $E$ is a continuous extension operator from $H^{1/2}(\partial\Omega)$ to $H^1(\Omega)$.
Then the relation  
\begin{equation}
\label{eq:transport:Dirichlet:2:Neumann:cas:trace}
{{(\D_{t}\varphi)\circ T_{t} }}= \mathcal{D}_{t}\left[\varphi \circ T_{t} \right]
\end{equation}
holds for all functions $\varphi \in H^{1/2}(\Omega_t)$.
\end{lemma}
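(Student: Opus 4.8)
The plan is to characterize $\D_t$ variationally, through the Dirichlet energy, and then to transport everything back to the fixed domain $\Omega$ by the change of variables $y=T_t(x)$. The starting point is the weak form of the normal derivative: if $\varphi\in H^{1/2}(\partial\Omega_t)$ and $u_t$ denotes its harmonic extension (the solution of \eqref{definition:Dirichlet:to:Neumann} with $\phi_t=\varphi$), then for every $g\in H^{1/2}(\partial\Omega_t)$ and every extension $G\in H^1(\Omega_t)$ of $g$, Green's identity together with $\Delta u_t=0$ gives
$$\langle \D_t\varphi,\,g\rangle_{\partial\Omega_t}=\int_{\Omega_t}\nabla u_t\cdot\nabla G\,\dd y,$$
the right-hand side being independent of the chosen extension $G$ precisely because $u_t$ is harmonic. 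This reduces the statement to an identity between Dirichlet forms.

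I would then change variables $y=T_t(x)$ in this integral. Using $(\nabla_y w)\circ T_t=(DT_t^{T})^{-1}\nabla_x(w\circ T_t)$ and $\dd y=\det(DT_t)\,\dd x$, and writing $v^t=u_t\circ T_t$ and $\widetilde G=G\circ T_t$, the integrand becomes $\nabla v^t\cdot A_t\nabla\widetilde G$, so that
$$\int_{\Omega_t}\nabla u_t\cdot\nabla G\,\dd y=\int_{\Omega}\tilde A_t\,\nabla v^t\cdot\nabla\widetilde G\,\dd x,$$
since $\det(DT_t)\,A_t=\tilde A_t$. Applying the same change of variables to the harmonicity condition $\int_{\Omega_t}\nabla u_t\cdot\nabla\Phi=0$, valid for all $\Phi\in H^1_0(\Omega_t)$, one obtains $\int_{\Omega}\tilde A_t\nabla v^t\cdot\nabla\widetilde\Phi=0$ for all $\widetilde\Phi\in H^1_0(\Omega)$; combined with the boundary condition $v^t=\varphi\circ T_t=\psi$ on $\partial\Omega$, this identifies $v^t$ as the unique solution of the transported problem $-\dive(\tilde A_t\nabla v^t)=0$ in $\Omega$, $v^t=\psi$ on $\partial\Omega$. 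In particular the linear form $f\mapsto\int_{\Omega}\tilde A_t\nabla v^t\cdot\nabla E(f)$ does not depend on the choice of extension $E$, which is what makes $\mathcal D_t\psi$ well defined.

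It remains to match the two functionals. Taking $f:=g\circ T_t$ and choosing the extension $E(f)=\widetilde G$, the two displays above identify $\langle\D_t\varphi,g\rangle_{\partial\Omega_t}$ with $\langle\mathcal D_t\psi,f\rangle_{\partial\Omega}$. On the other hand, changing variables directly in the boundary pairing, with $\dd\sigma_t=\omega_t\,\dd\sigma$ the surface Jacobian of $T_t$, gives
$$\langle\D_t\varphi,\,g\rangle_{\partial\Omega_t}=\int_{\partial\Omega}\big[(\D_t\varphi)\circ T_t\big]\,f\,\omega_t\,\dd\sigma,$$
which is exactly the pairing defining the transported functional $(\D_t\varphi)\circ T_t\in H^{-1/2}(\partial\Omega)$. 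Since $f=g\circ T_t$ ranges over all of $H^{1/2}(\partial\Omega)$ as $g$ ranges over $H^{1/2}(\partial\Omega_t)$, equating the two expressions yields \eqref{eq:transport:Dirichlet:2:Neumann:cas:trace}.

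The main points requiring care are, I expect, of a bookkeeping nature rather than conceptual. The first is the precise meaning of $(\D_t\varphi)\circ T_t$ as an element of $H^{-1/2}(\partial\Omega)$: the change of boundary measure $\omega_t$ must be incorporated in the dual pairing, and it is exactly this factor that reconciles the two computations above. The second is to verify that $\tilde A_t$ is bounded and uniformly elliptic for $t$ small, which follows from $T_t$ being a $W^{3,\infty}$-diffeomorphism close to the identity; this guarantees that the transported boundary value problem is well posed in $H^1(\Omega)$, hence that $v^t$ depends continuously on $\psi$ and that $\mathcal D_t\psi$ indeed belongs to $H^{-1/2}(\partial\Omega)$.
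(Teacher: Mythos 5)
Your proof is correct. A preliminary remark: the paper itself gives no proof of Lemma \ref{transport:Dirichlet:2:Neumann} --- it is imported from \cite{DK2012} --- so there is no in-paper argument to compare against. The route you take (weak characterization of the normal derivative via Green's identity, change of variables $y=T_{t}(x)$ converting $\int_{\Omega_{t}}\nabla u_{t}\cdot\nabla G$ into $\int_{\Omega}\tilde A_{t}\nabla v^{t}\cdot\nabla \widetilde G$, identification of $v^{t}=u_{t}\circ T_{t}$ as the solution of the transported problem, then matching the two boundary functionals) is exactly the argument that the statement's own formulation of $\mathcal{D}_{t}$ presupposes, and your closing checks (uniform ellipticity of $\tilde A_{t}$ for small $t$, independence of the extension operator, surjectivity of $g\mapsto g\circ T_{t}$ on $H^{1/2}$) are the right ones.

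The one delicate point is the one you flag yourself, and it is worth stating as the crux rather than as bookkeeping: \eqref{eq:transport:Dirichlet:2:Neumann:cas:trace} is an identity in $H^{-1/2}(\partial\Omega)$, and it holds only when $(\D_{t}\varphi)\circ T_{t}$ is understood as the pullback functional $f\mapsto\langle \D_{t}\varphi,\,f\circ T_{t}^{-1}\rangle_{\partial\Omega_{t}}$. What your computation literally establishes is
\begin{equation*}
\langle \mathcal{D}_{t}[\varphi\circ T_{t}],\,f\rangle_{\partial\Omega}
=\langle \D_{t}\varphi,\,f\circ T_{t}^{-1}\rangle_{\partial\Omega_{t}}
\qquad\textrm{for all } f\in H^{1/2}(\partial\Omega),
\end{equation*}
and it shows at the same time that if one instead reads $(\D_{t}\varphi)\circ T_{t}$ as pointwise composition identified with a distribution through the unweighted pairing $f\mapsto\int_{\partial\Omega}\cdot\, f\,d\sigma$, then $\mathcal{D}_{t}[\varphi\circ T_{t}]=\omega_{t}\,\bigl[(\D_{t}\varphi)\circ T_{t}\bigr]$, i.e.\ the two sides differ by the surface Jacobian. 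Your resolution (absorbing $\omega_{t}$ into the transported duality pairing) is the consistent one, and it is coherent with the way the lemma is used later in the proof of Theorem \ref{analyticity}, where the transported eigenvalue equation carries the weight $\omega_{t}$ on its right-hand side.
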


\Bk\noindent Setting $u^{t}=u_t\circ T_{t}$, we check from the variational formulation,  that  the function $u^{t}$ is the unique solution of the transported boundary value problem:
\begin{equation}
\label{definition:Dirichlet:to:Neumann:transporte}
\left\{
\begin{array}{rcll}
-\dive(\tilde{A}_{t}\nabla  u^{t})&=&0 &\textrm{ in } \Omega,\\
 u^{t}&=& \phi_t\circ T_t &\textrm{ on }\partial \Omega.
\end{array}
\right.
\end{equation}
Hence, setting $y=T_t(x),~x\in \Omega$ we get formally
\begin{equation*}
\D_{t}(\phi_t)(y) = \nabla u_t(y).\n_{t}(y) 
= (DT_{t}(x)^T)^{-1}\nabla v_{t}(x). \frac{(DT_{t}(x)^T)^{-1}\n(x)}{\| (DT_{t}(x)^T)^{-1}\n(x)\|}= \frac{A_{t}(x)\n(x).\nabla u^{t}(x)}{\| (DT_{t}(x)^T)^{-1} \n(x)\|}.
\end{equation*}

Here again, we can give a sense to the co-normal derivative $A_{t}\n.\nabla u^t$  thanks to  the boundary value problem \eqref{definition:Dirichlet:to:Neumann:transporte}: this quantity  is defined in a weak sense as the previous Dirichlet-to-Neumann  operator $\D_{t}$.

\paragraph{Transport of the Laplace-Beltrami operator.}

We recall now  the expression of the transported Laplace-Beltrami operator, relying on the relation 
\begin{equation}
\label{eq:transport:Laplace:Beltrami:cas:surface}
\forall \varphi \in H^{{2}}(\partial\Omega_{t}), \;\;{ {(\Delta_{\tau} \varphi)\circ T_{t}}}= \frac{1}{\omega_{t}(x)} \divet\left( C_{t}(x) \nablat(\varphi\circ T_{t})(x)\right) \textrm{ on } \partial\Omega.
\end{equation}
Let us denote  by    $\mathcal{L}_{t} $ the operator defined as 
\begin{align}
\label{definition:L(h)}
\mathcal{L}_{t}& \left[\varphi \circ T_{t} \right](x)= \nonumber \\ 
 &\cfrac{1}{\omega_{t}(x)} ~\divet\left\{ C_{t}(x) \nabla_{} \left[\varphi \circ T_{t} \right](x) - \cfrac{C_{t}(x)\nabla \left[\varphi \circ T_{t} \right](x).\n(x) }{A_{t}(x)\n(x).\n(x)} A_{t}(x)\n(x) \right\}
\end{align}
for  $\varphi \in H^{5/2}( \Omega_{t})$.  In  \cite{DK2012},  we show the following lemma:
\begin{lemma}
\label{transport:Laplace:Beltrami:cas:trace}
The identity  
\begin{equation}
\label{eq:transport:Laplace:Beltrami:cas:trace}
\left[\Delta_{\tau} {\Rd \varphi}\right] \circ T_{t} = \mathcal{L}_{t}\left[\varphi \circ T_{t} \right]
\end{equation}
holds  for all functions $\varphi$ belonging to  $H^{5/2}( \Omega_{t})$.
\end{lemma}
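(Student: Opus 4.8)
The plan is to transport the variational form of the Laplace--Beltrami operator from $\partial\Om_{t}$ back onto $\partial\Om$ through the change of variables $y=T_{t}(x)$, expressing everything in terms of the \emph{volume} gradient of $\varphi\circ T_{t}$ and of the transported normal (rather than the intrinsic surface gradient used in \eqref{eq:transport:Laplace:Beltrami:cas:surface}). I start from $\Delta_{\tau}\varphi=\divet(\nablat\varphi)$ on $\partial\Om_{t}$, so that for every smooth test function $f$ on $\partial\Om_{t}$,
$$
\int_{\partial\Om_{t}}(\Delta_{\tau}\varphi)\,f\,\dd\sigma_{t}=-\int_{\partial\Om_{t}}\nablat\varphi\cdot\nablat f\,\dd\sigma_{t}.
$$

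The first step is to pull back the surface gradient. Writing $\psi=\varphi\circ T_{t}$ and using the chain rule $\nabla\psi=DT_{t}^{T}\,(\nabla\varphi\circ T_{t})$, the transported normal $\n_{t}\circ T_{t}=(DT_{t}^{T})^{-1}\n/\|(DT_{t}^{T})^{-1}\n\|$ and the identity $\|(DT_{t}^{T})^{-1}\n\|^{2}=A_{t}\n\cdot\n$, one obtains the pointwise formula
$$
(\nablat\varphi)\circ T_{t}=(DT_{t}^{T})^{-1}\Big(\nabla\psi-\frac{\nabla\psi\cdot A_{t}\n}{A_{t}\n\cdot\n}\,\n\Big).
$$
Since $C_{t}=\omega_{t}A_{t}$ and $C_{t}\n=\omega_{t}A_{t}\n$, the vector field occurring inside $\mathcal{L}_{t}$,
$$
W:=C_{t}\nabla\psi-\frac{C_{t}\nabla\psi\cdot\n}{A_{t}\n\cdot\n}\,A_{t}\n,
$$
then satisfies $W\cdot\n=0$, i.e. it is tangent to $\partial\Om$, and in fact $W=\omega_{t}(DT_{t})^{-1}\big[(\nablat\varphi)\circ T_{t}\big]$; in particular $W$ depends only on the trace of $\varphi$, not on its extension inside $\Om_{t}$. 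This identification of the oblique projection is the conceptual heart of the statement: the correction along $A_{t}\n$ (rather than along $\n$) is exactly what accounts for the tilting of the normal under $T_{t}$.

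The second step is to transport the weak identity. With $\dd\sigma_{t}=\omega_{t}\,\dd\sigma$, $F:=f\circ T_{t}$ and the analogous decomposition for $\nablat f$, the integrand satisfies $\omega_{t}\,(\nablat\varphi\cdot\nablat f)\circ T_{t}=W\cdot\big(\nabla F-\tfrac{\nabla F\cdot A_{t}\n}{A_{t}\n\cdot\n}\n\big)=W\cdot\nablat F$, the oblique correction carried by the test function dropping out because $W\cdot\n=0$. Hence the right-hand side equals $-\int_{\partial\Om}W\cdot\nablat F\,\dd\sigma$, and an integration by parts on the closed surface $\partial\Om$ (which produces no boundary term, $W$ being tangential) turns it into $\int_{\partial\Om}(\divet W)\,F\,\dd\sigma$. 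Comparing with the transported left-hand side $\int_{\partial\Om}[(\Delta_{\tau}\varphi)\circ T_{t}]\,F\,\omega_{t}\,\dd\sigma$ and letting $F$ range over a dense family of test functions yields $(\Delta_{\tau}\varphi)\circ T_{t}=\omega_{t}^{-1}\divet W=\mathcal{L}_{t}[\varphi\circ T_{t}]$, which is the claimed identity.

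The main obstacle is the tangential calculus: carrying out the pullback of the surface gradient and of the surface divergence rigorously, and proving the two structural facts $W\cdot\n=0$ and $W=\omega_{t}(DT_{t})^{-1}[(\nablat\varphi)\circ T_{t}]$, which guarantee both tangentiality and extension-independence. A second point requiring care is the regularity bookkeeping: the hypothesis $\varphi\in H^{5/2}(\Om_{t})$ (whose trace lies in $H^{2}(\partial\Om_{t})$) is what makes $\Delta_{\tau}\varphi$ and the integration by parts meaningful, and it is what one must track through the change of variables. Finally I would check that this volume-gradient identity is consistent with the intrinsic surface transport formula \eqref{eq:transport:Laplace:Beltrami:cas:surface}, the difference between the two being precisely the passage from the surface gradient of the trace to the ambient gradient.
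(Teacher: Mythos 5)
Your proposal is correct, but there is nothing in the paper to compare it against line by line: the paper does not prove this lemma at all, it simply quotes it from the companion work \cite{DK2012} (``In \cite{DK2012}, we show the following lemma''). Judged on its own merits, your self-contained variational argument is sound and is the natural route. The key computations all check out: writing $\psi=\varphi\circ T_{t}$, the chain rule $(\nabla\varphi)\circ T_{t}=(DT_{t}^{T})^{-1}\nabla\psi$, the normal transformation rule and $\|(DT_{t}^{T})^{-1}\n\|^{2}=A_{t}\n\cdot\n$ give your pullback formula for $(\nablat\varphi)\circ T_{t}$; then, using the symmetry of $A_{t}$ and $A_{t}DT_{t}^{T}=(DT_{t})^{-1}$, the field $W=C_{t}\nabla\psi-\frac{C_{t}\nabla\psi\cdot\n}{A_{t}\n\cdot\n}A_{t}\n$ appearing in \eqref{definition:L(h)} indeed equals $\omega_{t}(DT_{t})^{-1}\bigl[(\nablat\varphi)\circ T_{t}\bigr]$, which yields both $W\cdot\n=0$ and independence of the extension of $\varphi$ inside $\Om_{t}$. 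With $\dd\sigma_{t}=\omega_{t}\dd\sigma$, the identity $\omega_{t}\,(\nablat\varphi\cdot\nablat f)\circ T_{t}=W\cdot\nablat F$, the tangential Green formula (whose curvature term $\int_{\partial\Om}HFW\cdot\n\,\dd\sigma$ vanishes precisely because $W$ is tangential), and the density of $\{f\circ T_{t}\}$ in $\sL^{2}(\partial\Om)$, you get $\omega_{t}\,(\Delta_{\tau}\varphi)\circ T_{t}=\divet W$, which is exactly \eqref{eq:transport:Laplace:Beltrami:cas:trace}. Your approach also buys two things the bare citation hides: it explains structurally why the correction must be taken along $A_{t}\n$ rather than $\n$ (it is what makes $W$ tangential when one insists on using the volume gradient of $\psi$, which is also how your formula reconciles with the intrinsic surface identity \eqref{eq:transport:Laplace:Beltrami:cas:surface}), and it shows why $H^{5/2}(\Om_{t})$ is the right hypothesis: the trace of $\nabla\psi$ then lies in $\sH^{1}(\partial\Om)$, so $\divet W\in \sL^{2}(\partial\Om)$ and both sides of the identity are honest $\sL^{2}$ functions.
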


\subsection{Regularity of the eigenfunctions and eigenvalues with respect to the parameter}

The section is a slight variation of a theorem due  to  Ortega and Zuazua on  the existence and regularity of eigenvalues and associated eigenfunctions in the case of Stokes system \cite{ZuazuaOrtega}. The difficulty comes from the possible multiple eigenvalues. The main result is, for a fixed deformation field $\V\in W^{3,\infty}(\Om,\R^d)$, the existence of smooth branches of eigenvalue. In other words, the eigenvalues are not regular when sorted in the increasing order, but can be locally relabeled around the multiple point in order to remain smooth. The restriction is that this labeling depends on the deformation field $\V$ hence one cannot hope to prove Fr\'echet-differentiability.

\begin{theorem}\label{analyticity}
Let $\Omega$ be an open smooth bounded domain of $\mathbb{R}^d$. Assume that $\lambda$ is an  eigenvalue of multiplicity $m$ of the {\Be Wentzell}-Laplace operator. We suppose that  $T_t=I+t \V$
for some $\V \in W^{3,\infty}(\Omega,\mathbb{R})^d$ and denote $\Omega_t=T_t(\Omega)$. Then there exists $m$ real-valued continuous functions $t\mapsto \lambda_i(t),~i=1,2,\ldots,m$ { and $m$ functions $t\mapsto u_{i}^t\in H^\frac{5}{2}(\Om)$} such that the following properties hold 
\begin{enumerate}
\item
$\lambda_{i}(0)=\lambda,~i=1,\ldots,m$,
\item
the functions $t \mapsto \lambda_i(t)$ and $t\mapsto u_{i}^{t}, ~i=1,2, \ldots,m$ are  analytic in a neighborhood of $t=0$.
\item
{ The functions $u_{i,t}$ defined \Rd by \Bk$u_{i,t}\circ T_t=u_{i}^{t}$ are {\Be normalized } eigenfunctions associated to $\lambda_{i}(t)$ on the moving domain $\Omega_t$. If one considers $K$ compact subset such that $K\subset \Om_{t}$ for all $t$ small enough, then $t\mapsto {u_{i,t}}_{|K}$ is also an analytic function of $t$ in a neighborhood of $t=0$. 
}
\item  Let $I\subset \mathbb{R}$ be an interval such that $\overline I$ contains only the eigenvalue $\lambda$ of the {\Be Wentzell} problem of multiplicity $m$.  Then there exists a neighborhood of $t=0$ such that $\lambda_{i}(t)$ $i=1,\ldots,m$ are the only eigenvalues of $\Om_{t}$ which belongs to $I$.
\end{enumerate}
\end{theorem}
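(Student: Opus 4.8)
The plan is to transport the moving eigenvalue problem to the fixed boundary $\partial\Om$, recast it as a self-adjoint eigenvalue problem for an operator family that depends analytically on $t$, and then invoke the analytic perturbation theory of Kato and Rellich. First I would use the transport Lemmas \ref{transport:Dirichlet:2:Neumann} and \ref{transport:Laplace:Beltrami:cas:trace} to rewrite the problem posed on $\Om_{t}$ as an equivalent problem on the fixed domain. Setting $u^{t}=u_{t}\circ T_{t}$, the relations \eqref{eq:transport:Dirichlet:2:Neumann:cas:trace} and \eqref{eq:transport:Laplace:Beltrami:cas:trace} turn the {\Be Wentzell} equation $-\beta\Deltat u_{t}+\D_{t}u_{t}=\lambda u_{t}$ on $\partial\Om_{t}$ into
\[
-\beta\,\mathcal{L}_{t}u^{t}+\mathcal{D}_{t}u^{t}=\lambda(t)\,u^{t}\quad\textrm{on }\partial\Om .
\]
Equivalently, in variational form, one obtains two symmetric bilinear forms $a_{t}(\cdot,\cdot)$ and $b_{t}(u,v)=\int_{\partial\Om}uv\,\omega_{t}\,\dd\sigma$ on the Hilbert space $\sH(\Om)$, and the problem becomes the generalized eigenvalue problem $a_{t}(u^{t},v)=\lambda(t)\,b_{t}(u^{t},v)$ for all $v\in\sH(\Om)$. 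Since $T_{t}=I+t\V$ is affine in $t$ with $\V\in W^{3,\infty}$, the coefficients $\omega_{t}$, $A_{t}$, $C_{t}$, $\tilde{A}_{t}$ are analytic in $t$ near $0$, so $t\mapsto a_{t}$ and $t\mapsto b_{t}$ are analytic families of forms.

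Next I would remove the degeneracy of $B$. The form $b_{0}(u,v)=\int_{\partial\Om}uv$ sees only boundary traces, so the natural framework is the boundary space $\sL^{2}(\partial\Om)$: after eliminating the harmonic extension through $\mathcal{D}_{t}$, the problem is a genuine eigenvalue problem $\mathcal{A}(t)u^{t}=\lambda(t)u^{t}$ on $\partial\Om$, with $\mathcal{A}(t)=-\beta\mathcal{L}_{t}+\mathcal{D}_{t}$. Since $\omega_{0}=1$ and $\omega_{t}>0$ for $t$ small, the forms $b_{t}$ define inner products on $\sL^{2}(\partial\Om)$ uniformly equivalent to the standard one; with respect to $b_{t}$ the operator $\mathcal{A}(t)$ is self-adjoint, bounded below, with compact resolvent, and $t\mapsto\mathcal{A}(t)$ is a self-adjoint holomorphic family (of Kato's type (B), defined through the analytic forms $a_{t}$, $b_{t}$).

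Then I would run the spectral-projection argument. By discreteness of the spectrum, fix a circle $\gamma\subset\mathbb{C}$ enclosing $\lambda=\lambda(0)$ and no other eigenvalue of $\mathcal{A}(0)$. For $t$ small the resolvent $(\zeta-\mathcal{A}(t))^{-1}$ is analytic in $(\zeta,t)$ for $\zeta\in\gamma$, so the Riesz projection
\[
P(t)=\frac{1}{2\pi i}\oint_{\gamma}(\zeta-\mathcal{A}(t))^{-1}\,\dd\zeta
\]
is analytic in $t$ and has constant rank $m$. Solving $U'(t)=[P'(t),P(t)]\,U(t)$, $U(0)=I$, produces an analytic family of isomorphisms mapping $\mathrm{Ran}\,P(0)$ onto $\mathrm{Ran}\,P(t)$; transporting a $b_{0}$-orthonormal basis of $\mathrm{Ran}\,P(0)$ yields an analytic basis of $\mathrm{Ran}\,P(t)$ in which $\mathcal{A}(t)$ is represented by a symmetric $m\times m$ matrix $\mathcal{M}(t)$ with analytic entries. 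Rellich's theorem for analytic families of symmetric matrices then gives analytic eigenvalues $\lambda_{1}(t),\dots,\lambda_{m}(t)$ and an analytic orthonormal system of eigenvectors, from which the normalized eigenfunctions $u_{i}^{t}\in\sH^{5/2}(\Om)$ are recovered by harmonic extension; this proves (1)--(3). Property (4) is immediate, since $P(t)$ captures exactly the eigenvalues inside $\gamma$ and, by continuity, the rest of the spectrum stays outside $\gamma$ for $t$ small, so the $\lambda_{i}(t)$ are the only eigenvalues in $I$.

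The hard part will be the second step: justifying that the degenerate generalized problem is genuinely equivalent to a self-adjoint holomorphic family. This requires care because $B$ is degenerate on $\sH(\Om)$ (it ignores the interior) and because the Dirichlet-to-Neumann operator $\mathcal{D}_{t}$ is nonlocal and pseudodifferential of order one. Reducing everything to $\partial\Om$, checking uniform coercivity, and establishing analyticity of $\zeta\mapsto(\zeta-\mathcal{A}(t))^{-1}$ in operator norm uniformly for $\zeta\in\gamma$ is where the real work lies; once this holomorphy is in hand, the finite-dimensional reduction and the appeal to Rellich's theorem are routine.
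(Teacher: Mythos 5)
Your proposal is correct in substance, but it follows a genuinely different route from the paper's proof, so it is worth comparing the two. Both arguments begin the same way: transport the problem to the fixed boundary via Lemmas \ref{transport:Dirichlet:2:Neumann} and \ref{transport:Laplace:Beltrami:cas:trace}, and record that the transported family $S(t)=-\beta\mathcal{L}_t+\mathcal{D}_t$ (and the Jacobian $\omega_t$) is analytic in $t$. From there you invoke Kato--Rellich holomorphic perturbation theory: Riesz projection $P(t)$ by contour integration, the transformation function $U'=[P',P]U$, reduction to an analytic $m\times m$ symmetric matrix, and Rellich's theorem on analytic symmetric matrix families. The paper instead follows Ortega--Zuazua's treatment of the Stokes system: a Lyapunov--Schmidt reduction using a right inverse $K$ of $S(0)-\lambda$ produces the finite-dimensional determinant equation $\det M(t,\lambda(t)-\lambda)=0$; the Weierstrass preparation theorem factors this determinant into a degree-$m$ Weierstrass polynomial in $\alpha=\lambda(t)-\lambda$ (with analytic coefficients) times a non-vanishing function, whose real roots give the branches; the separate analytic branches and eigenfunctions are then extracted by a deflation argument ($S_2(t)=S(t)-\lambda_1 P_1(t)$, iterated), and item (4) is quoted from Ortega--Zuazua. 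Your route gets (1)--(4) in one package from standard spectral theory, with (4) immediate from the constancy of $\mathrm{rank}\,P(t)$; the paper's route avoids spectral projections entirely and is more self-contained, at the cost of the preparation-theorem and deflation bookkeeping. Two points you should make explicit to close your argument. First, the inner product $b_t$ moves with $t$, so $P(t)$ is $b_t$-orthogonal but not $b_0$-orthogonal, and the basis transported by $U(t)$ does not automatically represent $\mathcal{A}(t)$ by a symmetric matrix; either conjugate the whole family by $\omega_t^{1/2}$ at the outset (which fixes the inner product to that of $\sL^2(\partial\Om)$ and preserves analyticity), or use the version of Rellich's theorem for matrices symmetric with respect to an analytic family of inner products. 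Second, the theorem is stated in the part of the paper where $\beta$ may be negative, in which case the form is not bounded below; this is harmless for your argument (the contour-integral construction only needs $\lambda$ to be an isolated eigenvalue of finite multiplicity and the resolvent to be analytic near the contour), but the parenthetical ``bounded below, type (B)'' should be weakened accordingly.
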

\begin{proof}
Let  $\lambda$ be an eigenvalue of multiplicity $m$ and let  $u_1,\ldots,u_m$ the orthonormal eigenfunctions associated to $\lambda$.
Let  $(\lambda(t),u_t)$  be an eigenpair satisfying 
\begin{equation*}\label{eq:Ventcelt}
({P}_t)
\left\{\begin{array}{cll}
\displaystyle -\Delta u_t &=0&\textrm{ in }\Omega_t,\\
\displaystyle -\beta \Deltat u_t+\partial_{n_t}u_t&=\lambda(t) u_t&\textrm{ on }\partial\Omega_t.
\end{array}\right.
\end{equation*}
Setting $u^t=u_t\circ T_t$, Lemma \ref{transport:Dirichlet:2:Neumann} (transport of the Dirichlet-to-Neumann map) and \ref{transport:Laplace:Beltrami:cas:trace} (transport of the Laplace-Beltrami operator) show that the system $(P_t)$  above is equivalent to  the following equation set on the boundary    
\begin{equation}
(-\beta \mathcal{L}_{t} + \mathcal{D}_{t}) u^{t} =\lambda(t)\omega_t u^{t}\textrm{ on }\partial\Omega.
\end{equation}
Consider the operator $S(t)$ defined \Rd on \Bk $\sH^{3/2}(\partial\Omega)$ by
\begin{equation}
\label{operatorS}
v\mapsto S(t) v=
-\beta \mathcal{L}_{t}v + \mathcal{D}_{t}v  
 \end{equation}
From their expressions computed for example in \cite[\Rd Section 5-2\Bk]{HenrotPierre} and the regularity assumption on $T_{t}$, all the operators $C_t$, $A_t$ and $\omega_t$  are analytic in a neighborhood of $t=0$ .  Since ${ \det(DT_t)} >0$ for $t$ small enough,  we deduce that all the expressions involved in $\mathcal{C}_t $, $\mathcal{L}_{t} $ and  $\mathcal{D}_{t}  $ are analytic in a neighborhood of $t=0$. This enables us to conclude that $S(t)$ is also analytic in a neighborhood of zero.
\par
\noindent
To show that the eigenvalues and the corresponding eigenfunctions are analytic in a neighborhood of zero, we apply the Lyapunov-Schmidt  reduction in order to treat a problem on a finite dimensional space, namely the kernel of $S(0)- \lambda I$.  To that end, we rewrite the problem $(P_t)$   on the fixed domain $\partial \Omega$   as 
$$ S(t)(u^t)-\lambda(t)\omega_t u^t=0.$$
From the decomposition
$$
(S(0)-\lambda)(u^t)=\Big[(S(0)-S(t))+\left[(\lambda(t)-\lambda)\omega_t +\lambda(\omega_t-1)\right]\Big]u^t,
$$
 $u^t$ is solution of the equation
\begin{equation} \label{equation:introductionW}
(S(0)-\lambda )(u^t)= W(t,\lambda(t)-\lambda)u^t,
\end{equation}
where we have set $R(t)=S(0)-S(t)+\lambda(\omega_t-1)$ and $W(t,\alpha)=R(t)+\alpha \omega_{t}I$.
From the Lyapunov-Schmidt Theorem {\ (see \cite[Lemma 3-2, p. 999]{ZuazuaOrtega})}, we obtain that  $S(0)-\lambda$ has a right inverse  operator denoted by $K$. Hence  the equation above implies that 
$ u^t = K    W(t,\lambda(t)-\lambda) u^t + \psi_t $ where $\psi_t \in \textrm{~Ker~}(S(0)-\lambda )$, i.e $\psi_t=\sum_{k=1}^m c_k(t) \phi_k$ where $(\phi_k)$ is a basis of $\textrm{~Ker~}(S(0)-\lambda)$.  
Notice that $I-KW(t,\lambda(t)-\lambda)$ is invertible on $\mathrm{Ker}(S(0)-\lambda I)$, the inverse of his operator restricted to this kernel will be denoted by $(I-KW(t,\lambda(t)-\lambda))^{-1}$ so that 
$$u^t=(I-KW(t,\lambda(t)-\lambda))^{-1} \psi_{t}.$$
From \eqref{equation:introductionW}, $ W(t,\lambda(t)-\lambda) u^t$ belongs to $\mathrm{Im}(S(0)-\lambda )= \mathrm{Ker}^\perp(S(0)-\lambda )$ since $S(0)$ is a Fredholm selfadjoint operator, and then
\begin{equation}\label{vector_c}
\sum_{k=1}^m c_k(t)\langle W(t,\lambda(t)-\lambda)(I-KW(t,\lambda(t)-\lambda))^{-1}\phi_k , \phi_i\rangle=0,~~i=1,2,\ldots,m,
\end{equation}
{\Rd where $\langle\cdot,\cdot\rangle$ denote the scalar product of $L^2(\partial\Om)$.}
This shows that  a  vector of coefficients $C=(c_j)_{j=1,\ldots,m}\neq 0$ is  a solution if and only if the determinant of the $m\times m$ matrix $M(t,\lambda(t)-\lambda)$  with entries 
$$M(t,\alpha)_{i,j}=\langle W(t,\alpha)(I-KW(t,\alpha))^{-1}\phi_j , \phi_i\rangle$$ satisfies $$\det{(M(t,\lambda(t)-\lambda))}=0.$$Hence $\lambda(t)$ is an eigenvalue of our problem if and only if  $\det{(M(t,\lambda(t)-\lambda))}=0$.  Note that $t\mapsto M(t,\lambda(t))$ is analytic around $t=0$. 

For small values of $t$ the operator  $(I-K{W}(t,\alpha))^{-1} $ is well defined since  $I-KW(0,0)=I$ and $t\mapsto (I-K{W}(t,\alpha))^{-1} $ is analytic around $t=0$.  On the other hand,  if $\det{M(t,\alpha)}=0$ then (\ref{vector_c}) has a nontrivial solution $c_1(t),\ldots,c_m(t)$ and this means that $\lambda(t)=\lambda+\alpha$ is an eigenvalue of $(P_t)$. 
\par

We focus now on $\det{M(t,\alpha)}$ for $\alpha\in\mathbb{R}$. From the fact that ${W}(0,\alpha)=\alpha I$, it comes that for sufficiently small values of $\alpha$, the operator  
$ I-K W(0,\alpha)$ is invertible on $\mathrm{Ker}(S(0)-\lambda I)$ and from the Von Neumann expansion we write 
$$
\langle W(0,\alpha)(I-K  W(0,\alpha))^{-1}\phi_i,\phi_j\rangle=\alpha\Big[ \delta_{ij}+\sum_{k=1}^\infty \alpha^{k}\langle K^k\phi_i,\phi_j\rangle\Big];
$$
hence 
$$
\det{(M(0,\alpha))}=\alpha^m +\sum_{i=1}^\infty \beta_{i}\alpha^{m+i}=\alpha^m(1+\sum_{i=1}^\infty \beta_{i}\alpha^{i}).
$$
Since $\det{(M(0,\alpha))} \neq 0$ is the restriction on $t=0$ of $\det{(M(t,\alpha))}$, we deduce from the Weierstrass preparation theorem    that there is neighborhood of  $(0,0)$ such that $\det{(M(t,\alpha))}$ is uniquely representable as 
$$
\det{(M(t,\alpha))}= P_m (t,\alpha) h(t,\alpha)
$$
where $$P_m(t,\alpha)=\alpha^m+\sum_{k=1}^m a_k(t)\alpha^{m-k}$$ and where $$h(t,\alpha)\neq 0.$$ Furthermore, the coefficients $a_k(t),~k=1,\ldots,m$ are real  and  analytic in a neighborhood of $t=0$. Then $\det{(M(t,\alpha))}=0$ if and only if $P_m(t,\alpha)=0$. If $\alpha_k(t),~k=1,\ldots,m$ are the real roots of the polynomial, we take $\lambda_{1}(t)=\lambda+\alpha_1(t)$ if $\alpha_1(t)$ is not identically equal to zero. 

We now have to find the $(m-1)$ other branches $\lambda_i(t)$ and the corresponding eigenfunction $u_{i,t}$ for $i=2,\ldots,m$.  
We use the idea of the deflation method by considering the operator 
$$
S_2(t)=S(t)-\lambda_1 P_1(t)
$$
where $P_1$ is the orthogonal projection on the subspace  spanned by $u_{1,t} $.  At $ t=0$, we obtain 
$$
S_2(0)u_j=S(0)u_j-\lambda \delta_{1j} u_j 
$$
in other terms  $S_2(0)u_j=\lambda u_j,~j=2,\ldots,m$ while $S_2(0)u_1=0$. This shows  that $\lambda$ is an eigenvalue of multiplicity $m-1$ of $S_2(0)$ with eigenvalues $u_{2},\ldots,u_{m}$.  One can show that these functions  are the only linearly independent eigenfunctions associated to $\lambda$. Now  we can apply the same recipe used before to the operator $S_2$ instead of $S$. We then get a branch $\lambda_2(t)$ such that $t\mapsto \lambda_2(t)$ is analytic in a neighborhood of $t=0$.  Iterating the process, we get at the end the $m-$ branches $\lambda_i(t),~i=1,\ldots,m$ such that each branch is analytic in a neighborhood of $t=0$ and $m$ corresponding eigenfunctions forming  an orthonormal set  of functions in $H^{\frac{3}{2}}(\partial\Om_t)$. 
\par
\noindent
The proof of the  last item follows the same lines than the proof of Ortega and Zuazua for the Stokes system, see \cite{ZuazuaOrtega}. 
\end{proof}

{\Rd \begin{theorem}\label{th:u'}
With the notations of Theorem \ref{analyticity}, if $t\mapsto (\lambda(t),u_{t})$ is one of the smooth eigenpair path $(\lambda_{i}(t),u_{i,t})$ of $\Om_{t}$ for  the {\Be Wentzell} problem, then the shape derivative $u'={\left(\partial_{t}u_{t}\right)}_{|t=0}$ of the eigenfunction satisfies
\begin{align}\label{shape_derivative}
\Delta u' =&\; 0 \textrm{ in }\Omega, \nonumber\\
-\beta \Deltat u'+\partial_n{u'} - \lambda u' =&\; \beta \Deltat (V_n \partial_n u) -\beta \divet\big(V_n(2D^2b-H I_{d})\nablat u\big)\nonumber \\
&\:\:\;+ \divet(V_n\nablat u)  -\lambda'(0) u+\lambda  V_n( \partial_nu+ H u) \textrm{ on }\partial\Omega.
\end{align}
\end{theorem}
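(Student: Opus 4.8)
The plan is to differentiate at $t=0$ the transported boundary equation
$$(-\beta\mathcal{L}_{t}+\mathcal{D}_{t})\,u^{t}=\lambda(t)\,\omega_{t}\,u^{t}\qquad\textrm{on }\partial\Omega,$$
obtained in the proof of Theorem \ref{analyticity}, where $u^{t}=u_{t}\circ T_{t}$ lives on the fixed boundary $\partial\Omega$. By that theorem every object appearing here ($t\mapsto\mathcal{L}_{t}$, $t\mapsto\mathcal{D}_{t}$, $t\mapsto\omega_{t}$, $t\mapsto u^{t}$ and $t\mapsto\lambda(t)$) is real-analytic in a neighbourhood of $t=0$, so the differentiation is licit and will produce the boundary line of \eqref{shape_derivative}. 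For the interior line I would argue separately: on any compact $K\subset\Omega$ the functions $u_{t}$ are harmonic for $t$ small and $t\mapsto {u_{t}}_{|K}$ is analytic, so differentiating $\Delta u_{t}=0$ and letting $K\uparrow\Omega$ gives $\Delta u'=0$ in $\Omega$.

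Using $\mathcal{L}_{0}=\Deltat$, $\mathcal{D}_{0}=\D$ and $\omega_{0}=1$, differentiation of the transported equation yields, with $\dot u=\left(\partial_{t}u^{t}\right)_{|t=0}$ the material derivative and $\mathcal{L}_{0}',\mathcal{D}_{0}',\omega_{0}'$ the $t$-derivatives of the operators at $0$,
$$-\beta\Deltat\dot u+\D\dot u-\lambda\dot u=\lambda'(0)\,u+\lambda\,\omega_{0}'\,u+\beta\,\mathcal{L}_{0}'u-\mathcal{D}_{0}'u.$$
The heart of the proof is to evaluate the operator derivatives on the unperturbed eigenfunction $u$. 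For this I would start from the explicit formula \eqref{definition:L(h)} for $\mathcal{L}_{t}$ and from Lemma \ref{transport:Dirichlet:2:Neumann} for $\mathcal{D}_{t}$, insert the elementary expansions at $t=0$ of the geometric coefficients, namely $\left(\partial_{t}DT_{t}\right)_{|t=0}=D\V$, $A_{0}'=-(D\V+D\V^{T})$, $\omega_{0}'=\divet\V$ and $C_{0}'=(\divet\V)I_{d}-(D\V+D\V^{T})$ (recorded in \cite{HenrotPierre}), and expand. This expresses $\mathcal{L}_{0}'u$ and $\mathcal{D}_{0}'u$ in terms of $\V$, $u$ and the first and second tangential derivatives of $u$ along $\partial\Omega$.

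It then remains to pass from the material derivative to the shape derivative through $\dot u=u'+\nabla u\cdot\V$, which on $\partial\Omega$ reads $\dot u=u'+V_{n}\partial_{n}u+\nablat u\cdot\V_{\tau}$ with $\V_{\tau}$ the tangential part of $\V$, to use $\D u'=\partial_{n}u'$ (valid since $u'$ is harmonic), and to invoke the eigenvalue relation $-\beta\Deltat u+\partial_{n}u=\lambda u$ from \eqref{Steklov_Ventcel} to eliminate the lowest-order terms. Rewriting the second fundamental form through the Hessian $D^{2}b$ of the signed distance and $H=\Tr(D^{2}b)$, the $\beta$-contributions should reorganise into $\beta\Deltat(V_{n}\partial_{n}u)-\beta\divet\!\big(V_{n}(2D^{2}b-HI_{d})\nablat u\big)$, the Dirichlet-to-Neumann contribution into $\divet(V_{n}\nablat u)$, and the right-hand side together with $\omega_{0}'=\divet\V$ into $-\lambda'(0)u+\lambda V_{n}(\partial_{n}u+Hu)$, which is exactly the right-hand side of \eqref{shape_derivative}.

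The main obstacle is the tangential-calculus bookkeeping in the last two steps. Two points deserve particular care. First, turning the raw matrix expressions $A_{0}'$ and $C_{0}'$ into geometric quantities requires the identities relating the restriction of $D\V$ to the tangent bundle with $D^{2}b$ and $H$, together with careful manipulation of $\divet$ of the resulting tangential fields. Second, although $\dot u$ and the nonlocal term $\D\dot u$ depend on the full field $\V$ (note that $\nabla u\cdot\V$ is not harmonic, so its Dirichlet-to-Neumann image must be reconciled with $\mathcal{D}_{0}'u$ to recover a \emph{local} expression), the final formula must depend on $V_{n}$ alone; one must therefore check that every contribution of the tangential component $\V_{\tau}$ cancels, in agreement with the Hadamard structure theorem. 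This cancellation, which forces the answer to be local and normal-component-dependent only, is the cleanest consistency check on the whole computation.
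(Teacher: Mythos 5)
Your route is genuinely different from the paper's. The paper never differentiates the transported operators: it writes the weak formulation of the eigenvalue problem on the \emph{moving} boundary $\partial\Om_t$, tests against a function whose normal derivative vanishes, and differentiates the three boundary integrals at $t=0$ using ready-made Hadamard-type formulas for $\frac{d}{dt}\int_{\partial\Om_t}\nablat u\cdot\nablat\phi_t\,d\sigma_t$ (quoted from the references \cite{DesaintZolesio} and \cite{CaubetDambrineKateb}), then reads off the strong form. You instead differentiate the strong transported equation $(-\beta\mathcal{L}_t+\mathcal{D}_t)u^t=\lambda(t)\omega_t u^t$ on the \emph{fixed} boundary and then convert the material derivative $\dot u$ into the shape derivative $u'$. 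Your route makes the differentiation itself immediate (everything is analytic on a fixed space by Theorem \ref{analyticity}), at the price of having to compute the operator derivatives $\mathcal{L}_0'$ and $\mathcal{D}_0'$; the paper's route avoids $\mathcal{D}_0'$ entirely but must invoke the boundary-integral differentiation formulas. Both are standard and both can be completed.

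Two substantive issues remain in your write-up. First, the heart of the computation is deferred: $\mathcal{D}_0'u$ is \emph{not} obtained by ``inserting coefficient expansions'', because $\mathcal{D}_t$ (Lemma \ref{transport:Dirichlet:2:Neumann}) is defined through the solution $v^t$ of an auxiliary boundary value problem with coefficient $\tilde A_t$; its $t$-derivative therefore involves the derivative of $v^t$ and is nonlocal. You correctly observe that it must recombine with $\D(\nabla u\cdot\V)$ into a local expression depending on $V_n$ only, but that reconciliation is exactly the hard part of this route, and it is asserted rather than carried out. Second, and more concretely: in your intermediate identity the term $\lambda'(0)u$ sits on the right-hand side with a \emph{plus} sign, and the substitution $\dot u=u'+\nabla u\cdot\V$ never touches it (no other term involves $\lambda'$), so your route necessarily ends with $+\lambda'(0)u$ on the right-hand side; the claim that everything ``reorganises into $-\lambda'(0)u+\lambda V_n(\partial_n u+Hu)$, exactly the right-hand side of \eqref{shape_derivative}'' therefore contains an unexplained sign flip. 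In fact the plus sign is the consistent one: pairing \eqref{shape_derivative} as printed with $u$, using the self-adjointness of $\Deltat$, tangential integration by parts, and the boundary condition $-\beta\Deltat u+\partial_n u=\lambda u$, returns the \emph{negative} of the formula for $\lambda'(0)$ of Theorem \ref{derivee:vp:simple} (one can also test the printed equation with $\beta=0$, $\Om$ a ball and the dilation field $V_n\equiv 1$: the left side vanishes while the right side does not). So the sign bookkeeping in the last step must be traced through honestly — had you done so, you would have found a discrepancy with the statement as printed rather than an exact match, which is a sign the final reorganisation was never actually performed.
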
}
\begin{proof}
The fact that $u'$ is harmonic inside the domain is trivial. To derive the boundary condition satisfied by $u'$, we use a test function $\phi_t$ defined on $\partial\Omega_{t}$ with $\partial_{n}\phi_{t}=0$ as used in the proof of Lemma \ref{transport:Dirichlet:2:Neumann} and \ref{transport:Laplace:Beltrami:cas:trace} in \cite{DK2012}.  We get the following weak formulation valid for all $t$ small enough:
$$
\displaystyle \int_{\partial\Omega_t}\beta \nablat u(t,x).\nablat \phi_t~d\sigma_t+\int_{\partial\Omega_t} \partial_{n_t}{}u(t,x)~ \phi_t~d\sigma_t-\lambda(t) \int_{\partial\Omega_t} u(t,x) \phi_t~d\sigma_t =0. 
$$
We take the derivative with respect to $t$ and get at $t=0$: 
\begin{equation*}
{\Rd \beta}\frac{d}{dt}\Big(\restriction{ \int_{\partial\Omega_t} \nablat u(t,x) .\nablat \phi_t~d\sigma_t \Big)}{t=0}  +\frac{d}{dt}\Big(\restriction{\int_{\partial\Omega_t} \partial_{n_t}{u(t,x)} \phi_t~d\sigma_t \Big)}{t=0}=\frac{d}{dt}\Big(\restriction{\lambda(t) \int_{\partial\Omega_t}  u(t,x)~\phi_t(x) ~d\sigma_t\Big)}{t=0}.
\end{equation*}
{From \cite{DesaintZolesio} and \cite{CaubetDambrineKateb}, we get 
\begin{equation*}\displaystyle\frac{d}{dt}\Big(\restriction{\displaystyle \int_{\partial\Omega_t} \nablat u(t,x) . \nablat \phi_t~d\sigma_t\Big)}{t=0}=  \int_{\partial\Omega} \Big( -\Deltat u' -\Deltat(V_n \partial_{n}u ) +\divet \left((2D^2b - H I_{d})\nablat u\right)\Big) \phi ~d\sigma.
\end{equation*}
}
After some lengthy but straightforward computations we also obtain 
\begin{equation*}
\cfrac{d}{dt}\restriction{\Big(\int_{\Omega_t} \partial_{n_t}{}u ~\phi_t ~d\sigma_t\Big)}{t=0}
=\displaystyle \int_{\partial\Omega} \partial_n{} u'{}~\phi~d\sigma - \int_{\partial\Omega} \nablat V_n.\nablat u{ } \phi~d\sigma+ \int_{\partial\Omega}V_n \Big(\partial_n u+H u \Big)~\phi~d\sigma
\end{equation*}
and 
\begin{align*}
\frac{d}{dt}\restriction{\Big(\displaystyle \int_{\partial\Omega_t} \lambda(t) u_t \phi_t~d\sigma_t\Big)}{t=0} =&
\displaystyle \lambda'(0)\int_{\partial\Omega} u ~\phi~d\sigma \\
&+ \lambda \int_{\partial\Omega} u' \phi ~d\sigma+ \lambda\int_{\partial\Omega} \partial_{n}{}u~ \phi~d\sigma + \lambda
\int_{\partial\Omega}H{}u \phi~d\sigma.
\end{align*}
To end the proof of this second point, it suffices to gather the relations.
\end{proof}

\subsection{Shape derivative of simple eigenvalues of the {\Be Wentzell}-Laplace  problem}

Let $\lambda$ be a simple  eigenvalue of the {\Be Wentzell}-Laplace equation (\ref{Steklov_Ventcel}) and let  $u$ be the corresponding normalized eigenfunction. We give in this subsection the explicit formula for  the shape derivative of the  eigenvalue of the {\Be Wentzell}-Laplace operator associated to \eqref{Steklov_Ventcel}. 

On $\Omega_t=(I+t\V)(\Omega)$ with $t$ small,  there is  a unique eigenvalue $\lambda(t)$ near $\lambda$ which is an analytic function with respect of the parameter $t$. The associated eigenfunction { $u_{t}(x)=u(t,x)$} is solution of the problem \eqref{Steklov_Ventcel}. The shape derivative denoted  $u'$ is the partial derivative $\partial_{t}u(t,x)$ evaluated at $t=0$ {\Rd and solves \eqref{shape_derivative}}.  Let us deduce the analytic expression of $\lambda'(0)$:

{\Rd \begin{theorem}
	\label{derivee:vp:simple}
If $(\lambda,u)$ is an eigenpair (with $u$ normalized) for  the {\Be Wentzell} problem with the additional assumption that $\lambda$ is simple then  
the  application $t \rightarrow \lambda(t) $ is  analytic and its derivative at $t=0$ is 
$$
\lambda'(0)=\int_{\partial\Omega}V_n\Big( \vert  \nablat u\vert ^2-\vert \partial_{n}{}u \vert ^2 -\lambda H \vert  u\vert ^2+\beta  (H~I_{d}-2D^2b)\nablat u.\nablat u
\Big)~d\sigma.
$$
\end{theorem}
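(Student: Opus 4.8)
The plan is to deduce $\lambda'(0)$ from the equation satisfied by the shape derivative $u'$ of the eigenfunction, established in Theorem~\ref{th:u'}, combined with the self-adjointness of the Wentzell operator. Since $\lambda$ is assumed simple, the multiplicity is $m=1$ and Theorem~\ref{analyticity} already supplies a single analytic branch $t\mapsto\lambda(t)$ with $\lambda(0)=\lambda$; this settles the analyticity, so only the value $\lambda'(0)$ remains. Write $\mathcal{W}v:=-\beta\Deltat v+\partial_n v$ for the operator in the boundary condition (with $\partial_n v$ read as the Dirichlet-to-Neumann trace of the harmonic extension of $v$), and let $\langle\cdot,\cdot\rangle$ be the $\sL^2(\partial\Om)$ scalar product. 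The eigenpair satisfies $\mathcal{W}u=\lambda u$ and $\int_{\partial\Om}u^2=1$, while by Theorem~\ref{th:u'} the shape derivative solves $\mathcal{W}u'-\lambda u'=G$ on $\partial\Om$, where
$$
G=\beta\Deltat(V_n\partial_n u)-\beta\divet\big(V_n(2D^2 b-H I_{d})\nablat u\big)+\divet(V_n\nablat u)-\lambda'(0)\,u+\lambda V_n(\partial_n u+Hu),
$$
and both $u$ and $u'$ are harmonic in $\Om$.

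First I would record that $\mathcal{W}$ is self-adjoint on $\sL^2(\partial\Om)$: the Laplace--Beltrami operator is self-adjoint on the closed surface $\partial\Om$, and the Dirichlet-to-Neumann map is self-adjoint because, $u$ and $u'$ being harmonic, Green's second identity gives $\int_{\partial\Om}(u\,\partial_n u'-u'\,\partial_n u)=\int_{\Om}(u\Delta u'-u'\Delta u)=0$. Pairing the $u'$-equation with $u$ then yields the Fredholm solvability relation
$$
\langle G,u\rangle=\langle \mathcal{W}u'-\lambda u',u\rangle=\langle u',\mathcal{W}u\rangle-\lambda\langle u',u\rangle=\lambda\langle u',u\rangle-\lambda\langle u',u\rangle=0 .
$$
The point of this step is that every unknown contribution coming from $u'$ disappears, and one is left with the scalar identity $\langle G,u\rangle=0$, which contains $\lambda'(0)$ linearly through the term $-\lambda'(0)\,u$.

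It then remains to expand $\langle G,u\rangle=0$. I would integrate the first three terms of $G$ by parts on the closed manifold $\partial\Om$ (no boundary terms appear), using $\int_{\partial\Om}(\Deltat\psi)\,u=\int_{\partial\Om}\psi\,\Deltat u$ and $\int_{\partial\Om}\divet(X)\,u=-\int_{\partial\Om}X\cdot\nablat u$, and then substitute the eigenvalue relation $\beta\Deltat u=\partial_n u-\lambda u$ to remove $\Deltat u$. The normalization $\int_{\partial\Om}u^2=1$ turns $-\lambda'(0)u$ into $-\lambda'(0)$, so solving $\langle G,u\rangle=0$ for $\lambda'(0)$ produces an integral over $\partial\Om$ of $V_n$ against the combination of $|\nablat u|^2$, $|\partial_n u|^2$, $\lambda H u^2$ and $\beta(H I_d-2D^2 b)\nablat u\cdot\nablat u$, the mixed terms $\lambda V_n u\,\partial_n u$ cancelling in pairs; this is the announced formula. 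The main obstacle is precisely this last expansion: the sign and curvature bookkeeping in the tangential integrations by parts must be carried out carefully on the manifold, the second fundamental form factor $H I_d-2D^2 b$ tracked through $\divet\big(V_n(2D^2 b-H I_d)\nablat u\big)$, and one must check that the regularity of $u$ and $u'$ (namely $u\in H^{5/2}$, from Theorem~\ref{analyticity}) justifies all these integrations by parts.
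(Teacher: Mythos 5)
Your proposal is correct and follows essentially the same route as the paper's proof: pair the boundary equation for $u'$ (from Theorem \ref{th:u'}) with the eigenfunction $u$, use the self-adjointness of the Wentzell operator (equivalently, Green's identity plus the eigen-equation $-\beta\Deltat u+\partial_n u-\lambda u=0$) so that all contributions of $u'$ vanish, then expand the remaining terms by tangential integration by parts, substitute the eigen-equation, and invoke the normalization $\int_{\partial\Om}u^2=1$, with the mixed terms $\lambda V_n u\,\partial_n u$ cancelling exactly as you describe. The analyticity is likewise handled as in the paper, by citing the simple-eigenvalue case of Theorem \ref{analyticity}.
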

}
\begin{proof}
We start with the result of Theorem \ref{th:u'}. Let us  multiply the two sides of  \eqref{shape_derivative} the boundary condition satisfied by $u'$ by the eigenfunction $u$ and integrate over the boundary $\partial\Omega$: 
\begin{eqnarray*}
&0=&\displaystyle\int_{\partial\Omega}  v'(-\beta \Deltat u +\partial_n{}u-\lambda u)~d\sigma+\int_{\partial\Omega} V_n \partial_n{}u(-\beta \Deltat u)~d\sigma \\[3pt]
&&+\displaystyle\int_{\partial\Omega}\beta V_n(H I_{d}-2D^2b)\nablat u.\nablat u~d\sigma +\displaystyle\int_{\partial\Omega} V_n |\nablat u\vert ^2-\lambda'(0) \int_{\partial\Omega}\vert  u\vert ^2\\&&-\lambda\int_{\partial\Omega}V_n\Big(  u\partial_n{u}  +H \vert  u\vert ^2\Big)~d\sigma.
\end{eqnarray*}
Using the boundary condition satisfied by the eigenfunction: $-\beta \Deltat u +\partial_n u-\lambda u=0$, it follows that 
\begin{eqnarray*}
0=\displaystyle\int_{\partial\Omega} V_n \partial_n{}u(\lambda u-\partial_n{}u)~d\sigma 
+\int_{\partial\Omega}\beta V_n(H I_{d}-2D^2b)\nablat u.\nablat u)~d\sigma\\[3pt]
+\displaystyle\int_{\partial\Omega} V_n \nablat \vert   u\vert ^2-\lambda'(0) \int_{\partial\Omega}\vert  u\vert ^2-\lambda\int_{\partial\Omega}V_n\Big(  u\partial_n{u}  +H \vert  u\vert ^2\Big)~d\sigma.
\end{eqnarray*}
and the normalization condition  $\displaystyle\int_{\partial\Omega} u^2~d\sigma=1 $ implies 
\begin{eqnarray*}
\lambda'(0)=-\displaystyle\int_{\partial\Omega} V_n \vert  \partial_n{}u\vert ^2~d\sigma 
+\int_{\partial\Omega}\beta V_n(H I_{d}-2D^2b)\nablat u.\nablat u~d\sigma\\[3pt]
+\displaystyle\int_{\partial\Omega} V_n \vert  \nablat  u\vert ^2-\lambda \int_{\partial\Omega}V_n ~ H \vert  u\vert ^2~d\sigma.
\end{eqnarray*}
\end{proof}

\subsection{Shape derivative of multiple  eigenvalues of the {\Be Wentzell}-Laplace  problem}

\subsubsection{The general result}

We suppose that $\lambda$ is an eigenvalue of multiplicity $m$. For smooth deformation $t\mapsto\Omega_t$, there will be  $m$ eigenvalues close to $\lambda$ (counting their multiplicities) for small values of $t$. We know that such a  multiple eigenvalue is no longer differentiable in the classical sense. We are then led to compute the directional derivative of $t\mapsto \lambda_{i}(t)$ at $t=0$ where $\lambda_{i}(t), {j=1,\ldots,m}$ are given by Theorem \ref{analyticity}.
This is the second part of Theorem \ref{Theoreme:gradient:cas} that we recall here:

\begin{theorem} 
\label{theorem:gradient:vp:multiple}
Let $\lambda$ be a multiple eigenvalue of order $m\ge 2$. {\Be Then each $t\mapsto \lambda_i(t)$ for $i\in\llbracket 1,d\rrbracket$ given by Theorem \ref{analyticity} has a derivative near 0, and the values of $(\lambda_i'(0))_{i\in\llbracket1,d\rrbracket}$ are the eigenvalues} of the matrix $M(V_{n})=(M_{jk})_{1\le j,k\le m}$ defined by 
\begin{equation}\label{eq:M}
M_{jk}=\displaystyle \int_{\partial\Omega} V_n\Big(\nablat u_j.\nablat u_k-\partial_n{}u_j \partial_n{}u_k -\lambda H u_ju_k+ \beta\left(HI_{d}-2D^2 b\right)\nablat u_j.\nablat u_k \Big)~d\sigma.
\end{equation}
\end{theorem}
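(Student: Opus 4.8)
The plan is to reduce the statement to the simple-eigenvalue computation of Theorem \ref{derivee:vp:simple}, by exploiting that the analytic branches furnished by Theorem \ref{analyticity} automatically provide a basis of the eigenspace in which the symmetric bilinear form encoded by $M(V_n)$ is diagonal. Denote by $E_\lambda$ the $m$-dimensional eigenspace of $\lambda$ and fix the orthonormal basis $(u_j)_{j=1,\ldots,m}$ used in \eqref{eq:M}. By Theorem \ref{analyticity}, for the fixed field $\V$ there exist analytic branches $t\mapsto(\lambda_i(t),u_{i,t})$, $i=1,\ldots,m$, with $\lambda_i(0)=\lambda$, whose traces $\hat u_i:=u_{i,0}$ form an orthonormal basis of $E_\lambda$. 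Since $(\hat u_i)$ and $(u_j)$ are two orthonormal bases of $E_\lambda$, there is an orthogonal matrix $P=(P_{ik})$ with $\hat u_i=\sum_{k=1}^m P_{ik}u_k$. Because the entries $M_{jk}$ in \eqref{eq:M} are the values of a fixed symmetric bilinear form $q_{V_n}$ on $E_\lambda$, it suffices to prove that on the branch basis $q_{V_n}(\hat u_i,\hat u_j)=\lambda_i'(0)\,\delta_{ij}$: this reads $P\,M(V_n)\,P^T=\mathrm{diag}(\lambda_1'(0),\ldots,\lambda_m'(0))$, whence, $P$ being orthogonal, the eigenvalues of $M(V_n)$ are exactly the $\lambda_i'(0)$.

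The branches being analytic, each eigenfunction admits a shape derivative $u_i':=(\partial_t u_{i,t})_{|t=0}$, and Theorem \ref{th:u'} applies to each branch: $u_i'$ is harmonic in $\Om$ and satisfies on $\partial\Om$ the boundary equation \eqref{shape_derivative} with $u$ replaced by $\hat u_i$ and $\lambda'(0)$ by $\lambda_i'(0)$. I would then mimic the proof of Theorem \ref{derivee:vp:simple}: multiply this boundary equation for $u_i'$ by the eigenfunction $\hat u_j$ and integrate over $\partial\Om$. The left-hand side $\int_{\partial\Om}\hat u_j\bigl(-\beta\Deltat u_i'+\partial_n u_i'-\lambda u_i'\bigr)\,d\sigma$ equals, after integrating the surface Laplacian by parts and using the self-adjointness of the Dirichlet-to-Neumann map $\D$, the expression $\int_{\partial\Om}u_i'\bigl(-\beta\Deltat\hat u_j+\partial_n\hat u_j-\lambda\hat u_j\bigr)\,d\sigma$, which vanishes since $\hat u_j$ is itself a Wentzell eigenfunction for $\lambda$. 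This is the crucial point handling the cross terms ($i\neq j$): self-adjointness of $v\mapsto-\beta\Deltat v+\D v$ lets us test the $u_i'$-equation against another genuine eigenfunction and eliminate $u_i'$ entirely.

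It remains to transfer the derivatives appearing in the right-hand side of \eqref{shape_derivative} onto $\hat u_j$ by integration by parts on $\partial\Om$, exactly as in the simple case. The terms carrying $\lambda V_n\,\partial_n\hat u_i$ cancel against one another as they do in Theorem \ref{derivee:vp:simple}, the contribution $-\lambda_i'(0)\hat u_i$ produces $-\lambda_i'(0)\delta_{ij}$ by $L^2(\partial\Om)$-orthonormality of $(\hat u_i)$, and collecting the surviving contributions (coming from $\divet(V_n\nablat\hat u_i)$, $\beta\Deltat(V_n\partial_n\hat u_i)$, the curvature term $\beta\divet\bigl(V_n(2D^2b-HI_{d})\nablat\hat u_i\bigr)$, and $\lambda V_n H\hat u_i$) yields precisely $0=\widehat M_{ij}-\lambda_i'(0)\delta_{ij}$, where $\widehat M_{ij}$ is the integral in \eqref{eq:M} with the pair $(u_j,u_k)$ replaced by $(\hat u_i,\hat u_j)$. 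Equivalently $q_{V_n}(\hat u_i,\hat u_j)=\lambda_i'(0)\delta_{ij}$, which is the identity announced in the first paragraph, and the conclusion follows.

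The main obstacle I anticipate is bookkeeping rather than conceptual: one must verify that the formula of Theorem \ref{th:u'}, derived along a single smooth branch, applies to each of the $m$ branches simultaneously (guaranteed by the analyticity of Theorem \ref{analyticity}), and carry the integrations by parts with the correct signs for the operator $HI_{d}-2D^2b$. The only genuinely new feature compared with the simple case is that the branch eigenfunctions $\hat u_i$ diagonalize $q_{V_n}$; the vanishing of the off-diagonal entries $\widehat M_{ij}$ for $i\neq j$ is not assumed but emerges from the computation, confirming that Theorem \ref{analyticity} selects precisely the diagonalizing basis.
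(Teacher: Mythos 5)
Your proposal is correct and takes essentially the same route as the paper: the paper's proof also applies Theorem \ref{th:u'} to a smooth branch, multiplies the boundary equation for $u'$ by an eigenfunction of $\lambda$ so that harmonicity (Green's identity) and self-adjointness of $\Deltat$ eliminate $u'$, and reads off the matrix relation after integration by parts. The only cosmetic difference is that you test against the branch basis $(\hat u_i)$ and obtain the diagonalization $P\,M(V_n)\,P^T=\mathrm{diag}(\lambda_1'(0),\ldots,\lambda_m'(0))$ directly, whereas the paper decomposes each branch eigenfunction as $u(0,\cdot)=\sum_j c_j u_j$ and tests against the fixed basis $(u_k)$ to get the equivalent eigenvalue equation $M c=\lambda'(0)c$.
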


\begin{proofof}{Theorem \ref{theorem:gradient:vp:multiple}}
Let  $t\mapsto(u(t,x),\lambda(t)=\lambda(\Omega_t))$ a smooth path of eigenpair of the Laplace-{\Be Wentzell} problem, so that it satisfies
\begin{equation*}
\left\{
\begin{array}{rlll}
\Delta u(t,x)&=&0&~\textrm{~~in~~}  \Omega_t\\
-\beta \Deltat u(t,x)+\partial_n{} u(t,x)&=&\lambda(t) u(t,x)&~\textrm{~~on~}~\partial\Omega_t.
\end{array}
\right.
\end{equation*}
 We have proved that  $u'=\partial_{t} u(0,x)$ is harmonic in $\Omega$ and  satisfies  the boundary condition (\ref{shape_derivative}) on $\partial\Omega$. 
We use the decomposition of $u=u(0,x)$ as 
 \begin{equation*}
 u=\sum_{j=1}^m c_j u_j
 \end{equation*}
for some $c=(c_1,c_2,\ldots,c_m)^T\neq 0$. Multiplying the two sides equation of  $ (\ref{shape_derivative}) $ by $u_k$, we get  after some integration by parts the eigenvalue equation 
$$
\lambda'(0)c= Mc
$$
where $M=(M_{jk})_{1\le i,j\le m}$ is defined by \eqref{eq:M}.
From this, we deduce that the set of derivatives $(\lambda_{i}'(0))_{i\in\llbracket 1,d\rrbracket}$ is exactly the set of eigenvalues of the matrix $M$, which achieves the proof of Theorem \ref{theorem:gradient:vp:multiple}.
\end{proofof}

\subsubsection{The case of balls} 

We consider now the case where the domain is a ball of  radius $R$. The problem is invariant under translation. In order to remove the invariance, we fix the center of mass of the  boundary of the domain,  as in Section \ref{cas:beta:positif}. 

{\Rd
The coordinates functions $x_i$ are  eigenfunctions of the {\Be Wentzell}-Laplace operator, so we get 
$$
\lambda =\cfrac{\beta(d-1)+R}{R^2}, \textrm{ and }u_i(x)=\cfrac{x_i}{\parallel x_i\parallel_{L^2(\partial B_{R})}}=\frac{x_{i}}{\sqrt{\omega_{d}R^{d+1}}}.$$ 

\begin{corollary}
	\label{cor:derivees:premieres:disque}
Let $\Omega=B_{R}$ be a ball of radius $R$, $\lambda_{1}$ its first non-trivial eigenvalue, which is of multiplicity $d$. The shape derivatives of the maps $t\mapsto\lambda_{i}(t)$, $i=1,\ldots, d$ given by Theorem \ref{analyticity} are the eigenvalues of the matrix $M_{B_{R}}(V_{n})=(M_{jk})_{j,k=1,\ldots,d}$ defined by
\begin{equation}\label{expression:Mij}
 M_{jk}=\frac{\delta_{jk}}{\omega_{d}R^{d+1}}\left(1+\beta\frac{d-3}{R}\right)\int_{\partial B_{R}}V_{n}-C(d,R)\displaystyle \int_{\partial B_{R}} V_n~x_{j}x_{k}~d\sigma
\end{equation}
where $C(d,R)=\frac{(d+1)(1+\beta\frac{d-2}{R})}{\omega_{d}R^{d+3}}$.
\end{corollary}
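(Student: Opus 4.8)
The plan is to specialize the matrix $M(V_n)$ of Theorem \ref{theorem:gradient:vp:multiple}, whose entries are given by \eqref{eq:M}, to the sphere $\partial B_R$ and to the explicit normalized eigenfunctions $u_i(x)=x_i/N$ with $N^2=\omega_d R^{d+1}=\|x_i\|_{L^2(\partial B_R)}^2$. Everything reduces to an explicit pointwise computation on $\partial B_R$ followed by an integration against $V_n$, so no new analytic input is needed beyond \eqref{eq:M}. First I would record the geometric data of the sphere: the outward normal is $\n=x/R$, so its $i$-th component is $n_i=x_i/R$; the mean curvature is $H=(d-1)/R$; and the Hessian of the signed distance function is $D^2 b=\tfrac1R(I_d-\n\otimes\n)$ on $\partial B_R$.

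Next I would compute the four ingredients appearing in \eqref{eq:M}. Since $\nabla u_i=e_i/N$, one gets $\nablat u_i=\tfrac1N(e_i-n_i\n)$, hence $\nablat u_j\cdot\nablat u_k=\tfrac{1}{N^2}(\delta_{jk}-n_jn_k)$; similarly $\partial_n u_i=n_i/N$ yields $\partial_n u_j\,\partial_n u_k=\tfrac{1}{N^2}n_jn_k$, and $u_ju_k=x_jx_k/N^2$, with $n_jn_k=x_jx_k/R^2$ throughout. The one term requiring care is the anisotropic one: from the expression of $D^2 b$ above one finds $HI_d-2D^2b=\tfrac{d-3}{R}I_d+\tfrac{2}{R}\,\n\otimes\n$, and the key observation is that $\n\otimes\n$ annihilates tangential vectors, so $(HI_d-2D^2b)\nablat u_j=\tfrac{d-3}{R}\nablat u_j$. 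Consequently the fourth term of \eqref{eq:M} collapses to a scalar multiple of the first, namely $\beta(HI_d-2D^2b)\nablat u_j\cdot\nablat u_k=\beta\tfrac{d-3}{R}\tfrac{1}{N^2}(\delta_{jk}-n_jn_k)$. This cancellation is the crux of the computation, and I expect it to be the only nonroutine step.

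Finally I would gather the four contributions inside the bracket of \eqref{eq:M}, separating the part proportional to $\delta_{jk}$ from the part proportional to $x_jx_k$. The $\delta_{jk}$-coefficient is immediately $\tfrac{1}{N^2}\bigl(1+\beta\tfrac{d-3}{R}\bigr)$, which produces the first term of \eqref{expression:Mij} after integration against $V_n$. For the $x_jx_k$-coefficient I would substitute $\lambda=(\beta(d-1)+R)/R^2$ and $H=(d-1)/R$ into $-\tfrac{2}{R^2}-\lambda H-\beta\tfrac{d-3}{R^3}$; reducing to the common denominator $R^3$ and using the algebraic identity $(d-1)^2+(d-3)=(d-2)(d+1)$ collapses the numerator to $-(d+1)\bigl(R+\beta(d-2)\bigr)$, so that after dividing by $N^2=\omega_d R^{d+1}$ this coefficient equals exactly $-C(d,R)$. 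Integrating both parts against $V_n$ over $\partial B_R$ then gives precisely \eqref{expression:Mij}, which completes the proof.
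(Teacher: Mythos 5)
Your proposal is correct and follows essentially the same route as the paper: specialize \eqref{eq:M} to $\partial B_R$ with $u_i = x_i/\sqrt{\omega_d R^{d+1}}$, observe that $(HI_d-2D^2b)$ acts as $\tfrac{d-3}{R}$ on tangential vectors so the $\beta$-term is proportional to the first one, then collect the $\delta_{jk}$ and $x_jx_k$ coefficients and simplify via $(d-1)^2+(d-3)=(d+1)(d-2)$. All intermediate computations check out against the paper's.
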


\begin{proofof}{Corollary \ref{cor:derivees:premieres:disque}}
We use \eqref{eq:M}. 
On one hand we check the geometric quantities:
$$H=\frac{d-1}{R}, \;\;\;\;D^2b(x)=\frac{1}{R}I_{d}-\frac{1}{R^3}(x_{i}x_{j})_{i,j}
$$
so since $\nablat u_{j}, \nablat u_{k}$ are in the tangent space of $\partial B_{R}$, we obtain that
$$(H I_{d}-2D^2b(x))\nablat u_{j}.\nablat u_{k}=\frac{d-3}{R}\nablat u_{j}.\nablat u_{k}
$$
and on the other hand:
$$
\partial_{n}u_{j}=\frac{x_{i}}{R\sqrt{\omega_{d}R^{d+1}}}\;\;\;\;\;\;\nablat u_j.\nablat u_k =\cfrac{1}{\omega_{d}R^{1+d}}\left(\delta_{jk}-\cfrac{x_{j}x_{k}}{R^{2}}\right)
$$ 
Therefore, the matrix $M=M_{B_{R}}$ has the following  entries 
\begin{eqnarray*}
M_{jk}&=&\displaystyle \frac{1}{\omega_{d}R^{d+1}}\int_{\partial B_{R}} V_n\Big[ \left(\delta_{jk}-\frac{x_{j}x_{k}}{R^2}\right)-\frac{x_{j}x_{k}}{R^2}-\lambda\frac{d-1}{R}x_{j}x_{k}+\beta\frac{d-3}{R} \left(\delta_{jk}-\frac{x_{j}x_{k}}{R^2}\right)\Big]~d\sigma\\[3pt]
&=&\frac{\delta_{jk}}{\omega_{d}R^{d+1}}\left(1+\beta\frac{d-3}{R}\right)\int_{\partial B_{R}}V_{n}-\left[\frac{d+1+\beta\frac{(d-1)^2+d-3}{R}}{\omega_{d}R^{d+3}}\right]\displaystyle \int_{\partial B_{R}} V_nx_{j}x_{k}~d\sigma.
\end{eqnarray*}
This leads to the result since $(d-1)^2+d-3=(d+1)(d-2)$.


\end{proofof}
From this formula, we deduce a first interesting result:
\begin{proposition}\label{cas-nullite-M}
If $V$ is a volume preserving deformation, then the following statements are equivalent:
\begin{description}
\item[(i)] $V_{n}$ is orthogonal (in $\sL^2(\partial B_{R})$) to homogeneous harmonic polynomials of degree $2$, 
\item[(ii)] $M_{B_{R}}(V_{n})=0$. 
\end{description}
\end{proposition}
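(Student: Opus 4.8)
The plan is to read everything off from the explicit entries \eqref{expression:Mij} computed in Corollary \ref{cor:derivees:premieres:disque}, and then to translate the vanishing of $M_{B_{R}}(V_{n})$ into an orthogonality statement by means of the decomposition of quadratic forms on the sphere.

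First I would use the volume-preserving hypothesis. Since $V$ preserves the volume at first order, $\int_{\partial B_{R}}V_{n}\,d\sigma=0$, so the diagonal term in \eqref{expression:Mij} drops out and the matrix reduces to
$$M_{jk}=-C(d,R)\int_{\partial B_{R}}V_{n}\,x_{j}x_{k}\,d\sigma,\qquad C(d,R)=\frac{(d+1)\left(1+\beta\tfrac{d-2}{R}\right)}{\omega_{d}R^{d+3}}.$$
Provided $C(d,R)\neq 0$ --- which holds for all $\beta$ when $d=2$ and for all $\beta\neq -R/(d-2)$ when $d\geq 3$ --- this identity shows that (ii) is equivalent to
$$\int_{\partial B_{R}}V_{n}\,x_{j}x_{k}\,d\sigma=0\quad\text{for all }j,k\in\llbracket 1,d\rrbracket,$$
that is, to the $\sL^{2}(\partial B_{R})$-orthogonality of $V_{n}$ to the space $\mathcal{Q}$ spanned by the restrictions to $\partial B_{R}$ of all quadratic monomials $x_{j}x_{k}$.

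Second, I would invoke the standard splitting of homogeneous degree-two polynomials: every such polynomial writes uniquely as $P_{\mathrm{harm}}+c\,|x|^{2}$ with $P_{\mathrm{harm}}$ harmonic and $c\in\R$, because $\Delta$ maps the degree-two polynomials onto the constants with kernel the harmonic ones and $\Delta|x|^{2}=2d\neq 0$. Restricting to $\partial B_{R}$, where $|x|^{2}\equiv R^{2}$ is \emph{constant}, this exhibits $\mathcal{Q}$ as the orthogonal sum of the space of degree-two spherical harmonics and the constants.

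Finally I would assemble the two steps. Orthogonality of $V_{n}$ to $\mathcal{Q}$ is equivalent to orthogonality both to the harmonic degree-two polynomials and to the constants; and orthogonality to the constants is nothing but $\int_{\partial B_{R}}V_{n}=0$, which is exactly the volume-preserving assumption. Hence, under that assumption, (ii) $\iff \{V_{n}\perp\text{harmonic quadratics}\} \iff$ (i), which proves both implications at once. The argument is essentially linear algebra once \eqref{expression:Mij} is available; the only genuine subtlety is that the radial mode $|x|^{2}$ degenerates to a constant on the sphere, so that the volume constraint precisely compensates for the single extra dimension by which $\mathcal{Q}$ exceeds the space of degree-two spherical harmonics. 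The sole point requiring care is the nonvanishing of $C(d,R)$, outside of which the stated equivalence genuinely fails.
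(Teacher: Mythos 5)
Your proof is correct and is essentially the paper's own argument: both use the volume constraint to kill the diagonal term of \eqref{expression:Mij}, reducing (ii) to the vanishing of all moments $\int_{\partial B_{R}} V_{n}\,x_{j}x_{k}\,d\sigma$, and then identify the span of the quadratic monomials on the sphere with the harmonic quadratics plus the constants. Where the paper manipulates the explicit spanning set $\{x_{j}x_{k},\ j\neq k\}\cup\{x_{1}^2-x_{j}^2\}$ of $\mathcal{H}_{2}$ and uses $\sum_{j}x_{j}^2=R^2$ on $\partial B_{R}$ by hand, you invoke the decomposition $P=P_{\mathrm{harm}}+c\,|x|^2$; this is the same computation, packaged structurally. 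One genuine addition on your side: the hypothesis $C(d,R)\neq 0$ is indeed needed for the implication (ii) $\Rightarrow$ (i), and the paper is silent about it. Since Section 3 of the paper explicitly drops the assumption $\beta\geq 0$, the degenerate value $\beta=-R/(d-2)$ (for $d\geq 3$) is admissible there; for that value $M_{B_{R}}(V_{n})=0$ holds for every volume preserving deformation while (i) can fail, so your restriction is a correct and necessary refinement of the statement.
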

\begin{proofof}{Proposition \ref{cas-nullite-M}}
We denote $\mathcal{H}_{2}$ the space of homogeneous harmonic polynomials of degree 2 (therefore we use here a slightly different notation than in Section 4). Let us suppose that $M(V_n)=0$; this means that $\displaystyle \int_{\partial B_{R}} V_n~x_{j} x_{k}~d\sigma=0,$  for all $ j,k=1,\ldots,d$, and in particular $V_{n}$ is orthogonal to $\mathcal{H}_{2}$.\\
If we assume now that $V_{n}$ is orthogonal to $\mathcal{H}_2$, using that
$$
\mathcal{H}_2=\text{span}\left\{  x_{j}x_{k},~j\neq k\in\{1,\ldots,d\}, \;\;x_{1}^2-x_{j}^2, j=2,\ldots,d\right\}.
$$
and moreover that
$\int_{\partial B_{R}}V_{n}=0$, we obtain
$$d\int_{\partial B_{R}}V_{n}x_{1}^2=\sum_{j=2}^d\int_{\partial B_{R}}V_{n}(x_{1}^2-x_{j}^2)+\int_{\partial B_{R}}\sum_{j=1}^dx_{j}^2=0,$$
and therefore
$$\int_{\partial B_{R}}V_{n}x_{j}^2=\int_{\partial B_{R}}V_{n}(x_{j}^2-x_{1}^2)=0,$$
which concludes the proof.
\par
\noindent
\end{proofof}
}
\par
\noindent
In the case where ${\Be M_{B_{R}}}(V_{n})\ne 0$, we compute the trace of the matrix ${\Be M_{B_{R}}}(V_{n})$ to obtain information on its eigenvalues.
 
\begin{proposition}
\label{trace:matrice:derivees:premieresbis}
When $\Omega$ is a ball of radius $R$, then 
\begin{equation}
\text{Tr}(M_{B_{R}}(V_n))=0
\end{equation}
for all volume preserving deformations.
\end{proposition}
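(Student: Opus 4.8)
The plan is to compute the trace directly from the explicit entries of $M_{B_{R}}$ given in Corollary \ref{cor:derivees:premieres:disque}, and to exploit the fact that both contributions to $\mathrm{Tr}(M_{B_{R}}(V_{n}))$ turn out to be proportional to the single scalar $\int_{\partial B_{R}}V_{n}\,d\sigma$, which vanishes for a volume preserving deformation (at first order).

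First I would sum the diagonal entries. Taking $j=k$ in \eqref{expression:Mij} and summing over $j=1,\ldots,d$, the Kronecker term contributes $\frac{d}{\omega_{d}R^{d+1}}\big(1+\beta\frac{d-3}{R}\big)\int_{\partial B_{R}}V_{n}$, while the second term contributes $-C(d,R)\int_{\partial B_{R}}V_{n}\big(\sum_{j=1}^{d}x_{j}^2\big)\,d\sigma$. The key elementary observation is that on the sphere $\partial B_{R}$ one has $\sum_{j=1}^{d}x_{j}^2=R^2$ pointwise, so the second term collapses to $-C(d,R)\,R^2\int_{\partial B_{R}}V_{n}\,d\sigma$.

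Since $C(d,R)\,R^2=\frac{(d+1)(1+\beta\frac{d-2}{R})}{\omega_{d}R^{d+1}}$, both contributions are scalar multiples of $\int_{\partial B_{R}}V_{n}\,d\sigma$, and I would arrive at
$$
\mathrm{Tr}(M_{B_{R}}(V_{n}))=\frac{1}{\omega_{d}R^{d+1}}\Big[d\big(1+\beta\tfrac{d-3}{R}\big)-(d+1)\big(1+\beta\tfrac{d-2}{R}\big)\Big]\int_{\partial B_{R}}V_{n}\,d\sigma.
$$
The bracket is in general nonzero (a short computation gives $-1-\tfrac{2\beta(d-1)}{R}$), so the vanishing of the trace does not come from an algebraic cancellation inside the bracket.

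Instead, I would conclude by invoking the first-order volume preservation condition recalled in Remark \ref{rk:volume}, namely $\int_{\partial B_{R}}V_{n}\,d\sigma=0$, which forces the whole expression to vanish for every admissible $V_{n}$. There is essentially no obstacle here: the only point requiring a little care is to recognize that one need not evaluate the bracket at all, the result following solely from the vanishing of the mean of $V_{n}$ on the sphere.
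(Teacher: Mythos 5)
Your proof is correct and follows essentially the same route as the paper: both reduce $\mathrm{Tr}(M_{B_{R}}(V_{n}))$ to a scalar multiple of $\int_{\partial B_{R}}V_{n}\,d\sigma$ by using that $\sum_{j=1}^{d}x_{j}^{2}=R^{2}$ is constant on the sphere, and then conclude from the first-order volume preservation condition $\int_{\partial B_{R}}V_{n}\,d\sigma=0$. If anything, your version is slightly more explicit, since the paper's displayed computation silently drops the Kronecker-delta contribution (which also vanishes only because $\int_{\partial B_{R}}V_{n}=0$), whereas you track it and show the bracket $-1-\tfrac{2\beta(d-1)}{R}$ need not cancel algebraically.
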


\begin{proofof}{Proposition \ref{trace:matrice:derivees:premieresbis}} It comes that
{\Be $$
\text{Tr}(M_{B_{R}}(V_n))=-{\Rd C(d,R)}\ \displaystyle \int_{\partial B_{R}} \sum_{j=1}^d x_{j}^2 ~V_n ~d\sigma=-{\Rd C(d,R)}\ \sum_{j=1}^d x_{j}^2\ \displaystyle \int_{\partial B_{R}}  ~V_n ~d\sigma=0$$
since we are concerned  with deformations preserving the volume.}
\end{proofof}
As a consequence of Proposition \ref{cas-nullite-M} and Proposition \ref{trace:matrice:derivees:premieresbis}, there is the following alternative: either the only eigenvalue of $M(V_{n})$ is $0$, or $M(V_{n})$ has at least one nonnegative and one nonpositive eigenvalue. \Rd Each $t\mapsto \lambda_i(t)$ given by Theorem \ref{analyticity} has a directional derivative at $t=0$ denoted by $\lambda_{i}'(0)$. We then define, as usual \cite{Clarcke}, $ \partial \lambda_{1}$ the subgradient of $\lambda_{1}$ by $ \partial \lambda_{1}=[\inf_{i=1\cdots d} \lambda_{i}'(0), \sup_{i=1\cdots d} \lambda_{i}'(0)]$. With this notation, $0 \in \partial \lambda_{1}$ and we say the ball is a critical shape. 
\Bk 
\subsection{Numerical illustrations}

In order to illustrate Proposition \ref{trace:matrice:derivees:premieresbis}, we consider the two dimensional case and consider perturbations of the disk given in polar coordinates by 
$$\Rd \rho_{t}(\theta)=R+t f(\theta)  $$
where \Rd $f$ has zero mean value. \Bk
\begin{figure}[!ht]
\begin{center}
\subfigure[$f(\theta)=\cos(\theta)$]{\label{Fig5_1} 
\includegraphics[width=4.5cm]{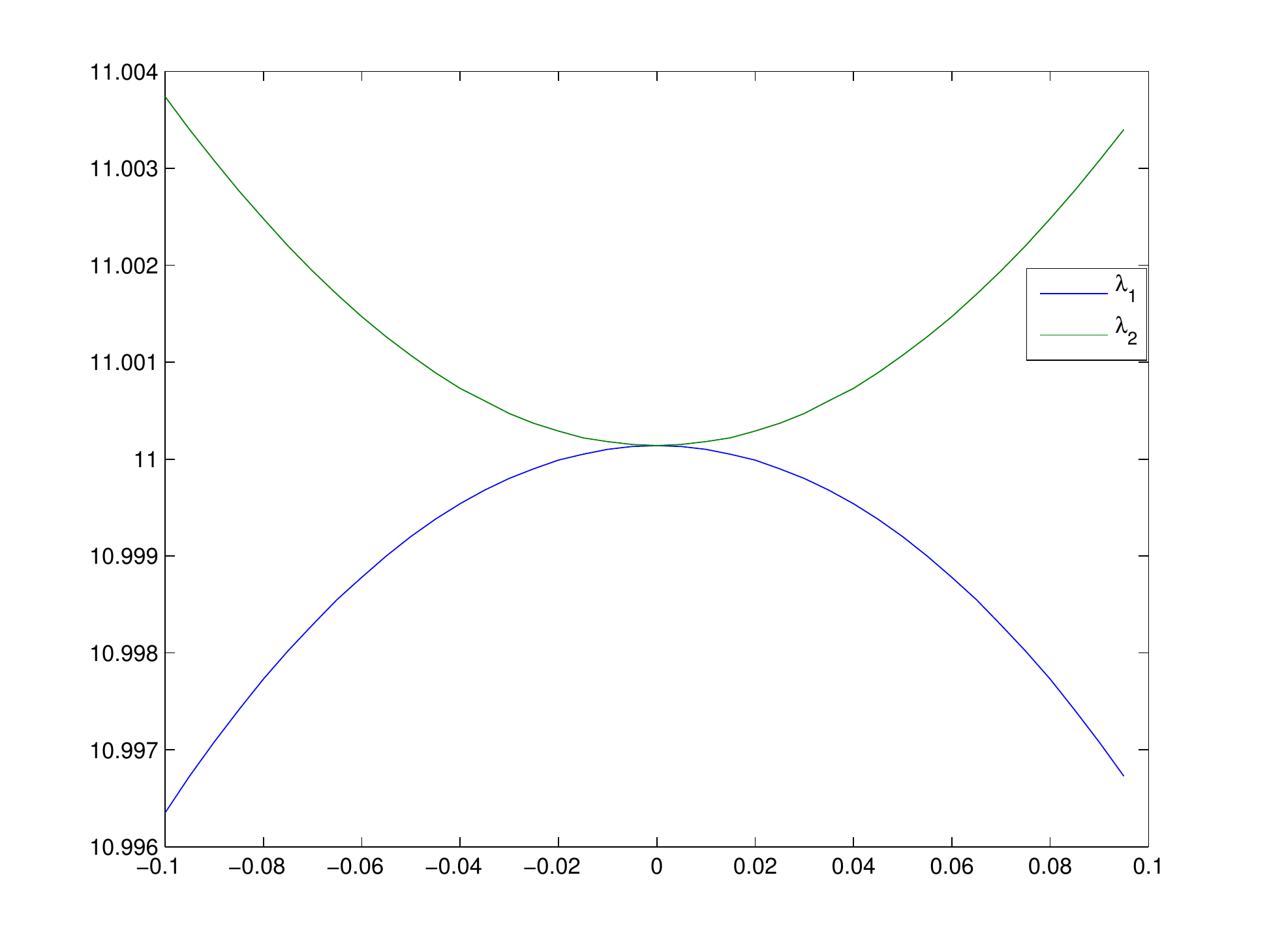}
}
\subfigure[$f(\theta)=\sin(\theta)$]{\label{Fig5_1bis} 
\includegraphics[width=4.5cm]{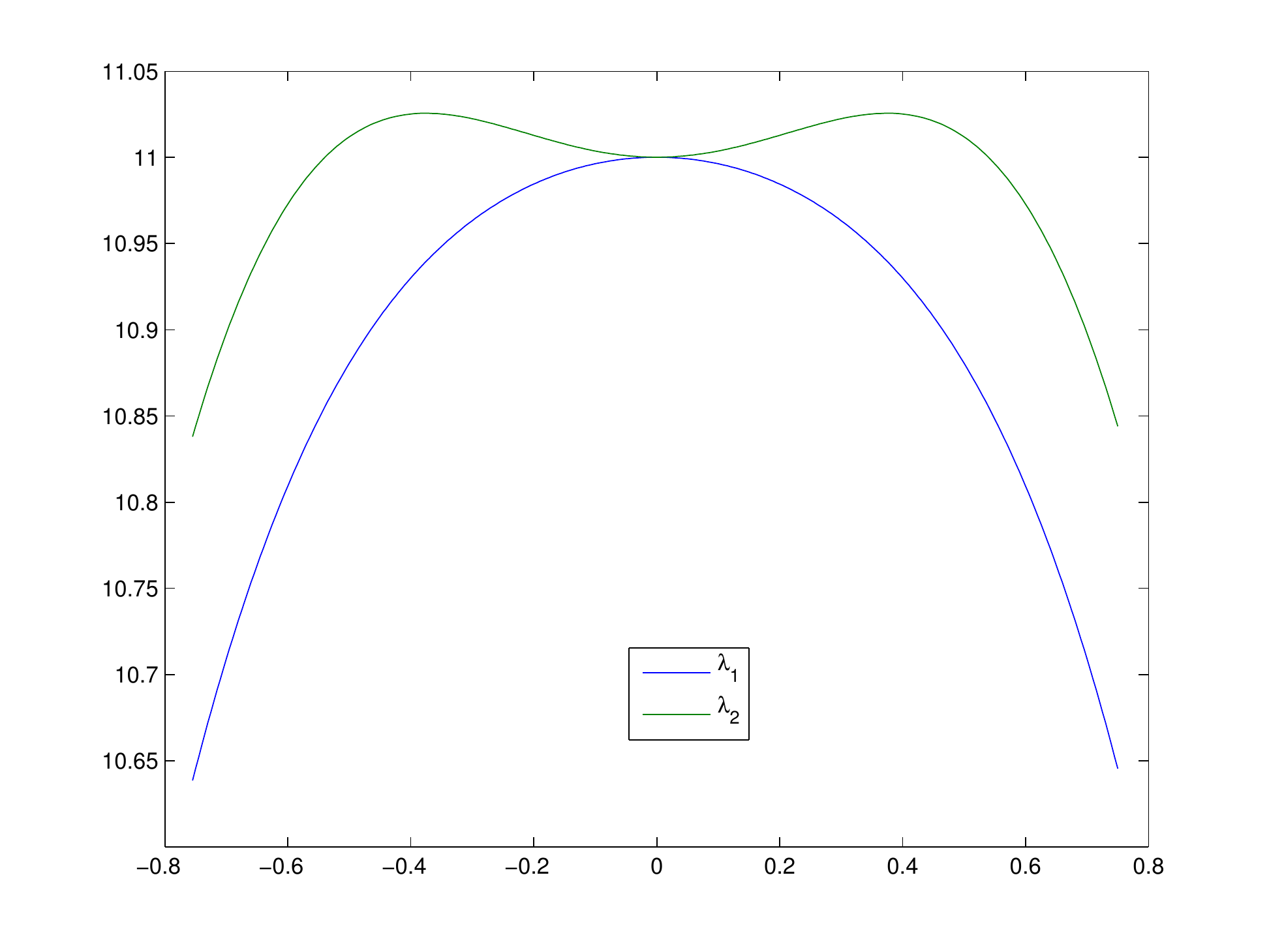}
}
\subfigure[$f(\theta)=\cos(2\theta)$]{\label{Fig5_2} 
\includegraphics[width=4.5cm]{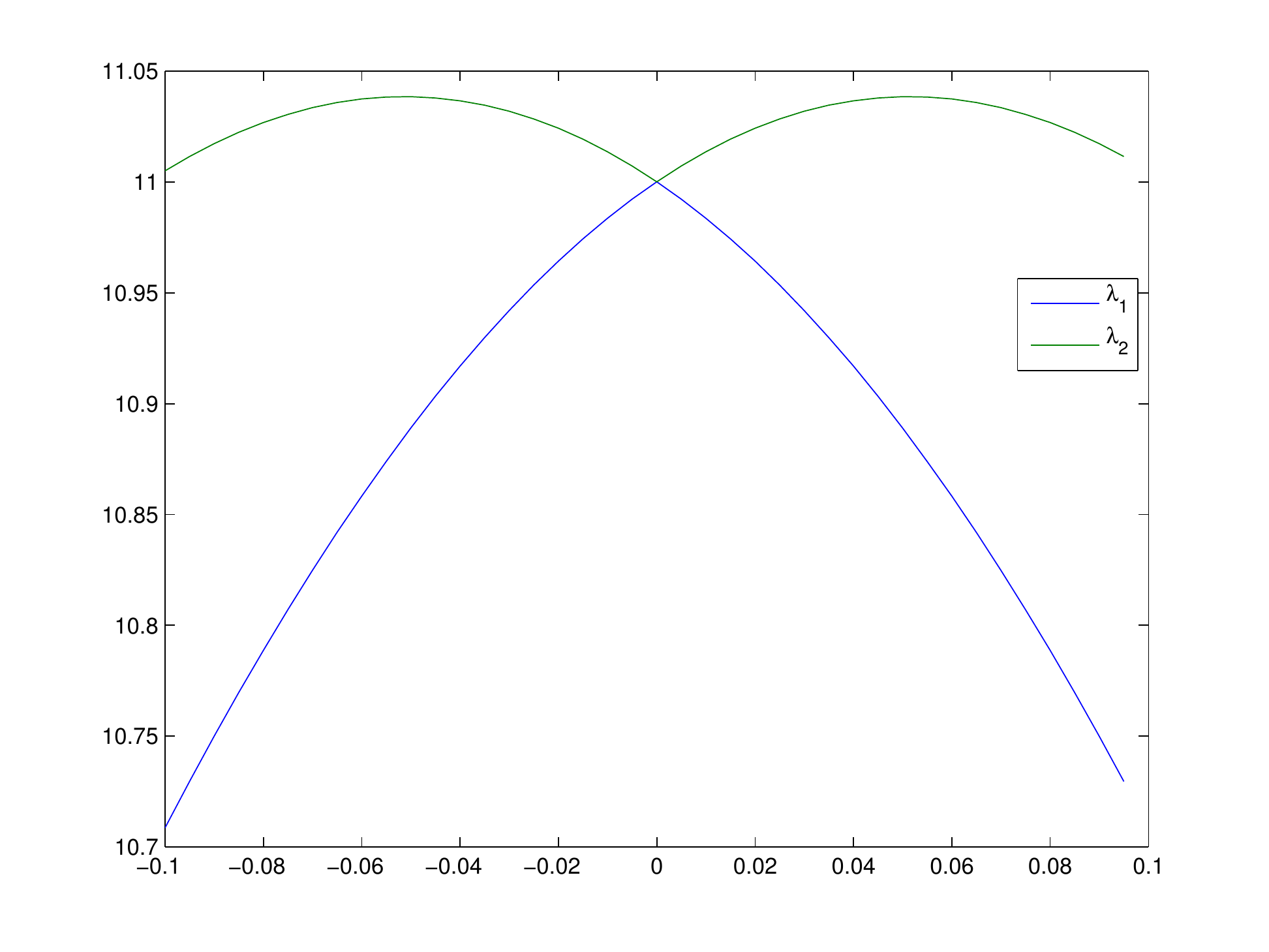}
}
\subfigure[$f(\theta)=\sin(2\theta)$]{\label{Fig5_2bis} 
\includegraphics[width=4.5cm]{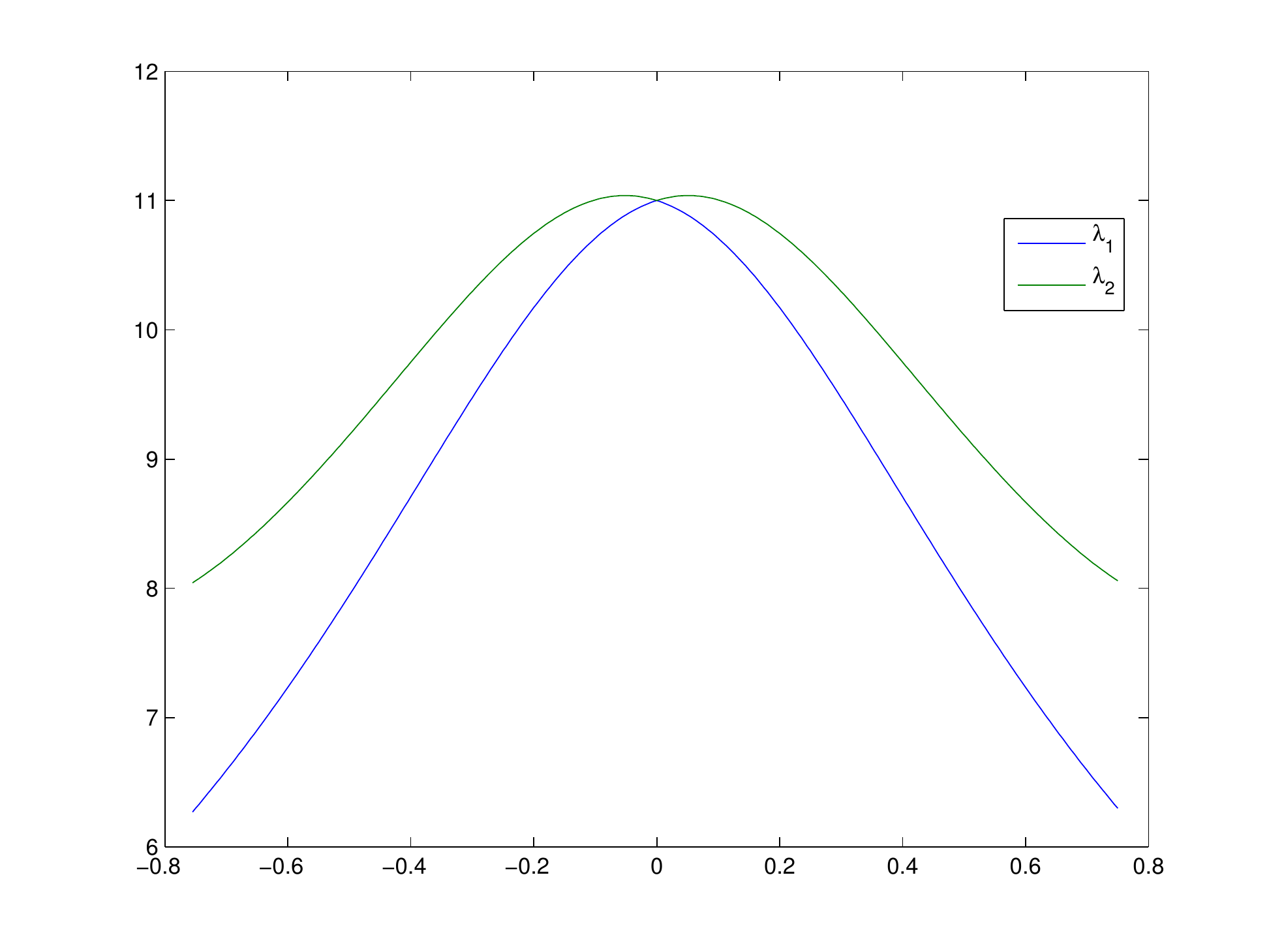}
}
\subfigure[$f(\theta)=\cos(3\theta)$]{\label{Fig5_3} 
\includegraphics[width=4.5cm]{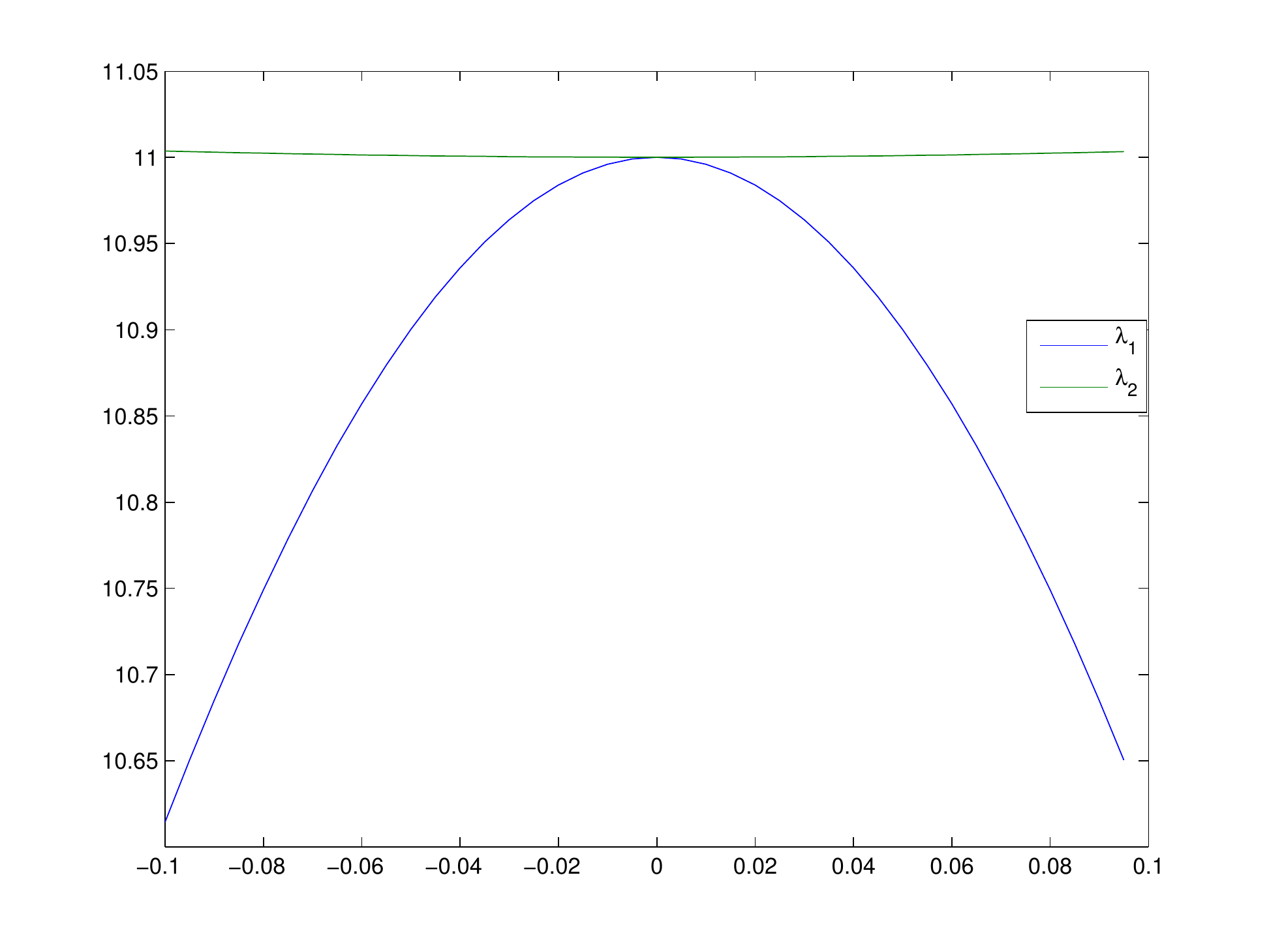}
}
\subfigure[$f(\theta)=\cos(4\theta)$]{\label{Fig5_4} 
\includegraphics[width=4.5cm]{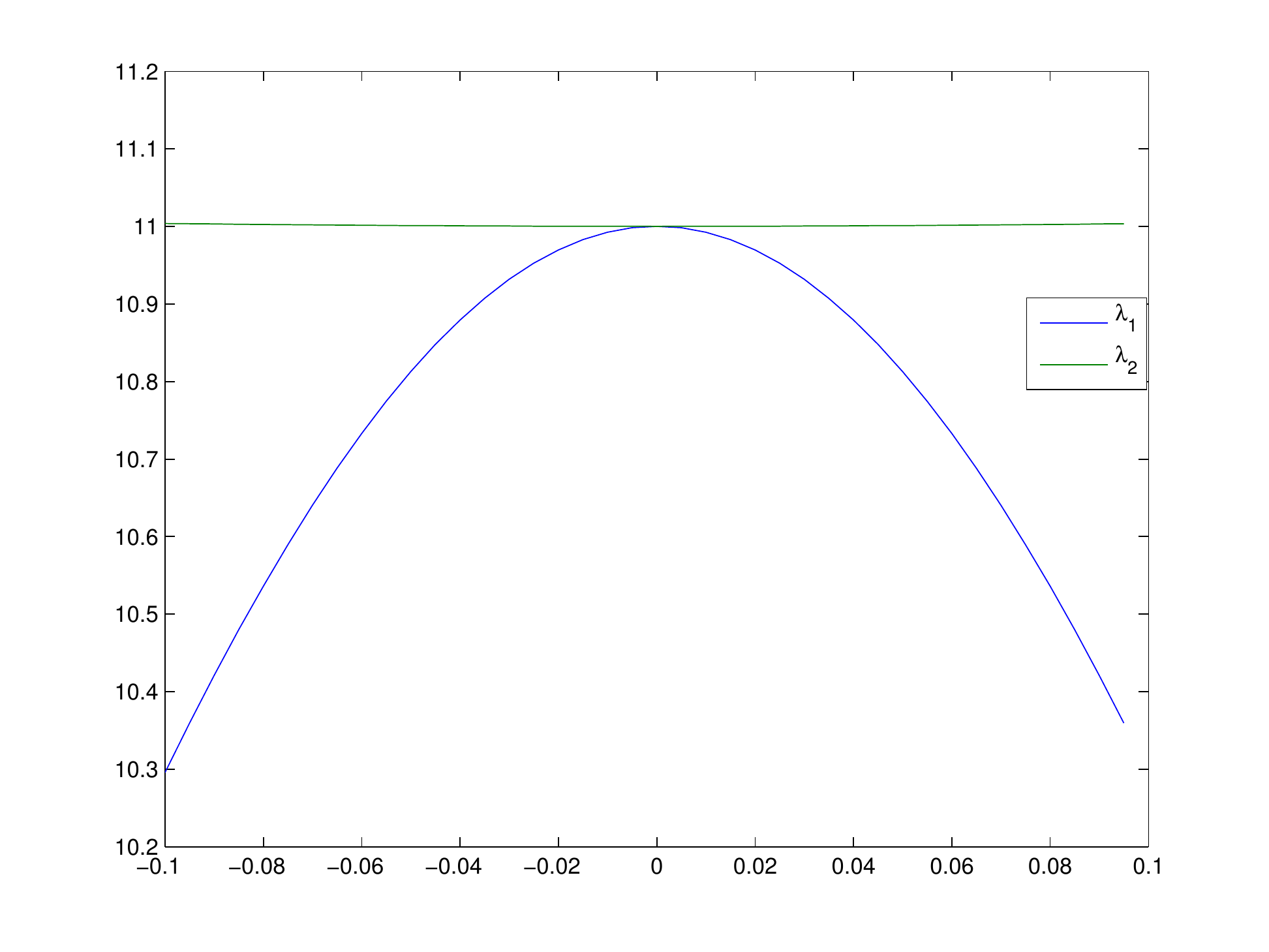}
}
\caption{$\lambda_{1}(\Omega)$ and $\lambda_{2}(\Omega)$ in the direction of $f(\theta)$ - $|B_{R}|=\pi$, $\beta=10$.}
	\label{Fig5}
\end{center}\end{figure}

In Figure \ref{Fig5}, the computations are made in the case $R=1$ and $\beta=10$, the deformation parameter $t$ appears in the abscissa.

In both collection of figures, we can see the derivatives of the second and third eigenvalues vanish at the ball in every case except when $f(\theta)=\cos(2\theta)$,  where the regular lines cross, leading to a really non differentiable second eigenvalue. This is coherent with Proposition \ref{cas-nullite-M}. 
 Let us explicit the case $V_n= R^2 \cos{2\theta}$, where we are led to compute the eigenvalues of the following  symmetric matrix  
$$
M=-\displaystyle\frac{3}{\pi R} \left(
\begin{array}{clr}
&\displaystyle\int_0^{2\pi} \cos{2 \theta }\cos^2{\theta} ~d\theta  &0\\
&0 &\displaystyle \int_0^{2\pi} \cos{2\theta}~\sin^2{\theta}~d\theta
\end{array}
\right)
$$
whose eigenvalues are $\alpha_1=-\displaystyle\frac{3}{2 R}$ and $\alpha_2=\displaystyle\frac{3}{2 R}$.

\section{Testing if the ball is a local maximum for $\lambda_{1}$: second order arguments}
\label{section:analyze:ordre2}

We know that any ball is a critical point for volume preserving deformations. Therefore, if the subgradient $\partial\lambda_{1}(B;V_{n}) \ne	\{0\}$,  then the ball is a local maximizer. It remains to deal with the case where all the eigenvalues of $M_{B}(V_{n})$ are $0$; this case corresponds to $V_{n}$ orthogonal to the harmonics of order two. Then, we aim at proving that the second derivative of $\lambda_{1}$ along at least one of the smooth branches is nonpositive. 

The necessary order two conditions of optimality are:  the second derivative \Rd of the Lagrangian \Bk should be non positive on the  subspace  orthogonal to the space generated by the gradient of the volume constraint.
We compute:
\begin{equation}
\label{gradient:contraintes}
\text{Vol}'(0)=\int_{\partial B_{R}} V_{n} 
\end{equation}
Hence $\text{Vol}'(0)=0$ if and only if $V_{n}\in (\mathcal{H}_{0})^{\perp}$  
where $\mathcal{H}_{k}$ denotes the linear space of spherical harmonics of order $k$.
\Bk
Due to the previous remarks, we hence consider deformation field in the hilbertian space $\mathcal{H}$ spanned by all the spherical harmonics of order \Be$l\in I=\N\setminus\{0,2\}$ \Bk. \Rd The normal component of such a field is orthogonal to spherical harmonics of order $0$ and $2$.\Bk

The goal of this section is to present the different steps for the computations. We will characterize the matrix $E$ whose eigenvalues are the second order derivatives of the smooth branches of eigenvalues.  It turns out that this computation is hard even in the case of a ball. Nevertheless, the computation of $\Tr(E)$ is much simpler than the individual computations of the entries. In order to prove that the ball is a local maximum of $\lambda_{1}$, it suffices to  prove that its trace is nonpositive: therefore at least one smooth branch of eigenvalues has a nonpositive second order derivative.  

\Rd In this section, we consider deformations preserving the volume at second order and not only at first order. Hence, we cannot consider deformation $T_{t}$ of type $I+tV$ with $V$ independent of $t$ and introduce deformations 
$S_{t}$ that are the flow at time $t$ of a vector field $V$ (see also Remark \ref{rk:volume}). Notice that $S_{t}=I+tV+{o}(t)$ so that  $T_{t} - S_{t}={o}(t)$ and first order shape derivatives are unchanged. In particular, one has
$$ \cfrac{d^2}{dt^2} \text{Vol}(S_{t}(\Omega))= \int_{\partial \Omega(t)} \left( \cfrac{\partial }{\partial t} (V_{n(t)})+ V_{n(t)} \cfrac{\partial }{\partial n(t)}(V_{n(t)})+ H V_{n(t)}^2 \right) \ d\sigma$$
and the volume preservation at second order means that  
\begin{equation}\label{volume:preserving:second:order}
\left(\cfrac{d^2}{dt^2} \text{Vol}(S_{t}(\Om))\right)_{|t=0}= \int_{\partial \Omega} \left( \cfrac{\partial }{\partial t} (V_{n(t)})+ V_{n(t)} \cfrac{\partial }{\partial n(t)}(V_{n(t)})+ H V_{n(t)}^2 \right)_{|t=0} \ d\sigma=0.
\end{equation}

\Bk
\subsection{Construction of the matrix $E$ of the second derivatives}\label{ssect:E''}
Let  $(u(t,x),\lambda(t)=\lambda(\Omega_t))$ be  an eigenpair of the Laplace-{\Be Wentzell} problem, that is to say solves
\begin{equation*}
\left\{
\begin{array}{rlll}
\Delta u(t,x)&=&0&~\textrm{~~in~~}  \Omega_t\\
-\beta \Deltat u(t,x)+\partial_n{} u(t,x)&=&\lambda(t) u(t,x)&~\textrm{~~on~}~\partial\Omega_t
\end{array}
\right.
\end{equation*} 
We use the decomposition of $u=u(0,x)$ in the basis of eigenfunctions:
 \begin{equation*}
 u=\sum_{j=1}^d c_j u_j
 \end{equation*}
for some $c_1,c_2,\ldots,c_d$ not all zero. We have shown that the vector   $c=(c_1,c_2,\ldots,c_d)^T$  is solution of 
$$
\lambda'(0)c= M(V_n)c
$$
where the matrix $M(V_n)=(M_{jk})_{1\le i,j\le d}$ is defined by \eqref{eq:M}. 
\par
\noindent
To compute the second derivative at $t=0$, one has  to compute the first shape derivative $u'(x)=u'(0,x)$.  Fredholm's alternative insures the existence of a unique harmonic function $\tilde  u_j$ orthogonal to the 
eigenfunctions $u_1,u_2,\ldots,u_d$ and satisfying on $\partial \Om $ the boundary condition 
\begin{eqnarray}
-\beta \Deltat  \tilde{u}_j+\partial_n \tilde{u}_j-\lambda \tilde{u}_j&=&\beta\Big[\Deltat[V_n\partial_n  u_j]+\divet[V_n(H I_{d}-2D^2b)\cdot\nablat u_j]\Big]
\nonumber \\
&&+\divet[V_n\nablat  u_j]+\lambda' u_j+\lambda V_n(\partial_n u_j+H u_j).
\end{eqnarray}
It follows that 
\begin{equation}\label{decomposition}
u'=\sum_{j=1}^d \tilde{c}_j  u_j+\sum_{j=1}^d c_j  \tilde{u}_j
\end{equation}
for some $c_j,\tilde{c}_j$ when $ j=1,\ldots,d$.   We point out that the $(c_j)$ are the same coefficients as the decomposition of  $u$ in the basis $( u_j)$. 
\par
\noindent
The strategy is straightforward : we have to consider the equation satisfied by $u'$ on the boundary $\partial \Omega$ and take its shape derivative again. A first look to the second derivative shows that we will encounter three operators : 
\begin{itemize} 
\item 
the first contains only $u''$ and its expression is  the following
$$
E^{(0)}=  -\beta \Delta u''+\partial_n u''-\lambda u''
$$
\item concerning the term in $u'$ and $\lambda'=0$ we have  
$$
\begin{array}{lll}
E^{(1)}&=&-2\beta \Deltat (V_n\partial_n u')-2\divet(  V_n(I+\beta \mathcal{A}) \nablat u')\\[5pt]
&& -2\Big[\lambda'  u'+\lambda V_n ( \partial_nu'+Hu') \Big]
\end{array}
$$
where $\mathcal{A}=H I -2D^2 b$ is the deviatoric part of the curvature tensor. 
\item The remaining  term is $E^{(2)} $  contains only $u$; we give a more explicit expression below.
\end{itemize}
Green-Riemann identity tells us that   $\langle E^{(0)},u_i\rangle=\langle u'',  -\beta \Deltat u_i+\partial_n u_i -\lambda u_i\rangle=0,~i=1,\ldots,d$. This means that  the term $E^{(0)}$ will have no influence in the determination of the second derivative of the eigenvalue.  We will  focus  only on $E^{(1)}$ and $E^{(2)}$. 
\\

\noindent
{\bf Construction of $E^{(2)}$}:

The computations are very technical.   We need first  to  use  a test function $\phi$ which is the restriction of  a test function $\Phi$ defined on a tubular neighborhood of the boundary such that its normal derivative on $\partial\Omega$  is zero. This kind of extension is well discussed in the book \cite{DelfourZolesio} of Delfour-Zolesio.   Taking the shape derivative  of the boundary condition \eqref{shape_derivative} (in the multiple case) we need to compute 
\begin{eqnarray*}
\restriction{\left( \frac{d}{dt}\int_{\partial \Omega_t} ~V_n{}\nablat u .\nablat \phi ~d\sigma_t\right)}{t=0}&=&\langle A^{(1)}u',\phi\rangle+ \langle A^{(2)}u,\phi\rangle,\\
\beta  \restriction{\left(\frac{d}{dt}  \int_{\partial \Omega_t}\mathcal{A}(t)V_n \nablat u. \nablat \phi~d\sigma_t\right)}{t=0}&=&\langle B^{(1)}u',\phi\rangle+ \langle B^{(2)}u,\phi\rangle,\\
-\displaystyle\frac{d}{dt}\restriction{\left( \displaystyle\int_{\partial \Omega_t} ~\Big[\lambda' u+ \lambda(u'+V_n \partial_n u +V_n Hu)\Big]\phi~d\sigma_{t} \right)}{t=0}&=&\langle C^{(0)}u'',\phi\rangle+ \langle C^{(1)}u',\phi\rangle+\langle C^{(2)}u,\phi\rangle,\\
\beta \restriction{\left( \frac{d}{dt}\int_{\partial \Omega_t} ~\nablat(V_n{}\nablat \partial_n u) .\nablat \phi d\sigma_t\right)}{t=0}&=&\langle D^{(1)}u',\phi\rangle+ \langle D^{(2)}u,\phi\rangle.\\
\end{eqnarray*}

The remaining  $E^{(2)} $ containing only $u$  is then given by 
$$
E^{(2)} = A^{(2)}u+B^{(2)}u+C^{(2)}u+D^{(2)}u.
$$

For an operator $L$ involved in $E^{(i)},~i=1,2,3$ we denote by  $(L_{ij})_{i,j=1,\ldots,d}$ the matrix of $L$ in the basis of the eigenvalues. After calculations (see also Remark \ref{remarque:simplification:du:noyau} in the Appendix), we get the following linear equation 
$$
(\lambda'' I-E)c + 2(-M(V_{n})+\lambda' I) \tilde c=0
$$
(corresponding to the second derivation) together with
$$
(-M(V_{n})+\lambda' I) c=0.
$$
(corresponding to the first derivation) where the matrix $E=(E_{ij})$ is split  into $E=E^{(1)}+E^{(2)}$ where the terms involving $u'$ are gathered in $E^{(1)}$ and the terms involving $u$ are gathered in $E^{(2)}$.
\subsection{Computation of the trace }
Since the direct computations of the eigenvalues are difficult, we restrict ourselves to the cases $d=2$ or $d=3$, and we will focus on the trace of $E$ and prove that $\Tr(E)$ is nonpositive. We start with the trace of $E^{(2)}$:
\begin{lemma}\label{lem:E2}
Assume $d\in\{2,3\}$. With $K(R)=\frac{d}{R^{2+d}\omega_{d\Be -\Bk 1}}$, we have 
\begin{equation}\label{trace:E2}
\Tr{(E^{(2)})} = -(d\beta+R) R K(R) \int_{\partial B_{R}} |\nablat V_{n}|^2 d \sigma - K(R) \int_{\partial B_{R}} V_{n}^2 d\sigma. 
\end{equation} 
for all deformations preserving volume and such that   $V_n$ is  orthogonal to spherical harmonics of order two.
\end{lemma}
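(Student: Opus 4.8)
The plan is to evaluate the trace directly from its definition $\Tr(E^{(2)})=\sum_{i=1}^{d}\langle E^{(2)}u_i,u_i\rangle$, where the $u_i=x_i/\sqrt{\omega_d R^{d+1}}$ are the normalized coordinate eigenfunctions on $\partial B_R$. Rather than identifying $E^{(2)}$ pointwise, I would keep the weak form and, for each $i$, pair the four contributions $A^{(2)}u_i,\,B^{(2)}u_i,\,C^{(2)}u_i,\,D^{(2)}u_i$ against $u_i$, each one arising as the genuinely $u$-dependent part of the second shape derivative of the matching boundary integral in \eqref{shape_derivative}. The $u''$-terms vanish against $u_i$ by the same Green's identity already used to discard $E^{(0)}$, namely $\langle u'',-\beta\Deltat u_i+\partial_n u_i-\lambda u_i\rangle=0$, and the $u'$-terms are precisely what is collected in $E^{(1)}$; thus only the four pieces above feed $E^{(2)}$, and summing over $i$ produces $\Tr(E^{(2)})$.

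The computation then reduces to sphere bookkeeping. On $\partial B_R$ one has $H=(d-1)/R$, $\partial_n u_i=x_i/(R\sqrt{\omega_d R^{d+1}})$, $\nablat u_i=(e_i-x_i x/R^2)/\sqrt{\omega_d R^{d+1}}$ and the crucial simplification $(H I_d-2D^2 b)\,w=\tfrac{d-3}{R}\,w$ for every tangential field $w$, already exploited in Corollary \ref{cor:derivees:premieres:disque}. Because we sum over $i$, the coordinate dependence collapses through the rotation-invariant identities $\sum_i u_i^2=\tfrac{1}{\omega_d R^{d-1}}$, $\sum_i(\partial_n u_i)^2=\tfrac{1}{\omega_d R^{d+1}}$ and $\sum_i|\nablat u_i|^2=\tfrac{d-1}{\omega_d R^{d+1}}$; terms carrying an extra factor $x_j x_k$ reduce, after summation, either to $|x|^2=R^2$ or to order-two harmonics that will be removed in the last step. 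After applying the surface shape-derivative formula $\frac{d}{dt}\int_{\partial\Om_t} f=\int_{\partial\Om}(f'+V_n(\partial_n f+Hf))$ term by term, I would integrate by parts on the closed manifold $\partial B_R$ to turn the Laplace-Beltrami contributions $\int_{\partial B_R} V_n\Deltat V_n$ into $-\int_{\partial B_R}|\nablat V_n|^2$, which is where the two terms of the announced formula come from.

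The main obstacle is the honest evaluation of the curvature-carrying pieces $B^{(2)}$ and $D^{(2)}$: differentiating $\beta\int \mathcal{A}\,V_n\,\nablat u\cdot\nablat\phi$ and $\beta\int\nablat(V_n\nablat\partial_n u)\cdot\nablat\phi$ a second time brings in the shape derivatives of $H$, $D^2 b$ and of the tangential gradient itself, and deciding which of these survive requires the tangential-calculus identities recalled in the Appendix. I expect this to be the bulk of the work, and I would organize it operator by operator, recording each $\sum_i\langle L^{(2)}u_i,u_i\rangle$ before summing. The last ingredient is the hypotheses on $V_n$: orthogonality to $\mathcal{H}_2$ annihilates every residual term of the form $\int_{\partial B_R} V_n\,x_jx_k$, and the volume-preserving (zero-mean) condition removes the constant part, so that no order-two or order-zero spherical harmonic of $V_n$ contributes. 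Only the combinations $\int_{\partial B_R}|\nablat V_n|^2$ and $\int_{\partial B_R} V_n^2$ remain, and matching their coefficients against the sphere constants above yields exactly $-(d\beta+R)R\,K(R)\int_{\partial B_R}|\nablat V_n|^2-K(R)\int_{\partial B_R}V_n^2$.
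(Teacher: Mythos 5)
Your plan follows the paper's own route: split $E^{(2)}$ into the four pieces $A^{(2)},B^{(2)},C^{(2)},D^{(2)}$ produced by the second differentiation of the four boundary integrals, discard the $u''$-part by Green's identity and the $u'$-part into $E^{(1)}$, and evaluate each trace on the sphere via the rotation-invariant sums $\sum_i u_i^2$, $\sum_i(\partial_n u_i)^2$, $\sum_i|\nablat u_i|^2$, a final integration by parts, and the orthogonality hypotheses — this is exactly Appendix \ref{app:E2} and Lemmas \ref{tracematriced}, \ref{trace:B}, \ref{Trace:A}, \ref{trace:C}. However, there is one genuine gap: you read ``volume preserving'' as the first-order condition ($V_n$ of zero mean), while the lemma needs — and the paper's proof crucially uses — volume preservation \emph{at second order}. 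When the boundary integrals are differentiated twice, the normal velocity $V_{n(t)}$ itself depends on $t$, so its derivative $W=\partial_t\bigl(\V\cdot\n_{\partial\Omega_t}\bigr)_{|t=0}$ enters the computation; this is precisely why Section \ref{section:analyze:ordre2} abandons $T_t=I+t\V$ in favor of flows (Remark \ref{rk:volume}). The pieces $A^{(2,1)}$, $B^{(2,1)}$, $C^{(2,2)}$, $D^{(2,1)}$ each carry the combination $W+V_n\partial_n V_n+HV_n^2$, and their traces vanish only because
\[
\int_{\partial B_R}\bigl(W+V_n\partial_n V_n+HV_n^2\bigr)\,d\sigma \;=\;\Bigl(\frac{d^2}{dt^2}|\Omega_t|\Bigr)_{|t=0}\;=\;0,
\]
i.e.\ the constraint \eqref{volume:preserving:second:order}. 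Zero mean of $V_n$ and orthogonality to $\mathcal{H}_2$ are powerless against these quadratic terms: for instance $\int_{\partial B_R}HV_n^2=\frac{d-1}{R}\int_{\partial B_R}V_n^2>0$. Your plan applies the first-order formula $\frac{d}{dt}\int_{\partial\Omega_t}f=\int_{\partial\Omega}\bigl(f'+V_n(\partial_n f+Hf)\bigr)$ ``term by term'' with $V_n$ frozen, so the $W$-terms are silently dropped; carried out honestly under your stated hypotheses, the computation would leave an uncancelled multiple of $\frac{d^2}{dt^2}|\Omega_t|_{|t=0}$ and the claimed identity would not come out.

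A secondary omission: orthogonality of $V_n$ to $\mathcal{H}_2$ is needed not only to kill residues of the form $\int_{\partial B_R}V_n x_jx_k$, but first of all to guarantee $\lambda'(0)=0$ (Proposition \ref{cas-nullite-M}). This is built into the very definition of the splitting $E=E^{(1)}+E^{(2)}$ in Section \ref{ssect:E''}: the terms carrying $\lambda'$ (in $E^{(1)}$ and in $C^{(2,1)}$) are discarded on that basis, and they are not automatically harmless, since $\int_{\partial B_R}u'u_i=\tilde c_i$ need not vanish. Your proposal never records this, so the operator you pair against the $u_i$ is not yet the one whose trace the lemma computes.
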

\begin{proof}
The computation of $E^{(2)}$ is done in the Appendix \ref{app:E2}, and to obtain the result, we sum all the traces given by Lemmas \ref{tracematriced}, \ref{trace:B},  \ref{Trace:A} and \ref{trace:C}.
\end{proof}  
\par
\noindent
Concerning $\Tr{(E^{(1)})}$, we start with the following Lemma which is  straightforward (see also Remark \ref{remarque:simplification:du:noyau}):
\begin{lemma}
We have
\begin{equation}\label{trace:E1}
\Tr{(E^{(1)})}=2\displaystyle \int_{\partial\Omega} V_n\sum_{j=1}^d\Big(-\partial_n \tilde u_j \partial_n u_j -H \lambda \tilde u_j u_j+ (I+\beta\left(HI_{d}-2D^2 b)\right)\nablat \tilde u_j.\nablat u_j \Big)~d\sigma.
\end{equation}
holds for all deformations preserving volumes such that   $V_n$ is  orthogonal to spherical harmonics of order two.
\end{lemma}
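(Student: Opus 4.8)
The plan is to compute the diagonal entries $E^{(1)}_{jj}$ by the same integration-by-parts argument that produced the simple-eigenvalue formula of Theorem~\ref{derivee:vp:simple}, and then to sum them. Since $E^{(1)}$ collects the contributions of $u'$, and since in the decomposition \eqref{decomposition} the coefficients $c_j$ multiply the correctors $\tilde u_j$, the matrix of $E^{(1)}$ in the eigenbasis is $E^{(1)}_{ij}=\langle E^{(1)}[\tilde u_j],u_i\rangle$, where $E^{(1)}[\tilde u_j]$ stands for the operator with $u'$ replaced by $\tilde u_j$ and $\langle\cdot,\cdot\rangle$ is the $L^2(\partial\Om)$ pairing. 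Writing $\mathcal{A}=HI_{d}-2D^2b$, this is
\[
E^{(1)}[\tilde u_j]=-2\beta\Deltat(V_n\partial_n\tilde u_j)-2\divet\big(V_n(I+\beta\mathcal{A})\nablat\tilde u_j\big)-2\big[\lambda'\tilde u_j+\lambda V_n(\partial_n\tilde u_j+H\tilde u_j)\big],
\]
so that $\Tr(E^{(1)})=\sum_{j=1}^{d}\langle E^{(1)}[\tilde u_j],u_j\rangle$ and it suffices to evaluate each diagonal pairing.

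I would then integrate by parts on the closed manifold $\partial\Om$, where $\Deltat$ is self-adjoint and $\divet$ produces no boundary term. The first summand becomes $-2\beta\int_{\partial\Om}V_n\,\partial_n\tilde u_j\,\Deltat u_j$, and the eigenvalue equation $-\beta\Deltat u_j+\partial_n u_j-\lambda u_j=0$ lets me substitute $\beta\Deltat u_j=\partial_n u_j-\lambda u_j$, yielding $-2\int_{\partial\Om}V_n\,\partial_n\tilde u_j(\partial_n u_j-\lambda u_j)$. The divergence theorem turns the second summand into $+2\int_{\partial\Om}V_n(I+\beta\mathcal{A})\nablat\tilde u_j\cdot\nablat u_j$. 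The term $-2\lambda'\int_{\partial\Om}\tilde u_j u_j$ drops out because the corrector $\tilde u_j$ was constructed (via Fredholm's alternative, above \eqref{decomposition}) to be $L^2(\partial\Om)$-orthogonal to every eigenfunction; moreover $\lambda'=0$ under the standing assumption that $V_n$ is orthogonal to spherical harmonics of order two.

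Collecting the surviving contributions, the two pieces proportional to $\lambda\int_{\partial\Om}V_n\,\partial_n\tilde u_j\,u_j$ — one coming from the boundary-condition substitution, the other from the last term $-2\lambda V_n\partial_n\tilde u_j$ of $E^{(1)}[\tilde u_j]$ — cancel exactly, and what remains is
\[
\langle E^{(1)}[\tilde u_j],u_j\rangle=2\int_{\partial\Om}V_n\Big(-\partial_n\tilde u_j\,\partial_n u_j-\lambda H\,\tilde u_j u_j+(I+\beta\mathcal{A})\nablat\tilde u_j\cdot\nablat u_j\Big)\,d\sigma.
\]
Summing over $j=1,\dots,d$ gives precisely \eqref{trace:E1}. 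The whole computation is parallel to the simple-eigenvalue case: the only new inputs are the orthogonality of the $\tilde u_j$ to the eigenspace and the replacement of one factor $u$ by $\tilde u_j$ in the bilinear expression defining $M$ in \eqref{eq:M}; indeed the diagonal pairing equals twice that bilinear form evaluated at $(\tilde u_j,u_j)$. No serious obstacle arises beyond careful tracking of signs through the successive integrations by parts and the cancellation just described.
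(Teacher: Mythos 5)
Your proof is correct and follows essentially the same route as the paper: the paper's own justification (the statement that the lemma is ``straightforward'' together with Remark \ref{remarque:simplification:du:noyau}) likewise injects the Fredholm decomposition \eqref{decomposition} of $u'$ into $E^{(1)}$, discards the $\tilde c_j u_j$ part (which pairs to $2(M(V_n)-\lambda' I)$, vanishing since $V_n\perp\mathcal{H}_2$ forces $M(V_n)=0$ and $\lambda'=0$), and reduces the diagonal pairings $\langle E^{(1)}[\tilde u_j],u_j\rangle$ by the same integrations by parts, eigenvalue-equation substitution, and cancellation of the $\lambda\int V_n\,\partial_n\tilde u_j\,u_j$ terms that you carry out. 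Your write-up is in fact more explicit than the paper's about these steps, and correctly identifies both reasons the $\lambda'$ term drops (orthogonality of $\tilde u_j$ to the eigenspace, and $\lambda'=0$).
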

From this result we deduce the following, which is proved in Appendix \ref{app:u'}:
\begin{proposition}\label{trace:E1:dimension:3}
Assume $d=3$ and set $\alpha=\cfrac{\beta}{R}$. We denote $Y^{m}_l,×m=-l,\ldots,m$  any spherical harmonic of order $l\Be \in I \Bk$. If 
$$V_{n}=\sum_{\Be l\in I \Bk} R^l \Big(\sum_{m=-l}^l v_{l,m}Y_{l}^m\Big),$$ then 
$$\Tr(E^{(1)})=\ - \ \Be K(R) \Bk \left( \ \sum_{\Be l\in I \Bk} \left[A_{l,\alpha}+ B_{l,\alpha}\right] \ R^{2l\Be +1 \Bk}\  \sum_{m=-l}^l |v_{l,m}|^2\right)$$
where
$$
A_{l,\alpha}\ =\ \frac{l}{2l+1}\ \frac{l+2}{l-2}\ (4\alpha+2l)\ \frac{1+\alpha(3-l)}{1+\alpha(l+1)} 
\textrm{ and }
B_{l,\alpha}\ =\ \frac{l+1}{l}\ \frac{l-1}{l}\ (4\alpha+2)\ \frac{1+\alpha(4+l)}{1+\alpha(3+l)}.$$
\end{proposition}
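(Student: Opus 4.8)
The plan is to start from \eqref{trace:E1} specialised to $\Om=B_R$ in dimension $d=3$, solve explicitly for the auxiliary functions $\tilde u_j$ by a spherical-harmonic decomposition, and then evaluate the trace mode by mode. First I would record the geometric simplifications on the sphere: here $H=\frac{d-1}{R}=\frac{2}{R}$, and because $d-3=0$ the deviatoric tensor $HI_{d}-2D^2b$ annihilates every tangent vector, so the term $\beta(HI_{d}-2D^2b)\nablat\tilde u_j\cdot\nablat u_j$ disappears and \eqref{trace:E1} reduces to
$$\Tr(E^{(1)})=2\int_{\partial B_{R}}V_n\sum_{j=1}^{3}\Big(\nablat\tilde u_j\cdot\nablat u_j-\partial_n\tilde u_j\,\partial_n u_j-H\lambda\,\tilde u_j\,u_j\Big)\,d\sigma,$$
with $u_j=x_j/\sqrt{\omega_{d}R^{d+1}}$ and $\lambda=\lambda_{1}(B_R)=\frac{R+2\beta}{R^2}$.

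The core of the argument is to compute $\tilde u_j$. On $\partial B_R$ the Wentzell operator $-\beta\Deltat+\D$ is diagonal in the basis of spherical harmonics, multiplying a harmonic of degree $k$ by $\lambda_{(k)}=\frac{k}{R}+\beta\frac{k(k+1)}{R^2}$ (the Steklov eigenvalue plus $\beta$ times the Laplace--Beltrami one); hence $-\beta\Deltat+\D-\lambda$ multiplies it by
$$\mu_k:=\lambda_{(k)}-\lambda=\frac{k-1}{R}\big(1+\alpha(k+2)\big),\qquad\alpha=\frac{\beta}{R}.$$
Writing $V_n=\sum_{l\in I}R^l\sum_{m=-l}^{l}v_{l,m}Y_l^m$ and using that $u_j$ restricted to the sphere is a degree-one harmonic, every product entering the right-hand side of the boundary problem \eqref{shape_derivative} for $\tilde u_j$ (namely $V_n u_j$, $V_n\partial_n u_j$ and $V_n\nablat u_j$) is a superposition of harmonics of degrees $l-1$ and $l+1$ only. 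This is exactly where the hypothesis $V_n\in\mathcal H$ is used: excluding order $2$ forces $l-1\neq1$, so no mode of $\tilde u_j$ resonates with the kernel degree $k=1$ (order $0$ being excluded by the volume constraint), and the resolvents $\mu_{l\pm1}^{-1}$ are finite. Solving mode by mode then gives $\tilde u_j$ explicitly as the sum of its degree-$(l-1)$ and degree-$(l+1)$ parts, weighted by
$$\mu_{l-1}^{-1}=\frac{R}{(l-2)\big(1+\alpha(l+1)\big)},\qquad\mu_{l+1}^{-1}=\frac{R}{l\big(1+\alpha(l+3)\big)},$$
which already exhibit the denominators $1+\alpha(l+1)$ and $1+\alpha(l+3)$ of $A_{l,\alpha}$ and $B_{l,\alpha}$.

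I would then substitute this expression into the reduced trace and integrate against $V_n$. The integrand $\sum_j(\cdots)$ is once more a product of a degree-$(l\pm1)$ factor (from $\tilde u_j$) with a degree-one factor (from $u_j$), so it lives in degrees $l$ and $l\pm2$; by orthogonality only its degree-$l$ part pairs nontrivially with the degree-$l$ modes of $V_n$. Each channel $k=l-1$ and $k=l+1$ therefore yields a contribution proportional to $\sum_m|v_{l,m}|^2$, whose prefactor combines (i) the resolvent $\mu_k^{-1}$, (ii) the scalar factors produced by the three pieces of the integrand via $\partial_n\leftrightarrow k/R$, the tangential inner product, and $H\lambda$ --- these assemble into $(4\alpha+2l)$ for $k=l-1$ and $(4\alpha+2)$ for $k=l+1$ --- and (iii) the Gaunt coefficients governing the splitting of $n_jY_l^m$ into degrees $l\pm1$. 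Summing over the three coordinates $j$ and over $m$ collapses these coefficients through the addition theorem $\sum_m Y_l^m(\xi)\overline{Y_l^m(\eta)}=\frac{2l+1}{4\pi}P_l(\xi\cdot\eta)$ and its gradient form, leaving the rational factors $\frac{l}{2l+1}\frac{l+2}{l-2}$ from the $k=l-1$ channel and $\frac{l+1}{l}\frac{l-1}{l}$ from the $k=l+1$ channel. Grouping the former as $A_{l,\alpha}$ and the latter as $B_{l,\alpha}$, and factoring out $K(R)=\frac{d}{R^{2+d}\omega_{d-1}}$, produces the stated identity.

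The genuine obstacle is concentrated in this last step: pushing the Gaunt-coefficient bookkeeping through the three different operators and checking that, after the sums over $j$ and $m$, every azimuthal dependence cancels and the $(l-2)^{-1}$ and $l^{-1}$ resolvent factors recombine with the Clebsch--Gordan weights into the clean rational prefactors above. These are the computations carried out in Appendix \ref{app:u'}; everything preceding them is a direct consequence of the spectral decomposition of the Wentzell operator on the ball together with the fact that each $u_j$ is a harmonic of degree one.
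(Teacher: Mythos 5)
Your proposal follows essentially the same route as the paper's own proof in Appendix \ref{app:u'}: you solve for the $\tilde u_j$ mode by mode using the diagonal action of the Wentzell operator on spherical harmonics (your resolvent factors $\mu_{l\pm 1}^{-1}$ are exactly the denominators $1+\alpha(l+1)$ and $1+\alpha(l+3)$ appearing in the coefficients $a^{(l,1,m,p)}_{l\pm 1,p,\alpha}$ of Lemma \ref{calcul3d}), then pair against $V_n$ to get the channel factors $(4\alpha+2l)$ and $(4\alpha+2)$ of Lemma \ref{technique:calcul:trace:E1}, and finally collapse the sums over $j$ and $m$ of squared Gaunt integrals, which the paper evaluates via Wigner-$3j$ symbols and you propose to evaluate via the addition theorem --- an equivalent bookkeeping device. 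The plan is correct and is in substance the paper's proof.
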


Since $\Tr{E}=\Tr{(E^{(1)}})+ \Tr{(E^{(2)}})$, we will then deduce the following result 
\begin{proposition}\label{trace_E}
Assume $d\in\{2,3\}$. Then there exists a  nonnegative constant $\mu$ such that  $$
  \Tr(E)\leq -K(R) \mu \int_{\partial B_{R}} |\nablat V_{n}|^2+|V_{n}|^2 ~d\sigma.
  $$
holds for  all preserving volume deformations such that $V_n$ is orthogonal {\Be to $\mathcal{H}$}.
\end{proposition}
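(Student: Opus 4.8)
The plan is to add the two trace formulas already at our disposal, $\Tr(E)=\Tr(E^{(1)})+\Tr(E^{(2)})$, and then to diagonalize everything in the harmonic basis adapted to the sphere $\partial B_{R}$. For $d=3$ I would use Lemma \ref{lem:E2} for $\Tr(E^{(2)})$ and Proposition \ref{trace:E1:dimension:3} for $\Tr(E^{(1)})$; the case $d=2$ follows the same scheme, replacing the spherical harmonics $Y_{l}^{m}$ by the Fourier modes $\cos(l\theta),\sin(l\theta)$ and using the two-dimensional analogue of Proposition \ref{trace:E1:dimension:3}. Writing $V_{n}=\sum_{l\in I}R^{l}\sum_{m}v_{l,m}Y_{l}^{m}$ and exploiting the $\sL^{2}(\partial B_{R})$-orthogonality of harmonics of distinct orders, every integral occurring in the two traces collapses to a diagonal sum $\sum_{l\in I}(\cdots)\sum_{m}|v_{l,m}|^{2}$. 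Applying the same decomposition to the right-hand side, together with $\int_{\partial B_{R}}|\nablat Y_{l}^{m}|^{2}=\frac{l(l+d-2)}{R^{2}}\int_{\partial B_{R}}|Y_{l}^{m}|^{2}$, expresses $\|V_{n}\|_{\sH^{1}(\partial B_{R})}^{2}$ in the very same basis.

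After factoring out the common positive constant $K(R)$, the claim reduces to a family of scalar inequalities indexed by the order $l\in I=\N\setminus\{0,2\}$: for each $l$, the coefficient gathering the $E^{(1)}$ and $E^{(2)}$ contributions must dominate $\mu$ times the $\sH^{1}$-weight of that order. The $E^{(2)}$ part is the favourable one: by Lemma \ref{lem:E2} it is a negative definite quadratic form, contributing (up to $-K(R)$) a per-order weight of the form $(d\alpha+1)\,l(l+d-2)+1$ with $\alpha=\beta/R$, already comparable to $\|V_{n}\|_{\sH^{1}}^{2}$. The $E^{(1)}$ part is governed by the coefficients $A_{l,\alpha}+B_{l,\alpha}$ of Proposition \ref{trace:E1:dimension:3}. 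Since all surviving integrals are diagonal, once the powers of $R$ are reconciled via $\beta=\alpha R$ (homogeneity forces them to match, since otherwise the sign of the bound would depend on $R$), each per-order ratio depends only on $(l,\alpha)$ and not separately on $R$; this is precisely what makes the resulting $\mu$ independent of $R$, as asserted in Theorem \ref{conclusion:ordre:deux}.

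It then remains to bound this per-order ratio below by a positive constant, uniformly in $l\in I$ and $\alpha\in[0,\infty)$, and $\mu$ is exactly the infimum of these ratios. Concretely one analyses $A_{l,\alpha}+B_{l,\alpha}$ against the $E^{(2)}$-weight $(d\alpha+1)l(l+d-2)+1$. The order $l=1$ is handled by hand: there $B_{1,\alpha}=0$ and $A_{1,\alpha}=-(4\alpha+2)$ is \emph{un}favourable, yet for $d=3$ the $E^{(2)}$-weight $(3\alpha+1)\cdot 2+1=6\alpha+3$ dominates it, leaving a net coefficient $\geq 2\alpha+1>0$. For $l\ge 3$ one must control the factors $\frac{l+2}{l-2}$ and $\frac{1+\alpha(3-l)}{1+\alpha(l+1)}$ in $A_{l,\alpha}$, which change sign for large $l$ or large $\alpha$. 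The main obstacle is exactly this uniform scalar estimate: one has to show that whenever the $E^{(1)}$ contribution turns unfavourable it is absorbed by the negative definite $E^{(2)}$ contribution, leaving a net bound $-\mu\big(l(l+d-2)+\text{l.o.t.}\big)$. I would close the argument by computing the asymptotics of the ratio as $l\to\infty$ and as $\alpha\to\infty$ (checking a strictly positive limit, uniform in the other variable) and combining these with a finite inspection of the remaining small orders; the degenerate order $l=2$ is precisely the one excluded from $I$, in agreement with the ball being only critical, and not strictly optimal, along second harmonics.
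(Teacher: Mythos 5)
Your overall strategy coincides with the paper's: write $\Tr(E)=\Tr(E^{(1)})+\Tr(E^{(2)})$, expand $V_n$ in harmonics, use orthogonality to reduce everything to a per-order coefficient, and bound that coefficient below uniformly. But two things go wrong. First, your treatment of $l=1$ is incorrect, and the error is not cosmetic. You add the $E^{(2)}$ weight $(3\alpha+1)\cdot 2+1=6\alpha+3$ to $A_{1,\alpha}+B_{1,\alpha}=-(4\alpha+2)$ and conclude that the net coefficient at $l=1$ is $2\alpha+1>0$. This cannot be right: a first-order harmonic $V_n=e\cdot\n$ is the normal trace of a pure translation, which preserves the volume exactly and leaves every eigenvalue unchanged, so every branch $\lambda_k(t)$ is constant and $\Tr(E)=\sum_k\lambda_k''(0)=0$ in such directions. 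Accordingly, the paper's combined coefficients satisfy $G(\alpha,1)=0$ and $F(\alpha,1)=0$, and it points out the translation-invariance explanation explicitly; if the printed constants in Lemma \ref{lem:E2} and Proposition \ref{trace:E1:dimension:3} do not reproduce this cancellation exactly, that is a typographical matter which the invariance argument settles. The consequence you miss is structural: since the true coefficient at $l=1$ vanishes, the coercive inequality $\Tr(E)\leq-K(R)\,\mu\,\|V_{n}\|_{\sH^1(\partial B_{R})}^2$ with $\mu>0$ is \emph{false} on first-order harmonics; that is precisely why the admissible space $\mathcal{H}$ must be generated by harmonics of order $\geq 3$ (orders $0$, $1$ and $2$ all excluded), and your version, which keeps $l=1\in I$, would "prove" a strictly negative second-order bound for translations, which is absurd.

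Second, the heart of the proposition --- the uniform positive lower bound on the per-order coefficient for $l\geq3$ --- is never established; you only state a plan (asymptotics in $l$ and in $\alpha$ plus a finite inspection) and you yourself call the scalar estimate "the main obstacle". This is exactly where all the content of the paper's proof lies: it computes the combined coefficient in closed form as an explicit rational function, $G(\alpha,l)$ in dimension $2$ and $F(\alpha,l)$ in dimension $3$ (with numerator polynomials $P_0,\dots,P_3$ in $\alpha$), and then verifies positivity for $l\geq 3$ by direct sign analysis --- in dimension $3$ via Descartes's rule of signs, showing each $P_m$ has its unique positive root in $[0,2]$, hence is positive for $l\geq 3$. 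Without this (or an equivalent) explicit computation, your "finite inspection" has nothing to inspect: the sign change you correctly identify in $A_{l,\alpha}$ (through the factor $1+\alpha(3-l)$) means the $E^{(1)}$ part genuinely becomes unfavourable for large $\alpha$, and whether the $E^{(2)}$ part absorbs it is precisely the cancellation your outline assumes but never proves. So the proposal follows the paper's route but stops short of its decisive step, and the one new quantitative claim it does make (the $l=1$ coefficient) is wrong.
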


\begin{proof} 
We distinguish the case $d=2$ and $d=3$.
\paragraph{The case  $d=2$.}
Let us compute the  trace of the matrix $E$. 
Gathering all the results of Lemma \ref{lem:E2} with the computations of Appendix \ref{app:u'} concerning the trace of the different matrices involved in the matrix $E$, we obtain the following formula: when  $$V_n=\sum_{\Be l \in I \Bk} \Be \cfrac{R^l}{\sqrt{\pi}}\Bk\left(v_1^{(l)}\cos{l \theta}+v_2^{(l)}\sin{l \theta}\right),~ \Be l \in I \Bk,$$ we have 
\begin{equation}\label{eq:trE2d}
\Tr(E)=-K(R)\sum_{\Be l \in I \Bk}G(\alpha,l) \ (l^2+1)\   \Be R\Bk^{2l+1} \big((v_1^{(l)})^2+(v_1^{(l)})^2\big) .
\end{equation}
where 
$$G(\alpha,l)=\cfrac{(l^2-1)}{2(1+l^2)} \ \cfrac{2 + l^2 + 2 \alpha^2 (l-2) l^2 + 
   \alpha (l-2)(l^2+2))} {(l-2 ) l (1 + \alpha l)}.$$
\Rd Let us remark that $G(\alpha,1)=0$. This could have been guessed since the Wentzell eigenvalues are translation invariance: we recall that, denoting $\text{Bar}$ the center of mass of the boundary, we have 
$$\text{Bar}'(0)=\int_{\partial B_{R}} x V_{n}$$
so that deformations orthogonal to spherical harmonics of order $1$ preserve at first order the center of mass. \Bk    
A close look to the fraction $G$ shows that it has no pole for $\alpha>0$ and $l\geq 3$, that it is nonnegative for $l>2$ and that $G(l,\alpha) 	\rightarrow 1$ when $l \rightarrow +\infty$; then there is a nonnegative constant $\mu$ such that for all $l\geq 3$, $\mu\leq G(l,\alpha)$. This gives 
  $$
  \Tr(E)\leq -K(R) \mu\int_{\partial B_{R}} |\nablat V_{n}|^2+|V_{n}|^2 ~d\sigma.
  $$

\paragraph{The case $d=3$.}
The strategy is the same, and we use again Lemma \ref{lem:E2} and the detailed computations from Section \ref{app:d3}: we get for $\Be l \in I \Bk$:
$$V_{n}=\sum_{\Be l \in I \Bk} \Be R\Bk^l \sum_{p=- l}^{\Be l\Bk} v_{p}^{(l)} Y_{l}^p,$$
$$
\Tr(E)=- K(R) \sum_{\Be l \in I \Bk}\ F(\alpha,l)\   (l(l+1)+1) \Be R\Bk^{2l+1} \sum_{p=-l}^{l} (v_p^{(l)})^2.
$$
where $F(\alpha,l)$ is the fraction 
$$F(\alpha,l)=\cfrac{(l-1) \displaystyle\sum_{m=0}^3 P_{m}(l)\alpha^m} {(l(l+1)+1) \ l\ (1+\alpha(l+1)) \ (2l+1)\ (l-2) \ (1+\alpha(l+3)) },$$
and where the polynomial $P_{m}$ are defined as
\begin{eqnarray*}
P_{0} (X)&=& 2X^4+5X^3+16X^2-8,\\
P_{1}(X) &=& 4 X^5+18X^4+40X^3+68X^2-28X-56,\\
P_{2} (X)&=& 2X^6+21X^5+42X^4+35X^3+16X-112,\\
P_{3}(X) & = & 8X^6+18X^5+24X^4-68X^3-144X^2-112X-64. 
\end{eqnarray*}
\Be Let us remark that $F(\alpha,1)=0$ for the same reason than in dimension two. \Bk
By Descartes's rule of signs, the polynomials  $P_{m}$ have at most one positive root. Since $P_{m}(0)<0$ and $P_{m}(2)>0$ for $m = 0,\dots 3$, $P_{m}$ has exactly one positive root which is in $[0,2]$. Since $l>2$,  there exists a nonnegative constant $\mu$ such that for all $k\geq 3$, $\mu\leq F(k,\alpha)$ and 
  $$
  \Tr(E)\leq - K(R)\  \mu \ \int_{\partial B_{R}} |\nablat V_{n}|^2+|V_{n}|^2 ~d\sigma.
  $$
\end{proof}
\appendix 

\section{Some classical results on tangential differential calculus}\label{app:diffcal}
We recall some facts about tangential operators acting on functions defined on $\partial \Omega$.  The formulas involve the extensions of  functions and the differential calculus becomes easier since we will use the classical euclidean differential calculus in a neighborhood of  $\partial \Omega$.  The canonical extension will be provided thanks to the oriented  distance  and the orthogonal projection on the tangent plane.  For more details, the interested reader will consult the book \cite{DelfourZolesio} of M. Delfour and J.P. Zolesio from which we borrowed the necessary material.
\subsection{Notations and definitions. Preliminary results}
We recall some essential notations and definitions that are needed for the computations of  shape derivatives. Given a smooth function $f:\partial\Omega\mapsto \mathbb{R}$, we define its tangential gradient $\nablat$ as 
\begin{equation}
\nablat{}f=\nabla \tilde{f}-\nabla \tilde {f}.\textbf{n}~\textbf{n}
\end{equation}
where $\tilde{f}$ is any extension of $f$ in a tubular neighborhood of $\partial\Omega$. An  extension is easily obtained when $\partial\Omega$ is smooth. The tangential gradient does not depends on the extension. 
\par
\noindent
It is also useful to define the tangential gradient as the normal projection of $\nabla \tilde f$ to the tangent hyperplane of $\partial\Om$; in other words
$$
\nablat f=\nabla \tilde  f -n \otimes n \nabla \tilde f, ~\text {on } \partial\Om.
$$
We also need the definition of the tangential divergence : for a tensor $v$, we define the surface divergence as 
$$
\divet u=\Tr(\nablat u)
$$
For regular functions we define the surface Laplacian or Laplace-Beltrami operator as
$$
\Delta_\tau f := \divet (\nablat f).
$$
We recall the definition of the oriented distance $b_{\partial\Omega}$: 
\begin{equation}\label{eq:signeddistance}
b_{\partial\Omega}(x)=\left\{
\begin{array}{lll}
{}d_\Omega(x)& & \textrm{~for~} x \in \mathbb{R}^d\backslash\overline{\Omega}\\
-d_\Omega(x)& & \textrm{~for~} x \in \Omega,
\end{array}
\right.
\end{equation}
where the notation $d_\Omega$ stands for the  distance function for a subset $\Omega \subset \mathbb{R}^d$:
$$
d_\Omega(x)=\textrm{inf}_{y\in \Omega}\vert x-y\vert
$$
We shall sometimes write $b$ instead of $b_{\partial\Omega}$; its gradient is an extension of the normal vector field $\textbf{n}$ in a neighborhood of $\partial\Omega$. 
\par
\noindent
Let $D^2 b$ be the Weingarten operator with entries $(\nablat)_i~n_j$ where $n_j$ is the $j-$th component of $\textbf{n}$.  The normal {\Rd vector }is known to be in the kernel of $D^2b$, {\Rd while the other eigenfunctions are tangential with the corresponding eigenvalues given by the principal curvatures of $\partial\Om$}. \par
\noindent
Let $\kappa_i,~i=1,\ldots,d-1$ be the non zero eigenvalues of $D^2b$.  We define the mean curvature $H$  as 
\begin{equation}
H=\sum_{i=1}^{d-1}\kappa_i=\Tr(D^2 b)=\Delta b, \textrm{~on~} \partial\Om.
\end{equation}
\par
\noindent
An important result about the normal derivative of this quantities is: 
\begin{proposition}
Suppose that  the boundary $\partial\Omega$ is of class $C^3$. Then the normal derivative of the mean curvature $H$ is 
\begin{equation}
\partial_n H=-\sum_{i=1}^{d-1} \kappa_i^2.
\end{equation}
\end{proposition}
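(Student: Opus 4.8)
The plan is to exploit the eikonal equation satisfied by the signed distance function $b$, namely $|\nabla b|^2=1$ in a tubular neighborhood of $\partial\Om$, and to differentiate it twice. Since $\partial\Om$ is of class $C^3$, the function $b$ is $C^3$ near $\partial\Om$, so all third-order derivatives below exist and mixed partials commute. Throughout I write $b_i=\partial_i b$, $b_{ij}=\partial_i\partial_j b$ and $b_{ijk}=\partial_i\partial_j\partial_k b$, and I recall that $\nabla b$ is the unit normal extension $\n$ and that $H=\Delta b=\sum_k b_{kk}$, while $D^2b=(b_{ij})$ coincides with the Weingarten operator because $\n$ lies in its kernel.

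First I would differentiate $\sum_k b_k b_k=1$ with respect to $x_j$ to obtain $\sum_k b_k b_{kj}=0$, which records that $\n=\nabla b$ lies in the kernel of the symmetric operator $D^2b$; its remaining eigenvalues are the principal curvatures $\kappa_1,\dots,\kappa_{d-1}$. Differentiating this relation once more, with respect to $x_i$, gives the pointwise identity
\begin{equation*}
\sum_k b_{ki}b_{kj}+\sum_k b_k b_{kij}=0 \qquad \text{for all } i,j.
\end{equation*}
Setting $i=j$ and summing over $i$ yields $\sum_{i,k}b_{ki}^2+\sum_{i,k}b_k b_{kii}=0$.

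Now I would identify the two sums. The first is the squared Frobenius norm of $D^2b$, that is $\Tr\big((D^2b)^2\big)=\sum_{i=1}^{d-1}\kappa_i^2$, since the eigenvalues of $D^2b$ are $0$ (in the normal direction) together with the $\kappa_i$. For the second, commuting derivatives gives $\sum_i b_{kii}=\partial_k\big(\sum_i b_{ii}\big)=\partial_k(\Delta b)=\partial_k H$, so $\sum_{i,k}b_k b_{kii}=\sum_k b_k\,\partial_k H=\nabla b\cdot\nabla H=\partial_n H$. Combining the two identifications gives $\partial_n H=-\sum_{i=1}^{d-1}\kappa_i^2$, as claimed.

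The computation is short, and the only points requiring care are the $C^3$ regularity (needed to justify the third derivatives and the symmetry of mixed partials, hence the hypothesis on $\partial\Om$) and the spectral identification $\Tr\big((D^2b)^2\big)=\sum\kappa_i^2$, which relies on the fact recalled earlier that $\n$ spans the kernel of $D^2b$ while the tangential eigenvalues are exactly the principal curvatures. An equivalent, more geometric route would be to note that the shape operator $S(t)$ along the normal exponential map satisfies the Riccati equation $S'(t)=-S(t)^2$, whence $\partial_n H=\tfrac{d}{dt}\Tr S(t)|_{t=0}=-\Tr\big(S(0)^2\big)=-\sum\kappa_i^2$; the distance-function argument above is simply the coordinate form of this fact.
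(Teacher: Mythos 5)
Your proof is correct. Note, however, that the paper itself gives no proof of this proposition: it is stated as background material in Appendix A, recalled (like the rest of that subsection) from the Delfour--Zol\'esio book, so there is no internal argument to compare yours against. Your eikonal-equation computation is the standard route and is complete: differentiating $|\nabla b|^2=1$ twice, taking the trace, and using that the eigenvalues of $D^2b$ on $\partial\Om$ are $0$ (normal direction) and $\kappa_1,\dots,\kappa_{d-1}$ yields exactly $\Tr\big((D^2b)^2\big)+\nabla b\cdot\nabla(\Delta b)=0$. One point you handle implicitly but which deserves emphasis: the quantity $\partial_n H$ is only well defined once a choice of extension of $H$ off the boundary is fixed, and your identification $\partial_n H=\nabla b\cdot\nabla(\Delta b)$ uses the canonical extension $H=\Delta b$. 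This is precisely the convention of the paper (all curvature quantities there are defined through the signed distance function $b$), so your argument is consistent with how the proposition is used later, e.g.\ in the computation of $\langle C^{(2,4)}u,\phi\rangle$ in Appendix C. The $C^3$ hypothesis is also invoked at the right place: it guarantees $b\in C^3$ in a tubular neighborhood, hence the existence and symmetry of the third derivatives $b_{kij}$. Your closing remark that this is the coordinate form of the Riccati equation $S'(t)=-S(t)^2$ for the shape operator along normal geodesics is a fair geometric summary, though it is not needed for the proof.
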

Other  known  identities: we denote $\mathbf{x}$ the identity function. We have 
$$
\begin{array}{lll}
-\Delta_\tau \mathbf{x}&=&H \mathbf{n}\\
\divet \mathbf{n}&=&H  \mathbf{n}
\end{array}
$$
\noindent{\bf Tangential integral formula:}
Given two functions $f$ (scalar) and $\mathbf{v}$ smooth enough, we have 
$$
\int_{\partial\Om} f \divet \mathbf{v} + \int_{\partial\Om} \nablat f.  v=\int_{\partial\Om}  H f~ \mathbf{v}.{\mathbf{n}}
$$
\noindent{\bf Shape derivative of the main curvature $H$ and of the normal $\bf{n}$} in the direction of a  velocity $\V$: 
\begin{proposition}\label{derivees:normale:H}
Let a  surface $\partial\Omega$ be of class $C^2$. The shape derivatives of the normal $\textbf{n}$ and of the mean curvature $H$ in the direction of the velocity vector $\V$ are 
\begin{equation}
\begin{array}{lll}
\textbf{n}'&=&-\nablat V_n\\
H'&=&-\Deltat V_n
\end{array}
\end{equation}
where $V_n=\langle \V,\n\rangle $ denotes the normal component of the vector deformation $\bf{V}$. 
\end{proposition}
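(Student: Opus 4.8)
The plan is to prove the two identities in sequence: first $\n'=-\nablat V_n$, by differentiating the explicit expression for the unit normal carried along the flow, and then $H'=-\Deltat V_n$, by differentiating $H=\divet\n$ and feeding in the value of $\n'$ just obtained.

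For the normal, I would start from the pull-back of the outward unit normal of $\partial\Om_t=T_t(\partial\Om)$, namely
$$\n_t\circ T_t=\frac{(DT_t)^{-T}\n}{\|(DT_t)^{-T}\n\|}.$$
Since $T_t=I+t\V$ one has $DT_0=I_{d}$ and $\left.\frac{d}{dt}(DT_t)^{-T}\right|_{t=0}=-(D\V)^T$, so differentiating the quotient at $t=0$ gives the material derivative
$$\dot\n=-(D\V)^T\n+\big(\n\cdot(D\V)^T\n\big)\,\n.$$
I would then pass to the shape derivative via $\n'=\dot\n-(D^2b)\V$, using that the canonical extension of $\n$ to a tubular neighbourhood is $\nabla b$, whose Jacobian is $D^2b$. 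Differentiating $|\n_t|^2=1$ shows $\n\cdot\n'=0$, so $\n'$ is tangential, and it remains to read off its tangential components. Testing against an arbitrary tangent vector $\tau$ and rewriting $((D\V)^T\n)\cdot\tau=\tau\cdot\nablat V_n-\tau\cdot(D^2b)\V$ (which uses $\tau\cdot\nabla(\V\cdot\n)=\tau\cdot\nablat V_n$ because $\tau$ is tangential), the two curvature contributions $\pm\,\tau\cdot(D^2b)\V$ cancel, leaving $\n'\cdot\tau=-\tau\cdot\nablat V_n$ for every tangent $\tau$. Together with $\n\cdot\n'=0$ this yields exactly $\n'=-\nablat V_n$.

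For the mean curvature I would write $H=\divet\n$ and take its shape derivative. The heart of the matter is the commutation of the shape derivative with the tangential divergence for the specific field $\n$: using the tangential calculus identities of \cite{DelfourZolesio}, the correction terms generated by differentiating $\divet$ along the moving surface are driven by $\n'$ and disappear thanks to the orthogonality $\n\cdot\n'=0$ and the identity $(D^2b)\n=0$, so that $H'=\divet(\n')$. Substituting $\n'=-\nablat V_n$ and invoking the very definition $\Deltat=\divet\nablat$ then gives $H'=\divet(-\nablat V_n)=-\Deltat V_n$. As a consistency check one may take $\Om=B_{R}$ with $V_n$ constant: then $\n'=0$ and $-\Deltat V_n=0$, matching the fact that the material variation $-(d-1)V_n/R^2$ of $H$ under a uniform dilation is entirely accounted for by the convective term $\V\cdot\nabla H$ in $\dot H=H'+\V\cdot\nabla H$.

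The step I expect to be the main obstacle is precisely this last commutation: the careful bookkeeping between material and shape derivatives on a moving hypersurface, and the verification that the curvature corrections in the shape derivative of $\divet\n$ genuinely cancel. This is where the structural relations $(D^2b)\n=0$, $\n\cdot\n'=0$ and the tangential integration-by-parts formula do the real work, and it is the computation most sensitive to sign and normalization conventions.
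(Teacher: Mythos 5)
The paper itself does not prove Proposition \ref{derivees:normale:H}: it is stated in Appendix \ref{app:diffcal} as classical material borrowed from \cite{DelfourZolesio}, so there is no internal proof to compare yours against; your argument is correct and self-contained, and its conventions (outward normal, $b$ as in \eqref{eq:signeddistance}) match the paper's. For the normal, your computation is exact: since $\nabla V_n=\nabla(\V\cdot\nabla b)=(D\V)^T\n+(D^2b)\V$, for tangential $\tau$ one gets
$$\n'\cdot\tau=-\big((D\V)^T\n\big)\cdot\tau-\tau\cdot(D^2b)\V=-\tau\cdot\nablat V_n,$$
and with $\n\cdot\n'=0$ this forces $\n'=-\nablat V_n$. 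For the curvature, the commutation you flag as the delicate step does hold, though the precise mechanism is slightly different from what you state: differentiating $H_t=\dive\nabla b_t-\n_t\cdot(D^2b_t)\n_t$ in the Eulerian sense produces two curvature corrections, $\n'\cdot(D^2b)\n$ and $\n\cdot(D^2b)\n'$, and both vanish because $D^2b$ is symmetric with $(D^2b)\n=0$ (the orthogonality $\n\cdot\n'=0$ is not what is needed here); the surviving term $-\n\cdot(D\n')\n$ is exactly what converts $\dive\n'$ into $\divet(\n')$, whence $H'=\divet(-\nablat V_n)=-\Deltat V_n$. A shortcut worth knowing, which packages both identities at once and avoids differentiating $\divet$ on a moving surface: differentiate the eikonal equation $|\nabla b_t|^2=1$ to get $\partial_n b'=0$, so the shape derivative of the signed distance satisfies $b'=-V_n\circ p$ ($p$ the projection onto $\partial\Om$), a function constant along normal lines; then $\n'=\nabla b'=-\nablat V_n$ directly, and since $\partial_nb'=\partial_{nn}b'=0$ the decomposition $\Delta g=\Deltat g+H\partial_ng+\partial_{nn}g$ on $\partial\Om$ gives $H'=\Delta b'=\Deltat\big({b'}_{|\partial\Om}\big)=-\Deltat V_n$. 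Your consistency check on the ball is also correct.
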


\subsection{A commutation lemma}
Here   $f$ and $g$ are two smooth functions defined on $\mathcal{U}$ a neighborhood of $\partial\Omega$;  the notation $b$ stands for the oriented distance. Recall that its gradient is an extension of the normal field $\mathbf{n}$ on $\partial\Omega$.

\begin{proposition}
\label{commutateur}
We have 
\begin{equation}
\displaystyle \partial_\mathbf{n} (\nablat f.\nablat g)+  2(D^2b \nablat f). \nabla g= \nablat (\partial_\mathbf{n}f).\nablat g
+ \nablat (\partial_\mathbf{n}g).\nablat f
\end{equation}
\end{proposition}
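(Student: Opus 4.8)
The plan is to carry out the whole computation in the tubular neighborhood $\mathcal{U}$, where $\mathbf{n}=\nabla b$ extends the unit normal and $D^2b$ is its (symmetric) Jacobian, and to convert every tangential expression into a full Euclidean one. The only two preliminary facts I need are the following. First, from $\nablat p=\nabla p-(\partial_{\mathbf n}p)\mathbf{n}$ together with $|\mathbf{n}|^2=1$ one gets, for any smooth $p,q$,
$$\nablat p\cdot\nablat q=\nabla p\cdot\nabla q-(\partial_{\mathbf n}p)(\partial_{\mathbf n}q).$$
Second, differentiating $|\nabla b|^2=1$ yields $D^2b\,\mathbf{n}=0$; hence $D^2b\,\nablat f=D^2b\,\nabla f$, so the curvature term in the statement may be written as $2(D^2b\,\nabla f)\cdot\nabla g$ and I may work with full gradients throughout.

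The core step is to differentiate $\nabla f\cdot\nabla g$ in the normal direction. Writing $\partial_{\mathbf n}=\sum_j n_j\partial_j$, expanding by the product rule, and using $\partial_i n_j=(D^2b)_{ij}$ in the form $\sum_j n_j\partial_i\partial_j f=\partial_i(\partial_{\mathbf n}f)-(D^2b\,\nabla f)_i$, I obtain
$$\partial_{\mathbf n}(\nabla f\cdot\nabla g)=\nabla(\partial_{\mathbf n}f)\cdot\nabla g+\nabla(\partial_{\mathbf n}g)\cdot\nabla f-2\,(D^2b\,\nabla f)\cdot\nabla g,$$
where the symmetry of $D^2b$ is precisely what merges the two curvature contributions $(D^2b\,\nabla f)\cdot\nabla g$ and $(D^2b\,\nabla g)\cdot\nabla f$ into a single factor of two.

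To finish, I would apply the first preliminary identity on both sides. On the left, $\nablat f\cdot\nablat g=\nabla f\cdot\nabla g-(\partial_{\mathbf n}f)(\partial_{\mathbf n}g)$, so $\partial_{\mathbf n}(\nablat f\cdot\nablat g)$ differs from the display above only by $-\partial_{\mathbf n}[(\partial_{\mathbf n}f)(\partial_{\mathbf n}g)]=-(\partial_{\mathbf n}^2 f)(\partial_{\mathbf n}g)-(\partial_{\mathbf n}f)(\partial_{\mathbf n}^2 g)$. On the right, the same identity gives $\nablat(\partial_{\mathbf n}f)\cdot\nablat g=\nabla(\partial_{\mathbf n}f)\cdot\nabla g-(\partial_{\mathbf n}^2 f)(\partial_{\mathbf n}g)$ and symmetrically for the $g$ term. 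Adding $2(D^2b\,\nabla f)\cdot\nabla g$ to the left-hand expression and comparing, the first-derivative terms match verbatim and the second normal derivatives $\partial_{\mathbf n}^2 f,\partial_{\mathbf n}^2 g$ cancel on both sides, which yields the claimed equality.

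There is no deep obstacle here: the proof is essentially bookkeeping. The only genuine points are the identification $\partial_i n_j=(D^2b)_{ij}$ (that is, that the Jacobian of the extended normal is the Weingarten operator), the use of its symmetry to combine the two curvature terms, and the relation $D^2b\,\mathbf{n}=0$ that lets one replace $\nablat f$ by $\nabla f$ inside $D^2b$. Care must be taken that all derivatives are those of the canonical extensions in $\mathcal{U}$, so that $\mathbf{n}=\nabla b$ and its derivatives are defined off $\partial\Omega$; the resulting identity of functions on $\mathcal{U}$ is then restricted to $\partial\Omega$.
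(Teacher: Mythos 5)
Your proof is correct and takes essentially the same route as the paper's: both first establish the full-gradient identity $\partial_{\mathbf{n}}(\nabla f\cdot\nabla g)+2(D^2b\,\nabla f)\cdot\nabla g=\nabla(\partial_{\mathbf{n}}f)\cdot\nabla g+\nabla(\partial_{\mathbf{n}}g)\cdot\nabla f$ (you via index calculus and the symmetry of $D^2b$, the paper via Hessian matrices), and then pass to tangential gradients, where the second normal derivative terms $\partial_{\mathbf{n}}^2f\,\partial_{\mathbf{n}}g$ and $\partial_{\mathbf{n}}^2g\,\partial_{\mathbf{n}}f$ cancel on both sides. The only difference is presentational: you make explicit the reduction $\nablat p\cdot\nablat q=\nabla p\cdot\nabla q-(\partial_{\mathbf{n}}p)(\partial_{\mathbf{n}}q)$ and the kernel property $D^2b\,\mathbf{n}=0$, which the paper invokes more tersely through the normal/tangential decomposition.
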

\begin{proof}  A straightforward \Rd computation \Bk gives
$$
 \partial_\mathbf{n} (\nabla f.\nabla  g)=( D^2 f\nabla g).\mathbf{n}+(D^2 g \nabla f).\mathbf{n}
$$
and 
$$
\begin{array}{lll}
\nabla (\partial_\mathbf{n}f).\nabla  g
&=&\nabla (\nabla f.\mathbf{n}).\nabla g\\
&=&(D^2f \mathbf{n}).\nabla g +(D^2b \nabla f).\nabla g
\end{array}
$$
hence 
$$
\begin{array}{lll}
\nabla  (\partial_\mathbf{n}f).\nabla  g
+ \nabla (\partial_\mathbf{n}g).\nabla f &=&2(D^2b \nabla f).\nabla g +(D^2f \mathbf{n}).\nabla g +(D^2 g~ \mathbf{n}).\nabla f \\
&=&~~2(D^2b \nabla f).\nabla g +\partial_\mathbf{n} (\nabla f.\nabla  g)
\end{array}
$$
We use now the decomposition of $\nabla$ into its normal and tangential components and the well known identity $D^2b\mathbf{ n}.{\mathbf{n}}=0$. We get 
\begin{align}
\nablat  (\partial_\mathbf{n}f).\nablat  g
+ \nablat (\partial_\mathbf{n}g).\nablat f&+\displaystyle \frac{\partial^2 f}{\partial n^2}\displaystyle \frac{\partial g}{\partial n}
+\displaystyle \frac{\partial^2 g}{\partial n^2}\displaystyle \frac{\partial f}{\partial n}  \nonumber= \\
&2(D^2b \nablat f).\nablat g +\partial_\mathbf{n} (\nablat f.\nablat  g)+ \displaystyle \frac{\partial^2 f}{\partial n^2}\displaystyle \frac{\partial g}{\partial n}+\displaystyle \frac{\partial^2 g}{\partial n^2}\displaystyle \frac{\partial f}{\partial n}
\end{align}
hence 
$$
\nablat  (\partial_\mathbf{n}f).\nablat  g
+ \nablat (\partial_\mathbf{n}g).\nablat f=2(D^2b \nablat f).\nablat g +\partial_\mathbf{n} (\nablat f.\nablat  g)
$$ 
\end{proof}

\section{Spherical harmonics}\label{app:harmonics}

In order to  explicit the shape hessian under consideration, a useful tool is the surface spherical harmonics defined as the restriction to the surface of the unit sphere of harmonic polynomials {\Be in the special case  $d=3$}. We recall here facts from \cite[pages 139-141]{SteinWeiss}. 
{\Rd Spherical Harmonics are defined as restrictions of homogeneous harmonic polynomials  to the unit sphere. The spherical harmonics are said of order $k$ when the harmonic homogeneous polynomial is of degree $k$}. 
We  denote by {\Rd $\mathcal{H}_{k}$ the space of spherical harmonics of degree $k$. We show that is also } the eigenspace of the Laplace-Beltrami operator on the unit sphere associated with the eigenvalue ${\Rd k(k+1)}$. Its dimension is $${\Be d_{k}=2k+1}.
$$ 
Let $\Be (Y^{l}_{k})_{-k\leq l \leq k}$ be an orthonormal basis of $\mathcal{H}_{k}$ with respect to the $\sL^2(\partial {\Rd B_{1})}$ scalar product. The $(\mathcal{H}_{k})_{k \in \mathbb{N}}$ spans a vector space dense in $\sL^{2}({\Rd\partial B_{1}})$ and the family {\Be $(Y^{l}_{k})_{k\in \mathbb{N}, -k\leq l \leq k}$ is a Hilbert basis of $\sL^2(\partial B_{1})$. To be more precise, if $f \in \sL^2(\partial B_1)$, then there exists a unique representation $$f =\sum_{k=0}^\infty \mathbf{Y}_k$$ where the series converge to $f$ in the $\sL^2$ norm and 
$$\mathbf{Y}_k=\sum_{l=-k}^k b^{l}_k Y_{k}^l \in \mathcal{H}_k $$
 \par
 \noindent
  {\Be If $x=(x_1,x_2,x_3)\in\mathbb{R}^3$, it is natural to use on a sphere the spherical coordinates $(r,\theta,\phi)$ where $r$ is the radius and $\theta$ and $\phi$ are the Euler angles.  The  spherical harmonic $Y_k^l$ is defined with the Euler angles $(\theta,\phi)$ as 
$$
Y_k^l=(-1)^l \sqrt{\Big[ \cfrac{k+\frac{1}{2}}{2\pi} \cfrac{(k-l)!}{(k+l)!} \Big]}e^{il\phi }\mathbb{P}_{k}^{l}(\cos{\theta}),~-k\le l\le k.
$$
where the  polynomial $\mathbb{P}_k^l$ is the associated Legendre polynomial. The formula giving the explicit form of these polynomials can be found in the book of Nedelec  \cite[page 24]{Nedelec}.  

When $k\neq k'$, we have also the orthogonality property 
$$\int_{\partial B_1} \mathbf{Y}_k\mathbf{Y}_{k'}  d\sigma=0$$ when $\mathbf{Y}_k\in \mathcal{H}_k$ and $\mathbf{Y}_{k'}\in \mathcal{H}_{k'}$.
An homogeneity argument shows that any  function $\varphi$ in $L^2(\partial B_{R})$ can be decomposed as the Fourier series:
$$\varphi(x)= \sum_{k=0}^{\infty}{\Rd R^k  \left( \sum_{l=-k}^{k} \alpha_{k,l}(\varphi) Y^{l}_{k}\left(\frac{x}{\mid x\mid}\right)\right)}, \ \ \textrm{ for }|x|=R.$$
Then, by construction, the function $u$ defined by
$$u(x) = \sum_{k=0}^{\infty} {\Rd  |x|^k\left( \sum_{l=1}^{d_{k}} \alpha_{k,l}(\varphi) Y^{l}_{k}\left(\frac{x}{|x|}\right)\right)}, \ \ \textrm{ for }|x|\leq R,$$}
is harmonic in $B_{R}$ and satisfies $u=\varphi$ on $\partial B_{R}$. 

We recall now some results about the  {\Be{integration of three spherical harmonics}}, they will enable us to estimate  $\Tr(E)$ in dimension three. {\Be{When we integrate three spherical harmonics, we use coefficients called Clebsch-Gordon coefficients or Wigner-$3j$ coefficients}}.  {\Be{The Wigner-$3j$ coefficients  are mostly used;  they  are related to Clebsch-Gordon coefficients via some known formula that the interested reader will find  in the book of Cohen-Tannoudji and al \cite[Tome 2, Annex B]{CohenTannoudji}}}.
\par
\noindent
{\Be{The first general result concerns the product of two  spherical harmonics; it  is given by  the following proposition}}
\begin{proposition}
\label{produit:deux:harmoniques}
Given $l_{1},l_{2}>0$ two natural integers and $-l_{1}\leq m_{1} \leq l_{1}$,  $-l_{2}\leq m_{2} \leq l_{2}$, we have
\begin{align*}
Y_{l_{1}}^{m_{1}}&Y_{l_{2}}^{m_{2}} =\\(-1)^{m_{1}+m_{2}}& \sum_{L=|l_{1}-l_{2}|}^{l_{1}+l_{2}} \sqrt{\cfrac{(2l_{1}+1)(2l_{2}+1)(2L+1)}{4\pi}} 
\begin{pmatrix} l_{1} &l_{2} &L\\ 0&0&0 \end{pmatrix}
\begin{pmatrix} l_{1} &l_{2} &L\\ m_{1}&m_{2}&-m_{1}-m_{2} \end{pmatrix}
Y_{L}^{m_{1}+m_{2}},
\end{align*}
 where  $\begin{pmatrix} l_{1} &l_{2} &L\\ 0&0&0 \end{pmatrix}$ and $
\begin{pmatrix} l_{1} &l_{2} &L\\ m_{1}&m_{2}&-m_{1}-m_{2} \end{pmatrix}$ are the Wigner-$3j$ symbols. 
\end{proposition}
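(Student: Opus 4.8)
The plan is to recognize the identity as the Clebsch--Gordan series for spherical harmonics, and to prove it by combining the representation theory of $SO(3)$ with one explicit normalization. The key structural fact is that, as $(m_1,m_2)$ range over the admissible values, the products $Y_{l_1}^{m_1}Y_{l_2}^{m_2}$ span (the image in $L^2(\partial B_1)$ of) the tensor product $\mathcal{H}_{l_1}\otimes\mathcal{H}_{l_2}$, with $SO(3)$ acting diagonally on functions. Since $\mathcal{H}_l$ carries the irreducible representation of dimension $2l+1$, complete reducibility yields $\mathcal{H}_{l_1}\otimes\mathcal{H}_{l_2}\cong\bigoplus_{L=|l_1-l_2|}^{l_1+l_2}\mathcal{H}_L$, each summand occurring once; expanding $Y_{l_1}^{m_1}Y_{l_2}^{m_2}=\sum_{L,M}c_{L,M}Y_L^M$ this already restricts $L$ to the stated range $|l_1-l_2|\le L\le l_1+l_2$.

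First I would pin down the upper index. Letting $R_\phi$ denote the rotation about the vertical axis, the $e^{il\phi}$ factor in the definition of $Y_k^l$ gives $Y_l^m\circ R_\phi^{-1}=e^{im\phi}Y_l^m$, so $Y_{l_1}^{m_1}Y_{l_2}^{m_2}$ transforms under $R_\phi$ through the character $e^{i(m_1+m_2)\phi}$. Comparing with the action of $R_\phi$ on $\sum_{L,M}c_{L,M}Y_L^M$ forces $c_{L,M}=0$ unless $M=m_1+m_2$, which is exactly the upper index appearing on the right-hand side.

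Next I would extract the dependence on $(m_1,m_2)$ using the Wigner--Eckart theorem, equivalently the uniqueness (up to scalar) of an $SO(3)$-invariant trilinear coupling of three irreducibles. By orthonormality, $c_{L,m_1+m_2}=\int_{\partial B_1}Y_{l_1}^{m_1}Y_{l_2}^{m_2}\overline{Y_L^{m_1+m_2}}\,d\sigma$, and using $\overline{Y_L^{M}}=(-1)^{M}Y_L^{-M}$ this becomes $(-1)^{m_1+m_2}$ times the Gaunt integral $\int Y_{l_1}^{m_1}Y_{l_2}^{m_2}Y_L^{-m_1-m_2}\,d\sigma$. The latter is a component of the unique invariant pairing of $\mathcal{H}_{l_1},\mathcal{H}_{l_2},\mathcal{H}_L$, whose components are precisely the Wigner-$3j$ symbol $\begin{pmatrix} l_1 & l_2 & L\\ m_1 & m_2 & -m_1-m_2\end{pmatrix}$; hence $c_{L,m_1+m_2}=(-1)^{m_1+m_2}\kappa(l_1,l_2,L)\begin{pmatrix} l_1 & l_2 & L\\ m_1 & m_2 & -m_1-m_2\end{pmatrix}$ for a reduced factor $\kappa$ independent of $m_1,m_2$.

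Determining $\kappa$ is the step I expect to be the main obstacle, since this is where the constant $\sqrt{(2l_1+1)(2l_2+1)(2L+1)/4\pi}$ and the second $3j$ symbol $\begin{pmatrix} l_1 & l_2 & L\\ 0 & 0 & 0\end{pmatrix}$ must appear. I would specialize to $m_1=m_2=0$: then $Y_l^0=\sqrt{(2l+1)/4\pi}\,P_l(\cos\theta)$, the $\phi$-integration contributes a factor $2\pi$, and the $\theta$-integration reduces to the classical triple product $\int_{-1}^1 P_{l_1}P_{l_2}P_L\,dt=2\begin{pmatrix} l_1 & l_2 & L\\ 0 & 0 & 0\end{pmatrix}^2$ (Adams/Gaunt formula). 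This gives $c_{L,0}=\sqrt{(2l_1+1)(2l_2+1)(2L+1)/4\pi}\begin{pmatrix} l_1 & l_2 & L\\ 0 & 0 & 0\end{pmatrix}^2$, and comparing with $c_{L,0}=\kappa\begin{pmatrix} l_1 & l_2 & L\\ 0 & 0 & 0\end{pmatrix}$ identifies $\kappa=\sqrt{(2l_1+1)(2l_2+1)(2L+1)/4\pi}\begin{pmatrix} l_1 & l_2 & L\\ 0 & 0 & 0\end{pmatrix}$. Substituting $\kappa$ back yields the announced identity; the parity selection rule (the product vanishes unless $l_1+l_2+L$ is even) is enforced automatically by the vanishing of $\begin{pmatrix} l_1 & l_2 & L\\ 0 & 0 & 0\end{pmatrix}$. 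The only genuine care needed throughout is the bookkeeping of the Condon--Shortley phase and the $3j$/Clebsch--Gordan normalization conventions so that all constants match exactly.
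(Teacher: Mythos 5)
The paper itself offers no proof of this proposition: it is recalled in Appendix B as a classical fact of angular--momentum theory, with the reader referred to Cohen-Tannoudji et al.\ (and Stein--Weiss, N\'ed\'elec) for conventions and formulas. Your proposal therefore does something genuinely different: it reconstructs the standard representation-theoretic derivation rather than quoting it, and the reconstruction is correct in its main thrust. Equivariance of the multiplication map $\mathcal{H}_{l_1}\otimes\mathcal{H}_{l_2}\to L^2(\partial B_1)$ together with multiplicity one in the Clebsch--Gordan decomposition confines $L$ to $[\,|l_1-l_2|,\,l_1+l_2\,]$; the $z$-rotation character forces $M=m_1+m_2$; uniqueness of the $SO(3)$-invariant trilinear coupling (Wigner--Eckart) reduces the whole $(m_1,m_2)$-dependence to the second $3j$ symbol times a reduced constant $\kappa(l_1,l_2,L)$; and the axisymmetric specialization $m_1=m_2=0$, via $Y_l^0=\sqrt{(2l+1)/4\pi}\,P_l(\cos\theta)$ and $\int_{-1}^{1}P_{l_1}P_{l_2}P_L\,dt=2\begin{pmatrix} l_1 & l_2 & L\\ 0&0&0\end{pmatrix}^2$, identifies $\kappa$ with the announced prefactor. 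What this buys over the paper's treatment is a self-contained argument in which the role of each factor is transparent: the second $3j$ symbol is pure invariance, the first is a reduced matrix element computed on the $\phi$-independent slice.

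One loose end you should patch: your determination of $\kappa$ fails exactly when $l_1+l_2+L$ is odd, because then $\begin{pmatrix} l_1 & l_2 & L\\ 0&0&0\end{pmatrix}=0$ and the specialization $m_1=m_2=0$ reads $0=0$, leaving $\kappa$ undetermined. Your closing remark that the parity selection rule is ``enforced automatically'' by this vanishing has the logic backwards: the vanishing of the right-hand side makes the formula \emph{consistent} with parity, but to \emph{prove} the formula for such $L$ you must show that the actual coefficients $c_{L,M}$ vanish --- and the $3j$ symbols with general $(m_1,m_2)$ do not vanish in the odd case, so Wigner--Eckart alone forces nothing. The fix is one line inside your framework: under the antipodal map $x\mapsto -x$ one has $Y_l^m\mapsto(-1)^l Y_l^m$, so the product $Y_{l_1}^{m_1}Y_{l_2}^{m_2}$ has parity $(-1)^{l_1+l_2}$ while every element of $\mathcal{H}_L$ has parity $(-1)^L$; hence the Gaunt integral, and with it $\kappa$, vanishes whenever $l_1+l_2+L$ is odd, which is precisely what the factor $\begin{pmatrix} l_1 & l_2 & L\\ 0&0&0\end{pmatrix}$ encodes in the stated identity.
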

The second  result concerns the integration of three spherical harmonics.  
\begin{proposition}
\label{inetrage:produit:trois:harmoniques}
We have:
\begin{align*}
\int_{\partial B_{1}} Y_{l_{1}}^{m_{1}} Y_{l_{2}}^{m_{2}}Y_{l_{3}}^{m_{3}} =
 \sqrt{\cfrac{(2l_{1}+1)(2l_{2}+1)(2l_{3}+1)}{4\pi}} 
\begin{pmatrix} l_{1} &l_{2} &l_3\\ 0&0&0 \end{pmatrix}
\begin{pmatrix} l_{1} &l_{2} &l_3\\ m_{1}&m_{2}&m_{3} \end{pmatrix}.
\end{align*}
\end{proposition}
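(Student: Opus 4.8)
The plan is to obtain this three-harmonic integration formula as a direct consequence of the product formula of Proposition~\ref{produit:deux:harmoniques}, combined with the orthonormality of the basis $(Y_k^l)$.

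First I would use Proposition~\ref{produit:deux:harmoniques} to linearize the product of the first two factors, writing
$$Y_{l_1}^{m_1}Y_{l_2}^{m_2}=(-1)^{m_1+m_2}\sum_{L=|l_1-l_2|}^{l_1+l_2}\sqrt{\frac{(2l_1+1)(2l_2+1)(2L+1)}{4\pi}}\begin{pmatrix}l_1&l_2&L\\0&0&0\end{pmatrix}\begin{pmatrix}l_1&l_2&L\\m_1&m_2&-m_1-m_2\end{pmatrix}Y_L^{m_1+m_2}.$$
Multiplying by $Y_{l_3}^{m_3}$ and integrating over $\partial B_1$ reduces the problem to evaluating $\int_{\partial B_1}Y_L^{m_1+m_2}Y_{l_3}^{m_3}\,d\sigma$ for each $L$ in the finite range above.

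Next I would evaluate these integrals using orthonormality. Since the orthonormality relation involves the complex conjugate, I would invoke the conjugation identity $\overline{Y_l^m}=(-1)^mY_l^{-m}$ to write $Y_{l_3}^{m_3}=(-1)^{m_3}\overline{Y_{l_3}^{-m_3}}$; then $\int_{\partial B_1}Y_L^{M}Y_{l_3}^{m_3}\,d\sigma=(-1)^{m_3}\delta_{L,l_3}\delta_{M,-m_3}$. This kills every term except $L=l_3$ and simultaneously forces the selection rule $m_1+m_2+m_3=0$ for a nonzero value. If $l_3\notin[\,|l_1-l_2|,l_1+l_2\,]$ the sum is empty and the integral vanishes, in agreement with the triangle condition already encoded in the $3j$ symbols.

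Finally I would collect the surviving term. Setting $M=m_1+m_2=-m_3$, the phase $(-1)^{m_1+m_2}$ from the product formula combines with the $(-1)^{m_3}$ from the conjugation to give $(-1)^{m_1+m_2+m_3}=1$, so all phase factors cancel; replacing $-m_1-m_2$ by $m_3$ in the lower row of the second $3j$ symbol then yields exactly the claimed expression. I expect the only delicate point to be the careful bookkeeping of these phases and of the conjugation, so that the two selection rules (the magnetic one $m_1+m_2+m_3=0$ and the triangle inequality on the $l_i$) emerge consistently; once these are tracked correctly the identity is immediate.
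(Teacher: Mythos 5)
Your proposal is correct. Note, however, that the paper does not prove Proposition \ref{inetrage:produit:trois:harmoniques} at all: it is recalled as a classical fact, with the reader referred to the literature (Cohen-Tannoudji et al.\ for the Wigner-$3j$/Clebsch--Gordan machinery) rather than given an argument. Your derivation therefore does something the paper does not attempt, namely deduce the triple integral from Proposition \ref{produit:deux:harmoniques} (itself recalled without proof) together with orthonormality; this is the standard reduction and it is sound, with the benefit of making the appendix self-contained modulo the product formula. The phase bookkeeping works exactly as you describe, and the one point you should make explicit is that the conjugation identity $\overline{Y_l^m}=(-1)^m Y_l^{-m}$ does hold in the paper's normalization: the definition in Appendix \ref{app:harmonics} carries the Condon--Shortley phase $(-1)^m$ together with the symmetry $\mathbb{P}_l^{-m}=(-1)^m\frac{(l-m)!}{(l+m)!}\,\mathbb{P}_l^m$ of the associated Legendre functions, and the orthonormality of the $(Y_k^l)$ asserted there is with respect to the Hermitian $\sL^2(\partial B_1)$ product, exactly as you use it. With $M=m_1+m_2=-m_3$ the surviving phase is $(-1)^{m_1+m_2+m_3}=1$, the lower-row entry $-m_1-m_2$ becomes $m_3$, and both selection rules (the triangle condition, via the Kronecker deltas killing every term when $l_3\notin[\,|l_1-l_2|,l_1+l_2\,]$, and the magnetic rule $m_1+m_2+m_3=0$) are consistent with the vanishing of the corresponding $3j$ symbols, so the identity holds in all cases, not only in the generic one.
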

\par
\noindent
\medskip
\par
\noindent

\Bk
In particular it holds
\begin{proposition}
\label{inetrage:produit:trois:harmoniques:bis}
Let $l$ be  a natural integer and $m$ an integer. We have \Rd
\begin{enumerate}
\item If $-l\le m\le l $ then 
\begin{align*}
\int_{\partial B_{1}} Y^{m}_{l} Y_{0}^{0}\overline{Y_{l}^{m}} =
 \sqrt{\cfrac{1}{4\pi}},
 \end{align*}
 and
 \begin{align*}
\int_{\partial B_{1}} Y^{m}_{l} Y_{1}^{1}\overline{Y^{m+1}_{l-1}} =-
 \sqrt{\cfrac{3}{8\pi}} \ \sqrt{\cfrac{(l-m)(l-m-1)}{(2l+1)(2l-1)}}.
\end{align*}
\item If   $-l-1\le m\le l+1$ then 
\begin{align*}
\int_{\partial B_{1}} Y_{l}^{m} Y_{1}^{0}\overline{Y^{m}_{l+1}} =
 \sqrt{\cfrac{3}{4\pi}} \ \sqrt{\cfrac{(l+m+1)(l-m+1)}{(2l+1)(2l+3)}},
\end{align*}
\item If $-l-2\le m\le l$ then 
\begin{align*}
\int_{\partial B_{1}} Y^{m}_{l} Y_{1}^{1}\overline{Y^{m+1}_{l+1}} =
 \sqrt{\cfrac{3}{8\pi}} \ \sqrt{\cfrac{(l+m+1)(l+m+2)}{(2l+1)(2l+3)}},
\end{align*}
\end{enumerate}
\Bk
\end{proposition}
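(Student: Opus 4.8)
The plan is to obtain all four identities as special instances of the general three-harmonics integration formula of Proposition \ref{inetrage:produit:trois:harmoniques}, after removing the complex conjugates. The one tool needed for this is the conjugation identity $\overline{Y_l^m}=(-1)^m Y_l^{-m}$, valid for the spherical harmonics normalized as above (they carry the Condon--Shortley phase $(-1)^m$). Applying it turns each integrand into an honest product of three spherical harmonics $Y_{l_1}^{m_1}Y_{l_2}^{m_2}Y_{l_3}^{m_3}$ with $l_2\in\{0,1\}$ and $m_1+m_2+m_3=0$, so that Proposition \ref{inetrage:produit:trois:harmoniques} applies verbatim and only the two Wigner-$3j$ symbols remain to be evaluated.

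For the first identity I would simply note that $Y_0^0=(4\pi)^{-1/2}$ is constant: writing $\overline{Y_l^m}=(-1)^m Y_l^{-m}$ and using the $\sL^2(\partial B_1)$-orthonormality of the basis gives $\int_{\partial B_1}Y_l^m Y_0^0\overline{Y_l^m}=(4\pi)^{-1/2}\int_{\partial B_1}|Y_l^m|^2=(4\pi)^{-1/2}$. The same value is reproduced through Proposition \ref{inetrage:produit:trois:harmoniques} from the closed forms $\begin{pmatrix} l & 0 & l\\ 0&0&0\end{pmatrix}=(-1)^l(2l+1)^{-1/2}$ and $\begin{pmatrix} l & 0 & l\\ m&0&-m\end{pmatrix}=(-1)^{l-m}(2l+1)^{-1/2}$, whose product times the prefactor $\sqrt{(2l+1)^2/(4\pi)}$ yields $(-1)^m(4\pi)^{-1/2}$, the extra sign being cancelled by the $(-1)^m$ coming from the conjugation.

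For the three remaining identities I would apply Proposition \ref{inetrage:produit:trois:harmoniques} directly with $(l_1,l_2,l_3)=(l,1,l\mp1)$, after replacing the conjugated factor $\overline{Y_{l\pm1}^{m'}}$ by $(-1)^{m'}Y_{l\pm1}^{-m'}$. Because $l_3$ is pinned to $l\mp1$ and the orders satisfy $m_1+m_2+m_3=0$, no summation occurs: it remains only to insert the explicit values of $\begin{pmatrix} l & 1 & l\mp1\\ 0&0&0\end{pmatrix}$ and of the matching $\begin{pmatrix} l & 1 & l\mp1\\ m_1&m_2&m_3\end{pmatrix}$, which are tabulated in closed form (see \cite{Nedelec,CohenTannoudji}), and then to simplify the resulting product of square roots. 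Equivalently, one could expand the product $Y_l^m Y_1^{m_2}$ by Proposition \ref{produit:deux:harmoniques} and let orthonormality select the single surviving term $L=l\mp1$.

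The main obstacle is exactly this last bookkeeping step. For each of the three cases one must simplify a product of two $3j$ symbols, cancel the global factor $\sqrt{(2l_1+1)(2l_2+1)(2l_3+1)/(4\pi)}$, and recognize the compact square-root expressions in the statement; in particular the signs must be tracked with care, since the leading minus in the second identity arises from the interplay of the conjugation phase $(-1)^{m+1}$ with the sign of the $3j$ symbol, whereas the third and fourth identities come out positive. An entirely elementary alternative, avoiding the $3j$ tables, is to compute each integral from the associated-Legendre recurrence relations: this reduces every case to a one-dimensional integral of two associated Legendre functions against $\mathbb{P}_1^{m_2}(\cos\theta)$ together with a trivial $\phi$-integral, at the cost of somewhat more explicit computation.
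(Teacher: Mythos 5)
Your proposal is correct and takes essentially the same route as the paper: the paper offers no written proof at all, presenting the proposition as an immediate specialization ("In particular it holds") of Proposition \ref{inetrage:produit:trois:harmoniques} with the Wigner-$3j$ symbols evaluated from the tabulated closed forms in the cited references, which is exactly your plan (your use of $\overline{Y_l^m}=(-1)^m Y_l^{-m}$ and your sign bookkeeping are consistent with the paper's Condon--Shortley convention). If anything, your write-up, including the direct orthonormality argument for the first identity and the check against the $3j$ values, is more detailed than what the paper provides.
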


\section{Intermediate results for the second shape  derivative matrix }\label{app:E2}

We need to construct the matrix associated to the second shape derivative. To that end, we have to compute the explicit formula for all the shape  derivatives of order one  involved  in the formula giving $\lambda'$ (see Theorem \ref{derivee:vp:simple}). In this appendix, we focus on the term $E^{(2)}$ introduced in Section \ref{ssect:E''}.
Since these computations are very technical, we only give the main line  and the used arguments, omitting a couple of details.  In the following lines, we denote by $H(t)$ the mean curvature associated to the boundary of $\Omega_{t}$ and $\mathcal{A}(t)$ the deviatoric part defined on $\partial\Omega_{t}$ as $$\mathcal{A}(t)=H(t) I -2D^2 b(t)$$ (see \cite{DelfourZolesio}  for the terminology). 

In order to deal with the weak formulation on the boundary $\partial\Om_t$, we will make use of a test function $\phi$ which is the restriction of a test function $\Phi$ defined on a tubular neighborhood of the boundary such that its normal derivative is zero. This kind of extension is well discussed in the book \cite{DelfourZolesio} of Delfour-Zolesio.   

In this differentiation, nineteen terms arise and we introduce some notations to study them separately. For all function test $\phi \in H^1(\partial\Om)$, we will need in the sequel the following quantities:
\begin{eqnarray*}
A(u,u',\phi)&=&\restriction{\left( \frac{d}{dt}\int_{\partial \Omega_t} ~V_n{}\nablat u .\nablat \phi d\sigma_t\right)}{t=0},\\
B(u,u',\phi)&=&\beta  \restriction{\left(\frac{d}{dt}  \int_{\partial \Omega_t}\mathcal{A}(t)V_n \nablat u. \nablat \phi~d\sigma_t\right)}{t=0},\\
C(u,u',u'',\phi)&=&-\displaystyle\frac{d}{dt}\restriction{\left( \displaystyle\int_{\partial \Omega_t} ~\Big[\lambda' u+ \lambda(u'+V_n \partial_n u +V_n Hu)\Big]\phi~d\sigma_{t} \right)}{t=0},\\
D(u,u',\phi)&=&\beta \restriction{\left( \frac{d}{dt}\int_{\partial \Omega_t} ~\nablat(V_n \partial_n u) .\nablat \phi \ d\sigma_t\right)}{t=0}.\\
\end{eqnarray*}
We will now study independently each term $A,B,C$ and $D$, when $\Om=B_{R}\subset\R^2$ or $\R^3$, and $t\mapsto\Om_{t}$ is volume preserving.

\paragraph{Study of $D(u,u',\phi)$.}

First, {\Rd we denote 
$$W= \frac{d}{dt} \left(\V\cdot \n_{\Omega_{t}}\right)_{|t=0}.$$}From the derivative formula of boundary integrals, we know that we have to compute three  main terms:  the first corresponding to the shape derivative, the second concerns the normal derivative of the integrand and the third is related to the term related to   the mean curvature $H$.  
\Be The first term is
\begin{align*}
\beta &\restriction{\left( \int_{\partial \Omega_t} ~\frac{d}{dt}\left[ \nablat(V_n \partial_n u) .\nablat \phi\right]\ d\sigma_t\right)}{t=0}\\
&~~~~~~~~~~=\beta \Big(\displaystyle\int_{\partial B_R}  \nablat(V_n.\partial_n u'-V_n \nablat u.\nablat V_n) .\nablat \phi~d\sigma+ 
 \int_{\partial B_R}  \nablat(V'_n.\partial_n u) .\nablat \phi~d\sigma         \Big)\\
&~~~~~~~~~~~~~~~+\beta \int_{\partial B_R} \partial_n(V_n\partial_n u)\nablat V_n.\nablat \phi~d\sigma\\
&~~~~~~~~~~=-\beta \int_{\partial B_R}  \Deltat(V_n.\partial_n u') \phi~d\sigma+ \beta
\int_{\partial B_R}  \nablat(V'_n.\partial_n u) .\nablat \phi~d\sigma       \\
&~~~~~~~~~~~~~~~+\beta \int_{\partial B_R} \partial_n (V_n\partial_n u) \nablat V_n.\nablat \phi~d\sigma+\beta \int_{\partial B_R}  \Deltat \Big(  V_n\nablat u .\nablat V_n\Big) \phi ~d\sigma.
\end{align*}
\Bk
The third term is 
$$
\beta\displaystyle\int_{\partial B_R}HV_n \nablat(V_n{}  \partial_n u).\nablat \phi~d\sigma =-\beta \int_{\partial B_{R}} \divet \Big(H V_n \nablat(V_n\partial_n u)\Big)  \phi ~d\sigma.
$$
We focus now on \Be the second term. \Bk 
We have 
\begin{align*}
\beta& \int_{\partial B_R} V_n\partial_n[\nablat (V_n\partial_n u).\nablat\phi]~d\sigma\\
&= \beta \int_{\partial B_R} V_n\nablat[ \partial_n(V_n\partial_n u)].\nablat\phi~d\sigma -2\beta\int_{\partial B_R} V_n (D^2b \nablat [V_n\partial_n u]).\nablat\phi~d\sigma\\
&=\beta\left( \int_{\partial B_R} V_n\nablat\left[ \partial_n V_n \partial_n u)\right].\nablat\phi~d\sigma -2 \int_{\partial B_R} V_n (D^2b \nablat [V_n\partial_n u]).\nablat\phi~d\sigma\right)
\\
&=-\beta \int_{\partial B_R}\divet [ V_n \nablat [\partial_n u~ \partial_n V_n] -2  V_n D^2b \nablat[V_n\partial_n u]]  \phi~d\sigma.
\end{align*}
We expand  $D(u,\phi)$ into  a  sum 
$\langle D^{(1)} u',\phi \rangle +\langle D^{(2)} u,\phi\rangle$. For $D^{(2)}$, we will set $D^{(2)} =\sum_{k=1}^3 D^{(2,k)}$
where
\begin{eqnarray*}
\langle D^{(1)} u',\phi \rangle &=& 
\beta \int_{\partial B_R}  \nablat[V_n.\partial_n u'] .\nablat \phi~d\sigma
=-\beta \int_{\partial B_R}\Deltat [V_n\partial_n u'] \phi~d\sigma \\
\langle D^{(2,1)} u,\phi \rangle &=&\beta\Big[\int_{\partial B_R}  (-\Deltat [\Rd W\Bk\partial_n u]\  \phi~d\sigma-\int_{\partial B_{R}}\divet [V_n\partial_n V_n \nablat [\partial_n u]]\ \phi~d\sigma, \nonumber 
\\
&& - \int_{\partial B_{R}} \divet [H V_n \nablat(V_n\partial_n u) ] \ \phi  ~d\sigma\Big],\\
\langle D^{(2,2)} u,\phi \rangle &=&-\beta\int_{\partial B_R} \divet[\partial_n u ~\partial_n V_n \nablat V_n]  \phi~d\sigma+\beta \displaystyle \int_{\partial B_R}  \Deltat [ V_n\partial_nu \nablat V_n] \phi ~d\sigma, \\
\langle D^{(2,3)} u,\phi \rangle &=&2\beta \int_{\partial B_R}  \divet[ V_n D^2b \cdot \nablat[V_n\partial_n u] ]  \phi~d\sigma.
\end{eqnarray*}
We denote $D^{(1)}$ and  $D^{(2,k)},~k=1,2,3$ the matrices whose elements are defined by 
$$
D^{(1)}_{ij}=\langle D^{(1)}\tilde  u_i, u_j\rangle,
\textrm{ and }D^{(2,k)}_{ij}=\langle D^{(2,k)} u_i, u_j\rangle,~i,j=1,2,\ldots,d.
$$
We give a result concerning the traces of the matrices.

\begin{lemma}\label{tracematriced}
We have 
$$
\Tr(D^{(2,1)})=\Tr(D^{(2,2)})=0
\textrm{ and }
\Tr(D^{(2,3)})=-\cfrac{2\beta (d-1)K(R)}{R}\int_{\partial B_{R}} V_n^2~~d\sigma ,
$$
with the normalization constant $K(R)= \cfrac{d}{R^{2+d} \omega_{d}}.$ 
\end{lemma}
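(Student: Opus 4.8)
The plan is to evaluate $\Tr(D^{(2,k)})=\sum_{i=1}^{d}\langle D^{(2,k)}u_i,u_i\rangle$ by inserting $u=\phi=u_i$ into the defining identities, integrating by parts on the closed surface $\partial B_R$, and summing over $i$. The computation is driven by the explicit data of the ball recalled in Corollary~\ref{cor:derivees:premieres:disque}: the normalized eigenfunctions are $u_i=x_i/\sqrt{\omega_d R^{d+1}}$, so that $\partial_n u_i=\tfrac1R u_i$ and hence $\nablat(\partial_n u_i)=\tfrac1R\nablat u_i$, while $H=\tfrac{d-1}{R}$ is constant and $D^2b=\tfrac1R I_d-\tfrac1{R^3}(x_jx_k)_{jk}$ acts as $\tfrac1R\,\mathrm{id}$ on any tangent vector. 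I would first record the ``collapsing'' identities that all follow from $u_i=x_i/\sqrt{\omega_d R^{d+1}}$: the function $\sum_i u_i^2=\tfrac{1}{\omega_d R^{d-1}}$ is constant, whence $\sum_i u_i\nablat u_i=\tfrac12\nablat\!\big(\sum_i u_i^2\big)=0$; and likewise $\sum_i|\nablat u_i|^2=\tfrac{d-1}{\omega_d R^{d+1}}$ is constant, with $\sum_i\nablat u_i\otimes\nablat u_i=\tfrac{1}{\omega_d R^{d+1}}(I_d-\n\otimes\n)$. These are exactly the combinations produced by the traces.

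With this preparation, $\Tr(D^{(2,3)})$ is the most transparent. One integration by parts (legitimate since every vector field involved is tangential, so no mean-curvature term appears) turns $\langle D^{(2,3)}u_i,u_i\rangle$ into $-2\beta\int_{\partial B_R}V_n\,\big(D^2b\,\nablat[V_n\partial_n u_i]\big)\cdot\nablat u_i$; substituting $\partial_n u_i=\tfrac1R u_i$ and $D^2b=\tfrac1R\,\mathrm{id}$ on tangent vectors, this becomes $-\tfrac{2\beta}{R^2}\int_{\partial B_R}V_n\big(u_i\nablat V_n+V_n\nablat u_i\big)\cdot\nablat u_i$. Summing over $i$, the cross term dies because $\sum_i u_i\nablat u_i=0$, leaving $-\tfrac{2\beta}{R^2}\int_{\partial B_R}V_n^2\,\big(\sum_i|\nablat u_i|^2\big)\,d\sigma$, which is the announced multiple $-\tfrac{2\beta(d-1)K(R)}{R}\int_{\partial B_R}V_n^2$. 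For $\Tr(D^{(2,2)})$ the same mechanism forces vanishing: after integrating by parts, each of the two contributions carries the factor $\partial_n u_i=\tfrac1R u_i$ paired against a tangential derivative of $u_i$, and summing over $i$ produces either $\sum_i u_i\nablat u_i=0$ or, using self-adjointness of $\Deltat$ together with $\Deltat u_i=-\tfrac{d-1}{R^2}u_i$, an expression reducible to the gradient of the constant $\sum_i u_i^2$; hence $\Tr(D^{(2,2)})=0$.

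The interesting identity is $\Tr(D^{(2,1)})=0$, which is where the volume constraint intervenes. The matrix $D^{(2,1)}$ gathers the contributions of $-\beta\Deltat[W\partial_n u_i]$, $-\beta\divet[V_n\partial_n V_n\,\nablat(\partial_n u_i)]$ and $-\beta\divet[HV_n\,\nablat(V_n\partial_n u_i)]$, where $W=\tfrac{d}{dt}(\V\cdot\n_{\Omega_t})|_{t=0}$. Integrating each by parts and using $\Deltat u_i=-\tfrac{d-1}{R^2}u_i$, $\partial_n u_i=\tfrac1R u_i$ and $H=\tfrac{d-1}{R}$ (the cross term $\sum_i u_i\nablat u_i$ again disappearing in the last one), the three summed contributions equal the common prefactor $\beta\tfrac{d-1}{\omega_d R^{d+2}}$ times $\int_{\partial B_R}W$, $\int_{\partial B_R}V_n\partial_n V_n$ and $\int_{\partial B_R}HV_n^2$ respectively. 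Therefore $\Tr(D^{(2,1)})=\beta\tfrac{d-1}{\omega_d R^{d+2}}\int_{\partial B_R}\big(W+V_n\partial_n V_n+HV_n^2\big)\,d\sigma$, and this integral is exactly the second-order volume variation~\eqref{volume:preserving:second:order} (see Remark~\ref{rk:volume}), which vanishes by assumption.

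The main obstacle is not this final algebra but the bookkeeping that precedes it: one must differentiate the boundary integrals defining $D(u,u',\phi)$ with the shape-derivative formula for $\int_{\partial\Omega_t}f_t\,d\sigma_t$ (which manufactures the mean-curvature term $HV_n$, the normal-derivative term and the material-derivative term), use an extension of $\phi$ with $\partial_n\phi=0$ following Delfour--Zolesio and the commutation identity of Proposition~\ref{commutateur} to keep the tangential integrations by parts consistent, and then sort the resulting roughly nineteen terms correctly into $D^{(1)},D^{(2,1)},D^{(2,2)},D^{(2,3)}$. The delicate point is to recognize that $W$ enters only $D^{(2,1)}$ and assembles there with the $V_n\partial_n V_n$ and $HV_n^2$ terms into the second-order volume functional; once this is seen, the vanishing of $\Tr(D^{(2,1)})$ and $\Tr(D^{(2,2)})$ and the value of $\Tr(D^{(2,3)})$ all follow from the collapsing identities above.
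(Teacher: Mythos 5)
Your proof is correct and takes essentially the same route as the paper's: integration by parts on the closed surface, the ball-specific identities $\partial_n u_i=\tfrac1R u_i$, $D^2b=\tfrac1R\,I$ on tangent vectors, the collapsing facts $\sum_i u_i\nablat u_i=0$ and $\sum_i|\nablat u_i|^2=\mathrm{const}$ (the paper's \eqref{definition:K(R)}--\eqref{prop:harmoniques}), and the key observation that the three contributions to $\Tr(D^{(2,1)})$ share a common prefactor and assemble into the second-order volume variation \eqref{volume:preserving:second:order}. The only differences are cosmetic: you derive the vanishing of the cross terms from the constancy of $\sum_i u_i^2$ where the paper cites spherical-harmonic properties, and your final constant agrees with the stated $K(R)$ under the paper's own (appendix) normalization convention for $\omega_d$.
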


\begin{proofof}{Lemma \ref{tracematriced}}
We have 
\begin{eqnarray}\label{snd_snderiv1}
\Tr(D^{(2,1)} )&=&\beta\Big[\int_{\partial B_R}  -\Deltat (\Rd W \Bk \sum_{i=1}^d\partial_n  u_i)  u_i~d\sigma 
-\int_{\partial B_R
} \divet (V_n\partial_n V_n \sum_{i=1}^d\nablat (\partial_n  u_i)) u_i~d\sigma \nonumber 
\\
&& - \int_{\partial B_R}\Be \sum_{i=1}^d\Bk \divet \left(H V_n \nablat(V_n\partial_n  u_i) \Be\right)\Bk u_i ~d\sigma\Big]\\
&=&\beta \int_{\partial B_R} V_n' (d-1)\sum_{i=1}^d| \partial_n  u_i) |^2~\cfrac{d\sigma}{R}+ \beta  \int_{\partial B_R} V_n \partial_n V_n \sum_{i=1}^d  \nablat (\partial_n   u_i)\Be\cdot\Bk \nablat  u_i ~\partial \sigma \nonumber\\
&&+\beta \displaystyle \int_{\partial B_R}HV_n^2 \sum_{i=1}^d \nablat (\partial_n   u_i)\Be\cdot\Bk \nablat  u_i ~d\sigma +\beta \displaystyle \int_{\partial B_R}H\sum_{i=1}^d 
 \partial_n  u_i V_n \nablat V_n\Be\cdot\Bk \nablat  u_i.~d \sigma \nonumber
\end{eqnarray}
Combining the two facts (coming from algebraic properties of spherical harmonics, see Appendix \ref{app:harmonics}),
\begin{equation}
\label{definition:K(R)}
(d-1)\sum_{i=1}^d\cfrac{| \partial_n  u_i |^2}{R}= \sum_{i=1}^d \nablat (\partial_n u_i )\cdot\nablat  u_i=\cfrac{d(d-1)}{R^{2+d}\omega_{d}} =(d-1)K(R).
\end{equation}
and 
\begin{equation}
\label{prop:harmoniques}
\int_{\partial B_R}  V_{n} \sum_{i=1}^d \partial_n  u_i \nablat V_n\cdot \nablat  u_i=0,
\end{equation}
 we get 
$$
\Tr(D^{(2,1)})=(d-1)\sum_{i=1}^d | \partial_n  u_i  |^2 \int_{\partial B_R}\Big(\Rd W\Bk+V_n\partial_{n} V_n +HV_n^2\Big)~\cfrac{d\sigma}{R}.
$$Since we assumed the deformation \Rd to be volume preserving up to the second order \eqref{volume:preserving:second:order}, we have \Bk
$\Tr(D^{(2,1)})=0.$
The same strategy applies for $\Tr(D^{(2,2)})$. 
\par
\noindent
We focus now on $\Tr(D^{(2,3)})$.  We first expand the second term in the definition of $D^{(4)}$: 
\begin{align*}
\Tr(D^{(2,3)})\ = \ & \beta \sum_{i=1}^d   \int_{\partial B_R}  V_n\partial_n  u_i \ \nablat[\partial_nV_n].\nablat  u_i -2  V_n \partial_n u_{i} \ D^2b \nablat V_n\cdot\nablat  u_i~ d\sigma\\
 &-\beta \sum_{i=1}^d  \int_{\partial B_R} 2V_n^2\ D^2b\nablat(\partial_n   u_i) \cdot\nablat u_i ~d\sigma.
\end{align*}
We follow the same argument thanks to the relations \eqref{definition:K(R)}-\eqref{prop:harmoniques} and the  fact  
$$
 \sum_{i=1}^d  D^2b \nablat(\partial_n  u_i).\nablat u_i =\cfrac{(d-1)K(R)}{R}
$$
on the sphere. Recall that on the sphere $D^2 b=I_{d}/R$ when restricted to the tangent space.
\end{proofof}

\paragraph{Study of $B(u,u',\phi)$. } In the same manner, we begin to compute the derivative of the integrand:
\begin{align*}
\frac{d}{dt}\restriction{\Big( \mathcal{A}(t)V_n \nablat u. \nablat \phi\Big)}{t=0}&= \\\mathcal{A}'V_n \nablat u. \nablat \phi&+\mathcal{A}V'_n \nablat u. \nablat \phi+\mathcal{A}V_n \nablat u'. \nablat \phi-\mathcal{A}V_n\partial_n u \nablat V_n.\nablat \phi.
\end{align*}
Denote  $\mathcal{A}=(a_{ij})_{1\le i,j\le d}$  and   $\tilde{\mathcal{A}}=(\partial_n a_{ij})_{1\le i,j\le d}$. Thanks to lemma \ref{commutateur}, we get
\begin{align*}
 V_n\partial_n &\Big(V_n\mathcal{A} \cdot \nablat u. \nablat\phi\Big)=V_n^2 \tilde{\mathcal{A}} \cdot \nablat u. \nablat\phi+ V_n\partial_n V_n \mathcal{A}\cdot  \nablat u. \nablat\phi+ V_n^2 \mathcal{A}\partial_n\Big[  \nablat u. \nablat\phi \Big].\\
\end{align*}
From the relation
\begin{align*}
\beta  \frac{d}{dt}&\restriction{  \int_{\partial \Omega_t}\mathcal{A}(t)V_n \nabla_{\partial \Omega_t}u. \nabla_{\partial \Omega_t}\phi~d\sigma_t}{t=0}\ =\ \int_{\partial B_R}
\frac{d}{dt}\restriction{\Big( \mathcal{A}(t)V_n \nabla_{\partial \Omega_t}u. \nabla_{\partial \Omega_t}\phi\Big)}{t=0}~d\sigma\\
& \ \ \ \ \ \ \ \ \ + \int_{\partial B_R}V_n\partial_n \Big(\mathcal{A}V_n \nablat u. \nablat\phi\Big)~d\sigma
+ \int_{\partial B_R}HV_n^2\mathcal{A} \nabla_t u. \nablat \phi~d\sigma_t,
\end{align*}
we gather all the terms and obtain  $ B(u,\phi)=\langle B^{(1)}u',\phi\rangle+\langle B^{(2)}u',\phi\rangle$; we then set $$\langle B^{2)}u,\phi\rangle=\sum_{i=1}^4 \langle B^{(2,i)}u,\phi\rangle,$$ where 
\begin{eqnarray*}
\langle B^{(2,1)}u',\phi\rangle&=&
-\beta \displaystyle\int_{\partial B_R} \divet[V_{n }A \cdot\nablat u' ]\phi~d\sigma  , \\
\langle B^{(2,1)} u,\phi \rangle &=&-\beta \displaystyle\int_{\partial B_R} \divet[ (\Rd W\Bk+HV_n^2+V_n\partial_n V_n)~\mathcal{A}\cdot\nablat u] \phi~d\sigma,\\
\langle B^{(2,2)} u,\phi \rangle &=&-\beta \displaystyle \int_{\partial B_R}\divet [\partial_n u ~V_n \mathcal{A}\cdot \nablat V_n ] \phi~d\sigma,\\
\langle B^{(2,3)}u,\phi\rangle&=&
-\beta \displaystyle\int_{\partial B_R} \divet[V_{n }\mathcal{A}'\cdot \nablat u]\phi~d\sigma ,\\
 \langle B^{(2,4)}u,\phi\rangle&=&
\beta \displaystyle\int_{\partial B_R} V_n^2\partial_n \left[\mathcal{A} \cdot\nablat u.\nablat \phi\right]~d\sigma.    \nonumber\\
\end{eqnarray*}
We get 
\begin{eqnarray*}
B^{(2,4)}u,\phi\rangle&=&\beta \displaystyle\int_{\partial B_R} V_n^2\left( \partial_n [\mathcal{A}]\cdot \nablat u.\nablat \phi\right)~d\sigma  +\beta \displaystyle\int_{\partial B_R} V_n^2  \mathcal{A} \cdot\nablat \partial_n u.\nablat \phi~d\sigma \nonumber\\
& &  -\beta \displaystyle\int_{\partial \Omega} 2(D^2b \mathcal{A})\cdot \nablat u.\nablat \phi ~d\sigma  \nonumber\\
&=&-\beta  \displaystyle\int_{\partial B_R} 
\divet \Big[V_n^2 \Big( \tilde{\mathcal{A}} \cdot\nablat u+ \mathcal{A}\cdot \nablat [\partial_n u])-2D^2b \mathcal{A}\cdot\nablat u     \Big)\Big]\phi~d\sigma \nonumber
\end{eqnarray*}
Let $B^{(2,k)}~k=1,2,3,4$ denote the respective matrices  associated to  the operator with respect to the basis of eigenvectors. We have the following result:

\begin{lemma}\label{trace:B}
We have 
$$
\Tr(\sum_{i=1}^{4}B^{(2,i)})\ =- \beta(d-1) R K(R)\  \int_{\partial B_R}|\nablat V_n|^2~d\sigma \ + 2\cfrac{\beta K(R)}{R}\displaystyle\int_{\partial B_{R}}V_n^2~d\sigma.
$$
\end{lemma}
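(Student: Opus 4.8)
The plan is to compute each of the four traces $\Tr(B^{(2,i)})$ separately by setting $\phi=u_j$ in the definition of the corresponding operator and summing over $j=1,\ldots,d$, where $u_j=x_j/\sqrt{\omega_d R^{d+1}}$ are the normalized coordinate eigenfunctions. The key reduction is that $\partial B_R$ is a closed surface and every vector field appearing inside a $\divet[\cdot]$ is tangential (being assembled from $\nablat u$, $\nablat V_n$ and the operators $\mathcal{A}$, $D^2 b$, which map the tangent space to itself). Hence the tangential integration-by-parts formula $\int_{\partial B_R} u\,\divet\mathbf{v}=-\int_{\partial B_R}\nablat u\cdot\mathbf{v}$ applies with no boundary contribution, turning each $\langle B^{(2,i)}u_j,u_j\rangle$ into the integral of a contraction of tangential gradients that can be evaluated from the explicit geometry of the sphere.

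First I would dispose of the two vanishing traces. For $\Tr(B^{(2,1)})$, integrating by parts and using the identity $\mathcal{A}\cdot\nablat u_j=\tfrac{d-3}{R}\nablat u_j$ on $\partial B_R$ together with the fact that $\sum_j|\nablat u_j|^2$ is constant on the sphere reduces the trace to a constant multiple of $\int_{\partial B_R}\left(W+HV_n^2+V_n\partial_n V_n\right)\,d\sigma$, which vanishes by the second-order volume-preservation condition \eqref{volume:preserving:second:order}. For $\Tr(B^{(2,2)})$, the same manipulations leave $\tfrac{\beta(d-3)}{R}\int_{\partial B_R}V_n\sum_j\partial_n u_j\,\nablat V_n\cdot\nablat u_j\,d\sigma$, which is zero because on the sphere $\sum_j\partial_n u_j\,\nablat u_j=0$ pointwise, i.e. \eqref{prop:harmoniques}.

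Next I would compute the two nonzero contributions. For $\Tr(B^{(2,4)})$ I would use the explicit form with $\tilde{\mathcal{A}}=\partial_n\mathcal{A}$: the contracted sums $\sum_j\tilde{\mathcal{A}}\nablat u_j\cdot\nablat u_j$, $\sum_j\mathcal{A}\,\nablat(\partial_n u_j)\cdot\nablat u_j$ and $\sum_j D^2 b\,\mathcal{A}\,\nablat u_j\cdot\nablat u_j$ are all constant on $\partial B_R$, computed from $\partial_n H=-(d-1)/R^2$, the restriction $D^2 b=\tfrac1R I$ on the tangent space (so $\partial_n D^2 b=-\tfrac1{R^2}I$ there), and the harmonic-sum relation \eqref{definition:K(R)}; this produces the $\int_{\partial B_R}V_n^2$ part. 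For $\Tr(B^{(2,3)})$ I would write $\mathcal{A}'=H'\,I-2(D^2 b)'$ with $H'=-\Deltat V_n$ from Proposition \ref{derivees:normale:H}; the $H'$ term, after a further tangential integration by parts against the constant $\sum_j|\nablat u_j|^2$, yields exactly the $\int_{\partial B_R}|\nablat V_n|^2$ term, while the derivative of the Weingarten operator $(D^2 b)'$ supplies the remaining lower-order pieces.

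The main obstacle is precisely the term $B^{(2,3)}$: computing the shape derivative $(D^2 b)'$ of the second fundamental form and evaluating the contraction $\sum_j(D^2 b)'\nablat u_j\cdot\nablat u_j$ on the sphere, which is where the genuinely technical tangential calculus (in the spirit of \cite{DelfourZolesio} and the commutation identity of Proposition \ref{commutateur}) enters, and where one must carefully separate the contributions proportional to $\int|\nablat V_n|^2$ from those proportional to $\int V_n^2$. Once $(D^2 b)'$ is in hand, the rest is bookkeeping: summing the four traces, $\Tr(B^{(2,1)})=\Tr(B^{(2,2)})=0$ and the surviving constants assemble into the factor $-\beta(d-1)RK(R)$ in front of $\int_{\partial B_R}|\nablat V_n|^2\,d\sigma$ and $2\beta K(R)/R$ in front of $\int_{\partial B_R}V_n^2\,d\sigma$, yielding the claimed identity.
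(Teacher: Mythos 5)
Your decomposition and most of the individual steps coincide with the paper's own proof: $\Tr(B^{(2,1)})=0$ follows from second-order volume preservation, $\Tr(B^{(2,2)})=0$ follows from the pointwise identity $\sum_j\partial_n u_j\,\nablat u_j=0$ on the sphere, and your plan for $B^{(2,4)}$ (evaluating the constant contractions built from $\tilde{\mathcal{A}}$ and $D^2b=\tfrac1R I_{d}$ on the tangent space; the paper shortcuts this via $\mathcal{A}=0$ when $d=3$ and $\mathcal{A}+\tilde{\mathcal{A}}=0$ when $d=2$) correctly produces the $\tfrac{2\beta K(R)}{R}\int_{\partial B_R} V_n^2$ term.

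The gap is in $B^{(2,3)}$ — exactly the term you identify as the main obstacle — and your announced way of handling it would produce the wrong sign. Writing $\mathcal{A}'=H'I_{d}-2(D^2b)'=-\Deltat V_n\, I_{d}+2D(\nablat V_n)$, \emph{both} pieces contract against the constant $\sum_j|\nablat u_j|^2$ into multiples of $\int_{\partial B_R}V_n\Deltat V_n\,d\sigma$; the Weingarten derivative is in no sense lower order. Indeed the $H'$ piece alone gives
\begin{equation*}
\beta\int_{\partial B_R}V_n\,H'\sum_j|\nablat u_j|^2\,d\sigma
=-\beta(d-1)RK(R)\int_{\partial B_R}V_n\Deltat V_n\,d\sigma
=+\beta(d-1)RK(R)\int_{\partial B_R}|\nablat V_n|^2\,d\sigma,
\end{equation*}
which is the \emph{opposite} sign of the target. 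In the paper's computation it is the term $2D(\nablat V_n)$ that dominates and reverses this: it contributes $2\beta\int_{\partial B_R}V_n\Tr(D^2_\tau V_n)\sum_j|\nablat u_j|^2\,d\sigma$, i.e. another multiple of $\int V_n\Deltat V_n$, and only the \emph{sum} of the two pieces equals $\beta(d-1)RK(R)\int_{\partial B_R}V_n\Deltat V_n\,d\sigma=-\beta(d-1)RK(R)\int_{\partial B_R}|\nablat V_n|^2\,d\sigma$. Note also that $B^{(2,3)}$ contributes nothing to $\int V_n^2$ in the paper's proof — that term comes entirely from $B^{(2,4)}$ — so there are no "remaining lower-order pieces" for $(D^2b)'$ to supply. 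Following your plan literally (keep the $H'$ term as the principal contribution, relegate $(D^2b)'$ to lower order) yields $+\beta(d-1)RK(R)\int|\nablat V_n|^2$ instead of $-\beta(d-1)RK(R)\int|\nablat V_n|^2$, a fatal error here, since the negativity of this coefficient is the whole point of the lemma and of Theorem \ref{conclusion:ordre:deux} downstream.
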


\begin{proofof}{Lemma \ref{trace:B}}
Using the same arguments as before, we prove easily that
$
\Tr(B^{(2,1)})=\Tr(B^{(2,2)})=0.
$
\par
\noindent
For the other terms, above all we have  to focus on the term $$\Tr{(B^{(2,3)})}=\beta \displaystyle\int_{\partial B_R} V_{n }\sum_{i=1}^d (\mathcal{A}'\cdot \nablat u_i).\nablat u_i~d\sigma.$$ We have, thanks to the expression of shape derivation of the normal vector and of the mean curvature given in Proposition \ref{derivees:normale:H}:
$$
\mathcal{A}(t)=H(t)-2D^2 b(t)\Rightarrow \mathcal{A}'=-\Deltat{V_n}+2D(\nablat V_n);
$$
then 
$$
\begin{array}{lll}
\Tr{(B^{(2,3))}}&=&\beta \displaystyle\int_{\partial B_R} V_{n }\sum_{i=1}^d (\mathcal{A}'\cdot \nablat u_i).\nablat u_i~d\sigma\\
&=&-\beta \displaystyle\int_{\partial B_R}  V_n\Deltat{V_n}\sum_{i=1}^d |\nablat u_i|^2~d\sigma+ 2  \beta \displaystyle\int_{\partial B_R}  V_n \sum_{i=1}^d\Big[ D(\nablat{V_n})\cdot\nablat u_i \Big].\nablat u_i~d\sigma\\
&=&-\beta \displaystyle\int_{\partial B_R}  V_n\Deltat{V_n}\sum_{i=1}^d |\nablat u_i|^2~d\sigma+ 2  \beta \displaystyle\int_{\partial B_R}  V_n \sum_{i=1}^d\Big[ D_\tau(\nablat {V_n})\cdot\nablat u_i \Big].\nablat u_i~d\sigma\\
&=&-\beta \displaystyle\int_{\partial B_R}  V_n\Deltat{V_n}\sum_{i=1}^d |\nablat u_i|^2~d\sigma+ 2  \beta \displaystyle\int_{\partial B_R}  V_n \sum_{i=1}^d\Big[ D^2_\tau{V_n}\cdot\nablat u_i \Big].\nablat u_i~d\sigma\\
&=&-\beta \displaystyle\int_{\partial B_R}  V_n\Deltat{V_n}\sum_{i=1}^d |\nablat u_i|^2~d\sigma+ 2  \beta \displaystyle\int_{\partial B_R}  V_n  \Tr{(D^2_\tau{V_n})}\sum_{i=1}^d |\nablat u_i |^2~d\sigma
\end{array}
$$
Since $  \Tr{(D^2_\tau{V_n})}=\Deltat V_n$, and since $\sum_{i=1}^d | \nabla_\tau u_i |^2=RK(R),~\text{on~}\partial B_R$  we get 
$$
\Tr(B^{(2,3)})=\beta \displaystyle\int_{\partial B_{R}} V_n\Deltat V_n \sum_{i=1}^d |\nablat  u_i|^2~d\sigma\\
=\beta(d-1) RK(R) \displaystyle\int_{\partial B_{R}} V_n\Deltat V_n ~d\sigma.
$$
Concerning $\Tr(B^{(2,4)})$,  we have to distinguish the case $d=2$ from the case $d=3$. 
If $d=3$ then $\mathcal{A}=0$ ; this implies that  $\Tr(B^{(2,4)})$ is reduced to 
$$
\Tr(B^{(2,4)})=(d-1)K(R)\cfrac{\beta}{R}  \displaystyle\int_{\partial B_R} V_n^2 ~d\sigma.
$$
If $d=2$, then $\mathcal{A}+\tilde{\mathcal{A}}$ is a null matrix and this leads to 
$$
\begin{array}{lll}
\Tr(B^{(2,4)})&=&2\beta  \displaystyle\int_{\partial B_R} V_n^2 \sum_{i=1}^d D^2b\cdot \nablat  u_i.\nablat  u_i~d\sigma\\
&=& 2K(R)\cfrac{\beta}{R}  \displaystyle\int_{\partial B_R}  V_n^2~d\sigma.
\end{array}$$
Then for $d=2,3$ we get 
$$
\Tr(B^{(2,4)})=2\beta \cfrac{K(R)}{R};
$$
\par
\noindent
\end{proofof}

\paragraph{Study of $A(u,u',\phi)$.}
We \Be have\Bk 
\begin{align*}
& \restriction{\frac{d}{dt} \left( \int_{\partial \Omega_t} ~V_n{}\nablat u .\nablat \phi d\sigma_t \right)}{t=0}=  \int_{\partial B_R} \Rd W \Bk \nablat u.\nablat{}\phi~d\sigma +  \int_{\partial B_R} V_n \nablat u'.\nablat{}\phi~d\sigma\\
& ~~~~~~~~~~+   \int_{\partial B_R} V_n~\nablat V_n.\Big[\partial_n u ~\nablat \phi+ \partial_n \phi  \nablat  u\Big] + \Big(V_n \partial_n[V_n\nablat u.\nablat \phi]+ H V_n^2 \nablat u.\nablat\phi\Big)~d\sigma.
\end{align*}
Since $\partial_n \phi=0$, it comes that
\begin{align*}
  \int_{\partial B_R} V_n~&\nablat V_n.\Big[\partial_n u ~\nablat \phi+ \partial_n \phi  \nablat  u\Big] ~d\sigma=- \frac{1}{2} \int_{\partial B_R} V_n^2\Big[\partial_n u \Deltat \phi +\nablat[\partial_n u].\nablat \phi\Big]\\
&~~~~~~~~~~= - \frac{1}{2} \int_{\partial B_R} V_n^2 \left( \partial_n[\nablat u.\nablat  \phi]+2D^2b \nablat u.\nablat  \phi \right)~d\sigma - \frac{1}{2} \int_{\partial B_R} V_n^2~\partial_n u \Deltat \phi ~d\sigma\\
\end{align*}
Hence, gathering the equivalent terms we get 
\begin{align*}
\frac{d}{dt}& \restriction{ \int_{\partial \Omega_t} ~V_n{}\nablat u .\nablat \phi d\sigma_t}{t=0}=   \int_{\partial B_R} \Rd W \Bk  \nablat u'.\nablat \phi~d\sigma +  \int_{\partial B_R} V_n \nablat u'.\nablat \phi~d\sigma\\
&~~~~~~~~~~ - \frac{1}{2} \int_{\partial B_R} \Deltat[V_n^2~\partial_n u ] \phi -\partial_n\left(V_n^2 \nablat  u.\nablat \phi \right)~d\sigma ~+ ~ \int_{\partial B_R} ( HI_{d} -D^2b)V_n^2 \nablat u.\nablat\phi~d\sigma.
\end{align*}
We split these terms into 
$ A(u,\phi)=\langle A^{(1)} u',\phi \rangle   +   \langle A^{(2)} u,\phi\rangle$. As before, we set $  \langle A^{(2)} u,\phi\rangle=  \sum_{i=1}^3\langle A^{(i)} u,\phi\rangle$
where
\begin{eqnarray*}
\langle A^{(1)} u',\phi \rangle &=& 
\int_{\partial B_{R}}  -\divet [V_n \nablat u' ]\phi~d\sigma,\\
\langle A^{(2,1)} u,\phi \rangle &=&\int_{\partial B_R}  -\divet[(\Rd W \Bk+HV_n^2+V_n\partial_n V_n)~\nablat u]~\phi~d\sigma,\\
\langle A^{(2,2)} u,\phi \rangle &=&\int_{\partial B_R}\divet[ \partial_n u ~V_n \nablat V_n]~\phi~d\sigma,\\
\langle A^{(2,3)} u,\phi \rangle &=&\int_{\partial B_R} \divet[V_n^2\left(2D^2b\nablat u -\nablat( \partial_n u)\right)] \phi~d\sigma.
\end{eqnarray*}
We have 
\par
\noindent
\begin{lemma} \label{Trace:A}
We have
$$\Tr(A^{(2,1)})=0,~\Tr(A^{(2,2)})=0 \textrm{ and }
\Tr(A^{(2,3)})=-K(R)\displaystyle\int_{\partial B_{R}} V_n^2~~d\sigma .$$ 
\end{lemma}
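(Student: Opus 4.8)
The plan is to evaluate each trace $\Tr(A^{(2,k)})=\sum_{i=1}^d\langle A^{(2,k)}u_i,u_i\rangle$ directly, exploiting that all three operators are already written in tangential divergence form. Since $\partial B_R$ is a closed manifold and each vector field appearing inside $\divet[\,\cdot\,]$ is tangential, the tangential integration-by-parts formula collapses to $\int_{\partial B_R}(\divet \mathbf v)\,f\,d\sigma=-\int_{\partial B_R}\mathbf v\cdot\nablat f\,d\sigma$, with no mean-curvature boundary contribution. Pairing each operator with the eigenfunction $u_i$ in this way turns every diagonal entry into an integral of the corresponding flux dotted with $\nablat u_i$; I would then sum over $i$ and invoke the algebraic identities for the coordinate eigenfunctions $u_i=x_i/\sqrt{\omega_d R^{d+1}}$ recorded in the appendix.

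First I would treat the two vanishing terms. For $A^{(2,1)}$ the integration by parts gives $\langle A^{(2,1)}u_i,u_i\rangle=\int_{\partial B_R}(W+HV_n^2+V_n\partial_n V_n)\,|\nablat u_i|^2\,d\sigma$; summing and using that $\sum_{i=1}^d|\nablat u_i|^2=RK(R)$ is \emph{constant} on the sphere lets the factor leave the integral, and what remains is exactly the second-order volume-preservation integral \eqref{volume:preserving:second:order}, which is zero, so $\Tr(A^{(2,1)})=0$. For $A^{(2,2)}$ the same step yields $\langle A^{(2,2)}u_i,u_i\rangle=-\int_{\partial B_R}V_n\,\partial_n u_i\,\nablat V_n\cdot\nablat u_i\,d\sigma$, hence $\Tr(A^{(2,2)})=-\int_{\partial B_R}V_n\,\nablat V_n\cdot\big(\sum_{i}\partial_n u_i\,\nablat u_i\big)\,d\sigma$. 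The key is the pointwise identity \eqref{prop:harmoniques}: since $\partial_n u_i=u_i/R$ on $\partial B_R$, one has $\sum_i\partial_n u_i\,\nablat u_i=\frac{1}{2R}\nablat\big(\sum_i u_i^2\big)=\frac{1}{2R}\nablat\big(|x|^2/(\omega_d R^{d+1})\big)=0$, because $\sum_i u_i^2$ is constant on the sphere; thus $\Tr(A^{(2,2)})=0$.

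The genuinely computational term is $A^{(2,3)}$, which after integration by parts reads $\Tr(A^{(2,3)})=-\int_{\partial B_R}V_n^2\sum_{i=1}^d\big(2(D^2b\,\nablat u_i)\cdot\nablat u_i-\nablat(\partial_n u_i)\cdot\nablat u_i\big)\,d\sigma$. Here I would use two facts specific to the sphere: the Weingarten map restricted to the tangent space is $D^2b=\frac{1}{R}I_d$, so $(D^2b\,\nablat u_i)\cdot\nablat u_i=\frac{1}{R}|\nablat u_i|^2$; and, again from $\partial_n u_i=u_i/R$, one gets $\nablat(\partial_n u_i)=\frac{1}{R}\nablat u_i$, whence $\nablat(\partial_n u_i)\cdot\nablat u_i=\frac{1}{R}|\nablat u_i|^2$. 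The bracket therefore collapses to $\frac{2}{R}|\nablat u_i|^2-\frac{1}{R}|\nablat u_i|^2=\frac{1}{R}|\nablat u_i|^2$, and summing with $\sum_i|\nablat u_i|^2=RK(R)$ gives $\Tr(A^{(2,3)})=-\frac{1}{R}\,RK(R)\int_{\partial B_R}V_n^2\,d\sigma=-K(R)\int_{\partial B_R}V_n^2\,d\sigma$, as claimed.

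The analytic content is thus only tangential integration by parts together with the second-order volume constraint; the one place demanding care is the bookkeeping. I expect the main obstacle to be verifying the pointwise spherical identities $\sum_i|\nablat u_i|^2=RK(R)$, $\sum_i\partial_n u_i\,\nablat u_i=0$, and the Weingarten/normal-derivative relations $D^2b=\frac1R I_d$ and $\partial_n u_i=u_i/R$, and keeping the normalization constant $K(R)$ consistent with \eqref{definition:K(R)} throughout the reduction.
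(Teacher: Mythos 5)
Your proof is correct and follows essentially the same route as the paper, which disposes of this lemma by remarking that it ``follows the lines of the proof of Lemma \ref{trace:B}'': tangential integration by parts (with no curvature term since all fields are tangential), the second-order volume-preservation identity \eqref{volume:preserving:second:order} for $A^{(2,1)}$, the identity \eqref{prop:harmoniques} (which you sharpen to its pointwise form via $\sum_i u_i^2$ constant) for $A^{(2,2)}$, and the spherical relations $D^2b=\frac1R I_d$ on the tangent space, $\partial_n u_i = u_i/R$, and $\sum_i|\nablat u_i|^2 = RK(R)$ for $A^{(2,3)}$. In effect you have written out explicitly the computation the paper leaves to analogy, with the constants handled consistently with the paper's convention \eqref{definition:K(R)}.
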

The proof of Lemma \ref{Trace:A} follows the lines of the proof of Lemma \ref{trace:B}.
 
\paragraph{Study of $C(u,u',u'',\phi)$. } We decompose $C(u,u',u'',\phi)$ as follows: 
$$ C(u,\phi) =\langle C^{(0)}u'',\phi\rangle + \langle C^{(1)}u',\phi\rangle +\langle  C^{(2)}u,\phi\rangle $$ 
with $\langle  C^{(2)}u,\phi\rangle =\sum_{i=3}^6 \langle C^{(2,i)}u,\phi\rangle$ 
where 
\begin{eqnarray*}
\langle C^{(0)}u'',\phi\rangle&=&-\lambda  \int_{\partial B_R} u'' \phi~d\sigma\\
\langle C^{(1)}u',\phi\rangle &=&-2  \int_{\partial B_R} \left(\lambda'  u'+\lambda V_n ( \partial_nu'+Hu') \right) \phi~d\sigma \\
\langle C^{(2,1)}u,\phi\rangle &=&-\lambda''   \int_{\partial B_R} u\phi-\lambda'   \int_{\partial B_R }V_n\partial_n u\phi~d\sigma\\
\langle C^{(2,2)}u,\phi\rangle &=&-\lambda  \int_{\partial B_R} \left(\Rd W \Bk+V_n\partial_n V_n+HV_n^2)(\partial_n u + H u)\right){}\phi~d\sigma\\
\langle C^{(2,3)}u,\phi\rangle &=&-\lambda   \int_{\partial B_R}   V_n\Big( -\nablat V_n . \nablat u+H'u \Big)   {}\phi  ~d\sigma\\
&=&\lambda   \int_{\partial B_R}   V_n\Big( \nablat V_n . \nablat u+\Deltat V_n \ u \Big)   {}\phi  ~d\sigma\\
\langle C^{(2,4)}u,\phi\rangle &=&-\lambda \int_{\partial B_R} V_n^2\Big(   \partial_n^2u-u \sum_{i=1}^{d-1} \kappa_i^2 + H\partial_n u \Big){}\phi~d\sigma\\
&=&0.
\end{eqnarray*}
Denoting  by $(C^{(2,j)}),~j=1,2,3,4$ the matrices associated to the linear operators $C^{(2,p)},~p=1,2,3,4$ in the basis of eigenvectors, we get:

\begin{lemma}\label{trace:C}
We have 
$$
\sum_{j=1}^{4} \Tr(C^{(2,j)})=\lambda R^3 K(R) \int_{\partial B_{R}} V_n \Deltat V_n~d\sigma=-\left((d-1)\beta+R\right)RK(R) \int_{\partial B_{R}} |\nablat V_n|^2~d\sigma.$$
\end{lemma}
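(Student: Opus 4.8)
The plan is to compute the four traces $\Tr(C^{(2,j)})$, $j=1,2,3,4$, one at a time, to show that three of them vanish, and to reduce the whole sum to $\Tr(C^{(2,3)})$. The engine behind every simplification will be the rotational symmetry of the ball: on $\partial B_{R}$ the eigenfunctions are $u_i=x_i/\sqrt{\omega_{d}R^{d+1}}$, so that $\partial_n u_i=u_i/R$, $H=(d-1)/R$, and the symmetric sum $\sum_{i=1}^{d}u_i^2=|x|^2/(\omega_{d}R^{d+1})$ is \emph{constant} on the sphere (equal to $R^3K(R)$), whence $\nablat\big(\sum_i u_i^2\big)=0$. I would record these identities first, since they are what collapse the cross terms.

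First I would dispose of $\Tr(C^{(2,4)})$ and $\Tr(C^{(2,1)})$. The operator $C^{(2,4)}$ vanishes identically on the eigenspace: using harmonicity in the form $\partial_n^2 u_i=-H\partial_n u_i-\Deltat u_i$, together with $\Deltat u_i=-\tfrac{d-1}{R^2}u_i$ and $\sum_{i}\kappa_i^2=\tfrac{d-1}{R^2}$, one gets $\partial_n^2 u_i-u_i\sum\kappa_i^2+H\partial_n u_i=0$, so $\Tr(C^{(2,4)})=0$. For $C^{(2,1)}$, I would invoke that we are precisely in the regime $V_n\perp\mathcal{H}_{2}$, so Proposition \ref{cas-nullite-M} gives $M_{B_{R}}(V_n)=0$ and hence $\lambda'(0)=0$, which kills the $\lambda'$-term; the $\lambda''$-term is exactly the spectral contribution carried as $\lambda'' I$ on the left of the second-order equation $(\lambda'' I-E)c+\cdots=0$ and so is not part of $E$. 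Thus $\Tr(C^{(2,1)})=0$.

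Next comes $\Tr(C^{(2,2)})$, whose trace is $-\lambda\int_{\partial B_{R}}\big(W+V_n\partial_n V_n+HV_n^2\big)\sum_i(\partial_n u_i+Hu_i)u_i\,d\sigma$. Since $\partial_n u_i+Hu_i=\tfrac{d}{R}u_i$, the sum $\sum_i(\partial_n u_i+Hu_i)u_i=\tfrac{d}{R}\sum_i u_i^2$ is constant on the sphere, so it factors out and the remaining integral $\int_{\partial B_{R}}(W+V_n\partial_n V_n+HV_n^2)$ vanishes by the second-order volume-preservation condition \eqref{volume:preserving:second:order}; hence $\Tr(C^{(2,2)})=0$. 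The only survivor is $\Tr(C^{(2,3)})=\lambda\int_{\partial B_{R}}V_n\big(\nablat V_n\cdot\sum_i(\nablat u_i)u_i+\Deltat V_n\sum_i u_i^2\big)\,d\sigma$. Here I would use $\sum_i(\nablat u_i)u_i=\tfrac12\nablat\big(\sum_i u_i^2\big)=0$ to drop the gradient term, leaving $\Tr(C^{(2,3)})=\lambda\,\big(\sum_i u_i^2\big)\int_{\partial B_{R}}V_n\Deltat V_n=\lambda R^3K(R)\int_{\partial B_{R}}V_n\Deltat V_n$.

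Summing the four contributions gives $\sum_{j}\Tr(C^{(2,j)})=\Tr(C^{(2,3)})=\lambda R^3K(R)\int_{\partial B_{R}}V_n\Deltat V_n$, the first claimed equality. For the second, I would substitute $\lambda=(\beta(d-1)+R)/R^2$, so that $\lambda R^3=((d-1)\beta+R)R$, and integrate by parts on the closed surface $\partial B_{R}$ to replace $\int V_n\Deltat V_n$ by $-\int|\nablat V_n|^2$, producing exactly the stated form. I expect the only genuinely delicate step to be the bookkeeping around $\lambda'$ and $\lambda''$ — namely recognizing that $\lambda'=0$ under the orthogonality hypothesis and that the $\lambda''$-term belongs to the spectral parameter $\lambda'' I$ rather than to $E$ — while the analytic content is carried entirely by the two symmetry identities (constancy of $\sum_i u_i^2$ and of $\sum_i(\partial_n u_i+Hu_i)u_i$) that let the volume constraint and the integration by parts do their work.
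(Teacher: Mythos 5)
Your proof is correct and is essentially the argument the paper intends: its own proof is a one-line appeal to "the same arguments used before," which are exactly the symmetry identities you record (constancy on the sphere of $\sum_i u_i^2$ and of $\sum_i(\partial_n u_i+Hu_i)u_i$, so that first- and second-order volume preservation kill $C^{(2,1)}$ and $C^{(2,2)}$, while harmonicity kills $C^{(2,4)}$), leaving only the $H'=-\Deltat V_n$ term in $C^{(2,3)}$. Your explicit bookkeeping of the $\lambda''$-term as the spectral parameter $\lambda'' I$ rather than part of $E$ correctly resolves an ambiguity the paper glosses over, and the final substitution $\lambda=((d-1)\beta+R)/R^2$ with integration by parts matches the paper's conclusion.
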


\begin{proofof}{Lemma \ref{trace:C}}
The proof is straightforward and obeys to the same arguments used before. The only non null trace concerns the factor in $-H'=\Deltat V_n.$
\end{proofof}

\section{Computing $u'$}\label{app:u'}

In this section, we focus on the computation of the trace of $E^{(1)}$ introduced in Section \ref{ssect:E''}.
We recall that $t\mapsto(\lambda(t),u(t,\cdot))$ is solution of 
\begin{equation}\label{derivation_1}
\begin{array}{rlll}
\Delta u&=&0&~\textrm{in~}T_{t}(B_{R}),\\
-\beta \Deltat u+\partial_{n} u-\lambda(t) u&=&0&~\textrm{on~}\partial T_{t}(B_R).
\end{array}
\end{equation}
To compute the second derivative, one must know $u'=u'(0)$. For the reader convenience,  we recall the problem \eqref{shape_derivative} solved by $u'$.
\begin{align*}
\Delta u' = 0 &\textrm{ in }B_{R}, \nonumber\\
-\beta \Deltat u'+\partial_n{u'} -& \lambda v' = \beta \Deltat (V_n \partial_n u) -\beta \divet\big(V_n(2D^2b-H I_{d})\nablat u\big)\nonumber \\
&+ \divet(V_n\nablat u)  -\lambda' u+\lambda  V_n( \partial_nu+ H u) \textrm{ on }\partial B_{R}.
\end{align*}  
First,  Fredholm's alternative insures the existence of a unique harmonic function $\tilde  u_j$ orthogonal to the 
eigenfunctions $u_1,u_2,\ldots,u_d$ and satisfying on $\partial B_R $ the boundary condition 
\begin{eqnarray}\label{tildefi}
-\beta \Deltat  \tilde{u}_j+\partial_n \tilde{u}_j-\lambda \tilde{u}_j&=&\beta\Big[\Deltat[V_n\partial_n  u_j]+\divet[V_n(H I_{d}-2D^2b)\cdot\nablat u_j]\Big]
\nonumber \\
&&+\divet[V_n\nablat  u_j]+\lambda' u_j+\lambda V_n(\partial_n u_j+H u_j).
\end{eqnarray}
It follows that 
\begin{equation}
u'=\sum_{j=1}^d \tilde{c}_j  u_j+\sum_{j=1}^m c_j  \tilde{u}_j
\end{equation}
for some $c_j,\tilde{c}_j$ when $ j=1,\ldots,d$.   We point out that the $(c_j)$ are the same coefficients as the decomposition of $u$ in the basis $(u_j)$ of the eigenspace associated to $\lambda$: $u=c_1 u_1 +\dots +c_{d}u_{d}$. 

\begin{remark}\label{remarque:simplification:du:noyau}
We recall that we only need the terms $\tilde u_{j}$: we inject this decomposition of $u'$ in $E^{(1)}$:
\begin{eqnarray*}
E^{(1)}\phi&=&-2\sum_{j=1}^d \tilde{c}_{j} \Big[\int_{\partial B_R}  V_n\partial_n 	u_{j} \partial_n \phi~d\sigma+2\cfrac{R+\beta(d-3)}{R}\displaystyle \int_{\partial B_R}   V_n \nablat u_{j}.\nablat \phi~d\sigma\Big]\\ 
&& -2\sum_{j=1}^m c_{j} \Big[ \int_{\partial B_R}  V_n\partial_n 	\tilde u_{j} \partial_n \phi~d\sigma+2\cfrac{R+\beta(d-3)}{R}\displaystyle \int_{\partial B_R}   V_n \nablat \tilde u_{j}.\nablat \phi~d\sigma\\
& & ~~ -2\lambda  \int_{\partial B_R}  V_n  H u_{j}  \phi~d\sigma ~-2\lambda  \int_{\partial B_R}  V_n  H \tilde u_{j}  \phi~d\sigma \Big]. 
\end{eqnarray*}
By construction the first sum cancels and we simply get
\begin{equation*}
E^{(1)}_{jk}=2\displaystyle \int_{\partial\Omega} V_n\Big(-\partial_n \tilde u_j \partial_n u_k -H \lambda \tilde u_ju_k+ (I+\beta\left(HI_{d}-2D^2 b)\right)\nablat \tilde u_j.\nablat u_k \Big)~d\sigma
\end{equation*}
\end{remark}

\subsection{Explicit resolution of \eqref{tildefi} to compute $ \tilde{u}_j$} 

Let us now compute $ \tilde{u}_j$ solution of \eqref{tildefi}. This step consists in technical computations. For the completeness of the presentation, we present the case of dimension three, we will then simply state the results in dimension two. From now on, we do not consider the case $d\geq4$ for technical reasons. 
\subsubsection{Explicit representation of $ \tilde{u}_j$ in the case $d=2$.} We illustrate the computation of the elements $\tilde  u_i,~i=1,2$ in the case $d=2$. 
The eigenfunctions are the normalized coordinates functions that is $(u_{1},u_{2})$ given by
$$u_1(r,\theta)=r\cfrac{\cos{\theta}}{\sqrt{\pi R^3}}  \textrm{ and }
u_2(r,\theta)=r\cfrac{\sin{\theta}}{\sqrt{\pi R^3}}.$$
We have
\begin{lemma}\label{tildeu_cas2}
Let  $V$ be a deformation of normal component  
$
V_n=R^k(v_1^{(k)}\cos{k \theta}+v_2^{(k)}\sin{k\theta}),
$ then
\begin{eqnarray}
\tilde{u}_1(r,\theta)&=&\cfrac{r^{k+1}}{2\sqrt{\pi}R^{\frac{7}{2}}} \cfrac{1-k}{k}\Big[v_1^{(k)}\cos{(k+1)\theta}+v_2^{k}\sin{(k+1)\theta}\Big]\\
& &+   \cfrac{r^{k-1}}{2\sqrt{\pi}R^{\frac{3}{2}}}    \cfrac{1+k}{k-2}  \Big[\displaystyle\frac{\beta(2-k)+R}{ k \beta+R }\Big]\Big[ v_1^{(k)}\cos{(k-1)\theta}+v_2^{k}\sin{(k-1)\theta} \Big] \nonumber
\end{eqnarray}
and 
\begin{eqnarray}
\tilde{u}_2(r,\theta)&=&\cfrac{r^{k+1}}{2\sqrt{\pi}R^{\frac{7}{2}}} \cfrac{1-k}{k}\Big[- v_2^{(k)}\cos{(k+1)\theta}+v_1^{k}\sin{(k+1)\theta} \Big]\\
& &+ \cfrac{r^{k-1}}{2\sqrt{\pi}R^{\frac{3}{2}}}    \cfrac{1+k}{k-2}  \Big[\displaystyle\frac{\beta(2-k)+R}{ k \beta+R }\Big]\Big[ v_2^{(k)}\cos{(k-1)\theta}-v_1^{k}\sin{(k-1)\theta} \Big] \nonumber
 \end{eqnarray}
 \end{lemma}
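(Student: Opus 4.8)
The plan is to exploit the fact that $\tilde u_{j}$ is harmonic in $B_{R}$, so it admits an expansion in the harmonic basis $\{r^{m}\cos m\theta,\ r^{m}\sin m\theta\}_{m\geq 0}$, and that the boundary operator $L:=-\beta\Deltat+\partial_{n}-\lambda$ is diagonal in this basis. Indeed, for $w=r^{m}\cos m\theta$ (or its sine analogue) one has on the circle $r=R$ the identities $\partial_{n}w=\frac{m}{R}w$ and $\Deltat w=\frac{1}{R^{2}}\partial_{\theta}^{2}w=-\frac{m^{2}}{R^{2}}w$, whence
\begin{equation*}
Lw=\mu_{m}\,w,\qquad \mu_{m}=\frac{\beta m^{2}}{R^{2}}+\frac{m}{R}-\lambda=\frac{(m-1)\big(\beta(m+1)+R\big)}{R^{2}},
\end{equation*}
using $\lambda=\frac{\beta+R}{R^{2}}$. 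In particular $\mu_{1}=0$, which reflects the fact that $u_{1},u_{2}$ (the mode $m=1$ functions) span the kernel of $L$; this is precisely the degeneracy handled by Fredholm's alternative in \eqref{tildefi}. Thus, once the right-hand side of \eqref{tildefi} is written as a Fourier series in $\theta$, solving for $\tilde u_{j}$ amounts to dividing each Fourier coefficient by the corresponding $\mu_{m}$ and reattaching the harmonic radial weight $r^{m}$.

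Next I would compute the right-hand side of \eqref{tildefi} explicitly. On $\partial B_{R}$ one has $H=\frac1R$, and since every tangential vector is an eigenvector of $D^{2}b$ with eigenvalue $\frac1R$, the operator $HI_{d}-2D^{2}b$ acts on $\nablat u_{j}$ as multiplication by $\frac{d-3}{R}=-\frac1R$. Writing $u_{1}=\frac{r\cos\theta}{\sqrt{\pi R^{3}}}$, one gets $\partial_{n}u_{1}=\frac{\cos\theta}{\sqrt{\pi R^{3}}}$ on the circle, while the (one-dimensional) tangential gradient $\nablat u_{1}$ has signed magnitude $-\frac{\sin\theta}{\sqrt{\pi R^{3}}}$, and symmetrically for $u_{2}$. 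Inserting $V_{n}=R^{k}\big(v_{1}^{(k)}\cos k\theta+v_{2}^{(k)}\sin k\theta\big)$, every product $V_{n}\partial_{n}u_{j}$ and $V_{n}\nablat u_{j}$ is a combination of the Fourier modes $k+1$ and $k-1$, and so are their images under $\Deltat$ and $\divet$. The term $\lambda' u_{j}$ drops out: for a single mode $V_{n}$ with $k\neq 0,2$ we have $\int_{\partial B_{R}}V_{n}\,x_{i}x_{\ell}=0$ for all $i,\ell$ (as $x_{i}x_{\ell}$ is a combination of modes $0$ and $2$) and $\int_{\partial B_{R}}V_{n}=0$, so $M_{B_{R}}(V_{n})=0$ by Corollary \ref{cor:derivees:premieres:disque} and hence $\lambda'=0$. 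Collecting the surviving contributions, the right-hand side of \eqref{tildefi} reduces to a pure combination of $\cos(k\pm1)\theta$ and $\sin(k\pm1)\theta$, with no mode-$1$ component as long as $k\neq 2$.

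Finally I would match Fourier coefficients. The $(k+1)$-part of the right-hand side is divided by $\mu_{k+1}=\frac{k\big(\beta(k+2)+R\big)}{R^{2}}$ and the $(k-1)$-part by $\mu_{k-1}=\frac{(k-2)(\beta k+R)}{R^{2}}$; after simplification this produces exactly the two prefactors $\frac{1-k}{k}$ and $\frac{1+k}{k-2}\,\frac{\beta(2-k)+R}{k\beta+R}$ of the statement, together with the radial factors $r^{k+1}$, $r^{k-1}$ from the harmonic extension of each boundary mode and the normalizing powers of $\sqrt{\pi}\,R$. Since no mode-$1$ term is generated, the resulting $\tilde u_{j}$ is automatically $\sL^{2}(\partial B_{R})$-orthogonal to $u_{1},u_{2}$, as required. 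The main obstacle is the bookkeeping in the second step: one must carefully expand $\divet$ and $\Deltat$ of the products and track every curvature factor and normalization constant, in particular checking the cancellation of $\big(\beta(k+2)+R\big)$ that makes the $(k+1)$-coefficient $\beta$-independent. The division by $\mu_{k-1}$ also makes transparent why $k=2$ is resonant (the mode $k-1=1$ falls in the kernel of $L$) and must be excluded, as reflected by the pole at $k=2$ in the formula.
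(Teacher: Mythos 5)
Your proposal is correct and follows essentially the same route as the paper: the paper's own proof simply posits the ansatz $\tilde u_j = a^{(k)}\cos(k+1)\theta + b^{(k)}\sin(k+1)\theta + c^{(k)}\cos(k-1)\theta + d^{(k)}\sin(k-1)\theta$ (harmonically extended) and leaves the identification of the coefficients to the reader, which is exactly the computation you carry out. Your observation that the boundary operator $-\beta\Delta_{\tau}+\partial_n-\lambda$ is diagonal on the harmonic modes $r^m\cos m\theta$, $r^m\sin m\theta$ with symbol $\mu_m=(m-1)\bigl(\beta(m+1)+R\bigr)/R^2$ --- so that solving \eqref{tildefi} reduces to mode-by-mode division, with the resonance $\mu_1=0$ accounting both for the Fredholm condition $\lambda'=0$ and for the excluded value $k=2$ --- is precisely the clean way to organize the ``tedious computations'' the paper omits.
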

In order to justify these formulae, one has to compute $a,b,c,d$ the coefficients 
$$
 \tilde{u}_j= a^{(k)} \cos{(k+1)\theta}+b^{(k)} \sin{(k+1)\theta}+c^{(k)} \cos{(k-1)\theta}+d^{(k)} \cos{(k-1)\theta}
$$
such that $ \tilde{u}_j$ satisfies (\ref{tildefi}) with $ u_i=\displaystyle\frac{x_i}{\parallel x_i\parallel_{L^2(\partial B_R)}}.$ We left the tedious computations to the reader. 
\subsubsection{Explicit representation of $ \tilde{u}_j$ in the case $d=3$} 
We begin with the case where $V_{n}=r^l Y_{l}^m$ and $\varphi_{p}=rY_{1}^p$ where $-l\leq m\leq l$ and $-1\leq p \leq 1$. We introduce the coefficients:
$$C^{(l,1,m,p)}_{l-1,p}=(-1)^{m+p} \ \sqrt{\cfrac{3(2l-1)(2l+1)}{4\pi}}
	\begin{pmatrix} l &1 &l-1\\ m& p &-m-p \end{pmatrix} \begin{pmatrix} l &1 &l-1\\ 0& 0 &0 \end{pmatrix} ,$$
and
$$C^{(l,1,m,p)}_{l+1,p}=(-1)^{m+p} \ \sqrt{\cfrac{3(2l+1)(2l+3)}{4\pi}}
	\begin{pmatrix} l &1 &l+1\\ m& p &-m-p \end{pmatrix} \begin{pmatrix} l &1 &l+1\\ 0& 0 &0 \end{pmatrix}, $$
where we use the Wigner $3j$ symbol and Clebsch-Gordan coefficients. We set  $\alpha = \beta/R$ in order to obtain an adimensional constant.	

\begin{lemma}	\label{calcul3d}
Let $l\ne 0$ be a natural integer and let $-l\leq m\leq l$. Let $V_{n}=r^l Y_{l}^m$ and $u_{p}=rY_{1}^p$ where  $-1\leq p \leq 1$. The unique solution of \eqref{tildefi} that is orthogonal to $\textrm{Span}(Y_{1}^{-1},Y_{1}^0,Y_{1}^1)$ is given by
$$\tilde{u}_{p}=a^{(l,1,m,p)}_{l-1,p,\alpha} r^{l-1} Y^{m+p}_{l-1} + a^{(l,1,m,p)}_{l+1,p,\alpha} \cfrac{r^{l+1}}{R^2} Y^{m+p}_{l+1}$$
where
$$a^{(l,1,m,p)}_{l-1,p,\alpha} = \frac{l+2}{l-2} \ \frac{1+\alpha(3-l)}{1+\alpha(1+l)} C^{(l,1,m,p)}_{l-1,p} \textrm{ and }
a^{(l,1,m,p)}_{l+1,p,\alpha} = \frac{l-1}{l} \ \frac{1+\alpha(4+l)}{1+\alpha(3+l)} C^{(l,1,m,p)}_{l+1,p}.$$
\end{lemma}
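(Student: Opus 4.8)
The plan is to solve \eqref{tildefi} explicitly by decomposing every term into spherical-harmonic modes, exploiting that all the data are built from $u_p=rY_1^p$ and $V_n=r^lY_l^m$. First I would record the simplifications specific to $d=3$: on $\partial B_R$ one has $H=2/R$, and since the deviatoric tensor $HI_d-2D^2b$ reduces to $\tfrac{d-3}{R}I$ on the tangent space, the term $\beta\divet\big[V_n(HI_d-2D^2b)\nablat u_p\big]$ vanishes identically; moreover $\lambda'=0$, because for $l\notin\{0,2\}$ the function $V_n$ is $\sL^2$-orthogonal to the spherical harmonics of order $2$, so $M_{B_R}(V_n)=0$. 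Using $\partial_n u_p=Y_1^p$ and $\lambda R^2=2\beta+R$, the right-hand side of \eqref{tildefi} collapses to $\beta\Deltat[V_n\partial_n u_p]+\divet[V_n\nablat u_p]+3\lambda V_nY_1^p$. Since $\tilde u_p$ must be harmonic in $B_R$, I then look for it as a combination of solid harmonics $r^kY_k^{\,\cdot}$.

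The key structural point is that the right-hand side lies in the two-dimensional space spanned by $Y_{l-1}^{m+p}$ and $Y_{l+1}^{m+p}$. Indeed, the product formula (Proposition \ref{produit:deux:harmoniques}) gives $Y_l^mY_1^p=C^{(l,1,m,p)}_{l-1,p}Y_{l-1}^{m+p}+C^{(l,1,m,p)}_{l+1,p}Y_{l+1}^{m+p}$, which immediately handles the terms $V_n\partial_n u_p$ and $V_nY_1^p$ after applying $\Deltat$ (diagonal on each mode). The only delicate term is $\divet[V_n\nablat u_p]$: here I would use the tangential product rule $\divet(f\nablat g)=\nablat f\cdot\nablat g+f\Deltat g$ together with $2\,\nablat f\cdot\nablat g=\Deltat(fg)-f\Deltat g-g\Deltat f$, which rewrites it once more as a combination of $Y_{l-1}^{m+p}$ and $Y_{l+1}^{m+p}$ with the same weights $C_{l\pm1,p}$. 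This confirms the ansatz $\tilde u_p=a\,r^{l-1}Y_{l-1}^{m+p}+b\,r^{l+1}Y_{l+1}^{m+p}$, and orthogonality to $\mathrm{Span}(Y_1^{-1},Y_1^0,Y_1^1)$ is automatic, since $l-1$ and $l+1$ never equal $1$ when $l\notin\{0,2\}$.

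It then remains to determine $a$ and $b$. The operator $-\beta\Deltat+\partial_n-\lambda$ acts diagonally on each solid harmonic: on $r^kY_k$ its boundary value is multiplied by $R^{k-2}\big[\beta k(k+1)+kR-\lambda R^2\big]$, which factors neatly as $(l-2)\big[\beta(l+1)+R\big]$ for $k=l-1$ and $l\big[\beta(l+3)+R\big]$ for $k=l+1$ once $\lambda R^2=2\beta+R$ is inserted. Equating this diagonal action against the right-hand-side coefficients computed above decouples into two scalar equations, whose solution yields the stated amplitudes: the algebraic factors $\tfrac{l+2}{l-2}$ and $\tfrac{l-1}{l}$ arise as the net right-hand-side polynomial factors $(l+2)$ and $(l-1)$ divided by the eigenvalue factors $(l-2)$ and $l$, while the $\alpha$-fractions $\tfrac{1+\alpha(3-l)}{1+\alpha(1+l)}$ and $\tfrac{1+\alpha(4+l)}{1+\alpha(3+l)}$ collect the remaining $\beta$-dependence after dividing numerator and denominator by $R$. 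The pole of $a^{(l,1,m,p)}_{l-1,p,\alpha}$ at $l=2$ is not accidental: it is exactly the resonance $k=l-1=1$ at which the eigenvalue $(l-2)\big[\beta(l+1)+R\big]$ vanishes and \eqref{tildefi} becomes unsolvable, which is why $l=2$ is excluded.

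The main obstacle I anticipate is purely computational bookkeeping: carrying the correct numerical and $R$-power weights through the $\divet[V_n\nablat u_p]$ term, where the Laplace--Beltrami eigenvalues $-l(l+1)/R^2$ and $-2/R^2$ enter the product-rule identity and must be combined with the product-formula coefficients. The simplifications $-(l-1)l+l(l+1)-2=2(l-1)$ and $-(l+1)(l+2)+l(l+1)-2=-2(l+2)$ are what ultimately produce the clean factors $(l-1)$ and $-(l+2)$; once these are secured, the final matching is elementary linear algebra in two unknowns.
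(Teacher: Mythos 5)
Your proposal is correct and follows essentially the same route as the paper's own proof: expand the right-hand side of \eqref{tildefi} in the two modes $Y_{l-1}^{m+p}$, $Y_{l+1}^{m+p}$ via the product formula of Proposition \ref{produit:deux:harmoniques}, reduce $\divet[V_n\nablat u_p]$ to Laplace--Beltrami eigenvalue data through the polarization identity $2\nablat f\cdot\nablat g=\Deltat(fg)-f\Deltat g-g\Deltat f$, and then identify coefficients against the diagonal action of $-\beta\Deltat+\partial_n-\lambda$ on the solid harmonics $r^{l\pm1}Y_{l\pm1}^{m+p}$, exactly as in the paper (your factorizations $(l-2)[\beta(l+1)+R]$ and $l[\beta(l+3)+R]$ are the ones the paper uses implicitly in its "after identification" step). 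Your explicit justifications that the deviatoric term and $\lambda'$ vanish for $l\notin\{0,2\}$ (left implicit in the paper) and your reading of the excluded case $l=2$ as the resonance where the $k=l-1$ eigenvalue factor vanishes are correct refinements of the same argument.
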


\begin{proofof}{Lemma \ref{calcul3d}} We first decompose the right hand side of \eqref{tildefi} into the basis of spherical harmonics. Taking into account that 
$$\begin{pmatrix} l_{1}&l_{2}&L\\ 0&0&0 \end{pmatrix}=0$$
whenever $(l_{1},l_{2},L)$ satisfies the triangular inequality and $l_{1}+l_{2}+L$ is odd, we get
$$ \beta V_{n} \partial_{n} u_{p}=\beta R^l Y_{l}^m Y_{1}^p= \beta R^l \left[ C^{(l,1,m,p)}_{l-1,p} Y_{l-1}^{m+p}+ C^{(l,1,m,p)}_{l+1,p} Y_{l+1}^{m+p}\right]$$
and then
$$\beta \Deltat(V_{n} \partial_{n}u_{p})=\alpha R^{l-1} \left[ l(1-l) C^{(l,1,m,p)}_{l-1,p} Y^{m+p}_{l-1} - (l+1)(l+2) C^{(l,1,m,p)}_{l+1,p} Y^{m+p}_{l+1}\right].$$
We also have
\begin{eqnarray*}
\nablat V_{n} . \nablat u_{p} & =& \frac{1}{2} \left[  \Deltat(V_{n} u_{p})- V_{n} \Deltat u_{p} -u_{p} \Deltat V_{n}\right]\\
&=& \frac{R^{l-1}}{2} \Big[~~ l \Be(1- l)\Bk  C^{(l,1,m,p)}_{l-1,p} Y^{m+p}_{l-1} ~~- (l+1)(l+2) C^{(l,1,m,p)}_{l+1,p} Y^{m+p}_{l+1} \\
& &~~~~~~~+~~2~~~~~  C^{(l,1,m,p)}_{l-1,p} Y^{m+p}_{l-1}~ +~~2~~~~ C^{(l,1,m,p)}_{l+1,p} Y^{m+p}_{l+1}\\
& &~~~~~~~+  l(l+1)  C^{(l,1,m,p)}_{l-1,p} Y^{m+p}_{l-1} + l (l+1) ~~C^{(l,1,m,p)}_{l+1,p} Y^{m+p}_{l+1}	\Big]\\
&=& R^{l-1} \left[  (l+1) C^{(l,1,m,p)}_{l-1,p} Y^{m+p}_{l-1} -l\ C^{(l,1,m,p)}_{l+1,p} Y^{m+p}_{l+1} \right]. 
\end{eqnarray*}
Since $\divet{V_{n}\nablat u_{p}}=\nablat V_{n}.\nablat u_{p}+V_{n}\Deltat  u_{p}$, it comes
$$\divet{V_{n}\nablat u_{p}}= R^{l-1} \left[  (l-1) C^{(l,1,m,p)}_{l-1,p} Y^{m+p}_{l-1} -(l+2)\ C^{(l,1,m,p)}_{l+1,p} Y^{m+p}_{l+1} \right]. $$
Hence, gathering the various terms in the right hand side of \eqref{tildefi}, we see that $\tilde{u}_{p}$ is solution of 
\begin{align*}
-\beta\Deltat \tilde{u}_{p}&+\partial_{n} \tilde{u}_{p} -\lambda_{2}\tilde{u}_{p} =\\R^{l-1}&\left[(l+2)(1+\alpha(3-l))C^{(l,m,1,p)}_{l-1,p}Y_{l-1}^{m+p}+(1-l)(1+\alpha(4+l))C^{(l,m,1,p)}_{l+1,p}Y_{l+1}^{m+p}\right].
\end{align*}
After identification, we obtain:
$$\tilde{u}_{p} = a_{l-1,p,\alpha}^{(l,1,m,p)} r^{l-1} Y^{m+p}_{l-1} + a_{l+1,p,\alpha}^{(l,1,m,p)} \cfrac{r^{l+1}}{R^2} Y^{m+p}_{l+1},$$
where the coefficients $ a_{l\pm1,p,\alpha}^{(l,1,m,p)}$ are defined in Lemma \ref{calcul3d}.
\end{proofof}
As a corollary, we deduce the general case for $V_{n}$.

\begin{corollary}\label{calcul:tilde:phi}
If 
$$V_{n}=\sum_{l=2}^\infty r^l \sum_{m=-l}^l v_{l,m}Y_{l}^m \textrm{ and }u_{p}=\sum_{p=-1}^1 \alpha_{p} Y^p_{1},$$ then
$$\tilde{u}_{p} = \sum_{l=2}^\infty \sum_{m=-l}^l \sum_{p=-1}^1 \alpha_{p} v_{l,m} \left[a_{l-1,p,\alpha}^{(l,1,m,p)} r^{l-1} Y^{m+p}_{l-1} + a_{l+1,p,\alpha}^{(l,1,m,p)} \cfrac{r^{l+1}}{R^2} Y^{m+p}_{l+1}\right].$$
\end{corollary}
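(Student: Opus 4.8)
The plan is to obtain the general formula from the single-mode computation of Lemma \ref{calcul3d} purely by linearity, so that no new calculation is needed. The defining problem \eqref{tildefi} for $\tilde u_p$ has the form $(-\beta\Deltat+\partial_n-\lambda)\tilde u_p=\mathcal{F}(V_n,u_p)$, where the left-hand operator is the fixed self-adjoint Fredholm operator already analysed, and the right-hand side $\mathcal{F}$ is a differential expression that is \emph{separately linear} in $V_n$ and in $u_p$. Because $V_n\in\mathcal{H}$ is orthogonal to $\mathcal{H}_2$, Proposition \ref{cas-nullite-M} gives $\lambda'=0$, so the term $\lambda' u_p$ drops and $\mathcal{F}(V_n,u_p)$ is genuinely bilinear in the pair $(V_n,u_p)$. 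Together with the normalisation that $\tilde u_p$ be orthogonal to the eigenspace $\mathrm{Span}(Y_1^{-1},Y_1^0,Y_1^1)$, the Fredholm alternative makes $\tilde u_p$ unique, so the solution map $(V_n,u_p)\mapsto\tilde u_p$ is a well-defined bilinear map.

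First I would record that Lemma \ref{calcul3d} evaluates this bilinear map on the elementary inputs $V_n=r^lY_l^m$ and $u_p=rY_1^{p'}$, producing $a^{(l,1,m,p)}_{l-1,p,\alpha}\,r^{l-1}Y^{m+p}_{l-1}+a^{(l,1,m,p)}_{l+1,p,\alpha}\,\frac{r^{l+1}}{R^2}Y^{m+p}_{l+1}$. Next I would expand the data in the spherical-harmonic bases, $V_n=\sum_{l\ge2}r^l\sum_{m=-l}^l v_{l,m}Y_l^m$ and $u_p=\sum_{p'=-1}^1\alpha_{p'}\,rY_1^{p'}$, and apply the bilinear solution map term by term. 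Superposition then yields exactly the triple sum claimed in the statement, each summand being the single-mode solution weighted by $\alpha_{p'}v_{l,m}$.

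The one point requiring genuine care is the legitimacy of passing the a priori infinite summation through the solution operator. Here I would use that the deformation field, being $W^{3,\infty}$ on a smooth boundary, has rapidly decaying coefficients $v_{l,m}$, so the series for $V_n$ converges in a high-order Sobolev space; since the first-order operator $-\beta\Deltat+\partial_n-\lambda$ admits a bounded inverse on the orthogonal complement of its kernel, the solution map is continuous and commutes with the summation. I would also note that the Fredholm compatibility condition is met mode by mode precisely because the reduction couples $Y_l^m$ only to $Y^{m+p}_{l\pm1}$: for $l\ge3$ neither index equals $1$, so the right-hand side is orthogonal to the order-one eigenspace, while the excluded value $l=2$ (the source of the pole in $a^{(l,1,m,p)}_{l-1,p,\alpha}$, coming from the $l-1=1$ coupling) does not appear since $V_n\in\mathcal{H}$ is orthogonal to spherical harmonics of order two, i.e. $v_{2,m}=0$. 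This solvability/convergence issue is the only real obstacle; the remainder of the argument is pure bookkeeping of the superposition.
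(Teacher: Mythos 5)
Your proof is correct and takes essentially the same route as the paper, which deduces the corollary from Lemma \ref{calcul3d} purely by superposition: the problem \eqref{tildefi} is linear (indeed bilinear in $(V_n,u_p)$ once $\lambda'=0$), so the single-mode solutions are summed with weights $\alpha_{p}v_{l,m}$. Your extra care about convergence (bounded invertibility of $-\beta\Deltat+\partial_n-\lambda$ on the orthogonal complement of the order-one harmonics) and about the mode $l=2$ — which must be absent, since $a^{(l,1,m,p)}_{l-1,p,\alpha}$ has a pole at $l=2$ exactly because $Y_{1}^{m+p}$ lies in the kernel, a point the paper's statement of the corollary glosses over by starting the sum at $l=2$ — is a sound refinement rather than a deviation.
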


\subsection{The explicit expression of the trace of $E^{(1)}$}\label{app:d3}
We leave the tedious but easy computations of the case $d=2$ to the reader; the obtained result is written in \eqref{eq:trE2d}. We focus here on the much more technical case $d=3$.\\
We set $u_{j}=\Be{K}(R)\Bk (\alpha_{-1}^{i}Y_{1}^{-1}+\alpha_{0}^{i}Y_{1}^{0}+ \alpha_{1}^{i}Y_{1}^{1})$ for $1\leq j \leq 3$ where 
\begin{alignat*}{5}
\alpha_{-1}^{1}=1/\sqrt{2},  &  ~~~~ &\alpha_{0}^{1}=0,&~~~~ &\alpha_{1}^1=1/\sqrt{2},\\
\alpha_{-1}^{2}=0 , &&\alpha_{0}^{2}=1,& &\alpha_{1}^2=0,\\  
\alpha_{-1}^{3}=-i/\sqrt{2}, & &\alpha_{0}^{3}=0,& &\alpha_{1}^3=i/\sqrt{2}.
\end{alignat*}
On the sphere in dimension $3$, the deviatoric part of the curvature cancels and the entries of $E^{(1)}$ are 
\begin{align*}
\Tr(E^{(1)})=\sum_{j=1}^3 E^{(1)}_{jj} \textrm{ where } E^{(1)}_{jj}=\int_{\partial\Omega} V_n\Big(-\partial_n \tilde u_j \partial_n  u_j -H \lambda \tilde u_j u_j+ \nablat \tilde u_j.\nablat  u_j \Big)~d\sigma,
\end{align*}
where each $\tilde u_{j}$ corresponding to $u_j$ is computed thanks to Corollary \ref{calcul:tilde:phi}.
\par
\noindent
We first state a technical result to perform this summation. We postpone its proof to the end of the section.  

\begin{lemma} \label{technique:calcul:trace:E1}
Let $V_{n}=R^l Y_{l}^m$,  $-l\leq m\leq l$ and 
$$\psi=r Y^p_{1}$$ 
for $-1\leq p \leq1$. Let $m'$ and $p'$ be integer such that $-l\leq m'\leq l$ and $-1\leq p' \leq1$ and suppose 
$$\tilde\psi = a \ r^{l-1}Y_{l-1}^{m'+p'} +b \ \cfrac{r^{l+1}}{R^2} Y^{m'+p'}_{l+1}.$$ 
Then 
\begin{align*}
\int_{\partial B_{R}} &V_n\Big(-\partial_n \tilde\psi \partial_n \psi -H \lambda \tilde\psi \psi + \nablat \tilde\psi .\nablat \psi \Big)~d\sigma\\
&= \ -a\ (4\alpha+2l) \ R^{2l-1} \ \int_{\partial B_{1}}Y_{l-1}^{m'+p'} Y_{l}^{m} Y_{1}^{p}\ -\ b\ (4\alpha+2)\ R^{2l-1}\ \int_{\partial B_{1}}Y_{l+1}^{m'+p'} Y_{l}^{m} Y_{1}^{p}.
\end{align*}
\end{lemma}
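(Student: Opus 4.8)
The plan is to split the integrand into its three natural pieces and evaluate each on $\partial B_R$, using repeatedly that $\psi$ and the two summands of $\tilde\psi$ are restrictions of homogeneous harmonic polynomials (of degrees $1$, $l-1$ and $l+1$). First I would record the boundary data: on $\partial B_R$ one has $r^kY_k^{\mu}=R^kY_k^{\mu}$ and $\partial_n(r^kY_k^{\mu})=kR^{k-1}Y_k^{\mu}$, so that $\partial_n\psi$, $\partial_n\tilde\psi$, $\psi$ and $\tilde\psi$ become explicit multiples of $Y_1^p$, $Y_{l-1}^{m'+p'}$ and $Y_{l+1}^{m'+p'}$. I also insert the ball data from \eqref{valeurs:propres:sphere}, namely $H=(d-1)/R=2/R$ and $\lambda=(\beta(d-1)+R)/R^2=R^{-2}(2\beta+R)$, and write $\alpha=\beta/R$.

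Writing the left-hand side as $T_1+T_2+T_3$ with $T_1=-\int_{\partial B_R}V_n\,\partial_n\tilde\psi\,\partial_n\psi$, $T_2=-\int_{\partial B_R}V_nH\lambda\,\tilde\psi\psi$ and $T_3=\int_{\partial B_R}V_n\,\nablat\tilde\psi\cdot\nablat\psi$, the terms $T_1$ and $T_2$ are immediate. After inserting the boundary data each integrand becomes a fixed multiple of a product of the three spherical harmonics $Y_l^m$, $Y_{l\mp1}^{m'+p'}$, $Y_1^p$, and since the surface measure satisfies $d\sigma=R^{d-1}\,d\sigma_1=R^2\,d\sigma_1$, the integral $\int_{\partial B_R}$ reduces to $\int_{\partial B_1}$ up to an explicit power of $R$. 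This already produces the two triple products $I_{\mp}:=\int_{\partial B_1}Y_{l\mp1}^{m'+p'}Y_l^mY_1^p$ of the statement, with coefficients polynomial in $l$ and affine in $\alpha$.

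The only genuinely delicate piece is $T_3$. Here I would use the Laplace--Beltrami product rule on $\partial B_R$, that is $\nablat\tilde\psi\cdot\nablat\psi=\tfrac12\big(\Deltat(\tilde\psi\psi)-\tilde\psi\,\Deltat\psi-\psi\,\Deltat\tilde\psi\big)$, exactly as in the proof of Lemma \ref{calcul3d}, together with $\Deltat Y_k^\mu=-k(k+1)R^{-2}Y_k^\mu$. Integrating against $V_n$ and using that $\partial B_R$ is closed, the top-order term $\Deltat(\tilde\psi\psi)$ is moved onto $V_n$ by self-adjointness of $\Deltat$, with $\Deltat V_n=-l(l+1)R^{-2}V_n$; every remaining contribution is again a multiple of $I_-$ or $I_+$. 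Thus each of $T_1,T_2,T_3$ is a linear combination of $I_-$ and $I_+$, and the proof closes by summing the three and collecting the coefficients: the $l$-dependent contributions of $T_1$ and $T_3$ combine with the $\alpha$-contribution of $T_2$, and after the elementary cancellations one reads off precisely $-a(4\alpha+2l)R^{2l-1}$ in front of $I_-$ and $-b(4\alpha+2)R^{2l-1}$ in front of $I_+$.

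The main obstacle, and the step demanding the most care, is the bookkeeping in $T_3$: one must track the $R^{-2}$ scaling of every Laplace--Beltrami eigenvalue together with the single Jacobian factor $R^{d-1}$, and check that the differential term $\Deltat(\tilde\psi\psi)$ truly contributes only through $\Deltat V_n$ after integration by parts (no boundary terms arise, as $\partial B_R$ has none). Once these normalizations are pinned down, what remains is a finite, purely algebraic collection of terms that matches the claimed coefficients.
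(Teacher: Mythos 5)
Your proposal is correct and follows essentially the same route as the paper's own proof: the same splitting into the three terms $T_1,T_2,T_3$, direct evaluation of $T_1$ and $T_2$ from the boundary data $\partial_n(r^kY_k^\mu)=kR^{k-1}Y_k^\mu$, $H=2/R$, $\lambda=(2\beta+R)/R^2$, and, for $T_3$, the identity $\nablat\tilde\psi\cdot\nablat\psi=\frac{1}{2}\big(\Deltat(\tilde\psi\psi)-\tilde\psi\,\Deltat\psi-\psi\,\Deltat\tilde\psi\big)$ combined with self-adjointness of $\Deltat$ on the closed surface $\partial B_R$ to transfer the top-order term onto $V_n$ via $\Deltat V_n=-l(l+1)R^{-2}V_n$. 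The paper's proof is exactly this computation, so your attempt matches it in both strategy and in the identification of the one delicate step.
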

As a consequence, we get for $j=1,2,3$
\begin{align*}
E^{(1)}_{jj}=\  -\  \Be K\Bk (R)\ R^{2l\Be + \Bk1} &\left[ (4\alpha+2l)\ \frac{l+2}{l-2} \ \frac{1+\alpha(3+l)}{1+\alpha(1+l)}\ \sum_{m=-l}^l \sum_{p=-1}^1 
|\alpha_{p}^j|^2 \ |v_{l,m}|^2 \left(\int_{\partial B_{1}} \overline{Y_{l-1}^{m+p}} Y_{l}^m Y_{1}^p \right)^2\right. \\
&+\left.  \ (4\alpha+2) \ \frac{l-1}{l}\ \frac{1+\alpha(4+l)}{1+\alpha(3+l)} \  \sum_{m=-l}^l \sum_{p=-1}^1 
|\alpha_{p}^j|^2 \ |v_{l,m}|^2 \left(\int_{\partial B_{1}} \overline{Y_{l+1}^{m+p}} Y_{l}^m Y_{1}^p \right)^2\right].
\end{align*} 
We are now in position to prove Proposition \ref{trace:E1:dimension:3} concerning the trace of $E^{(1)}$ in dimension $d=3$.

\begin{proofof}{Proposition \ref{trace:E1:dimension:3}} We have to sum the $E^{(1)}_{jj}$ obtained before the statement of Proposition \ref{trace:E1:dimension:3}. By the normalization condition $\sum_{j}|\alpha_{p}^{j}|^2=1$, our main task is to compute the sum over $p=-1,0,1$ of the integrals involving three spherical harmonics. The values of this type of integral is recalled in Propositions \ref{inetrage:produit:trois:harmoniques} and \ref{inetrage:produit:trois:harmoniques:bis}.  Elementary computations then give
$$	\sum_{m=-l}^l \sum_{p=-1}^1 \left(\int_{\partial B_{1}} \overline{Y_{l-1}^{m+p}} Y_{l}^m Y_{1}^p\right)^2 = \frac{3}{4\pi} \ \frac{l}{2l+1}\textrm{ and } \sum_{m=-l}^l \sum_{p=-1}^1 \left( \int_{\partial B_{1}} \overline{Y_{l+1}^{m+p}} Y_{l}^m Y_{1}^p\right)^2=\ \frac{3}{4\pi} \ \frac{l+1}{2l+1}.$$
\end{proofof}

\begin{proofof}{Lemma \ref{technique:calcul:trace:E1}}
We compute:
\begin{eqnarray*}
-V_{n} \partial_{n} \tilde\psi \partial_{n}\psi &= & -R^{2l-1} \left[ a (l-1)Y_{l-1}^{m'+p'} +b (l+1) Y_{l+1}^{m'+p'} \right] Y_{l}^{m} Y_{1}^{p}, \\
-\lambda H V_{n} \tilde\psi \psi &= &  -R^{2l-1} (4\alpha+2) \left[ a Y_{l-1}^{m'+p'} +b Y_{l+1}^{m'+p'} \right] Y_{l}^{m} Y_{1}^{p},
\end{eqnarray*}
We have also 
\begin{eqnarray*}
\int_{\partial B_{R}} V_{n} \nablat \tilde\psi.\nablat \psi &=& \frac{1}{2} \int_{\partial B_{R}} V_{n} \left[ \Deltat (\tilde\psi \psi)-\psi\Deltat \tilde\psi-\tilde\psi \Deltat \psi\right]\\
&=& -\ \frac{1}{2}\ l(l+1)\  R^{2l-1} \ \int_{\partial B_{1}} ( a Y_{l-1}^{m'+p'} +b Y_{l+1}^{m'+p'})  Y_{l}^{m} Y_{1}^{p}\\
&&+ R^{2l+1} \ \int_{\partial B_{1}} ( a Y_{l-1}^{m'+p'} +b Y_{l+1}^{m'+p'})  Y_{l}^{m} Y_{1}^{p}\\
&&+ \frac{1}{2} \ R^{2l-1} \ \int_{\partial B_{1}} \left[ a \ l(l-1) \ Y_{l-1}^{m'+p'} +b\  (l+1)(l+2)\  Y_{l+1}^{m'+p'} \right]  Y_{l}^{m} Y_{1}^{p}\\
&=& R^{2l-1} \  \int_{\partial B_{1}} \left[ a \ (l-1) \ Y_{l-1}^{m'+p'} +b\ (l+2)\  Y_{l+1}^{m'+p'} \right]  Y_{l}^{m} Y_{1}^{p}.
\end{eqnarray*}
We obtain the result by summing the three terms.
\end{proofof}

\section{Shape Derivatives of Steklov and Laplace-Beltrami eigenvalues problem}\label{app:SteLB}

The following result is obtained by taking $\beta=0$ in Theorem \ref{Theoreme:gradient:cas}.
\begin{theorem}\label{Theoreme:gradient:casSteklov}[Steklov eigenvalues]
We distinguish the case of simple and multiple eigenvalue. 
\begin{itemize}
\item If $\lambda{=\lambda_{k}(\Omega)}$ is a simple eigenvalue of the Steklov problem and $u$ an associated eigenfunction, then the  application $t \rightarrow \lambda(t)=\lambda_{k}((I+t\V)(\Om))$ is differentiable and the derivative at $t=0$ is 
$$
\lambda'(0)=\int_{\partial\Omega}V_n\Big( \vert  \nablat u\vert ^2-\vert \partial_{n}{}u \vert ^2 -\lambda H \vert  u\vert ^2\Big)~d\sigma.
$$
The shape derivative $u'$of the eigenfunction satisfies
\begin{align*}
\Delta u' = 0 &\textrm{ in }\Omega, \nonumber\\
\partial_n{u'} -& \lambda u' =  \divet(V_n\nablat u)  -\lambda'(0) u+\lambda  V_n( \partial_nu+ H u) \textrm{ on }\partial\Omega.
\end{align*}
\item Let $\lambda$ be a multiple eigenvalue of order $m\ge 2$.  Let   $(u_j)$ for $1 \leq j\leq m$ denote the eigenfunctions associated to $\lambda$. Then there exists $m$ functions $t\mapsto \lambda_k(t), k=1,\ldots,m$ defined in a neighborhood of 0 such that
\begin{itemize}
\item $\lambda_{k}(0)=\lambda$,
\item for every $t$ in a neighborhood of 0, $\lambda_{k}(t)$ is an Steklov eigenvalue of $\Om_{t}=(I+t\V)(\Om)$,
\item the functions $t\mapsto\lambda_{k}(t), k=1,\ldots,m$ admit derivatives which are the eigenvalues of the $m\times m$ matrix $M{=M_{\Om}(V_{n}})$ of entries $(M_{ij})$ defined by
$$
M_{jk}=\displaystyle \int_{\partial\Omega} V_n\Big(-\partial_n{}u_j \partial_n{}u_k -H \lambda{}u_ju_k+ \nablat u_j.\nablat u_k \Big)~d\sigma.
$$
\end{itemize}
\end{itemize}\end{theorem}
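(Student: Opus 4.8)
The plan is to read off both statements as the $\beta=0$ specialization of the corresponding Wentzell results already established in this section. The key observation is that the Steklov problem \eqref{Steklov} is precisely the Wentzell problem \eqref{Steklov_Ventcel} with $\beta=0$, and that---as emphasized just before Section \ref{section:analyze:ordre1}---all of the preparatory results (the analyticity of eigenvalue branches in Theorem \ref{analyticity}, the boundary value problem for $u'$ in Theorem \ref{th:u'}, and the derivative formulas in Theorems \ref{derivee:vp:simple} and \ref{theorem:gradient:vp:multiple}) were obtained for an arbitrary real parameter $\beta$. Thus the proof reduces to substituting $\beta=0$ and checking that the functional-analytic framework survives this choice.

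First I would verify that the branch-existence machinery degenerates correctly. When $\beta=0$ the operator $S(t)=-\beta\mathcal{L}_t+\mathcal{D}_t$ of \eqref{operatorS} reduces to the transported Dirichlet-to-Neumann operator $\mathcal{D}_t$, which is still analytic in $t$ near $0$ by Lemma \ref{transport:Dirichlet:2:Neumann}, and remains self-adjoint and Fredholm. Consequently the Lyapunov--Schmidt reduction carried out in the proof of Theorem \ref{analyticity} applies verbatim, producing the $m$ analytic branches $\lambda_k(t)$ together with their eigenfunctions. Elliptic regularity on the $C^3$ domain guarantees that Steklov eigenfunctions are smooth enough (in particular they lie in $H^{5/2}$) for all the subsequent tangential integrations by parts to be legitimate, so no regularity is lost in the limit $\beta=0$.

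For the simple eigenvalue case I would then take the formula of Theorem \ref{derivee:vp:simple} and set $\beta=0$: the curvature term $\beta(H I_{d}-2D^2b)\nablat u.\nablat u$ disappears, leaving exactly $\lambda'(0)=\int_{\partial\Omega}V_n\big(|\nablat u|^2-|\partial_n u|^2-\lambda H|u|^2\big)\,d\sigma$. Likewise, setting $\beta=0$ in the boundary condition \eqref{shape_derivative} kills the two terms $\beta\Deltat(V_n\partial_n u)$ and $\beta\divet(V_n(H I_{d}-2D^2b)\nablat u)$, yielding the stated problem $\partial_n u'-\lambda u'=\divet(V_n\nablat u)-\lambda'(0)u+\lambda V_n(\partial_n u+Hu)$ on $\partial\Omega$ together with $\Delta u'=0$ in $\Om$. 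For the multiple eigenvalue case I would apply the multiple-eigenvalue part of Theorem \ref{Theoreme:gradient:cas} (equivalently Theorem \ref{theorem:gradient:vp:multiple}) with $\beta=0$: the matrix $M(V_n)$ of \eqref{eq:M} loses its $\beta$-contribution, so its entries become $M_{jk}=\int_{\partial\Omega}V_n\big(-\partial_n u_j\partial_n u_k-H\lambda u_ju_k+\nablat u_j.\nablat u_k\big)\,d\sigma$, whose eigenvalues are the derivatives $\lambda_k'(0)$ of the branches. There is essentially no genuine obstacle here, since the content is entirely a careful specialization; the only point requiring attention is confirming, as in the second paragraph, that the abstract existence and analyticity results remain valid at the endpoint $\beta=0$, and this is where I would concentrate the exposition.
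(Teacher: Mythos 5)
Your proposal is correct and follows exactly the paper's own route: the paper obtains this theorem simply by setting $\beta=0$ in Theorem \ref{Theoreme:gradient:cas}, relying on the remark at the start of Section \ref{section:analyze:ordre1} that all shape-derivative results there hold for arbitrary $\beta\in\R$. Your additional verification that the operator $S(t)$ reduces to $\mathcal{D}_t$ and that the Lyapunov--Schmidt machinery still applies at $\beta=0$ is a sound (if slightly more detailed) rendering of the same argument.
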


\noindent The following result is obtain by taking $\beta \rightarrow +\infty$ in Theorem \ref{Theoreme:gradient:cas}.\begin{theorem}\label{Theoreme:gradient:casLaplace-Beltrami}[Laplace-Beltrami eigenvalues]
We distinguish the case of simple and multiple eigenvalue. 
\begin{itemize}
\item If $\lambda{=\lambda_{k}(\Omega)}$ is a simple eigenvalue of the Laplace-Beltrami problem and $u$ an associated eigenfunction, then the  application $t \rightarrow \lambda(t)=\lambda_{k}((I+t\V)(\Om))$ is differentiable and the derivative at $t=0$ is 
$$
\lambda'(0)=\int_{\partial\Omega}V_n\Big((H~I_{d}-2D^2b)\nablat u.\nablat u\Big)~d\sigma.
$$
The shape derivative $v'$of the eigenfunction satisfies
\begin{align*}
\Delta u' = &\;0 \textrm{ in }\Omega, \nonumber\\
-\Deltat u' =& \;\Deltat (V_n \partial_n u) -\divet\big(V_n(2D^2b-H I_{d})\nablat u\big) -\lambda'(0) u  \textrm{ on }\partial\Omega.
\end{align*}
\item Let $\lambda$ be a multiple eigenvalue of order $m\ge 2$.  Let   $(u_j)$ for $1\leq j \leq m$ denote the eigenfunctions associated to $\lambda$. Then there exists $m$ functions $t\mapsto \lambda_k(t), k=1,\ldots,m$ defined in a neighborhood of 0 such that
\begin{itemize}
\item $\lambda_{k}(0)=\lambda$,
\item for every $t$ in a neighborhood of 0, $\lambda_{k}(t)$ is a Laplace-Beltrami eigenvalue of $\Om_{t}=(I+t\V)(\Om)$,
\item the functions $t\mapsto\lambda_{k}(t), k=1,\ldots,m$ admit derivatives which are the eigenvalues of the $m\times m$ matrix $M{=M_{\Om}(V_{n})}$ of entries $(M_{ij})$ defined by
$$
M_{jk}=\displaystyle \int_{\partial\Omega} V_n\Big(\left(HI_{d}-2D^2 b\right)\nablat u_i.\nablat u_j  \Big)~d\sigma.
$$
\end{itemize}
\end{itemize}\end{theorem}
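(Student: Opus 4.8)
The plan is to obtain both statements as the limit $\beta\to+\infty$ of the corresponding Wentzell results, after the rescaling by $1/\beta$ that turns a Wentzell eigenvalue into a Laplace--Beltrami one. Recall from Section~\ref{remarques:preliminaires} that for any smooth $\Om$ one has $\lambda^{LB}_k(\partial\Om)=\lim_{\beta\to+\infty}\tfrac1\beta\lambda_{k,\beta}(\Om)$, the Wentzell eigenfunctions being understood through their harmonic extension so that $\partial_n u$ keeps a meaning in the limit. Fix a field $\V$ and the deformation $t\mapsto\Om_t=(I+t\V)(\Om)$. By Theorem~\ref{analyticity} each branch $t\mapsto\lambda_{k,\beta}(t)$ is analytic near $0$; setting $\lambda^{LB}_k(t):=\lim_{\beta}\tfrac1\beta\lambda_{k,\beta}(t)$, the first step is to prove that this limit is itself differentiable at $t=0$ and that
\[
(\lambda^{LB}_k)'(0)=\lim_{\beta\to+\infty}\tfrac1\beta\,\lambda_{k,\beta}'(0).
\]

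Granting this interchange, I would divide the Wentzell formula of Theorem~\ref{derivee:vp:simple} (resp. the entries of the matrix $M$ of Theorem~\ref{theorem:gradient:vp:multiple}) by $\beta$ and let $\beta\to+\infty$. Writing $u_\beta$ for the normalized eigenfunction and using that $u_\beta\to u$ in $\sH^1(\partial\Om)$ together with the boundedness of $\partial_n u_\beta$ (controlled by the Dirichlet-to-Neumann map), the Steklov-type contributions $\tfrac1\beta\big(|\nablat u_\beta|^2-|\partial_n u_\beta|^2\big)$ disappear, the coefficient $\beta(HI_d-2D^2b)\nablat u_\beta.\nablat u_\beta$ loses its factor $\beta$ and survives, while $\tfrac{\lambda_{k,\beta}}{\beta}H|u_\beta|^2\to\lambda^{LB}_k H|u|^2$. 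Collecting the surviving terms produces the curvature expression $\int_{\partial\Om}V_n\,(HI_d-2D^2b)\nablat u.\nablat u\,d\sigma$, together with the lower-order $-\lambda H|u|^2$ contribution coming from the mean-curvature term, which must be tracked carefully since it does not obviously vanish under the rescaling. The boundary equation for $u'$ is obtained the same way, dividing the $u'$-equation of Theorem~\ref{th:u'} by $\beta$: the operator $-\beta\Deltat+\partial_n-\lambda$ becomes, after rescaling, $-\Deltat-\lambda^{LB}$, and the forcing term converges to $\Deltat(V_n\partial_n u)-\divet\big(V_n(2D^2b-HI_d)\nablat u\big)$ plus the analogous lower-order pieces.

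For the multiple case the argument is identical entry by entry: $\tfrac1\beta M_{jk}(\V)$ converges to the corresponding Laplace--Beltrami entry, so the $m$ limiting derivatives are the eigenvalues of the limiting matrix. The only care needed is that the relabelling of branches furnished by Theorem~\ref{analyticity} is compatible with the rescaling, which holds because the ordering of the $\lambda_{k,\beta}(t)$ is unaffected by dividing by the positive constant $\beta$.

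The main obstacle is precisely the interchange of $\lim_\beta$ and $\tfrac{d}{dt}$ in the first step: it requires uniform-in-$\beta$ control of the analytic branches $\lambda_{k,\beta}(t)$ and of the eigenfunctions $u_\beta(t)$ on a common neighborhood of $t=0$, so that one may legitimately differentiate the limit and pass to the limit inside the boundary integrals. A self-contained alternative, which avoids this difficulty altogether, is to repeat the proof of Theorem~\ref{derivee:vp:simple} verbatim with the bilinear form $A_\beta$ replaced by the pure surface Dirichlet form $\int_{\partial\Om}\nablat u.\nablat v$: one multiplies the (rescaled) $u'$-equation by $u$, integrates over $\partial\Om$, and uses the self-adjointness of $-\Deltat-\lambda^{LB}$ together with $\int_{\partial\Om}u^2=1$ and the shape-derivative identities $\n'=-\nablat V_n$, $H'=-\Deltat V_n$ of Proposition~\ref{derivees:normale:H}. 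This reproduces the same formula directly and makes transparent exactly which curvature and $\lambda$-proportional terms occur.
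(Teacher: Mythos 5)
Your route is exactly the paper's own: the appendix justifies this theorem in a single sentence, ``obtained by taking $\beta\to+\infty$ in Theorem \ref{Theoreme:gradient:cas}'', with no discussion of the $1/\beta$ rescaling, of the convergence of eigenfunctions, or of the interchange of $\lim_{\beta}$ with $d/dt$. So you and the paper take the same approach; the difference is that you actually try to execute it, and both difficulties you isolate are genuine. The interchange of limit and derivative is the technical heart of the matter and is nowhere addressed in the paper; your fallback --- re-running the proofs of Theorems \ref{th:u'}, \ref{derivee:vp:simple} and \ref{theorem:gradient:vp:multiple} with $A_{\beta}$ replaced by the surface form $\int_{\partial\Om}\nablat u\cdot\nablat v$, i.e.\ working directly with the transported operator $\mathcal{L}_{t}$ of Lemma \ref{transport:Laplace:Beltrami:cas:trace} --- is the clean, self-contained way to avoid it.

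The more important point is the one you flag about the mean-curvature term, and you should push it to its conclusion: that term does \emph{not} disappear, so the limit does not produce the formula as printed. Dividing the Wentzell formula by $\beta$, the terms $\frac1\beta\big(|\nablat u_{\beta}|^2-|\partial_n u_{\beta}|^2\big)$ vanish but $\frac{\lambda_{k,\beta}}{\beta}H|u_{\beta}|^2\to\lambda^{LB}H|u|^2$, so the limit is $\lambda'(0)=\int_{\partial\Om}V_n\big(-\lambda H|u|^2+(H\,I_{d}-2D^2b)\nablat u\cdot\nablat u\big)\,d\sigma$, with the corresponding extra entries $-\lambda H u_ju_k$ in $M$ and the terms $-\lambda u'$ and $\lambda V_n(\partial_n u+Hu)$ in the equation for $u'$. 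This corrected formula, not the printed one, is the true derivative. A direct test: for the dilation $\V=x$ of a ball $B_R\subset\R^d$, the eigenvalue has multiplicity $d$ and by symmetry all branches of the matrix version coincide with $\lambda_1^{LB}(t)=(d-1)/\big(R(1+t)\big)^2$, whose derivative is $-2(d-1)/R^2$; the corrected formula gives exactly this, while the printed one gives $(d-3)(d-1)/R^2$. (Note also that the Steklov statement in the same appendix, obtained by setting $\beta=0$ in Theorem \ref{Theoreme:gradient:cas}, does retain the term $-H\lambda u_ju_k$.) So your limit computation is right, and what it reveals is an omission in the statement itself rather than an obstruction in your argument; the same corrected formula comes out of your alternative direct proof.
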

\Bk

\textit{Acknowledgements}. Part of the work was supported by the project Projet ANR-12-BS01-0007 OPTIFORM financed by the French Agence Nationale de la Recherche (ANR). We would also like to thank the
anonymous reviewer for its careful reading of the previous version of the manuscript, that helped to improve and clarify the paper.

\bibliographystyle{plain}
\bibliography{references}
\bigskip

\noindent------------------------------------------------------------------

\noindent Marc Dambrine

\smallskip

\noindent Universit\'{e} de Pau et des Pays de l'Adour

\smallskip

\noindent E-mail: \texttt{marc.dambrine@univ-pau.fr}
\newline\bigskip
\texttt{http://web.univ-pau.fr/~mdambrin/Marc{\_}Dambrine/Home.html}

\noindent Djalil Kateb

\smallskip

\noindent Universit\'e de Technologie de Compi\`egne.

\smallskip

\noindent E-mail: \texttt{djalil.kateb@utc.fr} \newline\noindent
\texttt{http://www.lmac.utc.fr/membres/kateb}

\bigskip

\noindent Jimmy Lamboley

\smallskip

\noindent Universit\'{e} Paris-Dauphine

\smallskip

\noindent E-mail: \texttt{lamboley@math.cnrs.fr}

\noindent\texttt{https://www.ceremade.dauphine.fr/\symbol{126}lamboley/}

\end{document}